\newcommand{\END}{\mathcal{E}nd}
\newcommand{\clr}{rgb:black,1;blue,4;red,1}
\newcommand{\bdot}{ node[circle, draw, fill=\clr, thick, inner sep=0pt, minimum width=4pt]{}}
\newcommand{\ob}[1]{\mathsf{#1}}
\newcommand{\down}{\downarrow}
\newcommand{\OB}{\mathcal{OB}}
\newcommand{\B}{\mathcal{B}}
\newcommand{\CB}{\mathcal{CB}}
\newcommand{\AB}{\mathcal{AB}}
\newcommand{\fh}{\mathfrak{h}}
\newcommand{\fn}{\mathfrak{n}}
\newcommand{\fb}{\mathfrak{b}}
\newcommand{\lcap}{
\begin{tikzpicture}[baseline = 3pt, scale=0.5, color=\clr]
        \draw[-,thick] (1,0) to[out=up, in=right] (0.53,0.5) to[out=left, in=right] (0.47,0.5);
        \draw[-,thick] (0.49,0.5) to[out=left,in=up] (0,0);
\end{tikzpicture}
}
\newcommand{\lcup}{
\begin{tikzpicture}[baseline = 6pt, scale=0.5, color=\clr]
        \draw[-,thick] (1,1) to[out=down, in=right] (0.53,0.5) to[out=left, in=right] (0.47,0.5);
        \draw[-,thick] (0.49,0.5) to[out=left,in=down] (0,1);
\end{tikzpicture}
}
\newcommand{\lcapo}{
\begin{tikzpicture}[baseline = 3pt, scale=0.5, color=\clr]
        \draw[-,thick] (1,0) to[out=up, in=right] (0.53,0.5) to[out=left, in=right] (0.47,0.5);
        \draw[->,thick] (0.49,0.5) to[out=left,in=up] (0,0);
\end{tikzpicture}
}
\newcommand{\lcupo}{
\begin{tikzpicture}[baseline = 6pt, scale=0.5, color=\clr]
        \draw[-,thick] (1,1) to[out=down, in=right] (0.53,0.5) to[out=left, in=right] (0.47,0.5);
        \draw[->,thick] (0.49,0.5) to[out=left,in=down] (0,1);
\end{tikzpicture}
}
\newcommand{\swap}{
\begin{tikzpicture}[baseline = 3pt, scale=0.5, color=\clr]
        \draw[-,thick] (0,0) to[out=up, in=down] (1,1);
        \draw[-,thick] (1,0) to[out=up, in=down] (0,1);
\end{tikzpicture}
}
\newcommand{\swapo}{
\begin{tikzpicture}[baseline = 3pt, scale=0.5, color=\clr]
        \draw[->,thick] (0,0) to[out=up, in=down] (1,1);
        \draw[->,thick] (1,0) to[out=up, in=down] (0,1);
\end{tikzpicture}
}
\newcommand{\xdot}{
\begin{tikzpicture}[baseline = 3pt, scale=0.5, color=\clr]

\draw[-,thick] (0,0) to[out=up, in=down] (0,1);
\draw(0,0.5) \bdot;
\end{tikzpicture}
}
 \providecommand{\og}{``}
\providecommand{\fg}{''} \providecommand{\smfandname}{and}
\def\crulefill{\leavevmode\leaders\hrule height 1pt\hfill\kern 0pt}
\long\def\QUERY#1{%
\leavevmode\newline%
\noindent$\star\star\star$\thinspace\textsf{Comment/Query}\crulefill\newline%
   \space #1\newline\hbox to 120mm{\crulefill}$\star\star\star$\newline}
\newtheorem{Theorem}{Theorem}[section]
\newtheorem{Lemma}[Theorem]{Lemma}
\newtheorem{Cor}[Theorem]{Corollary}
\newtheorem{Prop}[Theorem]{Proposition}
\theoremstyle{definition}
\newtheorem{example}[Theorem]{Example}
\newtheorem{Defn}[Theorem]{Definition}
\newtheorem{rem}[Theorem]{Remark}
\numberwithin{equation}{section}
\theoremstyle{definition}
\newtheorem{THEOREM}{Theorem}
\newtheorem*{Assumptions}{Assumptions}
\def\enumerate{\begingroup\ifnum\@enumdepth>3\@toodeep\else
      \advance\@enumdepth\@ne
      \edef\@enumctr{enum\romannumeral\the\@enumdepth}%
      \topsep\z@\parskip\z@
      \list{\csname label\@enumctr\endcsname}
        {\@nmbrlisttrue\let\@listctr\@enumctr
         \parsep\z@\itemsep\z@\topsep\z@
         \setcounter{\@enumctr}{0}
         \def\makelabel##1{\hss\llap{\rm ##1}}
       }\fi}
\let\bar=\overline
\let\epsilon=\varepsilon
\def\({\big(}
\def\){\big)}
\def\Z{\mathbb Z}
\def\0{\underline{0}}
\DeclareMathOperator{\End}{End}
\def\F{\mathcal F}
\def\Hom{\text{Hom}}
\def\U{\mathbf U}
  \gdef\set#1{\mathinner{\lbrace\,{\mathcode`\|"8000%
                                   \let|\midvert #1}\,\rbrace}}
  \gdef\seT#1{\mathinner{\Big\lbrace\,{\mathcode`\|"8000%
                                   \let|\midverT #1}\,\Big\rbrace}}
\def\midvert{\egroup\mid\bgroup}
\def\midverT{\egroup\,\Big|\,\bgroup}
\def\Set[#1]#2|#3|{\Big\{\ #2\ \Big| \
           \vcenter{\hsize #1mm\centering #3}\Big\}}
\def\Z{\mathbb{Z}}
\def\F{\mathbb{F}}
\def\Hom{{\rm Hom}}
\def\mfg{{\mathfrak g}}
\def\fh{{\mathfrak h}}
\def\Set{{\rm Set}}
\newcommand{\C}{\mathcal{C}}
\def\F{\mathcal F}%
\def\Hom{\text{Hom}}%
\def\U{\mathbf U}%
\def\textsf#1{{\textit{#1}}}%
\begin{document}
\title{Affine Brauer category and  parabolic category $\mathcal O$ in types $B, C, D$}
\author{ Hebing Rui, Linliang Song}
\address{H.R.  School of Mathematical Science, Tongji University,  Shanghai, 200092, China}\email{hbrui@tongji.edu.cn}
\address{L.S.  School of Mathematical Science, Tongji University,  Shanghai, 200092, China}\email{llsong@tongji.edu.cn}

\thanks{H. Rui is supported  partially by NSFC (grant No.  11571108).  L. Song is supported  partially by NSFC (grant No.  11501368) and the Fundamental Research Funds for the Central Universities (grant No.22120180001). }
\date{\today}
\sloppy \maketitle

\begin{abstract}A  strict monoidal category referred to as affine Brauer category $\AB$ is introduced over a commutative ring  $\kappa$ containing multiplicative identity $1$ and  invertible element $2$.   We prove that  morphism spaces in $\AB$ are  free over $\kappa$. The cyclotomic (or level $k$) Brauer category $\CB^f(\omega)$ is a  quotient category of $\AB$.  We prove that any  morphism space in  $\CB^f(\omega)$ is free over $\kappa$ with maximal rank if and only if
the $\mathbf u$-admissible condition  holds in the sense of \eqref{admc}. Affine Nazarov-Wenzl algebras~\cite{Na} and cyclotomic  Nazarov-Wenzl algebras~\cite{AMR} will be realized as certain  endomorphism algebras in $\AB$ and $\CB^f(\omega)$, respectively.
 We will establish higher  Schur-Weyl duality between cyclotomic Nazarov-Wenzl algebras and  parabolic BGG categories $\mathcal O$ associated to symplectic and orthogonal Lie algebras over the complex field $\mathbb C$.
 This enables us to use standard arguments in \cite{AST, RS1, RS2} to   compute decomposition matrices of cyclotomic  Nazarov-Wenzl algebras. The level two case was considered by  Ehrig and Stroppel in  \cite{ES}.
\end{abstract}


\section{Introduction
}\label{affb}
Throughout this paper,  $\kappa$ is a commutative ring  containing multiplicative identity $1$ and invertible element $2$.
A category $\C$ is said to be $\kappa$-linear if all Hom sets  are equipped with the structure of $\kappa$-modules in such a way that composition of morphisms is bilinear.
Unless otherwise stated,  we assume that all categories and functors and  associative algebras are defined over $\kappa$.

The aim of this paper is to  introduce the $\kappa$-linear strict  monoidal category   $\AB$ and its quotient category   $\CB^f$, where
$f(t)$ is a monic polynomial in $\kappa[t]$. They will be called   the
 \emph{affine Brauer category} and the \emph{cyclotomic Brauer category}, respectively.
There are some closed relationships between these  categories and the  representations of orthogonal and symplectic Lie algebras over the complex field  $\mathbb C$.
 Affine Nazarov-Wenzl algebras\cite{Na}
and cyclotomic Nazarov-Wenzl algebras\cite{AMR} appear naturally as some  endomorphism algebras in   $\AB$ and $\CB^f$, respectively.
In the future, we  hope to establish higher  Schur-Weyl duality between $\CB^f$ and finite $W$-algebras of types $B, C$ and $D$. Also, we hope to find some applications of
 $ \AB$ and $\CB^f$ in the theory of categorification and the representation theory of ortho-symplectic Lie superalgebras.
We refer the reader to the papers \cite{BDE,Br,CK,CW,DM,GRSS1,GZ,LZ,Re} for more examples of  applications of monoidal (super)categories.

 We start by recalling the notion of a strict monoidal category.
A  \emph{monoidal category}~\cite{T} is a category
$\C$ equipped with a  bifunctor
$-\otimes -: \C \times  \C \rightarrow  \C$, a unit object $\ob 1$, and three natural isomorphisms
 $$\alpha:(-\otimes -)\otimes -\cong -\otimes (-\otimes-),\ \  \text{ $\lambda: \ob 1\otimes - \cong -$ and $\mu:-\otimes  \ob 1\cong -$,}$$ called  coherence maps in the sense that for any objects $\ob a, \ob b, \ob c, \ob d$ in $\C$,
 \begin{enumerate}\item[(1)] $( 1_{\ob a}\otimes \alpha_{\ob b, \ob c, \ob d})\circ (\alpha_{\ob a, \ob b\otimes \ob c, \ob d})\circ (\alpha_{\ob a, \ob b, \ob c}\otimes  1_{\ob d})=(\alpha_{\ob a, \ob b, \ob c\otimes \ob d})\circ (\alpha_{\ob a\otimes \ob b, \ob c, \ob d})$,
  \item[(2)] $( 1_{\ob a}\otimes \lambda_{\ob b})\circ \alpha_{\ob a, 1, \ob b}=\mu_{\ob a}\otimes  1_{\ob b}$,
 \end{enumerate}
\noindent where $1_{\ob a}: \ob a\rightarrow \ob a$ is  the identity morphism in $\End_{\mathcal C}(\ob a)$.

Suppose that $\mathcal C$ and $\mathcal C'$  are two monoidal categories such that  $\alpha$ and $\ob1$ (resp., $\alpha'$ and $\ob1'$)  are the coherence map and the unit object of $\mathcal C$ (resp.,  $\mathcal C'$). A monoidal functor from $ \mathcal C$ to $ \mathcal C'$ is a pair $(F,J)$, where $F: \mathcal C\rightarrow \mathcal C'$ is a functor and
$J_{\ob a,\ob b}: F(\ob a)\otimes F(\ob b)\rightarrow F(\ob a\otimes \ob b)$ is a natural isomorphism such that $F(\ob 1)$ is isomorphic to $\ob 1'$ and
$$F(\alpha_{\ob a,\ob b,\ob c})\circ J_{\ob a\otimes \ob b,\ob c}\circ (J_{\ob a,\ob b}\otimes 1_{F(\ob c)})= J_{\ob a,\ob b\otimes \ob c}\circ (1_{F(\ob a)}\otimes J_{\ob b,\ob c})\circ \alpha'_{F(\ob a),F(\ob b),F(\ob c) }. $$

 A \emph{strict monoidal category} is one for which the natural isomorphisms $\alpha, \lambda$ and $\mu$ are  identities.
Unless otherwise stated, \emph{  all monoidal categories  are assumed to be strict}.

 Any monoidal functor $(F,J)$ between two monoidal categories $\mathcal C$ and $\mathcal C'$ is assumed to be strict in the sense that $F(\ob a\otimes \ob b)=F(\ob a)\otimes F(\ob b)$, $F(\ob 1)=\ob 1'$ and $J_{\ob a,\ob b}=1_{F(\ob a)\otimes F(\ob b)}$ for all objects $\ob a,\ob b$ in $\C$.

 Let $\tau_{\C}:\C\times \C\rightarrow\C\times \C$ be the functor such that  $\tau_\C (\ob a, \ob b)=(\ob b, \ob a)$.
 A \emph{braiding} on  $\C$ is a natural isomorphism $\sigma:  -\otimes -\rightarrow (-\otimes -)\circ \tau_{\C}$ such that
 \begin{equation}\label{symm} \sigma_{\ob a, \ob b\otimes \ob c}=(1_{\ob b}\otimes \sigma_{\ob a, \ob c}) \circ (\sigma_{\ob a, \ob b}\otimes 1_{\ob c})\text{ and
$\sigma_{\ob a\otimes \ob b,  \ob c}=(\sigma_{\ob a,  \ob c}\otimes  1_{\ob b}) \circ (1_{\ob a}\otimes \sigma_{\ob b, \ob c})$,   $\forall \ob a, \ob b, \ob c\in \C$.}\end{equation}
If $\sigma_{\ob a, \ob b}^{-1}=\sigma_{\ob b, \ob a}$ for all objects $\ob a, \ob b\in \C$, then  $\sigma$ is said to be  \emph{symmetric}. A monoidal category equipped with a braiding (resp., a symmetric braiding) is called a \emph{ braided (resp., symmetric) monoidal category}.

 A \emph{right dual} of an object $\ob a$ in a monoidal category $\mathcal C$ consists of a triple $(\ob a^*, \eta_{\ob a}, \epsilon_{\ob a})$, where  $\ob a^*$ is an object,    $\eta_{\ob a}: 1\rightarrow \ob a \otimes \ob a^*$,   $\epsilon_{\ob a}: \ob a^*\otimes \ob a\rightarrow \ob 1$  are morphisms in $\C$ such that \begin{equation}\label{rigdual} (1_{\ob a}\otimes \epsilon_{\ob a})\circ (\eta_{\ob  a} \otimes 1_{\ob a})=\ob 1_{\ob a}\text{ and  $ (\epsilon_{\ob a}\otimes \ob 1_{\ob a^*})\circ (1_{\ob a^*}\otimes\eta_{\ob  a}  )= 1_{\ob a^*}$.}\end{equation}
 The morphism $\eta_{\ob a}$ (resp., $\epsilon_{\ob a}$) is called the unit (resp., counit)  morphism.
There is also a notion of \emph{left dual}. An object in a symmetric monoidal category  has a right dual  if and only if it has a left dual. A monoidal category in which every object has both a left and right dual is called \emph{a rigid monoidal category}.

There is a well-defined graphical calculus for morphisms in a monoidal category $\C$ (c.f. \cite[\S1.6]{VT} or \cite[\S1.2]{BE}).
For any two objects $\ob a,\ob b$ in  $\C$,  $\ob a\ob b$ represents  $\ob a\otimes \ob b$.
    A  morphism $g:\ob a\to \ob b$ is drawn as
    $$\begin{tikzpicture}[baseline = 12pt,scale=0.5,color=\clr,inner sep=0pt, minimum width=11pt]
        \draw[-,thick] (0,0) to (0,2);
        \draw (0,1) node[circle,draw,thick,fill=white]{$g$};
        \draw (0,-0.2) node{$\ob a$};
        \draw (0, 2.3) node{$\ob b$};
    \end{tikzpicture}
     \quad\text{ or simply as }
    \begin{tikzpicture}[baseline = 12pt,scale=0.5,color=\clr,inner sep=0pt, minimum width=11pt]
        \draw[-,thick] (0,0) to (0,2);
        \draw (0,1) node[circle,draw,thick,fill=white]{$g$};
    \end{tikzpicture}$$
 if there is no confusion on the objects. Note that $\ob a$ is  at the bottom while $\ob b$ is at the top.
 The composition  (resp., tensor product) of two morphisms  is given by vertical stacking (resp., horizontal concatenation):
\begin{equation}\label{com1}
      g\circ h= \begin{tikzpicture}[baseline = 19pt,scale=0.5,color=\clr,inner sep=0pt, minimum width=11pt]
        \draw[-,thick] (0,0) to (0,3);
        \draw (0,2.2) node[circle,draw,thick,fill=white]{$g$};
        \draw (0,0.8) node[circle,draw,thick,fill=white]{$h$};
    \end{tikzpicture}
    ~,~ \ \ \ \ \ g\otimes h=\begin{tikzpicture}[baseline = 19pt,scale=0.5,color=\clr,inner sep=0pt, minimum width=11pt]
        \draw[-,thick] (0,0) to (0,3);
        \draw[-,thick] (2,0) to (2,3);
        \draw (0,1.5) node[circle,draw,thick,fill=white]{$g$};
        \draw (2,1.5) node[circle,draw,thick,fill=white]{$h$};
    \end{tikzpicture}~.
\end{equation}
 Since $-\otimes -: \C\times\C \rightarrow \C$ is a bifunctor, we have the interchange law:
\begin{equation}\label{wwcirrten}
 (f\otimes g)\circ(k\otimes h)=(f\circ k)\otimes (g\circ h),
\end{equation}
for any $f\in\Hom_{\C}(\ob b,\ob c)$, $g\in\Hom_{\C}(\ob n,\ob t )$, $k\in\Hom_{\C}(\ob a,\ob b)$, $h\in\Hom_{\C}(\ob m,\ob n)$.
Both sides of \eqref{wwcirrten} are drawn as
 \begin{equation*}
    \begin{tikzpicture}[baseline = 19pt,scale=0.5,color=\clr,inner sep=0pt, minimum width=11pt]
        \draw[-,thick] (0,0) to (0,3);
        \draw[-,thick] (2,0) to (2,3);
        \draw (0,2.2) node[circle,draw,thick,fill=white]{$f$};
        \draw (2,2.2) node[circle,draw,thick,fill=white]{$g$};
        \draw (0,0.8) node[circle,draw,thick,fill=white]{$k$};
        \draw (2,0.8) node[circle,draw,thick,fill=white]{$h$};
    \end{tikzpicture}.
\end{equation*}

The Brauer category $\B$ and affine Brauer category $\AB$ that we will introduce are  monoidal categories generated by the unique object denoted by
 \begin{tikzpicture}[baseline = 10pt, scale=0.5, color=\clr]
                \draw[-,thick] (0,0.5)to[out=up,in=down](0,1.2);
    \end{tikzpicture}. Therefore,  the set of  objects of $\B$ (resp., $\AB$) is
$\{\begin{tikzpicture}[baseline = 10pt, scale=0.5, color=\clr]
                \draw[-,thick] (0,0.5)to[out=up,in=down](0,1.2);
    \end{tikzpicture}^{\otimes m}\mid m\in \mathbb N\}$, where
$\begin{tikzpicture}[baseline = 10pt, scale=0.5, color=\clr]
                \draw[-,thick] (0,0.5)to[out=up,in=down](0,1.2);
    \end{tikzpicture}^{\otimes 0}$ is the unit object.
     To simplify the notation, we denote
$ \begin{tikzpicture}[baseline = 10pt, scale=0.5, color=\clr]
                \draw[-,thick] (0,0.5)to[out=up,in=down](0,1.2);
                    \end{tikzpicture}^{\otimes m}$
by $\ob m$. Unlike the notation in a usual monoidal category, the unit object is denoted by $\ob 0$, while the object
\begin{tikzpicture}[baseline = 10pt, scale=0.5, color=\clr] \draw[-,thick] (0,0.5)to[out=up,in=down](0,1.2);
    \end{tikzpicture}
    is denoted by $\ob 1$.
\begin{Defn}\label{DefnB} The Brauer category $\B$ is the $\kappa$-linear  monoidal category  generated by the unique object $\ob 1$
and three morphisms $U:\ob0\rightarrow \ob2$, $ A:\ob2\rightarrow\ob0$ and $ S: \ob2\rightarrow\ob2$ subject to the following relations:

\begin{equation}\label{OBR1}
S\circ S=\text{1}_{\ob 2},\ \  (S\otimes\text{1}_{\ob 1})\circ (\text{1}_{\ob 1}\otimes  S)\circ (S\otimes\text{1}_{\ob 1})=(\text{1}_{\ob 1}\otimes  S)\circ (S\otimes\text{1}_{\ob 1})\circ (\text{1}_{\ob 1}\otimes  S),
\end{equation}
\begin{equation}\label{OBR2}
(\text{1}_{\ob 1}\otimes A)\circ (U\otimes\text{1}_{\ob 1})=\text{1}_{\ob 1}=(A\otimes\text{1}_{\ob 1})\circ (\text{1}_{\ob 1}\otimes U),
\end{equation}
\begin{equation}\label{OBR3}
U=S\circ U,\ \  A=A\circ S,
\end{equation}
\begin{equation}\label{OBR4}
(A\otimes\text{1}_{\ob 1}) \circ (\text{1}_{\ob 1}\otimes S)=(\text{1}_{\ob 1}\otimes A)\circ (S\otimes \text{1}_{\ob 1}), \ \
(S\otimes \text{1}_{\ob 1})\circ (\text{1}_{\ob 1}\otimes U)=(\text{1}_{\ob 1}\otimes S) \circ (U\otimes\text{1}_{\ob 1}).
\end{equation}
\end{Defn}
We draw  $U,A$ and $S$ as follows:
    \begin{equation}\label{uas}  U=\lcup,\quad   A=\lcap,\quad  \text{ and  $S=\swap$.}\end{equation}
Each endpoint  at both rows of the above diagrams represents the object $\ob 1$. If there is no endpoints at a row, then the object at this row is the unit object $\ob 0$.
So, the object at  each row of $U,A,S$ is determined  by the endpoints on it. For example, the object  at the bottom (resp., top) row of $U$ is $\ob 0$ (resp., $\ob 2$).
For any $m>0$, the identity morphism $1_{\ob m}$  is drawn as the object itself. For example, $\text{1}_{\ob 2}$ is drawn as $\begin{tikzpicture}[baseline = 10pt, scale=0.5, color=\clr]
                \draw[-,thick] (0,0.5)to[out=up,in=down](0,1.2);\draw[-,thick] (0.5,0.5)to[out=up,in=down](0.5,1.2);
    \end{tikzpicture}$.
    The objects at both rows of  $\begin{tikzpicture}[baseline = 10pt, scale=0.5, color=\clr]
                \draw[-,thick] (0,0.5)to[out=up,in=down](0,1.2);\draw[-,thick] (0.5,0.5)to[out=up,in=down](0.5,1.2);
    \end{tikzpicture}$ are also determined by the endpoints.
Thanks to \eqref{com1},  \eqref{OBR1}--\eqref{OBR4} are depicted as \eqref{OB relations 1 (symmetric group)}--\eqref{Brauer relation 4}, respectively:
 \begin{equation}\label{OB relations 1 (symmetric group)}
        \begin{tikzpicture}[baseline = 10pt, scale=0.5, color=\clr]
            \draw[-,thick] (0,0) to[out=up, in=down] (1,1);
            \draw[-,thick] (1,1) to[out=up, in=down] (0,2);
            \draw[-,thick] (1,0) to[out=up, in=down] (0,1);
            \draw[-,thick] (0,1) to[out=up, in=down] (1,2);
                    \end{tikzpicture}
        ~=~
        \begin{tikzpicture}[baseline = 10pt, scale=0.5, color=\clr]
            \draw[-,thick] (0,0) to (0,1);
            \draw[-,thick] (0,1) to (0,2);
            \draw[-,thick] (1,0) to (1,1);
            \draw[-,thick] (1,1) to (1,2);
        \end{tikzpicture}
        ,\qquad
        \begin{tikzpicture}[baseline = 10pt, scale=0.5, color=\clr]
            \draw[-,thick] (0,0) to[out=up, in=down] (2,2);
            \draw[-,thick] (2,0) to[out=up, in=down] (0,2);
            \draw[-,thick] (1,0) to[out=up, in=down] (0,1) to[out=up, in=down] (1,2);
        \end{tikzpicture}
        ~=~
        \begin{tikzpicture}[baseline = 10pt, scale=0.5, color=\clr]
            \draw[-,thick] (0,0) to[out=up, in=down] (2,2);
            \draw[-,thick] (2,0) to[out=up, in=down] (0,2);
            \draw[-,thick] (1,0) to[out=up, in=down] (2,1) to[out=up, in=down] (1,2);
        \end{tikzpicture},
    \end{equation}
    \begin{equation}\label{OB relations 2 (zigzags and invertibility)}
        \begin{tikzpicture}[baseline = 10pt, scale=0.5, color=\clr]
            \draw[-,thick] (2,0) to[out=up, in=down] (2,1) to[out=up, in=right] (1.5,1.5) to[out=left,in=up] (1,1);
            \draw[-,thick] (1,1) to[out=down,in=right] (0.5,0.5) to[out=left,in=down] (0,1) to[out=up,in=down] (0,2);
        \end{tikzpicture}
        ~=~
        \begin{tikzpicture}[baseline = 10pt, scale=0.5, color=\clr]
            \draw[-,thick] (0,0) to (0,1);
            \draw[-,thick] (0,1) to (0,2);
        \end{tikzpicture}
        ~=~
        \begin{tikzpicture}[baseline = 10pt, scale=0.5, color=\clr]
            \draw[-,thick] (2,2) to[out=down, in=up] (2,1) to[out=down, in=right] (1.5,0.5) to[out=left,in=down] (1,1);
            \draw[-,thick] (1,1) to[out=up,in=right] (0.5,1.5) to[out=left,in=up] (0,1) to[out=down,in=up] (0,0);
        \end{tikzpicture},
    \end{equation}

\begin{equation}\label{right cuaps - down cross}
    \begin{tikzpicture}[baseline = 5pt, scale=0.5, color=\clr]
        \draw[-,thick] (0,1) to[out=down,in=left] (0.5,0.35) to[out=right,in=down] (1,1);
    \end{tikzpicture}
    ~=~
    \begin{tikzpicture}[baseline = 5pt, scale=0.5, color=\clr]
        \draw[-,thick] (0,1) to[out=down,in=up] (1,0) to[out=down,in=right] (0.5,-0.5) to[out=left,in=down] (0,0) to[out=up,in=down] (1,1);
    \end{tikzpicture}
    ~,\qquad
    \begin{tikzpicture}[baseline = 5pt, scale=0.5, color=\clr]
        \draw[-,thick] (0,0) to[out=up,in=left] (0.5,0.65) to[out=right,in=up] (1,0);
    \end{tikzpicture}
    ~=~
    \begin{tikzpicture}[baseline = 5pt, scale=0.5, color=\clr]
        \draw[-,thick] (0,0) to[out=up,in=down] (1,1) to[out=up,in=right] (0.5,1.5) to[out=left,in=up] (0,1) to[out=down,in=up] (1,0);
    \end{tikzpicture},
    \end{equation}

    \begin{equation}\label{Brauer relation 4}
    \begin{tikzpicture}[baseline = 10pt, scale=0.5, color=\clr]
                \draw[-,thick] (1,0) to[out=up,in=right] (0.5,1.5) to[out=left,in=up] (0,1) to[out=down,in=up] (0,0);
        \draw[-,thick] (0.5,0) to[out=up,in=down] (1.3,1.5);
    \end{tikzpicture}~=~
    \begin{tikzpicture}[baseline = 10pt, scale=0.5, color=\clr]
         \draw[-,thick] (1,0) to[out=up,in=right] (0.5,1.5) to[out=left,in=up] (0.2,1) to[out=down,in=up] (0.2,0);
        \draw[-,thick] (0.7,0) to[out=up,in=down] (0,1.5);
    \end{tikzpicture}
    ~,\begin{tikzpicture}[baseline = 10pt, scale=0.5, color=\clr]
        \draw[-,thick]  (2,1.5) to[out=down, in=right] (1.5,0) to[out=left,in=down] (1,1.5);
        \draw[-,thick] (0.7,0) to[out=up,in=down] (1.5,1.5);
    \end{tikzpicture}~=~
    \begin{tikzpicture}[baseline = 10pt, scale=0.5, color=\clr]
        \draw[-,thick]  (2,1.5) to[out=down, in=right] (1.5,0) to[out=left,in=down] (1,1.5);
        \draw[-,thick] (2.3,0)to[out=up,in=down](1.5,1.5);
    \end{tikzpicture}.
\end{equation}

In order to give a $\kappa$-basis of   $\Hom_{\B}(\ob m, \ob s)$, we need the notion of $( m, s)$-Brauer diagram  as follows. An $(m,s)$-{\em  Brauer diagram} (or a Brauer diagram from $\ob m$ to $\ob s$) is a string diagram obtained by tensor product and composition of
 the generators $A, U, S$  and the identity  morphism \begin{tikzpicture}[baseline = 10pt, scale=0.5, color=\clr]
                \draw[-,thick] (0,0.5)to[out=up,in=down](0,1.5);\end{tikzpicture}
    of $\B$ such that there are $m$ (resp., $s$) endpoints  on the bottom (resp., top) row of the  resulting diagram.  For example, the following is a $(7,3) $-Brauer diagram:
 \begin{equation}\label{example of m ,s}
\begin{tikzpicture}[baseline = 25pt, scale=0.35, color=\clr]
        \draw[-,thick] (5,0) to[out=up,in=down] (8,5);
         \draw[-,thick] (10,0) to[out=up,in=down] (6,5);
          \draw[-,thick] (2.6,5) to (2.5,5) to[out=left,in=up] (1.5,4)
                        to[out=down,in=left] (2.5,3)
                        to[out=right,in=down] (3.5,4)
                        to[out=up,in=right] (2.5,5);
        \draw[-,thick] (-1,0) to[out=up,in=left] (1,1.5) to[out=right,in=up] (3,0);
         \draw[-,thick] (2,0) to[out=up,in=left] (4,2.5) to[out=right,in=up] (6,0);
          \draw[-,thick] (12.6,4.5) to (12.5,4.5) to[out=left,in=up] (10.5,2.5)
                        to[out=down,in=left] (12.5,0.5)
                        to[out=right,in=down] (14.5,2.5)
                        to[out=up,in=right] (12.5,4.5);
                        \draw[-,thick] (11,0) to [out=up,in=down](12,1.7);   \draw[-,thick] (12,1.7)to [out=up,in=down](11,5);
           \end{tikzpicture}. \end{equation}

For any $m,s\in\mathbb N$, let  \begin{equation}\label{mbs} \mathbb{B}_{m,s}=\{ \text{  all $(m,s)$-Brauer  diagrams}\}.\end{equation}
   Then  any element in $\mathbb{B}_{m,s}$  can be represented as a morphism in $\Hom_{\B}(\ob m,\ob s)$. Since $\Hom_{\B}(\ob m,\ob s)$ is generated by $A,U,S$ and $ \begin{tikzpicture}[baseline = 10pt, scale=0.5, color=\clr]
                \draw[-,thick] (0,0.5)to[out=up,in=down](0,1.5);\end{tikzpicture}$, it is spanned by $\mathbb{B}_{m,s}$.

  There are finitely many loops on any $d\in \mathbb{B}_{m,s}$. The loops are usually  called bubbles (c.f.\cite{BCNR}).
  For example, there are two bubbles in \eqref{example of m ,s}. The  endpoints at the bottom (resp., top) row  of $d$ are indexed by  $1,2,\ldots, m$ (resp., $m+1, m+2,\ldots,m +s$) from left to right.
Two endpoints   are  paired if they    are connected  by a strand.
Since each endpoint of $d$ is uniquely  connected with another  one, it  gives a partition of $\{1,2,\ldots, m+s\}$ into disjoint union  pairs.
For example, the $(7,3)$-Brauer diagram in \eqref{example of m ,s} gives the partition $\{\{1,3\}, \{2,5\}, \{4,9\},\{6,8\},\{7,10\}\}$.
So, $\mathbb{B}_{m,s}=\emptyset $  if $m+s$ is odd.
The  strand connecting the pairs on different rows (resp., the same row) is called a vertical  (resp.,  horizontal) strand. Moreover, the horizontal strand connecting the pairs on the top (resp., bottom) row is also called  a cup (resp., cap). For example, $U$ is a cup and   $A$ is   a cap.

We say that  $d, d'\in  \mathbb{B}_{m,s}$    are   \emph{equivalent} and write $d\sim d'$ if \begin{itemize}\item they have the same number of bubbles, \item they  give  the same partition of $\{1,2,\ldots, m+s\}$ into disjoint union of pairs.\end{itemize}
Let $\mathbb{B}_{m,s}/\sim$ be the set of all equivalence classes of $\mathbb{B}_{m,s}$. By Lemma~\ref{equi123}, $d=d'$  in $\B$ if  $d\sim d'$. Thus, any equivalence class of  $\mathbb{B}_{m,s}$ can be identified with any element in it.

\begin{THEOREM}\label{basisofb}
 For any    $  m,   s\in\mathbb N$,
 $\Hom_{\B}(\ob m,\ob s)$  has $\kappa$-basis   given by  $\mathbb{B}_{m,s}/\sim$.
 \end{THEOREM}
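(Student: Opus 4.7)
The plan is to establish the two directions separately: spanning, which is essentially formal from the definition plus Lemma~\ref{equi123}, and then linear independence, for which I build an auxiliary combinatorial model.

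For spanning, any morphism in $\Hom_{\B}(\ob m,\ob s)$ is by definition a $\kappa$-linear combination of compositions of tensor products of the generators $A$, $U$, $S$, and the identity $1_{\ob 1}$. Any such composite, drawn via the graphical calculus \eqref{com1} and rearranged by the interchange law \eqref{wwcirrten}, becomes an $(m,s)$-Brauer diagram in the sense of \eqref{mbs}, possibly decorated with bubbles. Since Lemma~\ref{equi123} asserts that equivalent diagrams already coincide as morphisms in $\B$, the set $\mathbb{B}_{m,s}/\sim$ spans $\Hom_{\B}(\ob m, \ob s)$.

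For linear independence, the plan is to construct a $\kappa$-linear strict monoidal category $\tilde{\B}$ in which the classes in $\mathbb{B}_{m,s}/\sim$ are free by construction. Take $\tilde{\B}$ to have objects $\{\ob m : m \in \mathbb N\}$, and let $\Hom_{\tilde\B}(\ob m, \ob s)$ be the free $\kappa$-module on pairs $(\pi, k)$ with $\pi$ a perfect matching of $\{1,\dots,m+s\}$ (taken to be empty when $m+s$ is odd) and $k\in\mathbb N$; these pairs are in canonical bijection with $\mathbb{B}_{m,s}/\sim$. Composition $(\pi_2,k_2)\circ(\pi_1,k_1)$ is defined combinatorially by concatenating $\pi_1$ and $\pi_2$ along the $s$ shared endpoints, reading off the induced matching $\pi$ of $\{1,\dots,m+r\}$ together with the number $\ell$ of cycles that close off in the middle, and setting the result equal to $(\pi,k_1+k_2+\ell)$. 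Tensor product is disjoint union of matchings with addition of bubble counts.

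Finally, let $\hat A,\hat U,\hat S$ denote the basis elements of $\tilde\B$ corresponding to the cap, cup, and crossing. A direct inspection on small matchings verifies relations \eqref{OBR1}--\eqref{OBR4} in $\tilde\B$, so the universal property of $\B$ supplies a $\kappa$-linear monoidal functor $\Phi:\B\to\tilde\B$ with $\Phi(A)=\hat A$, $\Phi(U)=\hat U$, $\Phi(S)=\hat S$. Unwinding the definitions, $\Phi$ sends each Brauer diagram $d\in\mathbb{B}_{m,s}$, viewed as a morphism in $\B$, to the basis vector of $\Hom_{\tilde\B}(\ob m,\ob s)$ indexed by its equivalence class. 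Since those basis vectors are $\kappa$-linearly independent in $\tilde\B$ by construction and $\Phi$ carries the spanning set $\mathbb{B}_{m,s}/\sim$ bijectively onto them, the spanning set is $\kappa$-linearly independent, completing the proof. The main obstacle in this plan is verifying that $\tilde\B$ is actually a strict monoidal category---the issue is not the bifunctoriality of $\otimes$, which is transparent from the disjoint-union definition, but the associativity of the combinatorial composition, which forces one to track carefully how middle loops are created and merged under triple concatenation.
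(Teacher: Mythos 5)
Your proposal is correct in outline but takes a genuinely different route from the paper. The paper proves linear independence representation-theoretically: it applies the monoidal functor $\Phi:\B\to \U(\mfg)\text{-fmod}$ of Proposition~\ref{level1s}, evaluates each $d'\in\bar{\mathbb B}_{0,2r}/\sim$ on $1\in\mathbb C$, and shows via the carefully chosen weight vectors $v_{\theta(d)}$ of \eqref{defofwdbed} that the coefficient matrix $(c_{d,d'})$ is $\pm$ the identity (equation \eqref{coeffi}); the bubble polynomials $p_d(\Delta_0)$ are then killed by letting $N\gg 0$ and invoking $\Phi(\Delta_0)=\epsilon_\mfg N$, after which the case of general $(m,s)$ follows from $\bar\eta_{\ob m}$ and general $\kappa$ follows by base change from $\mathbb Z$. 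You instead build the free combinatorial model $\tilde\B$ (matchings together with a bubble counter) and map $\B$ onto it by the universal property. Your approach is more elementary and works over an arbitrary $\kappa$ in one step, with no need for the polynomial trick in $N$ or the base-change argument; it is essentially the construction of the ``category of Brauer diagrams'' of Lehrer--Zhang \cite{LZ} with $\Delta_0$ kept generic. What the paper's approach buys is that the target category already exists, so no well-definedness check is required; what your approach costs is precisely the obligation you flag at the end: associativity of the concatenation product on matchings, including the correct bookkeeping of middle loops under triple composition, and the verification of relations \eqref{OBR1}--\eqref{OBR4} in $\tilde\B$. These are standard but nontrivial verifications (comparable in length to the paper's coefficient computation), and your proof is not complete until they are written out; in particular the associativity argument should identify the composite matching and loop count with data read off from the connected components of the three superimposed matchings, so that both bracketings manifestly agree. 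One further small point: to see that your functor carries $\mathbb B_{m,s}/\sim$ \emph{onto} the chosen basis of $\Hom_{\tilde\B}(\ob m,\ob s)$ you need every pair $(\pi,k)$ to be realized by an actual composite of generators; this follows from the construction $d_w=S_w\circ U^{\otimes r}$ as in Lemma~\ref{numberss} together with tensoring by copies of $\Delta_0$, though for linear independence alone injectivity on the spanning set suffices.
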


Theorem~\ref{basisofb} will be proven at the end of section~\ref{iso}. Let  $\Delta_0$
be  the crossing-free  bubble $ \begin{tikzpicture}[baseline = 5pt, scale=0.5, color=\clr]
        \draw[-,thick] (0.6,1) to (0.5,1) to[out=left,in=up] (0,0.5)
                        to[out=down,in=left] (0.5,0)
                        to[out=right,in=down] (1,0.5)
                        to[out=up,in=right] (0.5,1);\end{tikzpicture}$. So, $\Delta_0=A\circ U\in  \End_{\B}(\ob 0)$.
                        For any   $\omega_0\in \kappa$, define
                         $\B(\omega_0)$ to  be the $\kappa$-linear monoidal category obtained from $\B$ by imposing the additional relation  $\Delta_0=\omega_0$.
 Thanks to \cite[Theorem~2.16, Remark~3.3]{LZ}, $\B(\omega_0)$ is  isomorphic   to
  the \emph{category of Brauer diagrams} in \cite{LZ}. {For any $m,s\in\mathbb N$, define
\begin{equation}\label{defofbarbms}
\bar{\mathbb B}_{m,s}:=\{d\in \mathbb B_{m,s}\mid d \text{ has no bubbles}\}.
\end{equation}
Thanks  to Theorem~\ref{basisofb}, $\Hom_{\B(\omega_0)}(\ob m,\ob s)$ has  $\kappa$-basis given by $\bar{\mathbb B}_{m,s}/\sim$. This result has already been given in \cite[Theorem~2.16]{LZ}.}
\begin{Defn}\label{AOBC defn}
    The \emph{affine  Brauer category} $\AB$ is the $\kappa$-linear monoidal category generated by the unique  object $\ob1$; four  morphisms $U:\ob0\rightarrow \ob2, A: \ob2\rightarrow\ob0, S: \ob2\rightarrow\ob2$, $X:\ob1\rightarrow\ob1$,  subject to \eqref{OBR1}--\eqref{OBR4} and
     \eqref{AOB1}-\eqref{AOB2} as follows:
   \begin{equation}\label{AOB1}
   (X\otimes \text{1}_{\ob 1}) \circ S-S\circ (\text{1}_{\ob 1}\otimes X)=U\circ A-\text{1}_{\ob 2},
   \end{equation}
        \begin{equation}\label{AOB2}
        (\text{1}_{\ob 1}\otimes A)\circ(\text{1}_{\ob 1}\otimes X\otimes \text{1}_{\ob 1})\circ (U\otimes\text{1}_{\ob 1})=-X=(A\otimes\text{1}_{\ob 1})\circ(\text{1}_{\ob 1}\otimes X\otimes \text{1}_{\ob 1})\circ (\text{1}_{\ob 1}\otimes U).
   \end{equation}
   \end{Defn}
To simplify the notation, we keep the notations in \eqref{uas} and draw   the morphism $X$ as $\xdot $ later on.
 Again the objects at  both rows of $\xdot $ are indicated by  its  endpoints. Then the relations \eqref{AOB1}--\eqref{AOB2} are depicted as \eqref{AOBC relations}--\eqref{OB relations 2 (zigzags and invertibility2)}, respectively:
 \begin{equation}\label{AOBC relations}
                \begin{tikzpicture}[baseline = 7.5pt, scale=0.5, color=\clr]
            \draw[-,thick] (0,0) to[out=up, in=down] (1,2);
            \draw[-,thick] (0,2) to[out=up, in=down] (0,2.2);
            \draw[-,thick] (1,0) to[out=up, in=down] (0,2);
            \draw[-,thick] (1,2) to[out=up, in=down] (1,2.2);
             \draw(0,1.9)\bdot;
        \end{tikzpicture}
        ~-~
        \begin{tikzpicture}[baseline = 7.5pt, scale=0.5, color=\clr]
            \draw[-,thick] (0,0) to[out=up, in=down] (1,2);\draw[-,thick] (0,0) to[out=up, in=down] (0,-0.2);
             \draw[-,thick] (1,0) to[out=up, in=down] (0,2);\draw[-,thick] (1,0) to[out=up, in=down] (1,-0.2);
                        \draw(1,0.1)\bdot;
        \end{tikzpicture}
        ~=~
       \begin{tikzpicture}[baseline = 10pt, scale=0.5, color=\clr]
          \draw[-,thick] (2,2) to[out=down,in=right] (1.5,1.5) to[out=left,in=down] (1,2);
            \draw[-,thick] (2,0) to[out=up, in=right] (1.5,0.5) to[out=left,in=up] (1,0);
        \end{tikzpicture}
        ~-~
        \begin{tikzpicture}[baseline = 7.5pt, scale=0.5, color=\clr]
            \draw[-,thick] (0,0) to[out=up, in=down] (0,2);
            \draw[-,thick] (1,0) to[out=up, in=down] (1,2);
                   \end{tikzpicture},
    \end{equation}

  \begin{equation}\label{OB relations 2 (zigzags and invertibility2)}
               \begin{tikzpicture}[baseline = 10pt, scale=0.5, color=\clr]
            \draw[-,thick] (2,0) to[out=up, in=down] (2,1) to[out=up, in=right] (1.5,1.5) to[out=left,in=up] (1,1);
            \draw[-,thick] (1,1) to[out=down,in=right] (0.5,0.5) to[out=left,in=down] (0,1) to[out=up,in=down] (0,2);
           \draw( 1,1) \bdot;
        \end{tikzpicture}
        ~=~
        -\begin{tikzpicture}[baseline = 10pt, scale=0.5, color=\clr]
            \draw[-,thick] (0,0) to (0,2);
            \draw(0,1) \bdot;
                    \end{tikzpicture}
        ~=~
        \begin{tikzpicture}[baseline = 10pt, scale=0.5, color=\clr]
            \draw[-,thick] (2,2) to[out=down, in=up] (2,1) to[out=down, in=right] (1.5,0.5) to[out=left,in=down] (1,1);
            \draw[-,thick] (1,1) to[out=up,in=right] (0.5,1.5) to[out=left,in=up] (0,1) to[out=down,in=up] (0,0);
            \draw( 1,1) \bdot;
        \end{tikzpicture}.
    \end{equation}

  There is a monoidal  functor
  \begin{equation}\label{functorfrombtoab}
  \F:\B\rightarrow \AB\end{equation}
  sending the generators of $\B$ to the generators of $\AB$ with the same names. Therefore, any $d\in \mathbb{B}_{m,s}$   can be interpreted  as a morphism in $\AB$. After we prove the basis theorem for $\AB$, we see that this functor is faithful. So,  $\B$ can be considered as a subcategory of $\AB$. This is the
 reason why we call $\AB$ the affine Brauer category.

In order to give  a $\kappa$-basis of  $\Hom_{\AB}(\ob m,\ob s), \forall \ob m, \ob s\in \AB$, we need the notion of  a {\em  dotted   $(m,s)$-Brauer diagram} as follows.

A point on a strand of a Brauer diagram is called a critical point if it is either an endpoint or  a point such that  the tangent line at it  is a horizontal line.
    For example, there are three  critical points on both  $U$ and $A$.
 A segment of  a  Brauer diagram is defined to be a connected component of the diagram obtained when all crossings and critical points are deleted.
  A dotted   $(m,s)$-Brauer diagram  is  an $(m,s)$-Brauer diagram (with bubbles) such that there are finitely many $ \bullet$'s (called dots)
on each  segment of the diagram.
   If there are  $h$  $ \bullet$'s on a segment, then such $\bullet$'s  can be viewed as  $\underset {h} {\underbrace{\xdot\circ \ldots \circ \xdot}}$, and will be denoted by  $ \bullet~ h$.
    For example,
   $$\begin{tikzpicture}[baseline = 5pt, scale=0.5, color=\clr]
        \draw[-,thick] (0,2) to[out=down,in=left] (1,0.5) to[out=right,in=down] (2,2);
      \draw (1.6,0.8) \bdot;  \draw (1.9,1.2) \bdot;
    \end{tikzpicture}~ =~ \begin{tikzpicture}[baseline = 5pt, scale=0.5, color=\clr]
        \draw[-,thick] (0,2) to[out=down,in=left] (1,0.5) to[out=right,in=down] (2,2);
      \draw (1.6,0.8) \bdot;  \draw (2,0.8) node{\footnotesize{$2$}};
    \end{tikzpicture} =(\begin{tikzpicture}[baseline = 10pt, scale=0.5, color=\clr]
                \draw[-,thick] (0,0.5)to[out=up,in=down](0,1.5);
    \end{tikzpicture}\otimes X^2)\circ U; \quad \begin{tikzpicture}[baseline = 7.5pt, scale=0.5, color=\clr]
            \draw[-,thick] (0,0) to[out=up, in=down] (1,2);
            \draw[-,thick] (0,2) to[out=up, in=down] (0,2.2);
            \draw[-,thick] (1,0) to[out=up, in=down] (0,2);
            \draw[-,thick] (1,2) to[out=up, in=down] (1,2.2);
             \draw(0,1.9)\bdot;\draw(0.2,1.4)\bdot;
        \end{tikzpicture}=\begin{tikzpicture}[baseline = 7.5pt, scale=0.5, color=\clr]
            \draw[-,thick] (0,0) to[out=up, in=down] (1,2);
            \draw[-,thick] (0,2) to[out=up, in=down] (0,2.2);
            \draw[-,thick] (1,0) to[out=up, in=down] (0,2);
            \draw[-,thick] (1,2) to[out=up, in=down] (1,2.2);
              \draw(0.2,1.4)\bdot;\draw (-0.2,1.4) node{\footnotesize{$2$}};
        \end{tikzpicture}=(X^2\otimes \begin{tikzpicture}[baseline = 10pt, scale=0.5, color=\clr]
                \draw[-,thick] (0,0.5)to[out=up,in=down](0,1.5);)
    \end{tikzpicture})\circ S.
   $$
Let $\mathbb D_{m,s} $ be  the set of all dotted $(m,s)$-Brauer diagrams.   So, any  $d\in \mathbb D_{m,s}  $ is obtained by tensor product and composition of $U$, $A$, $S$, $X$ and  $\begin{tikzpicture}[baseline = 10pt, scale=0.5, color=\clr]
                \draw[-,thick] (0,0.5)to[out=up,in=down](0,1.5);
    \end{tikzpicture}$ and hence can be  interpreted as a morphism  in $\Hom_{\AB}(\ob m,\ob s)$. For example, $d\in \mathbb D_{6,2}$ if
 $$d=\begin{tikzpicture}[baseline = 25pt, scale=0.35, color=\clr]
        \draw[-,thick] (5,0) to[out=up,in=down] (8,5);
         \draw[-,thick] (10,0) to[out=up,in=down] (6,5);
          \draw[-,thick] (2.6,5) to (2.5,5) to[out=left,in=up] (1.5,4)
                        to[out=down,in=left] (2.5,3)
                        to[out=right,in=down] (3.5,4)
                        to[out=up,in=right] (2.5,5);
                         \draw (-1,0.5) \bdot;\draw (-0.4,0.5) node{\footnotesize{$k$}};\draw (6.2,4) \bdot;\draw (6.8,4) node{\footnotesize{$h$}};
                         \draw (3.5,4) \bdot;\draw (3.9,4) node{\footnotesize{$l$}};\draw (6.2,2.2) \bdot;
        \draw[-,thick] (-1,0) to[out=up,in=left] (1,1.5) to[out=right,in=up] (3,0);
         \draw[-,thick] (2,0) to[out=up,in=left] (4,2.5) to[out=right,in=up] (6,0);
           \end{tikzpicture}.  $$
 Conversely,
   if a diagram $d$  is  obtained by tensor product and composition of
the generators $A, U, S,X$  and the identity  morphism \begin{tikzpicture}[baseline = 10pt, scale=0.5, color=\clr]
               \draw[-,thick] (0,0.5)to[out=up,in=down](0,1.5);\end{tikzpicture}
    of $\AB$ such that there are $m$ (resp., $s$) points  on the bottom (resp., top) row, then $d\in\mathbb D_{m,s} $. So
$\Hom_{\AB}(\ob m,\ob s)$ is spanned $\mathbb D_{m,s} $.

\begin{Defn}\label{D:N.O. dotted  OBC diagram}
A $d\in \mathbb D_{m,s} $    is  \emph{normally ordered} if
\begin{enumerate}
\item all of its bubbles are crossing-free,  and there  are no other strands shielding any of them from the leftmost edge of the  picture;
    \item each bubble has even number of $\bullet$'s on the leftmost boundary  of the bubble (e.g.,  \begin{tikzpicture}[baseline = 5pt, scale=0.5, color=\clr]
        \draw[-,thick] (0.6,1) to (0.5,1) to[out=left,in=up] (0,0.5)
                        to[out=down,in=left] (0.5,0)
                        to[out=right,in=down] (1,0.5)
                        to[out=up,in=right] (0.5,1);
        \draw (0,0.5) \bdot;
        \draw (0.4,0.5) node{\footnotesize{$2$}};
    \end{tikzpicture} is allowed but \begin{tikzpicture}[baseline = 5pt, scale=0.5, color=\clr]
        \draw[-,thick] (0.6,1) to (0.5,1) to[out=left,in=up] (0,0.5)
                        to[out=down,in=left] (0.5,0)
                        to[out=right,in=down] (1,0.5)
                        to[out=up,in=right] (0.5,1);
        \draw (0,0.5) \bdot;
        \draw (0.4,0.5) node{\footnotesize{$3$}};
    \end{tikzpicture}, \begin{tikzpicture}[baseline = 5pt, scale=0.5, color=\clr]
        \draw[-,thick] (0.6,1) to (0.5,1) to[out=left,in=up] (0,0.5)
                        to[out=down,in=left] (0.5,0)
                        to[out=right,in=down] (1,0.5)
                        to[out=up,in=right] (0.5,1);
        \draw (1,0.5) \bdot;
        \draw (1.4,0.5) node{\footnotesize{$2$}};
    \end{tikzpicture} and  \begin{tikzpicture}[baseline = 5pt, scale=0.5, color=\clr]
        \draw[-,thick] (0.6,1) to (0.5,1) to[out=left,in=up] (0,0.5)
                        to[out=down,in=left] (0.5,0)
                        to[out=right,in=down] (1,0.5)
                        to[out=up,in=right] (0.5,1);
        \draw (0,0.5)\bdot;
       \draw (1,0.5) \bdot; \end{tikzpicture}
     are not allowed);
    \item whenever a $\bullet$ appears on a vertical strand, it is on the boundary of the top row;

    \item   whenever a $\bullet$ appears on either  a cap or a cup,  it is on  either  the leftmost boundary of the cap or the rightmost boundary  of a cup.
\end{enumerate}
\end{Defn}

For any $m, s \in \mathbb N$, define
\begin{equation} \label{ngms} \mathbb {ND}_{m,s}:= \{d\in \mathbb{D}_{m,s} \mid d \text{ is normally ordered}\}.\end{equation}  The following are two elements in $\mathbb{D}_{5,5}$. The  right one is in  $\mathbb {ND}_{5,5}$,  whereas  the left one is not:
\begin{equation}\label{two AOBC diagrams}
    \begin{tikzpicture}[baseline = 25pt, scale=0.35, color=\clr]
        \draw[-,thick] (2,0) to[out=up,in=down] (0,5);
        \draw[-,thick] (6,0) to[out=up,in=down] (6,5);
        \draw[-,thick] (7,0) to[out=up,in=down] (7,5);
            \draw[-,thick] (0,0) to[out=up,in=left] (2,1.5) to[out=right,in=up] (4,0);
        \draw[-,thick] (2,5) to[out=down,in=left] (3,4) to[out=right,in=down] (4,5);
        \draw[-,thick] (2,3.1) to (2,3) to[out=down,in=right] (0.5,2) to[out=left,in=down] (-1,3)
                        to[out=up,in=left] (0.5,4) to[out=right,in=up] (2,3);
    \draw(4,0.4)\bdot;
    \draw(-1,3)\bdot;
    \draw(2,0.4)\bdot;
    \end{tikzpicture}~,
    \qquad\qquad\qquad
    \begin{tikzpicture}[baseline = 25pt, scale=0.35, color=\clr]
        \draw[-,thick] (2,0) to[out=up,in=down] (0,5);
        \draw[-,thick] (6,0) to[out=up,in=down] (6,5);
         \draw[-,thick] (6.7,0) to[out=up,in=down] (6.7,5);
               \draw[-,thick] (0,0) to[out=up,in=left] (2,1.5) to[out=right,in=up] (4,0);
        \draw[-,thick] (2,5) to[out=down,in=left] (3,4) to[out=right,in=down] (4,5);
        \draw[-,thick] (-1.1,4) to (-1,4) to[out=left,in=up] (-2,3) to[out=down,in=left] (-1,2)
                        to[out=right,in=down] (0,3) to[out=up,in=right] (-1,4);
                        \draw(0,0.3)\bdot;
    \draw(-2,3)\bdot; \draw(6,4.6)\bdot;\draw(3.95,4.6)\bdot;
    \draw(0,4.6)\bdot;
     \draw (-1.5,3) node{$4$};\draw (0.5,0.4) node{$l$};
           \end{tikzpicture}~.
\end{equation}

 Suppose that  $d, d'\in \mathbb {D}_{m,s} $. We say that $d$ and $d'$    are \emph{equivalent} and write $d\sim d'$ if their underlying $(m,s)$-Brauer diagrams without $\bullet$'s  are equivalent and their corresponding strands have the same number of $\bullet$'s. One can restrict this equivalence relation on $ \mathbb {ND}_{m,s} $. Let  $\mathbb {ND}_{m,s}/\sim $ be the set of all equivalence classes of  $\mathbb {ND}_{m,s} $.

 By Lemma~\ref{reguequai}, $d=d'$ in $\AB$ if  $d, d'\in \mathbb {ND}_{m,s} $ and $d\sim d'$. So any equivalence class of  $\mathbb {ND}_{m,s} $ can be identified with any element in it.

\begin{THEOREM}\label{Cyclotomic basis conjecture}
    For any $m,s\in \mathbb N$,    $\Hom_{\AB}(\ob m,\ob s)$ has $\kappa$-basis given by  $\mathbb {ND}_{m,s}/\sim $.
   \end{THEOREM}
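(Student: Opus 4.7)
The plan is to prove two assertions: (i) $\mathbb{ND}_{m,s}/\sim$ spans $\Hom_{\AB}(\ob m,\ob s)$ over $\kappa$, and (ii) it is $\kappa$-linearly independent. The first uses the defining relations of $\AB$ to reduce any dotted Brauer diagram to normal form, while the second requires either a faithful representation of $\AB$ or an identification with an algebra whose basis is already known.

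For spanning, I would induct on a lexicographically ordered complexity pair (number of crossings, number of $\bullet$'s in non-normal positions) attached to each $d \in \mathbb{D}_{m,s}$. The following reduction moves suffice. Using \eqref{AOBC relations} slides a $\bullet$ across a crossing at the cost of an error term $\lcup\circ\lcap - 1_{\ob 2}$ with strictly fewer crossings. Using \eqref{OB relations 2 (zigzags and invertibility2)}, any $\bullet$ is transported along a zigzag with a sign change, so that $\bullet$'s on vertical strands rise to the top endpoint, those on a cap migrate to its leftmost boundary, and those on a cup migrate to its rightmost boundary. The Brauer relations \eqref{OB relations 1 (symmetric group)}--\eqref{Brauer relation 4}, together with Theorem~\ref{basisofb}, normalize the bubble-free skeleton. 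Bubbles are then pushed out to the leftmost edge by isotopy; any strand obstructing a bubble is swept aside by a braid move and, if necessary, a crossing resolution producing a lower-complexity correction. Finally, the parity condition on bubble boundaries is enforced by applying \eqref{OB relations 2 (zigzags and invertibility2)} inside a bubble to convert an odd number of $\bullet$'s on its leftmost boundary into an even number there plus a bubble with strictly fewer total $\bullet$'s. Each move strictly decreases the complexity, so the process terminates, expressing $d$ as a $\kappa$-linear combination of elements of $\mathbb{ND}_{m,s}$.

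For linear independence, I would proceed in two steps. First, identify $\End_{\AB}(\ob n)$ with the affine Nazarov-Wenzl algebra of \cite{Na} by matching generators (the dotted endomorphism $X$, the crossing $S$, and the cup-cap compositions) with the standard generators of $W_n^{\mathrm{aff}}$ and verifying that all defining relations hold; the basis theorem for $W_n^{\mathrm{aff}}$ then delivers $\kappa$-linear independence of $\mathbb{ND}_{n,n}/\sim$ inside $\End_{\AB}(\ob n)$. Second, reduce the general $\Hom$-space to an endomorphism algebra via the rigidity of $\AB$: when $m+s$ is even (otherwise $\Hom_{\AB}(\ob m,\ob s)=0$ by parity), a suitable configuration of cups and caps provides a $\kappa$-linear embedding $\Hom_{\AB}(\ob m,\ob s) \hookrightarrow \End_{\AB}(\ob n)$ for $n=(m+s)/2$, under which $\mathbb{ND}_{m,s}/\sim$ maps injectively into the already-established basis of $\End_{\AB}(\ob n)$.

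The main obstacle is establishing that the bubble scalars $\Delta_{2k} := A \circ (X^{2k}\otimes 1_{\ob 1}) \circ U$ generate a polynomial algebra $\kappa[\Delta_0,\Delta_2,\Delta_4,\ldots] \subseteq \End_{\AB}(\ob 0)$; without this, the comparison with $W_n^{\mathrm{aff}}$ is incomplete because the central parameters of the Nazarov-Wenzl algebra must themselves be shown to be algebraically free over $\kappa$. I would handle this either by invoking Nazarov's analysis of the central scalars of $W_n^{\mathrm{aff}}$ in \cite{Na}, or, more intrinsically, by constructing an explicit action of $\AB$ on a free polynomial module analogous to the polynomial representation of the degenerate affine Hecke algebra, under which the $\Delta_{2k}$ act as algebraically independent quantities. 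Once this algebraic independence is secured, the full basis theorem follows formally from the two-step reduction.
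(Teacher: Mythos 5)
Your overall architecture --- spanning by diagrammatic reduction, then linear independence, with the general $\Hom$-space folded into an endomorphism algebra (equivalently $\Hom(\ob 0,\ob{2r})$) via cups and caps as in Lemma~\ref{key1} --- matches the paper. But the linear independence step has a genuine gap. You propose to import it from ``the basis theorem for $W_n^{\mathrm{aff}}$'' in \cite{Na}. No such theorem is available there: Nazarov's Lemmas 4.4--4.5 only show that the regular monomials \emph{span} the affine Nazarov--Wenzl algebra, and their linear independence is exactly equivalent to the statement you are trying to prove. Indeed, in this paper the isomorphism $\End_{\AB(\omega_0)}(\ob r)\cong \mathcal W_r^{\rm aff}(\omega_0)$ (Theorem~\ref{affiso}) is \emph{deduced from} Theorem~\ref{Cyclotomic basis conjecture}, not used to prove it; the logical order is the reverse of yours. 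Your fallback --- an explicit action on a free polynomial-type module --- is the right idea and is essentially what the paper does (the functor $\Psi$ of Theorem~\ref{affaction} evaluated at the generic Verma module $M^{\rm gen}=\U(\mfg)\otimes_{\U(\fb)}\U(\fh)$, free as a right $\U(\fh)$-module), but you deploy it only to show the bubbles $\Delta_{2k}$ are algebraically independent. That is not enough: one must separate \emph{all} normally ordered diagrams, i.e.\ show the bubble-free dotted diagrams are independent over $\kappa[\Delta_0,\Delta_2,\dots]$. The paper's Lemmas~\ref{x1 calculation} and \ref{top component lemma} and Proposition~\ref{Basis theorem for affine Hecke-Clifford with bubbles} do this by a leading-term analysis: $\Delta_{2k}$ acts by $2\varepsilon_\mfg(h_1^{2k}+\cdots+h_n^{2k})$ plus lower order, each dot $x_j$ contributes a factor $h_{|i_j|}$ in the appropriate $v_{\theta(\hat d)}$-component, and for $n\gg 0$ the resulting monomials in $\U(\fh)$ are pairwise distinct. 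Without constructing such a representation and carrying out this separation, the independence claim is unsupported.

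A secondary problem is your treatment of bubbles carrying an odd number of dots. Sliding dots around a bubble via \eqref{OB relations 2 (zigzags and invertibility2)} cannot change the parity of the dot count: it only produces the tautology $\Delta_k=(-1)^{2k}\Delta_k$. The identity actually needed is Lemma~\ref{admissible}, $2\Delta_k=-\Delta_{k-1}+\sum_{j=1}^{k}(-1)^{j-1}\Delta_{j-1}\Delta_{k-j}$ for odd $k$, whose derivation passes a dot through a crossing via \eqref{AOBC relations} (generating the product terms) and whose use requires $2$ to be invertible in $\kappa$ --- this is precisely where the standing hypothesis on $\kappa$ enters. Without it, condition (b) of Definition~\ref{D:N.O. dotted  OBC diagram} cannot be achieved and your spanning argument does not terminate in $\mathbb{ND}_{m,s}$.
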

For any $k\in\mathbb N$, define 
 \begin{equation}\label{defofdelta}
\Delta_k:= \begin{tikzpicture}[baseline = 5pt, scale=0.5, color=\clr]
        \draw[-,thick] (0.6,1) to (0.5,1) to[out=left,in=up] (0,0.5)
                        to[out=down,in=left] (0.5,0)
                        to[out=right,in=down] (1,0.5)
                        to[out=up,in=right] (0.5,1);
        \draw (0,0.5) \bdot;
        \draw (-0.4,0.5) node{\footnotesize{$k$}};
    \end{tikzpicture}= \lcap\circ (\xdot~ \begin{tikzpicture}[baseline = 5pt, scale=0.5, color=\clr]
     \draw[-,thick] (0,0.15) to (0,1.15); \end{tikzpicture} ~)^k\circ \lcup\in \End_{\AB}(\ob 0).
\end{equation}
By ~\cite[Proposition~2.2.10]{EGNO},
 $\End_{\AB}(\ob 0)$ is commutative
and  $\Delta_k\circ \Delta_l=\Delta_k \otimes \Delta_l$ for all admissible $k,l$. Note that   $\Delta_k\otimes \Delta_l$ is also written as $\Delta_k\Delta_l$.
 In Lemma~\ref{admissible}, we will prove that
\begin{equation}\label{admidekts}
2\Delta_k=-\Delta_{k-1}+\sum_{j=1}^{k}(-1)^{j-1}\Delta_{j-1}\Delta_{k-j}, \text{ for }  k=1,3,\ldots.
\end{equation}Later on, we always assume that  \begin{equation}\label{omega} \omega=\{\omega_i\in \kappa\mid  i\in \mathbb N\}.\end{equation}
If $\Delta_j$'s are specialized at scalars $\omega_j$'s, then
\begin{equation}\label{admomega}
2\omega_k=-\omega_{k-1}+\sum_{j=1}^{k}(-1)^{j-1}\omega_{j-1}\omega_{k-j}, \text{ for } k=1,3,\ldots.
    \end{equation}
So, $\omega$  is  admissible in the sense of \cite[Definition~2.10]{AMR}.
By Theorem~\ref{Cyclotomic basis conjecture}, $ \End_{\AB}(\ob 0)$ is the polynomial algebra $\kappa[\Delta_0,\Delta_2,\ldots]$.
 $\AB$ can be viewed as a $\kappa[\Delta_0]$-linear category with $\Delta_0g:=\Delta_0\otimes g$ for any $g\in\Hom_{\AB}(\ob m,\ob s)$. Given $\omega_0\in\kappa$, define $\AB(\omega_0):=\kappa\otimes _{\kappa[\Delta_0]}\AB$ viewing $\kappa$ as a  $\kappa[\Delta_0]$-module with $\Delta_0$ acting as $\omega_0$.
  Then
  $\AB(\omega_0)$ is the   $\kappa$-linear category obtained from $\AB$ by specializing  $\Delta_0$ at $\omega_0$.
  Suppose that $\omega$ satisfies the admissible condition  in \eqref{admomega}.
  Similarly, one can define $\AB(\omega):=\kappa\otimes _{\kappa[\Delta_2,\Delta_4,\ldots]}\AB(\omega_0)$ viewing $\kappa$ as  a $\kappa[\Delta_2, \Delta_4,\ldots]$-module with $\Delta_{2j}$ acting as $\omega_{2j}$, $j\geq 1$.

  { For any $m,s\in\mathbb N$, define }
  \begin{equation}\label{barndms}\bar{\mathbb {ND}}_{m,s}:= \{d\in \mathbb{ND}_{m,s} \mid d \text{ has no  bubbles}\}.\end{equation}
  The following result follows from Theorem~\ref{Cyclotomic basis conjecture}, immediately.

\begin{Cor}\label{omega00s}
Suppose  $m,s\in\mathbb N$.
\begin{enumerate}
\item $\Hom_{\AB(\omega_0)}(\ob m, \ob s)$ has basis
$\widetilde{\mathbb {ND}}_{m,s}:=\{\Delta_2^{k_2} \Delta_4^{k_4} \cdots d\mid  d\in \bar{\mathbb {ND}}_{m,s}/\sim, k_{2i}\in\mathbb N, \forall i\geq 1\}$.

\item  $\Hom_{\AB(\omega)}(\ob m, \ob s)$ has basis $\bar{\mathbb {ND}}_{m,s}/\sim$.
\end{enumerate}
\end{Cor}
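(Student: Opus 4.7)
\medskip
\noindent\textbf{Proof proposal.} The plan is to extract both parts directly from Theorem~\ref{Cyclotomic basis conjecture} by observing that a normally ordered dotted diagram splits canonically into a product of bubbles and a bubble-free factor, so $\Hom_{\AB}(\ob m,\ob s)$ is free over the polynomial ring $\kappa[\Delta_0,\Delta_2,\Delta_4,\dots]\simeq \End_{\AB}(\ob 0)$, and the two claimed statements then follow by base change along $\Delta_0\mapsto\omega_0$, respectively $\Delta_{2j}\mapsto\omega_{2j}$.

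First I would verify the bubble/non-bubble decomposition of $\mathbb{ND}_{m,s}/\sim$. By Definition~\ref{D:N.O. dotted  OBC diagram}(1)--(2), each bubble of a normally ordered diagram is crossing-free, unshielded from the left, and carries an even number of dots on its leftmost boundary; hence, using the interchange law \eqref{wwcirrten}, every $d\in \mathbb{ND}_{m,s}$ can be written uniquely (up to $\sim$) as a horizontal concatenation
\[
d \;=\; \Delta_0^{k_0}\,\Delta_2^{k_2}\,\Delta_4^{k_4}\cdots\, \otimes\, d',
\]
with $d'\in \bar{\mathbb{ND}}_{m,s}/\sim$ and exponents $k_{2i}\in\mathbb N$. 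Combined with Theorem~\ref{Cyclotomic basis conjecture}, this exhibits $\Hom_{\AB}(\ob m,\ob s)$ as a \emph{free} $\kappa[\Delta_0,\Delta_2,\Delta_4,\dots]$-module on the set $\bar{\mathbb{ND}}_{m,s}/\sim$, where the polynomial ring acts by $\Delta_{2i}\cdot g := \Delta_{2i}\otimes g$.

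For part~(a), the category $\AB(\omega_0)$ is obtained by base change $\kappa\otimes_{\kappa[\Delta_0]}-$ with $\Delta_0$ specialized at $\omega_0$. Since the Hom space above is free over $\kappa[\Delta_0,\Delta_2,\Delta_4,\dots]=\kappa[\Delta_0]\otimes_\kappa \kappa[\Delta_2,\Delta_4,\dots]$ on $\bar{\mathbb{ND}}_{m,s}/\sim$, tensoring over $\kappa[\Delta_0]$ with $\kappa$ preserves freeness and yields the claimed basis $\widetilde{\mathbb{ND}}_{m,s}$. For part~(b), tensoring the result of (a) with $\kappa$ over $\kappa[\Delta_2,\Delta_4,\dots]$, where each $\Delta_{2j}$ acts as $\omega_{2j}$, kills all remaining bubble factors and leaves the free $\kappa$-module on $\bar{\mathbb{ND}}_{m,s}/\sim$; the fact that the odd scalars $\omega_{2j-1}$ are not imposed separately is harmless because by \eqref{admidekts} they are already forced inside $\End_{\AB}(\ob 0)$ to equal the expressions prescribed by the admissibility relation \eqref{admomega} once the even $\Delta_{2i}$ are set to $\omega_{2i}$.

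The only non-routine point is the unique-decomposition step, and the main obstacle is really bookkeeping: one must check that the bubbles can always be moved to the far left past the rest of the diagram without leaving the normally ordered class, and that different choices of dot placement on a given bubble or on a given strand are identified under~$\sim$ in the way that matches the definition \eqref{defofdelta} of $\Delta_k$. Once this verification is in place, the Corollary is a formal consequence of Theorem~\ref{Cyclotomic basis conjecture} via the two successive base changes.
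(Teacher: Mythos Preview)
Your proposal is correct and is precisely the argument the paper has in mind: the paper states that the corollary ``follows from Theorem~\ref{Cyclotomic basis conjecture}, immediately,'' having just observed that $\End_{\AB}(\ob 0)=\kappa[\Delta_0,\Delta_2,\ldots]$ and that $\AB(\omega_0)$ and $\AB(\omega)$ are defined by the successive base changes $\kappa\otimes_{\kappa[\Delta_0]}-$ and $\kappa\otimes_{\kappa[\Delta_2,\Delta_4,\ldots]}-$. Your write-up simply spells out the bubble/non-bubble factorisation of elements of $\mathbb{ND}_{m,s}/\sim$ (already implicit in \eqref{regularm}) that makes the freeness and the two base changes transparent.
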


For any $\kappa$-linear  monoidal category $\mathcal C$, a \emph{right tensor ideal} $I$ of $\mathcal C$ is the collection of  $\kappa$-submodules $ I(\ob m,\ob s)\subset  \Hom_{\mathcal C}(\ob m,\ob s)$,  for all $\ob m,\ob s \in \mathcal C$, such that
\begin{equation}\label{tensotideal}
 h\circ g\circ f\in I(\ob m ,\ob t), \text{ and $ g\otimes 1_{\ob t}\in I(\ob n\otimes \ob t ,\ob s\otimes \ob t)$}\end{equation}
 whenever $f \in \Hom_{\mathcal C}(\ob m,\ob n)$,  $g\in I(\ob n,\ob s)$,  $h\in \Hom_{\mathcal C}(\ob s,\ob t)$,  for all  $\ob m,  \ob n,  \ob s,  \ob t\in \C$. The quotient category  $ \mathcal C/I$  is the category with the same objects as $\mathcal C$ and   $\Hom_{\mathcal C/I}(\ob m,\ob s)=\Hom_{\mathcal C}(\ob m,\ob s)/I(\ob m, \ob s)$,  for all $  \ob m, \ob s\in \C/I$.
\begin{Defn}\label{COBC defn}
Fix a positive integer  $a$. Let  $f(t)=\prod_{1\leq i\leq a}(t-u_i)\in\kappa[t]$. The \emph{cyclotomic Brauer category} $\CB^f:=\AB/I $,   where  $I$ is  the right tensor ideal of $\AB$ generated by $f(X)$.
\end{Defn}
 Since  $\CB^f$ is a  quotient category of  $\AB$,
 $\Hom_{\CB^f}(\ob m, \ob s)$ is spanned $\mathbb {D}_{m,s}$.
Write     $f(t)=\sum_{j=0}^a b_j t^j$, where  $f(t)$ is given in Definition~\ref{COBC defn}.
Then \begin{equation}\label{defofbju}
b_j=(-1)^{a-j}e_{a-j}(\mathbf u),\ \  0\leq j\leq a,
 \end{equation}
 where   $e_k(\mathbf u)$ is the elementary symmetric  function in $u_1,u_2,\ldots,u_a$ with degree $k$,   $0\leq k\leq a$. Later on, we always assume that   \begin{equation}\label{bfu} \mathbf u= \{u_j\in\kappa\mid 1\leq j\leq a\}.\end{equation} In  $\CB^f$, we have  $f(X)=0$. So
$$
\Delta_k=-\sum_{j=1}^ab_{a-j}\Delta_{k-j},
 \forall k\geq a.$$

\begin{Defn}\cite[Definition~2.5]{G09}\label{wka}  $\omega$ is called weakly admissible if it satisfies both \eqref{admomega} and $
\omega_k=-\sum_{j=1}^ab_{a-j}\omega_{k-j}$ for all $k\geq a$,
where $b_j$'s are given in \eqref{defofbju}.
\end{Defn}

Thanks to  Definition~\ref{wka},  $\omega$ is determined by $\{\omega_i\mid  i=0,2, \ldots, 2\lfloor \frac{a-1}{2}\rfloor\}$ if it is weakly admissible.

\begin{Defn}\label{COBC1}  Suppose that    $\omega$  is weakly admissible.
The (specialized) cyclotomic Brauer category  $\CB^f(\omega):=\AB/J$, where $J$ is the right tensor ideal  generated by $f(X)$ and $\Delta_k-\omega_k$ for all $k\in\mathbb N$.
\end{Defn}

\begin{Defn}
 \cite[Definition~3.6,~Lemma~3.8]{AMR}  Let  $u$ be an indeterminate. $\omega$ is called
$\mathbf u$-admissible    if
\begin{equation}\label{admc}
\sum_{i\geq 0} \frac{\omega_i}{ u^{i}}+u-\frac{1}{2}=(u-\frac{1}{2}(-1)^a)\prod_{i=1}^a \frac{u+u_i}{u-u_i}.
\end{equation}
\end{Defn}

Thanks to \cite[Corollary~3.9]{AMR},  $\omega$ is admissible in the sense of \cite[Definition~2.10]{AMR} if
  it is $\mathbf u$-admissible. In other words, it  satisfies \eqref{admomega}.  Moreover, it is weakly admissible (see \cite[Theorem~5.2]{G09}).

For any $k\in \mathbb N$, let  $q_k(\mathbf u)$ be the Schur's $q$-function in the indeterminates  $u_1,u_2,\ldots, u_a$. Then $q_k(\mathbf u)$'s   are polynomials  which satisfy
$$
\prod_{i=1}^a\frac{1+u_iu}{1-u_iu}=\sum_{k\geq 0}q_k(\mathbf u)u^k.
$$
By \cite[Lemma~3.8]{AMR},   $\omega$ is $\mathbf u$-admissible if and only if
\begin{equation}\label{wdeterminedbyu}
\omega_k=q_{k+1}(\mathbf u)-\frac{1}{2}(-1)^aq_{k}(\mathbf u) + \frac{1}{2}\delta_{k,0}, \text{ for $k\in\mathbb N$.}
\end{equation}
So, $\omega$ is determined by $\mathbf u$ if it is $\mathbf u$-admissible.

Let $\mathbb {ND}^a_{m,s}:= \{d\in \mathbb{ND}_{m,s} \mid   \text{ there are at most $a-1$  $\bullet$'s on each strand of $d$} \}$. Define
\begin{equation}\label{ndams} \bar {\mathbb {ND}}^a_{m,s}:=\mathbb {ND}^a_{m,s}\cap \bar{\mathbb {ND}}_{m,s}.\end{equation}
 One can restrict the equivalence relation on $\mathbb {ND}_{m,s}$ to its subset $\bar{\mathbb {ND}}^a_{m,s}$. Let  $\bar{\mathbb {ND}}^a_{m,s}/\sim$ be the set of equivalence classes of  $\bar{\mathbb {ND}}^a_{m,s}$.

By Lemma~\ref{reguequai},  $d=d'$ in $\CB^f(\omega)$ if $d, d' \in\bar{\mathbb {ND}}^a_{m,s}$ and $d\sim d'$. So, each equivalence class in  $\bar{\mathbb {ND}}^a_{m,s}/\sim$ can be identified with  any element in it.

\begin{THEOREM}\label{basis of cyc thm2} For any $m, s\in \mathbb N$,  $\Hom_{\CB^f(\omega)}(\ob m, \ob s)$ has $\kappa$-basis  given by  $\bar{\mathbb {ND}}^a_{m,s}/\sim$     if and only if   $\omega$ is    $\mathbf u$-admissible in the sense of \eqref{admc}.
 \end{THEOREM}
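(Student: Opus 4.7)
The plan is to handle both directions by reduction to the basis of $\AB(\omega)$ established in Corollary~\ref{omega00s}(b), viewing $\CB^f(\omega)$ as the further quotient of $\AB(\omega)$ by the right tensor ideal generated by $f(X)$.

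\textbf{Necessity of $\mathbf u$-admissibility.} Suppose $\bar{\mathbb{ND}}^a_{m,s}/\sim$ is a $\kappa$-basis. Then in particular $\End_{\CB^f(\omega)}(\ob 0)$ is free of rank one. In $\AB(\omega)$ there are no further relations among the scalars $\omega_k$, but quotienting by $f(X) = 0$ forces new scalar identities in $\End(\ob 0)$: insert $f(X)=0$ on a strand and close it up with caps and cups, using \eqref{AOB2} to turn each $X^j$ on the closed loop into $(-1)^j$ dots on a bubble, and using \eqref{AOBC relations} to handle the inhomogeneous terms produced by crossings. Packaged as a generating function in a formal variable $u$, these identities are precisely those of \cite[Lemma~3.8]{AMR}, so consistency with $\End_{\CB^f(\omega)}(\ob 0)=\kappa\cdot 1$ forces $\omega$ to satisfy \eqref{admc}.

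\textbf{Sufficiency, spanning.} By Corollary~\ref{omega00s}(b), $\Hom_{\AB(\omega)}(\ob m,\ob s)$ is spanned by $\bar{\mathbb{ND}}_{m,s}/\sim$. Since $f(X)=0$ gives $X^a = -\sum_{j=0}^{a-1}b_j X^j$, iteratively rewriting strand-segments with $\geq a$ dots in terms of segments with $<a$ dots, while respecting $\sim$, shows that $\bar{\mathbb{ND}}^a_{m,s}/\sim$ spans $\Hom_{\CB^f(\omega)}(\ob m,\ob s)$. Under $\mathbf u$-admissibility, the forced scalar identities uncovered above are automatically compatible with the chosen $\omega_k$ by \cite[Corollary~3.9]{AMR}, so the spanning set does not trivially collapse from the $\End(\ob 0)$-side.

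\textbf{Sufficiency, linear independence (the main obstacle).} Here one needs a faithful model for $\CB^f(\omega)$ separating distinct normally-ordered diagrams. I would proceed in two steps. First, exploit the rigid monoidal structure of $\CB^f(\omega)$: the unit/counit morphisms $U$ and $A$ induce $\kappa$-linear isomorphisms $\Hom_{\CB^f(\omega)}(\ob m,\ob s) \cong \Hom_{\CB^f(\omega)}(\ob{m+s},\ob 0)$ by bending top endpoints downward, and after sliding dots via \eqref{OB relations 2 (zigzags and invertibility2)} (which only introduces signs absorbed by $\sim$) this isomorphism matches $\bar{\mathbb{ND}}^a_{m,s}/\sim$ with $\bar{\mathbb{ND}}^a_{m+s,0}/\sim$. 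This reduces the problem to the case $s=0$. Second, identify $\End_{\CB^f(\omega)}(\ob n)$ with the cyclotomic Nazarov--Wenzl algebra $W_{a,n}(\mathbf u,\omega)$, whose freeness of the expected rank under $\mathbf u$-admissibility is \cite[Theorem~5.5]{AMR}. The space $\Hom_{\CB^f(\omega)}(\ob n,\ob 0)$ is then cyclic as a right $W_{a,n}(\mathbf u,\omega)$-module, generated by the nested-cup diagram, and a counting comparison between $\bar{\mathbb{ND}}^a_{n,0}/\sim$ and this free module yields independence. The principal difficulties are (i) carefully aligning the normally-ordered convention with the Ariki--Mathas--Rui tableau basis under the rigidity isomorphism, so that signs and dot-bookkeeping match exactly, and (ii) verifying that the cyclic module structure is faithful of the predicted rank rather than a proper quotient—the latter is where $\mathbf u$-admissibility, via \cite[Theorem~5.5]{AMR}, is essential.
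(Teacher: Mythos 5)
Your reduction to the case $s=0$ via rigidity and your spanning argument are both fine and agree with the paper (Lemma~\ref{key1}, Proposition~\ref{regularspan}(c)). The serious problem is in the linear independence step. What you have a priori is only a surjection $\bar\gamma:\mathcal W_{a,r}(\omega)\twoheadrightarrow \End_{\CB^f(\omega)}(\ob r)$ (and correspondingly a cyclic module structure on $\Hom_{\CB^f(\omega)}(\ob r,\ob 0)$). Knowing from \cite[Theorem~5.5]{AMR} that $\mathcal W_{a,r}(\omega)$ is free of rank $a^r(2r-1)!!$ gives an \emph{upper} bound on the rank of the endomorphism space, not a lower bound: the quotient could still be proper, and nothing in your proposal rules this out. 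You flag this yourself as difficulty (ii), but citing AMR again does not resolve it, because AMR's theorem is about $\mathcal W_{a,r}(\omega)$, not about its image in the category. To close the gap one must exhibit a faithful target: the paper constructs the functor $\Psi_{M^{\mathfrak p_{I_i}}(\lambda_{I_i,\mathbf c})}$ to parabolic category $\mathcal O$ for $\mathfrak{so}_{2n}$, shows it factors through $\CB^{f_i}(\omega)$ for the specific parameters arising there (Lemmas~\ref{polyofx}--\ref{ghom123}), proves linear independence of the images of $\bar{\mathbb{ND}}^a_{0,2r}/\sim$ by a leading-term analysis on the highest weight vector (Lemmas~\ref{L:coefficients technical lemma}, Proposition~\ref{keytheorem}), and then passes to arbitrary $\mathbf u$-admissible parameters by a Zariski-density argument over the generic ring $\mathcal Z=\mathbb Z_{(2)}[\hat u_1,\dots,\hat u_a]$ (Theorem~\ref{calz}) followed by base change. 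Some substitute for this construction is unavoidable.

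The necessity direction also has a gap. Closing up $f(X)=0$ with cups and caps and sliding dots produces only the relations $\Delta_k=-\sum_{j=1}^a b_{a-j}\Delta_{k-j}$ for $k\ge a$, i.e.\ \emph{weak} admissibility (Definition~\ref{wka}), which is already assumed in the definition of $\CB^f(\omega)$ and is strictly weaker than \eqref{admc}; so your claim that these closures yield ``precisely'' the identities of \cite[Lemma~3.8]{AMR} is not substantiated, and focusing on $\End_{\CB^f(\omega)}(\ob 0)$ is the wrong place to look, since failure of $\mathbf u$-admissibility manifests as rank collapse in higher Hom spaces rather than in $\End(\ob 0)$. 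The paper's route is different: assuming the basis statement, the regular monomials of $\mathcal W_{a,r}(\omega)$ map bijectively onto the basis of $\End_{\CB^f(\omega)}(\ob r)$, hence are linearly independent, so $\mathcal W_{a,r}(\omega)$ attains the maximal rank $a^r(2r-1)!!$; Goodman's theorem \cite[Theorem~5.2]{G09} then says this happens if and only if $\omega$ is $\mathbf u$-admissible. If you want to avoid citing Goodman you would essentially have to reprove his analysis of when $e_1$ survives, which is substantially more than a bubble computation.
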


In summary, we give a table to collect  the different types of  diagrammatic basis. In the last row, we have to assume that   $\omega$ is  $\mathbf u$-admissible. 

\medskip
\centerline{\begin{tabular}{|c|c|}
\hline
Category  $\C$& $\kappa$-basis of  $\Hom_{\C}(\ob m,\ob s)$\\
\hline
 $\B$ & $\mathbb B_{m,s}/\sim$: equivalence classes of   $(m,s)$-Brauer diagrams\\
\hline
  $\B(\omega_0)$& $\bar{\mathbb B}_{m,s}/\sim$:=$\{d\in\mathbb B_{m,s}/\sim\mid d\text{ has no bubbles} \}$ \\
\hline
  $\AB$&$\mathbb {ND}_{m,s}/\sim:$ equivalence classes of normally ordered dotted $(m,s)$-Brauer diagrams \\
\hline
 $\AB(\omega_0)$& $\widetilde{\mathbb {ND}}_{m,s}$:= $\{d\in  \mathbb {ND}_{m,s}/\sim\mid d \text{ has no }\Delta_0\} $ \\
\hline
 $\AB(\omega)$& $\bar{\mathbb {ND}}_{m,s}/\sim:=\{d\in \mathbb {ND}_{m,s}/\sim\mid d \text{ has no bubbles}\} $\\
\hline
  $\CB^f(\omega)$  &$\bar{\mathbb {ND}}^a_{m,s}/\sim$:=$\{d\in \bar{\mathbb {ND}}_{m,s}/\sim \mid \text{each strand of $d$ has  at most $a-1$  $\bullet$'s}$ \} \\
\hline
\end{tabular}
}

\medskip
 We organize the paper as follows. In section~2, we establish connections between Brauer category $\B$ and simple  Lie algebras in types $B, C$ and $D$. As an application, we prove Theorem~A. In section~3,
 we give explicit bases of morphism  spaces  in affine Brauer category $\AB$. Thus we can establish   algebra
isomorphisms  between affine Nazarov-Wenzl algebras in \cite{Na} and certain endomorphism algebras in $\AB$. In section~4, we prove Theorem~C. This enables us to establish
 algebra isomorphisms between
  cyclotomic (or level $k$)  Nazarov-Wenzl algebras in  \cite{AMR} and certain endomorphism algebras in $\CB^f(\omega)$.
 As a by-product, we will give isomorphisms  between level $k$ Nazarov-Wenzl algebras and certain endomorphism algebras in the BGG parabolic categories  $\mathcal O$
associated to  orthogonal and symplectic Lie algebras.
Using standard arguments in \cite{AST, RS1, RS2}, we obtain formulae to compute  decomposition numbers of  level $k$ Nazarov-Wenzl algebras in section~ 5.  The  level $2$ case was dealt with  in \cite{ES}.

\section{Connections to orthogonal and symplectic Lie algebras}\label{iso}

 Throughout,  let $V$ be the $N$-dimensional  complex space.
   As a complex space,
  the general linear Lie algebra
  $\mathfrak{gl}_N$ is $\End_{\mathbb C}(V) $. The Lie bracket $[\ \ \ ]$  is defined as $[x, y]=xy-yx$ for all $x, y\in \mathfrak {gl}_N$.    Let $ (\ ,\ ): V\otimes V\rightarrow \mathbb C$ be  the non-degenerate bilinear form satisfying
 \begin{equation}
\label{bilinearf}
(x,y)=\varepsilon(y,x),\end{equation}
where $\epsilon\in \{-1, 1\}$.  Define  \begin{equation}\label{defofg}
\mathfrak g=\{g\in\mathfrak{gl}_N \mid (gx,y)+(x,gy)=0 \text{ for all $x,y\in V$}\}.
\end{equation}
If $\epsilon=1$, $\mfg$ is known as the orthogonal Lie algebra $\mathfrak{so}_{N}$. When $\epsilon=-1$, $N$ has to be even and $\mfg$ is known as the
 symplectic Lie algebra $ \mathfrak {sp}_N$. Later on, we will denote $\epsilon$ by $\epsilon_{\mfg}$ in order to emphasis $\mfg$.

 Define $\underline {2n+1}=\{1, 2, \ldots, n, 0, -n, \ldots -2, -1\}$ and $\underline {2n}=\underline{2n+1}\setminus\{0\}$.
 Choose  a basis $\{v_i\mid i\in \underline N\}$ of $V$ such that
 \begin{equation}\label{vivjb}
(v_i,v_j)=\delta_{i,-j}=\varepsilon_{\mathfrak g}(v_j,v_i), \text{for  all $i, j\in \underline N$, and   $i\ge 0$.}
\end{equation}
Let $V^*$ be the linear dual of $V$ with dual basis  $\{ v^*_i\mid i \in \underline N\}$  such that $  v^*_i(v_j)=\delta_{i, j}$ for all admissible $i$ and $j$.  Thanks to \eqref{defofg}, there is a $\mathfrak g$-module isomorphism  $\psi: V\rightarrow V^*$ such that   $\psi (v)(x)= (v,x)$ for all $x, v\in V$. So,
$V$ can be identified with $V^*$ under $\psi$ and
\begin{equation}\label{vistar}
v_i^*=\begin{cases}  v_{-i},  &  \text{if $i\leq 0$ and $i\in \underline N$,}\\
   \varepsilon_{\mathfrak g} v_{-i}, & \text{if $i>0$ and $i\in \underline{N}$}.\\
   \end{cases}
\end{equation}
For all $i, j\in \underline N$, let $E_{i, j}$ be the usual matrix unit with respect to the basis $\{v_k\mid k\in \underline N\}$ of $V$. Then   $\mathfrak{g}$ is spanned by $\{F_{i, j}\mid i, j\in \underline {N}\}$, where (see \cite[(7.9)]{Mol})
\begin{equation}\label{fij} F_{i,j}=E_{i,j}-\theta_{i,j} E_{-j,-i}, \ \  \text{ and
$\theta_{i,j}=\begin{cases}
                    1, & \text{if $\mathfrak{g}=\mathfrak {so}_{N}$,}\\
                      \text{sgn}(i)\text{sgn}(j), & \text{if $\mathfrak{g}=\mathfrak {sp}_{N}$.}\\
                      \end{cases}$}\end{equation}
Further,  $\mfg$  has basis $S(\mfg)$, where
                      \begin{equation}\label{basis} S(\mfg)=\begin{cases} \{F_{i,i}\mid 1\le i\le n\}\cup\{F_{\pm i, \pm j}\mid 1\le i<j\le n\}\cup \{F_{0, \pm i}\mid 1\le i\le n\}, &\text{if $\mathfrak g=\mathfrak{so}_{2n+1}$,}\\ \{F_{i, i}, F_{-i, i}, F_{i, -i}\mid 1\le i\le n\}\cup \{F_{\pm i, \pm j}\mid 1\le i<j\le n\}, &\text{if $\mathfrak g=\mathfrak{sp}_{2n}$,}\\
\{F_{i, i}\mid 1\le i\le n\}\cup \{F_{\pm i, \pm j}\mid 1\le i<j\le n\}, &\text{if $\mathfrak g=\mathfrak{so}_{2n}$.}\\
\end{cases}\end{equation}
The Cartan subalgebra $\mathfrak h$   has  basis $ \{h_i\mid 1\le i\le n\}$, where
\begin{equation}\label{hi} h_i=F_{i,i}, \text{  $1\leq i\leq n$.}\end{equation}
There is a triangular decomposition   $\mfg=\mathfrak n^{-}\oplus \mathfrak h\oplus \mathfrak n^+$ such that the positive part $\mathfrak n^+$ has  basis $S(\mathfrak n^+)$, where
\begin{equation}\label{basisofg} S(\mathfrak n^+)=\begin{cases} \{F_{ i,\pm j}\mid 1\le i<j\le n \}\cup \{F_{0,-i}\mid 1\le i\le n \}, &\text{if $\mathfrak{g}=\mathfrak {so}_{2n+1}$,}\\
\{F_{i,-i}\mid 1\le i\le n \}\cup\{F_{ i,\pm j}\mid 1\le i<j\le n \}, &\text{if $\mathfrak{g}=\mathfrak {sp}_{2n}$,}\\
\{F_{ i,\pm j}\mid 1\le i<j\le n\}, &\text{if $\mathfrak{g}=\mathfrak {so}_{2n}$.}\\
\end{cases}\end{equation}

Let $\mathfrak h^*$ be the linear dual  of $\mathfrak h$ with  dual basis $\{\epsilon_i\mid 1\le i\le n\}$ such that  $\epsilon_i(h_j)=\delta_{i, j}$,  for all admissible $i, j$.
Each element $\lambda\in \mathfrak h^*$, called a weight, is of form   $\sum_{i=1}^n \lambda_i\epsilon_i$.
 Let $\U(\mathfrak f )$ be   the universal enveloping algebra associated with any  complex Lie algebra  $\mathfrak f$.  A nonzero vector  $m$ in a left $\U(\mfg)$-module $ M$ is of weight $\lambda$ if $h_i m=\lambda_i m$, for     $1\le i\le n$. A weight vector  $m\in M$  is called a highest weight  vector  if $\mathfrak n^+ m=0$. A highest weight module is a left $\U(\mfg)$-module generated by a highest weight vector.

The root system of $\mfg$ is $R =R^+ \cup(-R^+)$, where the set of positive roots
\begin{equation}\label{positiveroot}
R^+=\left\{
         \begin{array}{ll}
           \{\varepsilon_i\pm \varepsilon_j\mid 1\leq i<j\leq n\}\cup \{\varepsilon_i\mid 1\leq i\leq n\}, & \hbox{$\mfg=\mathfrak {so}_{2n+1}$;} \\
           \{\varepsilon_i\pm \varepsilon_j\mid 1\leq i<j\leq n\}\cup \{2\varepsilon_i\mid 1\leq i\leq n\}, & \hbox{$\mfg=\mathfrak {sp}_{2n}$;} \\
            \{\varepsilon_i\pm \varepsilon_j\mid 1\leq i<j\leq n\}, & \hbox{$\mfg=\mathfrak {so}_{2n}$.}
         \end{array}
       \right.
\end{equation}    The set  of  simple roots is  $\Pi$, where $\Pi=\{\alpha_1, \alpha_2, \ldots, \alpha_{n-1}, \alpha_n\}$,
  $\alpha_i=\epsilon_i-\epsilon_{i+1}$,  $ 1\le i\le n-1$, and \begin{equation}\label{aln}\alpha_n=\begin{cases} \epsilon_n, &\text{if $\mathfrak g=\mathfrak{so}_{2n+1}$, }\\
2\epsilon_n, &\text{if $\mathfrak g=\mathfrak{sp}_{2n}$, }\\
\epsilon_{n-1}+\epsilon_n, &\text{if $\mathfrak g=\mathfrak{so}_{2n}$. }\\
\end{cases}
\end{equation}
 A weight $\lambda\in \mathfrak h^*$ is called a \emph{dominant integral  weight} if  $$\langle \lambda, \alpha\rangle=\frac{2(\lambda, \alpha)} {(\alpha, \alpha)}\in \mathbb N, \ \ \text{for any $\alpha\in \Pi$,}$$  where
  $(\ , \ )$ is the  symmetric bilinear form  defined on $\mathfrak h^*$ such that $(\epsilon_i, \epsilon_j)=\delta_{i, j}$.

Let $\U(\mathfrak g)\text{-mod}$ (resp., $\U(\mathfrak g)\text{-fmod}$) be the category of all (resp., finite dimensional) left $\U(\mathfrak g)$-modules. It is known that both  $\U(\mathfrak g)\text{-mod}$ and
 $\U(\mathfrak g)\text{-fmod}$  are   monoidal categories  in which the unit object is   the trivial $\mfg$-module  $\mathbb C$.\footnote{
  In this paper, we consider both $\U(\mathfrak g)\text{-fmod}$ and $\U(\mathfrak g)\text{-mod}$ as  strict monoidal categories  by identifying  $(M_1\otimes M_2)\otimes M_3$ (resp.,   $M_1\otimes \mathbb C$ and  $ \mathbb C\otimes M_1$)    with $M_1\otimes (M_2\otimes M_3)$ (resp., $M_1$)  for all admissible  $M_1,M_2,M_3$.}
    Moreover,  $\U(\mathfrak g)\text{-fmod}$ is symmetric with braiding $\sigma_{M,L}: M\otimes L  \rightarrow L\otimes M$ defined  by
\begin{equation}\label{eeddjsigma}
\sigma_{M,L}(m\otimes l)=l\otimes m, \text{$\forall m\in M, l\in L$.}
\end{equation}
Let $M^*$ be the linear dual of an object  $M$ in $\U(\mathfrak g)\text{-fmod}$.
If  $M$ has basis $\{m\in B_M\}$, then  we denote by  $\{m^*\mid m\in B_M\}$  the dual  basis of $M^*$ such that $m^*(l)=\delta_{m, l}$, $m,l\in B_M$.
Define two morphisms  $\eta_{M}: \mathbb C\rightarrow M \otimes M^*$ and $\epsilon_{M}: M^*\otimes M\rightarrow \mathbb C$ such that
\begin{equation}\label{etavshd}
\eta_{M}(1)=\sum_{m\in B_M}m\otimes m^*, \text{ and  $\epsilon_{M}(f\otimes m)=f(m)$, for all $f\in M^*$ and $m\in M$.}
\end{equation}
 It is known that  $\eta_M$ and $\epsilon_M$ are the unit morphism and the counit morphism in the sense of
\eqref{rigdual}. So, $\U(\mathfrak g)\text{-fmod}$ is rigid.


\begin{Prop}\label{level1s} Suppose  $\kappa=\mathbb C$.
There is a  monoidal functor
$\Phi: \B\rightarrow \U(\mathfrak g)\text{-fmod}$ such that
 $\Phi(\ob 0)=\mathbb C$, $\Phi(\ob 1)=V$ and
\begin{equation}\label{modstr}
 \Phi(U)( 1)= \sum_{i\in \underline N}v_i\otimes v_i^*, \ \
 \Phi(A)(u\otimes v)=(u,v), \ \
 \Phi(S)(u \otimes v)= \varepsilon_{\mathfrak g} v\otimes u,   \forall  u,v\in V.
\end{equation}
\end{Prop}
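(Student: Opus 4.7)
\medskip
\noindent\textbf{Proof proposal.} The plan is to construct $\Phi$ by first defining it on objects via $\Phi(\ob m):=V^{\otimes m}$ (with the convention $V^{\otimes 0}=\mathbb C$), and on the generating morphisms $U,A,S$ by the formulas in \eqref{modstr}; then, since $\B$ is defined by generators and relations \eqref{OBR1}--\eqref{OBR4} as a $\kappa$-linear strict monoidal category, it suffices to check that the three images are $\mathfrak g$-module maps and that they satisfy the corresponding images of relations \eqref{OBR1}--\eqref{OBR4} in $\U(\mfg)\text{-fmod}$.

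First I would verify that $\Phi(U)$, $\Phi(A)$ and $\Phi(S)$ are morphisms in $\U(\mfg)\text{-fmod}$. For $\Phi(A)$ this is a direct consequence of the invariance \eqref{defofg} of the bilinear form under $\mfg$. For $\Phi(S)$ it follows because the flip $u\otimes v\mapsto v\otimes u$ is already $\mathfrak{gl}_N$-equivariant and $\varepsilon_{\mathfrak g}$ is a scalar. For $\Phi(U)$, the element $\sum_{i\in\underline N}v_i\otimes v_i^*$ is nothing but the image under $\mathrm{id}\otimes\psi^{-1}$ of the canonical tensor $\sum_i v_i\otimes v_i^*\in V\otimes V^*$, and the latter spans the trivial submodule of $V\otimes V^*$; invariance of the former then follows from the $\mfg$-module isomorphism $\psi:V\to V^*$ coming from \eqref{vivjb}.

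Next I would check relations \eqref{OBR1}--\eqref{OBR4} one by one. The two relations in \eqref{OBR1} reduce to the obvious facts that the flip squares to the identity (the scalar $\varepsilon_{\mathfrak g}^2=1$ absorbs the sign) and that the flip on $V^{\otimes 3}$ satisfies the braid relation. The zigzag identities \eqref{OBR2} amount to the standard snake relations $(\mathrm{id}\otimes\epsilon_V)\circ(\eta_V\otimes\mathrm{id})=\mathrm{id}_V=(\epsilon_V\otimes\mathrm{id})\circ(\mathrm{id}\otimes\eta_V)$ once one translates between our $U,A$ and the unit/counit $\eta_V,\epsilon_V$ via $\psi$; this is a routine expansion in the basis $\{v_i\}$ using \eqref{vivjb}--\eqref{vistar}. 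The sliding relations \eqref{OBR4} follow formally once \eqref{OBR1}--\eqref{OBR3} are established, by composing a cup (resp.\ cap) with the braid relation and using invertibility of $\Phi(S)$.

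The main obstacle is relation \eqref{OBR3}, namely $U=S\circ U$ and $A=A\circ S$, because this is where the sign $\varepsilon_{\mathfrak g}$ and the two cases in \eqref{vistar} interact. I would handle these by a direct basis computation: applying $\Phi(S)$ to $\sum_{i\in\underline N}v_i\otimes v_i^*$ and reindexing via $j=-i$, the terms with $i>0$ pick up a factor $\varepsilon_{\mathfrak g}^2=1$ and become the $j<0$ contribution of $\Phi(U)(1)$, while the $i\le 0$ terms pick up one factor of $\varepsilon_{\mathfrak g}$ and match the $j\ge 0$ contribution of $\Phi(U)(1)$ (in the symplectic case there is no $j=0$); the upshot is the equality $\Phi(S)\circ\Phi(U)=\Phi(U)$ in both types. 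The identity $\Phi(A)\circ\Phi(S)=\Phi(A)$ then follows from \eqref{bilinearf}, as $(u,v)=\varepsilon_{\mathfrak g}(v,u)=\varepsilon_{\mathfrak g}\cdot\varepsilon_{\mathfrak g}(u,v)=(u,v)$ after accounting for the $\varepsilon_{\mathfrak g}$ in the definition of $\Phi(S)$. Once all relations are verified, the universal property of the $\kappa$-linear strict monoidal category $\B$ presented by $(\ob 1;U,A,S\mid\eqref{OBR1}\text{--}\eqref{OBR4})$ produces the required monoidal functor $\Phi$.
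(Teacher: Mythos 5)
Your proposal is correct and follows essentially the same route as the paper: define $\Phi$ on the generators by \eqref{modstr}, observe that $\Phi(U)$, $\Phi(A)$, $\Phi(S)$ are the unit, counit and (a scalar multiple of) the symmetry, hence $\U(\mfg)$-equivariant, and then check that the defining relations \eqref{OBR1}--\eqref{OBR4} are preserved. The only difference is that the paper delegates the relation-checking to \cite[Lemma~3.1]{LZ}, whereas you carry out the verification (in particular the sign bookkeeping for \eqref{OBR3} via \eqref{vistar} and \eqref{bilinearf}) by hand; your computations there are correct.
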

\begin{proof}The $\mathbb C$-linear maps $\Phi(U),\Phi(A), \Phi(S)$ are actually the maps $ \eta_{V}$, $\epsilon_{V}$, and $\epsilon_{\mfg}\sigma_{V,V}$ in \eqref{eeddjsigma}--\eqref{etavshd}, respectively. So, they are $\U(\mfg)$-homomorphisms.
It is proven  in \cite[Lemma~3.1]{LZ} that $\Phi$ keeps   the relations in \eqref{OBR1}--\eqref{OBR4}.
So, the functor $\Phi$ is well-defined and the result is proven.
\end{proof}
Let $\B(\epsilon_{\mfg}N)$ be  the $\kappa$-linear monoidal category obtained from $\B$ by imposing the relation $\Delta_0=\epsilon_{\mfg}N$.
By \eqref{modstr},
\begin{equation}\label{ewjdngn}
\Phi(\Delta_0)=\epsilon_{\mfg}N.
\end{equation}
So, the functor $\Phi$ factors through  $\B(\epsilon_{\mfg}N)$. See also \cite[Theorem~3.4]{LZ} when $\U(\mfg)$ is replaced by symplectic and orthogonal   groups.

We want to establish a  relationship between   $\B$  and  the oriented Brauer category $\OB$,   a $\kappa$-linear monoidal category with objects generated by
two objects $\uparrow, \down$  \cite{BCNR}. So,
the set of  objects in $\OB$  is  the set $\langle\uparrow,\downarrow\rangle$ of all words in the alphabets $\uparrow, \downarrow$. In particular, the unit object is  the empty word $\varnothing$.
\begin{Defn}\cite[\S1]{BCNR}
The \emph{oriented Brauer category} $\OB$
is  the $\kappa$-linear  monoidal  category generated by two objects $\uparrow, \down$;  three  morphisms $\lcupo:\varnothing\rightarrow\uparrow\down, \ \ \lcapo: \down\uparrow\rightarrow\varnothing, \ \ \swapo: \uparrow\uparrow\rightarrow\uparrow\uparrow$  subject to the following relations:
 \begin{equation}\label{aOB relations 1 (symmetric group)}
        \begin{tikzpicture}[baseline = 10pt, scale=0.5, color=\clr]
            \draw[-,thick] (0,0) to[out=up, in=down] (1,1);
            \draw[->,thick] (1,1) to[out=up, in=down] (0,2);
            \draw[-,thick] (1,0) to[out=up, in=down] (0,1);
            \draw[->,thick] (0,1) to[out=up, in=down] (1,2);
        \end{tikzpicture}
        ~=~
        \begin{tikzpicture}[baseline = 10pt, scale=0.5, color=\clr]
            \draw[-,thick] (0,0) to (0,1);
            \draw[->,thick] (0,1) to (0,2);
            \draw[-,thick] (1,0) to (1,1);
            \draw[->,thick] (1,1) to (1,2);
        \end{tikzpicture}
        \ ,\qquad
        \begin{tikzpicture}[baseline = 10pt, scale=0.5, color=\clr]
            \draw[->,thick] (0,0) to[out=up, in=down] (2,2);
            \draw[->,thick] (2,0) to[out=up, in=down] (0,2);
            \draw[->,thick] (1,0) to[out=up, in=down] (0,1) to[out=up, in=down] (1,2);
        \end{tikzpicture}
        ~=~
        \begin{tikzpicture}[baseline = 10pt, scale=0.5, color=\clr]
            \draw[->,thick] (0,0) to[out=up, in=down] (2,2);
            \draw[->,thick] (2,0) to[out=up, in=down] (0,2);
            \draw[->,thick] (1,0) to[out=up, in=down] (2,1) to[out=up, in=down] (1,2);
        \end{tikzpicture}\ ,
    \end{equation}
    \begin{equation}\label{aOB relations 2 (zigzags and invertibility)}
        \begin{tikzpicture}[baseline = 10pt, scale=0.5, color=\clr]
            \draw[-,thick] (2,0) to[out=up, in=down] (2,1) to[out=up, in=right] (1.5,1.5) to[out=left,in=up] (1,1);
            \draw[->,thick] (1,1) to[out=down,in=right] (0.5,0.5) to[out=left,in=down] (0,1) to[out=up,in=down] (0,2);
        \end{tikzpicture}
        ~=~
        \begin{tikzpicture}[baseline = 10pt, scale=0.5, color=\clr]
            \draw[-,thick] (0,0) to (0,1);
            \draw[->,thick] (0,1) to (0,2);\
        \end{tikzpicture}\
        ,\qquad
        \begin{tikzpicture}[baseline = 10pt, scale=0.5, color=\clr]
            \draw[-,thick] (2,2) to[out=down, in=up] (2,1) to[out=down, in=right] (1.5,0.5) to[out=left,in=down] (1,1);
            \draw[->,thick] (1,1) to[out=up,in=right] (0.5,1.5) to[out=left,in=up] (0,1) to[out=down,in=up] (0,0);
        \end{tikzpicture}
        ~=~
        \begin{tikzpicture}[baseline = 10pt, scale=0.5, color=\clr]
            \draw[-,thick] (0,2) to (0,1);
            \draw[->,thick] (0,1) to (0,0);
        \end{tikzpicture}~,\
      \end{equation}
       \begin{equation}\label{aOB relations 3 (zigzags and invertibility)} \begin{tikzpicture}[baseline = 10pt, scale=0.5, color=\clr]
            \draw[-,thick] (2,2) to[out=down, in=up] (2,1) to[out=down, in=right] (1.5,0.5) to[out=left,in=down] (1,1);
            \draw[->,thick] (1,1) to[out=up,in=right] (0.5,1.5) to[out=left,in=up] (0,1) to[out=down,in=up] (0,0);
            \draw[->,thick] (0.7,0) to[out=up,in=down] (1.3,2);
        \end{tikzpicture}
        ~\text{ is invertible}.
    \end{equation}

\end{Defn}
The arrows in $\lcupo$, $\lcapo$ and $\swapo$ give a consistent orientation to each strand in the diagrams.
The object on each row of a diagram is indicated by the orientations of the endpoints at this row.
If there is no endpoints at a row, then the object at this row is $\varnothing$.
For example, the object at the bottom (resp., top) row of $\lcupo $ is $ \varnothing$ (resp., $\uparrow\downarrow$).

 For any $\ob a,\ob b\in \langle\uparrow,\downarrow\rangle$,
 $\Hom_{\OB}(\ob a,\ob b)$ is spanned by all oriented Brauer diagrams  (with bubbles) of type $\ob a \rightarrow \ob b$\cite[\S1]{BCNR}(see also \cite[\S~3.1]{CK}).
 Here a bubble is $ \begin{tikzpicture}[baseline = -2pt, scale=0.5, color=\clr]
            \draw[->,thick] (0,0) to[out=up, in=left] (0.5,0.5);
            \draw[-,thick] (0.4,0.5) to (0.5,0.5) to[out=right,in=up] (1,0) to[out=down, in=right] (0.5,-0.5) to[out=left,in=down] (0,0);
        \end{tikzpicture}=\begin{tikzpicture}[baseline = -2pt, scale=0.5, color=\clr]
            \draw[-<,thick] (0,0) to[out=up, in=left] (0.5,0.5);
            \draw[-,thick] (0.4,0.5) to (0.5,0.5) to[out=right,in=up] (1,0) to[out=down, in=right] (0.5,-0.5) to[out=left,in=down] (0,0);
        \end{tikzpicture}$ (regardless of orientations).
Let
$\begin{tikzpicture}[baseline = 3pt, scale=0.5, color=\clr]
        \draw[->,thick] (0,0) to[out=up, in=down] (1,1);
        \draw[<-,thick] (1,0) to[out=up, in=down] (0,1);
\end{tikzpicture}: \uparrow\downarrow\rightarrow\downarrow\uparrow$ be  the two sided inverse of $\begin{tikzpicture}[baseline = 3pt, scale=0.5, color=\clr]
        \draw[<-,thick] (0,0) to[out=up, in=down] (1,1);
        \draw[->,thick] (1,0) to[out=up, in=down] (0,1);
\end{tikzpicture}:=\begin{tikzpicture}[baseline = 10pt, scale=0.5, color=\clr]
            \draw[-,thick] (2,2) to[out=down, in=up] (2,1) to[out=down, in=right] (1.5,0.5) to[out=left,in=down] (1,1);
            \draw[->,thick] (1,1) to[out=up,in=right] (0.5,1.5) to[out=left,in=up] (0,1) to[out=down,in=up] (0,0);
            \draw[->,thick] (0.7,0) to[out=up,in=down] (1.3,2);
        \end{tikzpicture}$. An oriented Brauer       diagram of type $\ob a \rightarrow \ob b$  is a diagram obtained by tensor product and composition of the defining generators of $\OB$ along with
  \begin{equation}\label{genofobddb}
  \begin{tikzpicture}[baseline = 3pt, scale=0.5, color=\clr]
        \draw[->,thick] (0,0) to[out=up, in=down] (1,1);
        \draw[<-,thick] (1,0) to[out=up, in=down] (0,1);
\end{tikzpicture} ,\  \
\ \begin{tikzpicture}[baseline = 3pt, scale=0.5, color=\clr]
        \draw[<-,thick] (0,0) to[out=up, in=down] (1,1);
        \draw[->,thick] (1,0) to[out=up, in=down] (0,1);
\end{tikzpicture},\ \
         \begin{tikzpicture}[baseline = 5pt, scale=0.5, color=\clr]
        \draw[->,thick] (0,1) to[out=down,in=left] (0.5,0.35) to[out=right,in=down] (1,1);
    \end{tikzpicture}
    :=~
    \begin{tikzpicture}[baseline = 5pt, scale=0.5, color=\clr]
        \draw[->,thick] (0,1) to[out=down,in=up] (1,0) to[out=down,in=right] (0.5,-0.5) to[out=left,in=down] (0,0) to[out=up,in=down] (1,1);
    \end{tikzpicture}
    ~,
    \begin{tikzpicture}[baseline = 5pt, scale=0.5, color=\clr]
        \draw[->,thick] (0,0) to[out=up,in=left] (0.5,0.65) to[out=right,in=up] (1,0);
    \end{tikzpicture}
    :=~
    \begin{tikzpicture}[baseline = 5pt, scale=0.5, color=\clr]
        \draw[->,thick] (0,0) to[out=up,in=down] (1,1) to[out=up,in=right] (0.5,1.5) to[out=left,in=up] (0,1) to[out=down,in=up] (1,0);
    \end{tikzpicture}, \text{ and } \ \
        \begin{tikzpicture}[baseline = 3pt, scale=0.5, color=\clr]
        \draw[<-,thick] (0,0) to[out=up, in=down] (1,1);
        \draw[<-,thick] (1,0) to[out=up, in=down] (0,1);
\end{tikzpicture}:=\begin{tikzpicture}[baseline = 10pt, scale=0.5, color=\clr]
            \draw[-,thick] (2,2) to[out=down, in=up] (2,1) to[out=down, in=right] (1.5,0.5) to[out=left,in=down] (1,1);
            \draw[->,thick] (1,1) to[out=up,in=right] (0.5,1.5) to[out=left,in=up] (0,1) to[out=down,in=up] (0,0);
            \draw[<-,thick] (0.7,0) to[out=up,in=down] (1.3,2);
        \end{tikzpicture}
        \end{equation}
       such that the resulting diagram  can be interpreted as a morphism in $\Hom_{\OB}(\ob a,\ob b)$.


Let $\ell(\ob a)$ be the total  numbers of $ \uparrow$ and $\downarrow$ in $\ob a\in \langle\uparrow,\downarrow\rangle$.
  We add an arrow to each strand of a $d\in \mathbb B_{m, s}$  such that
  the orientations at the endpoints of $d$ at the bottom (resp., top) row give  $\ob a$ (resp., $\ob b$).
  Then we get an oriented Brauer diagram $d'$ with bubbles of type $\ob a \rightarrow \ob b$ such that  $(\ell(\ob a),\ell(\ob b))=(m,s)$. All oriented Brauer diagrams  with bubbles of type $\ob a \rightarrow \ob b$
  can be obtained in this way.
  For example,
  $$d=\begin{tikzpicture}[baseline = 25pt, scale=0.35, color=\clr]
        \draw[-,thick] (5,0) to[out=up,in=down] (8,4);
         \draw[-,thick] (10,0) to[out=up,in=down] (6,4);
          \draw[-,thick] (2.6,4) to (2.5,4) to[out=left,in=up] (1.5,3)
                        to[out=down,in=left] (2.5,2)
                        to[out=right,in=down] (3.5,3)
                        to[out=up,in=right] (2.5,4);
         \draw[-,thick] (2,0) to[out=up,in=left] (4,1.5) to[out=right,in=up] (6,0);
           \end{tikzpicture}, \ \ \text{ and  }\ \ d'= \begin{tikzpicture}[baseline = 25pt, scale=0.35, color=\clr]
        \draw[->,thick] (5,0) to[out=up,in=down] (8,4);
         \draw[->,thick] (10,0) to[out=up,in=down] (6,4);
          \draw[->,thick] (2.6,4) to (2.5,4) to[out=left,in=up] (1.5,3)
                        to[out=down,in=left] (2.5,2)
                        to[out=right,in=down] (3.5,3)
                        to[out=up,in=right] (2.5,4);
         \draw[->,thick] (2,0) to[out=up,in=left] (4,1.5) to[out=right,in=up] (6,0);
           \end{tikzpicture}.$$
 In this case, $d\in \mathbb {B}_{4, 2}$ and $d'$ is an oriented Brauer diagram of type $\uparrow\uparrow\downarrow\uparrow\rightarrow\uparrow\uparrow$.

 Two oriented Brauer diagrams $d_1,d_2$ of type $\ob a\rightarrow\ob b$ are equivalent if $d_1'\sim  d_2'$, where $d_1', d_2'$ are obtained from $d_1$ and $d_2$ by deleting their  arrows.
  It is proved in \cite{BCNR} that
 two equivalent oriented Brauer diagrams represent the same morphism and $\Hom_{\OB}(\ob a,\ob b)$ has basis given by  all  equivalence   classes of oriented Brauer diagrams (with bubbles) of type $\ob a \rightarrow \ob b$.

 \begin{Lemma}\label{twocate}There is a  monoidal  functor $ \varphi: \OB\rightarrow\B$ such that   $\varphi(\uparrow)=\phi(\downarrow)= \ob 1$,
  $\varphi(\lcupo)=\lcup$, $\varphi(\lcapo)=\lcap$ and $\varphi(\swapo)=\swap$.
 \end{Lemma}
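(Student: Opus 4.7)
The plan is to invoke the universal property of $\OB$ as a presented $\kappa$-linear strict monoidal category. To produce $\varphi$, it suffices to specify it on the generating objects $\up, \down$ and generating morphisms $\lcupo, \lcapo, \swapo$, and then check that the images satisfy the defining relations \eqref{aOB relations 1 (symmetric group)}, \eqref{aOB relations 2 (zigzags and invertibility)} and \eqref{aOB relations 3 (zigzags and invertibility)}. On objects I would send both $\up$ and $\down$ to $\ob 1$; on morphisms I would send $\lcupo, \lcapo, \swapo$ to $U=\lcup$, $A=\lcap$, $S=\swap$ respectively, and then extend to all composites and tensor products in the unique way dictated by strict monoidality.

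The verification splits into three parts. First, applying $\varphi$ to \eqref{aOB relations 1 (symmetric group)} produces exactly the identities drawn in \eqref{OB relations 1 (symmetric group)}, which are the diagrammatic form of \eqref{OBR1}. Second, applying $\varphi$ to \eqref{aOB relations 2 (zigzags and invertibility)} produces the identities in \eqref{OB relations 2 (zigzags and invertibility)}, which are the diagrammatic form of \eqref{OBR2}. Both of these are immediate from the defining relations of $\B$.

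The third and most delicate piece is \eqref{aOB relations 3 (zigzags and invertibility)}, which asserts that the downward crossing in $\OB$ is invertible. Applying $\varphi$ to the explicit defining expression produces a Brauer diagram in which an extra straight strand passes across a zigzag-shaped strand. I would simplify this diagram inside $\B$ by iteratively invoking \eqref{OBR4} to slide the straight strand through the cup and cap of the zigzag, and then by \eqref{OBR2} to straighten the zigzag. The result collapses to $\swap=S$, which is its own two-sided inverse by the first identity of \eqref{OBR1}; hence the image of the downward crossing is invertible in $\B$, as required.

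The main obstacle is executing the diagrammatic simplification in the third step cleanly -- in particular, tracking which strand is being slid past which cup or cap, and verifying that each isotopy move is a legitimate instance of \eqref{OBR3} or \eqref{OBR4} rather than an implicit appeal to planar isotopy. Once this simplification is in hand, the three relation families are all verified and the universal property gives the desired strict monoidal functor $\varphi$.
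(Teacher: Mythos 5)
Your proposal follows essentially the same route as the paper: define $\varphi$ on generators, note that \eqref{aOB relations 1 (symmetric group)}--\eqref{aOB relations 2 (zigzags and invertibility)} hold by the corresponding Brauer relations, and for \eqref{aOB relations 3 (zigzags and invertibility)} simplify the image of the downward crossing to $S$ via \eqref{Brauer relation 4} and \eqref{OB relations 2 (zigzags and invertibility)}, concluding by the invertibility of $S$ from \eqref{OB relations 1 (symmetric group)}. This matches the paper's proof exactly.
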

\begin{proof}
It is enough to show that the relations in \eqref{aOB relations 1 (symmetric group)}--\eqref{aOB relations 3 (zigzags and invertibility)} are satisfied under $\varphi$.
Thanks to \eqref{OB relations 1 (symmetric group)}--\eqref{OB relations 2 (zigzags and invertibility)}, \eqref{aOB relations 1 (symmetric group)}--\eqref{aOB relations 2 (zigzags and invertibility)} are satisfied.
By  \eqref{OB relations 2 (zigzags and invertibility)} and \eqref{Brauer relation 4}, $\varphi(\begin{tikzpicture}[baseline = 10pt, scale=0.5, color=\clr]
            \draw[-,thick] (2,2) to[out=down, in=up] (2,1) to[out=down, in=right] (1.5,0.5) to[out=left,in=down] (1,1);
            \draw[->,thick] (1,1) to[out=up,in=right] (0.5,1.5) to[out=left,in=up] (0,1) to[out=down,in=up] (0,0);
            \draw[->,thick] (0.7,0) to[out=up,in=down] (1.3,2);
        \end{tikzpicture})=\swap$. Since   $\swap$ is  invertible
        (see \eqref{OB relations 1 (symmetric group)}),  \eqref{aOB relations 3 (zigzags and invertibility)} is satisfied.
\end{proof}

\begin{rem}\label{obbdiaramd}
By  Lemma~\ref{twocate} and \eqref{OB relations 2 (zigzags and invertibility)}--\eqref{Brauer relation 4},
 $\varphi$ sends \begin{tikzpicture}[baseline = 3pt, scale=0.5, color=\clr]
        \draw[->,thick] (0,0) to[out=up, in=down] (1,1);
        \draw[<-,thick] (1,0) to[out=up, in=down] (0,1);
\end{tikzpicture}, $\begin{tikzpicture}[baseline = 3pt, scale=0.5, color=\clr]
        \draw[<-,thick] (0,0) to[out=up, in=down] (1,1);
        \draw[->,thick] (1,0) to[out=up, in=down] (0,1);
\end{tikzpicture}$,
        \begin{tikzpicture}[baseline = 3pt, scale=0.5, color=\clr]
       \draw[<-,thick] (0,0) to[out=up, in=down] (1,1);
       \draw[<-,thick] (1,0) to[out=up, in=down] (0,1);
\end{tikzpicture}
         (resp.,
    \begin{tikzpicture}[baseline = 5pt, scale=0.5, color=\clr]
        \draw[->,thick] (0,0) to[out=up,in=left] (0.5,0.65) to[out=right,in=up] (1,0);
    \end{tikzpicture}
    ,\begin{tikzpicture}[baseline = 5pt, scale=0.5, color=\clr]
        \draw[->,thick] (0,1) to[out=down,in=left] (0.5,0.35) to[out=right,in=down] (1,1);
    \end{tikzpicture}) to $\swap $ (resp.,\lcap, \lcup). If  $d$ is an oriented Brauer diagram of type $\ob a\rightarrow\ob b$, then
  $\varphi(d)$ is  obtained  from $d$ by removing all of its arrows.
 \end{rem}



\begin{Prop}\label{equi123}
 If $d,d'\in \mathbb{B}_{m,s}$ and $ d\sim  d'$, then $d=d'$   in $\B$ and $\AB$.
 \end{Prop}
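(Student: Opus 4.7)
The plan is to deduce the result in $\B$ from the corresponding basis theorem for the oriented Brauer category $\OB$ stated just before the proposition (and proved in \cite{BCNR}), using the monoidal functor $\varphi: \OB \to \B$ of Lemma~\ref{twocate}. The transfer to $\AB$ is then immediate via the functor $\F: \B \to \AB$ of \eqref{functorfrombtoab}.

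The first step is to \emph{lift} the pair $d, d' \in \mathbb B_{m,s}$ to oriented Brauer diagrams of a common type $\ob a \to \ob b$. Fix any word $\ob a \in \langle\up,\down\rangle$ of length $m$ and any word $\ob b \in \langle\up,\down\rangle$ of length $s$; assign these orientations to the $m$ bottom endpoints and the $s$ top endpoints of $d$. Since each vertical or horizontal strand of $d$ is a $1$-dimensional curve joining two of these endpoints, the chosen orientations propagate along the strand consistently (arrows point from source to sink), and each bubble may be oriented arbitrarily. This produces an oriented Brauer diagram $\tilde d$ of type $\ob a \to \ob b$ whose underlying unoriented diagram is $d$. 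Apply the same procedure to $d'$ (using the same $\ob a, \ob b$, and the same orientation on each bubble) to obtain $\tilde d'$ of type $\ob a \to \ob b$.

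The second step is to observe that $\tilde d$ and $\tilde d'$ are equivalent as oriented Brauer diagrams: by construction they have the same type, the same number of bubbles (since $d \sim d'$), and the same partition of $\{1,\ldots,m+s\}$ into pairs (again since $d \sim d'$). By the basis theorem for $\OB$ recalled from \cite{BCNR}, equivalent oriented Brauer diagrams define the same morphism, hence $\tilde d = \tilde d'$ in $\Hom_{\OB}(\ob a, \ob b)$. Applying the monoidal functor $\varphi$ and using Remark~\ref{obbdiaramd} (which says that $\varphi$ simply forgets the arrows), we conclude
\[
d \;=\; \varphi(\tilde d) \;=\; \varphi(\tilde d') \;=\; d' \quad \text{in } \B.
\]
Finally, applying $\F: \B \to \AB$ gives $d = d'$ in $\AB$ as well.

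The main (and essentially only) technical point is the orientability step: one has to check that any Brauer diagram admits a consistent orientation extending any prescribed orientation on its boundary. This is automatic because each connected component of the diagram is either a simple arc (whose orientation is determined by one endpoint orientation, then verified to be compatible with the other since cups/caps reverse orientation) or a closed loop (on which any orientation works). All substantive content is absorbed into the oriented basis theorem of \cite{BCNR}, which we invoke as a black box.
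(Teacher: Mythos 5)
Your proof is correct and is essentially the paper's own argument: both lift $d,d'$ to oriented Brauer diagrams of a common type by orienting the strands, invoke the fact from \cite{BCNR} that equivalent oriented Brauer diagrams coincide in $\OB$, and push the identity down via $\varphi$ (which forgets arrows) and then via $\F$ to $\AB$. The only small wrinkle is that the words $\ob a,\ob b$ cannot be fixed arbitrarily in advance (a cup or cap forces opposite orientations at its two endpoints), but your final paragraph effectively corrects this by orienting one strand at a time, which is exactly what the paper does by placing the same arrow on the corresponding strand $(i,j)$ of both diagrams.
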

\begin{proof}
The endpoints of a $d\in\mathbb{B}_{m,s}$ are indexed by $\{1,2,\ldots,m+s\}$.
If  $d,d'\in \mathbb{B}_{m,s}$ and $d\sim d'$, then
 $i$   and   $j$ are connected in $d$ if and only if $i$   and   $j$ are connected in $d'$ for all admissible $i$ and $j$.
In this case, let  $(i,j)_d$ be the strand of $d$ which connects  $i$  and   $j$.
 We add arrows on   all  $(i,j)_d$ and all bubbles of $d$ so as to get   an oriented Brauer diagram $\tilde  d$.
Similarly, we have  an oriented Brauer diagram $\tilde d'$ such that the arrow on each strand  $(i,j)_{d'}$ is the same as that at  $(i,j)_{d}$.
Then  $\tilde  d$ and $\tilde d'$ are oriented Brauer diagrams
 of the same type and $\tilde  d\sim \tilde d'$.  Thus,
$\tilde  d=\tilde d'$ in $\OB$.
By Remark~\ref{obbdiaramd},
$\varphi(\tilde d)=d $ and $\varphi (\tilde d')= d'$, where  $\varphi$ is the  monoidal functor in Lemma~\ref{twocate}. So,
 $d=d'$ in $\B$. Using the functor $\mathcal F$ in \eqref{functorfrombtoab} yields that  $d=d'$   in $\AB$.
\end{proof}

\begin{Cor}\label{00generat}The $\kappa$-module  $\Hom_{\B}(\ob m,\ob s)$  is spanned by $\{\Delta_0^j d\mid d\in \bar{\mathbb B}_{m,s}/\sim, j\in\mathbb N\}$, where $\bar{\mathbb B}_{m,s}$ is given  in \eqref{defofbarbms}. 
\end{Cor}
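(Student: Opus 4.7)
The strategy is to use Proposition~\ref{equi123} to push all bubbles out of an arbitrary Brauer diagram. Since $\Hom_{\B}(\ob m,\ob s)$ is already known to be spanned by $\mathbb{B}_{m,s}$, it suffices to show that every $d\in\mathbb{B}_{m,s}$ can be rewritten as $\Delta_0^j\,\tilde d$ with $\tilde d\in\bar{\mathbb B}_{m,s}$ and $j\in\mathbb N$.

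More precisely, suppose $d\in\mathbb{B}_{m,s}$ has exactly $j$ bubbles. Let $\tilde d\in\bar{\mathbb B}_{m,s}$ be the bubble-free $(m,s)$-Brauer diagram obtained by erasing all $j$ bubbles of $d$. I then form the diagram
\[
\Delta_0^{j}\otimes\tilde d \;=\; \underbrace{\Delta_0\otimes\cdots\otimes\Delta_0}_{j}\otimes\tilde d \;\in\;\Hom_{\B}(\ob 0\otimes\ob m,\ob 0\otimes\ob s)=\Hom_{\B}(\ob m,\ob s),
\]
which is a legal $(m,s)$-Brauer diagram consisting of $j$ crossing-free bubbles placed to the left of $\tilde d$. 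By construction, $\Delta_0^{j}\otimes\tilde d$ and $d$ induce the same partition of $\{1,\dots,m+s\}$ (since $\tilde d$ and $d$ differ only by bubbles, which contribute no pairings between endpoints) and have the same number of bubbles, so $\Delta_0^{j}\otimes\tilde d\sim d$ in $\mathbb{B}_{m,s}$. Proposition~\ref{equi123} then yields $d=\Delta_0^{j}\otimes\tilde d=\Delta_0^{j}\tilde d$ in $\B$.

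Since every generator of the spanning set $\mathbb B_{m,s}$ lies in $\{\Delta_0^{j}d\mid d\in\bar{\mathbb B}_{m,s}/\sim,\ j\in\mathbb N\}$, this collection spans $\Hom_{\B}(\ob m,\ob s)$. There is no real obstacle here: the only thing to check is that ``removing all bubbles and placing $j$ copies of $\Delta_0$ on the side'' preserves both of the defining data of the equivalence relation $\sim$, which is immediate from the definition in \eqref{defofbarbms} and the discussion preceding Proposition~\ref{equi123}.
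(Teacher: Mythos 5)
Your proof is correct and takes essentially the same route as the paper: the paper simply observes that $\{\Delta_0^j d\mid d\in\bar{\mathbb B}_{m,s}/\sim,\ j\in\mathbb N\}$ is a complete set of representatives of $\mathbb B_{m,s}/\sim$ and invokes Proposition~\ref{equi123}, which is precisely the claim you verify in detail by stripping the bubbles from $d$ and reattaching $j$ crossing-free copies of $\Delta_0$ on the left. Your write-up just makes explicit the check that this operation preserves both the bubble count and the induced partition, which the paper leaves implicit.
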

\begin{proof}
By Proposition~\ref{equi123}, $\Hom_{\B}(\ob m,\ob s)$ is spanned by $\mathbb B_{m,s}/\sim$. Since $\{\Delta_0^j d\mid d\in \bar{\mathbb B}_{m,s}/\sim, j\in\mathbb N\}$ is a complete set of representatives of  ${\mathbb B}_{m,s}/\sim$, the result follows.\end{proof}

\begin{Lemma}\label{eviso} Suppose $\mathcal C\in \{\B,\AB\}$.  For any  positive integer $m$,  define
\begin{equation}\label{usuflelem}
 \eta_{\ob m}=\begin{tikzpicture}[baseline = 25pt, scale=0.35, color=\clr]
        \draw[-,thick] (0,5) to[out=down,in=left] (2.5,2) to[out=right,in=down] (5,5);
        \draw[-,thick] (0.5,5) to[out=down,in=left] (2.5,2.5) to[out=right,in=down] (4.5,5);
        \draw(1.5,4.5)node{$\cdots$};\draw(3.5,4.5)node{$\cdots$};
        \draw[-,thick] (2,5) to[out=down,in=left] (2.5,4) to[out=right,in=down] (3,5);
        \draw (1,5.5)node {$\ob m$}; \draw (4,5.5)node {$\ob m$};
           \end{tikzpicture} \quad \text{ and }\quad
         \varepsilon_{\ob m}= \begin{tikzpicture}[baseline = 25pt, scale=0.35, color=\clr]
           \draw[-,thick] (0,2) to[out=up,in=left] (2.5,6) to[out=right,in=up] (5,2);
           \draw[-,thick] (0.2,2) to[out=up,in=left] (2.3,5.5) to[out=right,in=up] (4.8,2);
           \draw[-,thick] (1.8,2) to[out=up,in=left] (2.5,3.5) to[out=right,in=up] (3.2,2);
           \draw(1,2.5)node{$\cdots$};\draw(4,2.5)node{$\cdots$};
          \draw (1,1.5)node {$\ob m$}; \draw (4,1.5)node {$\ob m$};
           \end{tikzpicture}.
\end{equation}
 Then   $ \eta_{\ob m}\in Hom_{\mathcal C}(\ob 0,\ob {2m})$ and  $\varepsilon_{\ob m}\in Hom_{\mathcal C}(\ob {2m},\ob 0)$   such that
$$(1_{\ob m}\otimes \varepsilon_{\ob m})\circ (\eta_{\ob m}\otimes 1_{\ob m})=(\varepsilon_{\ob m}\otimes 1_{\ob m})\circ ( 1_{\ob m}\otimes\eta_{\ob m})=1_{\ob m}.$$

\end{Lemma}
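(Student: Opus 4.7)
The plan is to prove the lemma by induction on $m$, using the fact that $\ob 1$ is its own right (and left) dual in $\mathcal C$ via the data $(U,A)$, and that nesting cups/caps realizes the tensor power $\ob 1^{\otimes m}=\ob m$ as its own dual.

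\textbf{Base case.} When $m=1$ we have $\eta_{\ob 1}=U$ and $\varepsilon_{\ob 1}=A$, and the claimed zigzag identities
\[
(1_{\ob 1}\otimes A)\circ(U\otimes 1_{\ob 1})=1_{\ob 1}=(A\otimes 1_{\ob 1})\circ(1_{\ob 1}\otimes U)
\]
are exactly the defining relation \eqref{OBR2} (equivalently \eqref{OB relations 2 (zigzags and invertibility)}), which holds in $\mathcal B$ and therefore in $\mathcal{AB}$ as well, since \eqref{OBR2} is among the defining relations of $\mathcal{AB}$.

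\textbf{Inductive step.} Assume the result for $m-1$. Reading off the nested structure of \eqref{usuflelem}, one obtains the decompositions
\[
\eta_{\ob m}=(1_{\ob{m-1}}\otimes U\otimes 1_{\ob{m-1}})\circ\eta_{\ob{m-1}},\qquad
\varepsilon_{\ob m}=\varepsilon_{\ob{m-1}}\circ(1_{\ob{m-1}}\otimes A\otimes 1_{\ob{m-1}}).
\]
Substituting into $(1_{\ob m}\otimes\varepsilon_{\ob m})\circ(\eta_{\ob m}\otimes 1_{\ob m})$ and repeatedly applying the interchange law \eqref{wwcirrten} to regroup the horizontal and vertical compositions, one isolates on the rightmost strand a factor of the form $(1_{\ob 1}\otimes A)\circ(U\otimes 1_{\ob 1})$, which collapses to $1_{\ob 1}$ by the base case. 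What remains is precisely $(1_{\ob{m-1}}\otimes\varepsilon_{\ob{m-1}})\circ(\eta_{\ob{m-1}}\otimes 1_{\ob{m-1}})$ tensored with $1_{\ob 1}$, and this equals $1_{\ob{m-1}}\otimes 1_{\ob 1}=1_{\ob m}$ by the inductive hypothesis. The second identity $(\varepsilon_{\ob m}\otimes 1_{\ob m})\circ(1_{\ob m}\otimes\eta_{\ob m})=1_{\ob m}$ is proved identically, using the right-hand equality in \eqref{OBR2} in the base case.

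\textbf{Main obstacle.} The only subtle point is bookkeeping the interchange law to peel off the outermost zigzag strand: one must group tensor factors as $\ob{m-1}\otimes\ob 1\otimes\ob 1\otimes\ob{m-1}$ on the cup/cap sides and as $\ob{m-1}\otimes\ob 1$ on the identity side so that the base-case zigzag appears on the outermost $\ob 1$ factor. Once this is set up, the reduction is routine and applies verbatim in both $\mathcal B$ and $\mathcal{AB}$, since the argument uses only \eqref{OBR2}, the interchange law, and the strictness of the monoidal structure.
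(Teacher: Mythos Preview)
Your inductive argument is correct. The recursive decompositions you write for $\eta_{\ob m}$ and $\varepsilon_{\ob m}$ are exactly how the nested diagrams in \eqref{usuflelem} are built from the generators, and peeling off the outermost zigzag strand via the interchange law and \eqref{OBR2} does reduce the problem to the case $m-1$. The only place one must be careful is that with your choice of decompositions the innermost $U$ (at positions $m,m+1$) and the innermost $A$ (at positions $2m,2m+1$) are separated by $m-1$ strands at the interface; to collapse them one first applies interchange to slide the two morphisms past each other (they act on disjoint blocks), after which the $\ob 1$-zigzag \eqref{OBR2} applies on a single strand and the inductive hypothesis finishes the job. This is exactly the standard proof that a tensor product of dualizable objects is dualizable with nested unit and counit, specialized to $\ob 1^{\otimes m}$.

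The paper proves this differently and in one line: it observes that both $(1_{\ob m}\otimes\varepsilon_{\ob m})\circ(\eta_{\ob m}\otimes 1_{\ob m})$ and $1_{\ob m}$ are $(m,m)$-Brauer diagrams with no bubbles and the same connectivity, hence are equivalent, and then invokes Proposition~\ref{equi123} (equivalent Brauer diagrams represent the same morphism in $\B$, hence in $\AB$ via the functor $\mathcal F$ of \eqref{functorfrombtoab}). Your route is more elementary in that it uses only the axiom \eqref{OBR2} and the interchange law, and in particular does not require Proposition~\ref{equi123} or the passage through the oriented Brauer category $\OB$ that underlies its proof. The paper's route is much shorter precisely because that machinery has already been set up. Both yield exactly the same conclusion.
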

\begin{proof} Thanks to Proposition~\ref{equi123}, we have $(1_{\ob m}\otimes \varepsilon_{\ob m})\circ (\eta_{\ob m}\otimes 1_{\ob m})=(\varepsilon_{\ob m}\otimes 1_{\ob m})\circ ( 1_{\ob m}\otimes\eta_{\ob m})=1_{\ob m}$  in  $\B$.  Using the functor $\mathcal F$ in \eqref{functorfrombtoab}  yields
the result for  $\AB$. \end{proof}

 \begin{Cor} The Brauer category  $\B$ is a rigid  symmetric monoidal category.\end{Cor}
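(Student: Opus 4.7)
The plan is to construct a symmetric braiding $\sigma$ on $\B$ and then invoke Lemma~\ref{eviso} for rigidity. Since every object of $\B$ is of the form $\ob m = \ob 1^{\otimes m}$, it suffices to define $\sigma_{\ob m, \ob n}: \ob m \otimes \ob n \to \ob n \otimes \ob m$ for each pair $m, n \in \mathbb N$. First I would set $\sigma_{\ob 1, \ob 1} := S$ and $\sigma_{\ob m, \ob 0} = \sigma_{\ob 0, \ob m} := 1_{\ob m}$, then extend inductively by forcing the hexagon axioms in \eqref{symm} to hold. Pictorially, $\sigma_{\ob m, \ob n}$ is the $(m+n, m+n)$-Brauer diagram consisting of $mn$ single crossings arranged in the obvious rectangular pattern, so that each of the first $m$ strands crosses once over each of the last $n$.

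The next step is to verify that $\sigma$ is a well-defined natural isomorphism. That \eqref{symm} holds is built in by construction. Naturality on morphisms reduces (by generators and the interchange law \eqref{wwcirrten}) to checking that $\sigma$ commutes past $S$, $U$, and $A$ when tensored with appropriate identity strands; this is exactly the content of the braid relation in \eqref{OBR1} together with \eqref{OBR3}--\eqref{OBR4}, which express that a crossing may be slid across a cup or a cap. Symmetry, namely $\sigma_{\ob n, \ob m} \circ \sigma_{\ob m, \ob n} = 1_{\ob m \otimes \ob n}$, follows from $S \circ S = 1_{\ob 2}$ and the braid relation by induction on $m + n$: using \eqref{OBR1} one unwinds the outermost crossings on each side via $S^2 = 1$ and reduces to a smaller case. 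Alternatively, Proposition~\ref{equi123} gives this immediately, since $\sigma_{\ob n, \ob m} \circ \sigma_{\ob m, \ob n}$ and $1_{\ob m \otimes \ob n}$ are equivalent $(m+n, m+n)$-Brauer diagrams.

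For rigidity I would appeal directly to Lemma~\ref{eviso}, which provides morphisms $\eta_{\ob m}: \ob 0 \to \ob m \otimes \ob m$ and $\varepsilon_{\ob m}: \ob m \otimes \ob m \to \ob 0$ satisfying both zigzag identities of \eqref{rigdual}. This exhibits $\ob m$ as its own right dual. The left dual is obtained either by conjugating $(\eta_{\ob m}, \varepsilon_{\ob m})$ by the braiding $\sigma_{\ob m, \ob m}$ constructed above, or directly by observing that the nested cups and caps defining $\eta_{\ob m}$ and $\varepsilon_{\ob m}$ are invariant under horizontal reflection, so the two zigzag identities of \eqref{rigdual} are symmetric to each other. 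Since every object is of the form $\ob m$, every object has a two-sided dual, which is the required rigidity.

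The main obstacle is the naturality verification for $\sigma$, which in principle must be checked against every morphism in $\B$. The cleanest route is to use the diagrammatic calculus enabled by Proposition~\ref{equi123}: rather than expanding morphisms in terms of generators and applying \eqref{OBR1}--\eqref{OBR4} term by term, one observes that both sides of any naturality square represent equivalent Brauer diagrams, so they coincide in $\B$. With that observation in hand the remaining verifications are formal.
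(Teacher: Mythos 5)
Your proposal is correct and follows essentially the same route as the paper: the braiding is the same rectangular crossing diagram, naturality and symmetry are verified via Proposition~\ref{equi123} (the paper simply declares this "routine to check"), and rigidity comes from Lemma~\ref{eviso} with $\ob m$ self-dual. The extra detail you give on reducing naturality to the generator relations \eqref{OBR1}--\eqref{OBR4} is a sound elaboration of what the paper leaves implicit, not a different argument.
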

\begin{proof} For all objects $\ob m, \ob s\in \B$,  define
$$\sigma_{\ob m,\ob s}= \begin{tikzpicture}[baseline = 25pt, scale=0.35, color=\clr]
        \draw[-,thick] (0,0) to[out=up,in=down] (5,4);
        \draw[-,   thick ] (1,0) to[out=up,in=down] (6,4);
        \draw(2.1,0.5)node{$\cdots$};\draw(7.1,3.5)node{$\cdots$};
        \draw(2.1,3.5)node{$\cdots$};\draw(7.1,0.5)node{$\cdots$};
        \draw[-,  thick] (3,0) to[out=up,in=down] (8,4);
        \draw[-,  thick] (4,0) to[out=up,in=down] (9,4);
        \draw (2,-0.5)node {$\ob m$};\draw (7,-0.5)node {$\ob s$};
        \draw[-,thick] (0,4) to[out= down,in=up] (5,0);
        \draw[-,   thick ] (1,4) to[out= down,in=up] (6,0);
        \draw[-,  thick] (3,4) to[out= down,in=up] (8,0);
        \draw[-,  thick] (4,4) to[out=down,in=up ] (9,0);
        \draw (2,4.5)node {$\ob s$}; \draw (7,4.5)node {$\ob m$};
           \end{tikzpicture}. $$
      Let  $\tau_{\B}:\B\times \B\rightarrow\B\times \B$ be the functor  sending $(\ob a, \ob b)$ to $(\ob b, \ob a)$.      By Proposition~\ref{equi123}, it is routine to check that $\sigma:  -\otimes -\rightarrow  (-\otimes -)\circ \tau_{\B}$ is a natural isomorphism. Moreover, it  satisfies the conditions in  \eqref{symm} and   $ \sigma_{\ob m,\ob s}^{-1}=\sigma_{\ob s,\ob m}$.
              Therefore   $\B$ is a symmetric monoidal category.
    By  Lemma~\ref{eviso} and \eqref{rigdual}, the left (resp., right)  dual of $\ob m$ is itself,  and the unit morphism and the counit morphism are $\eta_{\ob m}$ and $\epsilon_{\ob m}$, respectively.
So, $\B$ is a rigid symmetric monoidal category.\end{proof}



  \begin{Defn}\label{etae} Suppose $m,s,r\in \mathbb N$ such that  $m>0$ and $m+s=2r$. For any  $\mathcal C\in \{\B,\AB\}$, define two $\kappa$-linear maps  $\bar\eta_{\ob m}: \Hom_{\mathcal C}(\ob m,\ob s) \rightarrow \Hom_{\mathcal C}(\ob 0,\ob 2r)$ and
$\bar\varepsilon _{\ob m}: \Hom_{\mathcal C}(\ob 0,\ob {2r})\rightarrow\Hom_{\mathcal C}(\ob m,\ob s)$,
such that $\bar\eta_{\ob m}=(-\otimes 1_{\ob m}) \circ \eta_{\ob m}$ and $\bar\varepsilon _{\ob m} = (1_{\ob s}\otimes \varepsilon _{\ob m}) \circ(-\otimes 1_{\ob m})$.\end{Defn}
Thanks to \eqref{tensotideal}, $\bar\eta_{\ob m}(I(\ob m,\ob s))\subset I(\ob 0,\ob {2r})$ and $\bar\varepsilon _{\ob m}(I(\ob 0,\ob {2r}))\subset I(\ob m,\ob s)$ for any right tensor ideal $I$ of $\AB$. If  $\mathcal D\in \{\CB^f, \CB^f(\omega)\}$, then \begin{itemize} \item  $ \bar\eta_{\ob m}$ induces a linear map from $\Hom_{\mathcal D}(\ob m,\ob s)$ to $ \Hom_{\mathcal D}(\ob 0,\ob {2r})$,
 \item  $\bar\varepsilon _{\ob m}$ induces another linear map  from $\Hom_{\mathcal D}(\ob 0,\ob {2r})$ to $ \Hom_{\mathcal D}(\ob m,\ob s)$.\end{itemize}
  To simplify the notation,  two induced linear maps above  are also denoted by $\bar\eta_{\ob m} $ and  $\bar\varepsilon _{\ob m}$.

For any  $d\in\mathbb D_{m,s}$, we say $d$ is of degree $k$ and write $\text{deg}(d)=k$ if the total number of $\bullet$'s on all strands of $d$  is $k$.  For any  $k\in \mathbb N$, let
$$\Hom_{\AB}(\ob m,\ob s)_{\leq k}=\kappa\text{-span} \{d\in \mathbb D_{m,s} \mid \text{deg}(d)\leq k\}.$$

\begin{Lemma}\label{key1}

Suppose that   $m,s, r\in \mathbb N$ such that $m+s=2r$ and $m>0$.  Then $ \bar\eta_{\ob m}$  and $ \bar\varepsilon _{\ob m}$ are mutually inverse to each other.
 For any $\mathcal C\in \{\B,\AB, \CB^f,\CB^f(\omega)\}$,  we have the following $\kappa$-isomorphisms:
\begin{enumerate}
\item[(1)]$\Hom_{\mathcal C}(\ob m,\ob s)\cong \Hom_{\mathcal C}(\ob 0,\ob {2r})$,
\item[(2)]  $\Hom_{\AB}(\ob m,\ob s)_{\leq k}\cong \Hom_{\AB}(\ob 0,\ob {2r})_{\leq k}$  for all $k\in \mathbb N$.
\end{enumerate}
\end{Lemma}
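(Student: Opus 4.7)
The plan is to verify that $\bar\eta_{\ob m}$ and $\bar\varepsilon_{\ob m}$ are mutually inverse by a direct computation using the zigzag identities of Lemma~\ref{eviso}, and then observe that this single computation already implies all the claimed isomorphisms. The key inputs are (i) the two snake equations
$(1_{\ob m}\otimes \varepsilon_{\ob m})\circ (\eta_{\ob m}\otimes 1_{\ob m})=1_{\ob m}=(\varepsilon_{\ob m}\otimes 1_{\ob m})\circ ( 1_{\ob m}\otimes\eta_{\ob m})$ and (ii) the interchange law \eqref{wwcirrten}.

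First, for $f\in\Hom_{\mathcal C}(\ob m,\ob s)$, I would expand
$$\bar\varepsilon_{\ob m}(\bar\eta_{\ob m}(f))=(1_{\ob s}\otimes \varepsilon_{\ob m})\circ \bigl(((f\otimes 1_{\ob m})\circ \eta_{\ob m})\otimes 1_{\ob m}\bigr)$$
and use interchange to rewrite the second factor as $(f\otimes 1_{\ob{2m}})\circ(\eta_{\ob m}\otimes 1_{\ob m})$. A further application of interchange pulls $f$ out to the left, producing $f\circ (1_{\ob m}\otimes \varepsilon_{\ob m})\circ(\eta_{\ob m}\otimes 1_{\ob m})$, which collapses to $f$ by the first zigzag identity. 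Conversely, for $g\in\Hom_{\mathcal C}(\ob 0,\ob{2r})$, I would expand $\bar\eta_{\ob m}(\bar\varepsilon_{\ob m}(g))$ and, identifying $\ob{2r}=\ob s\otimes\ob m$ so that $1_{\ob{2r}}=1_{\ob s}\otimes 1_{\ob m}$, rewrite the result as
$$(1_{\ob s}\otimes (\varepsilon_{\ob m}\otimes 1_{\ob m}))\circ (1_{\ob s}\otimes(1_{\ob m}\otimes\eta_{\ob m}))\circ g,$$
which collapses to $1_{\ob{2r}}\circ g=g$ by the second zigzag identity. This proves the first displayed assertion and gives statement (1) for $\mathcal C\in\{\B,\AB\}$ immediately.

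To pass to $\CB^f$ and $\CB^f(\omega)$, I would appeal to the observation made just after Definition~\ref{etae}: the maps $\bar\eta_{\ob m}$ and $\bar\varepsilon_{\ob m}$ send the relevant right tensor ideals $I$ and $J$ into themselves (by \eqref{tensotideal}), so they descend to well-defined $\kappa$-linear maps on the quotients. Mutual inverseness is preserved by passing to quotients, so (1) follows for the remaining two categories. For (2), since $\eta_{\ob m}$ and $\varepsilon_{\ob m}$ are built by tensoring and composing only $U$, $A$, and identities (no $X$), they are of degree $0$ in $\AB$; hence both $\bar\eta_{\ob m}$ and $\bar\varepsilon_{\ob m}$ preserve the total number of dots, and so they restrict to mutually inverse $\kappa$-linear isomorphisms between $\Hom_{\AB}(\ob m,\ob s)_{\leq k}$ and $\Hom_{\AB}(\ob 0,\ob{2r})_{\leq k}$ for every $k\in\mathbb N$.

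No step looks like a real obstacle, since the argument is formal duality once the snake equations are in hand; the only thing that requires a little care is the bookkeeping of parentheses in the strict monoidal category — especially recognizing $1_{\ob{2r}}$ as $1_{\ob s}\otimes 1_{\ob m}$ when verifying $\bar\eta_{\ob m}\circ\bar\varepsilon_{\ob m}=\id$ — and checking that the tensor-ideal quotient passage in part (1) really does not introduce new relations that could destroy the isomorphism.
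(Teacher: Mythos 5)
Your proposal is correct and follows essentially the same route as the paper: the snake identities of Lemma~\ref{eviso} together with the interchange law give mutual inverseness, the remark after Definition~\ref{etae} handles the descent to $\CB^f$ and $\CB^f(\omega)$, and the absence of $X$ in $\eta_{\ob m}$, $\varepsilon_{\ob m}$ gives the filtered statement (2). No gaps.
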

\begin{proof} If  $\mathcal C\in \{\B,\AB\}$ and $b\in \Hom_{\mathcal C}(\ob m,\ob s)$, then
$$\begin{aligned}
\bar\varepsilon _{\ob m}\circ\bar\eta_{\ob m} (b) & = (1_{\ob s}\otimes\varepsilon _{\ob m} )\circ (b\otimes 1_{\ob {2m}})\circ (\eta_{\ob m}\otimes 1_{\ob m}), \text{(by Definition~\ref{etae})}\\
&= (b\otimes \varepsilon _{\ob m})\circ (\eta_{\ob m}\otimes 1_{\ob m})= b\circ (1_{\ob m} \otimes\varepsilon _{\ob m} )\circ (\eta_{\ob m}\otimes 1_{\ob m})
=b, \text{ (by  Lemma~\ref{eviso}).}
\end{aligned}$$
Similarly, we have $\bar\eta_{\ob m}\circ \bar\varepsilon _{\ob m}(b')=b'$ for any $b'\in\Hom_{\mathcal C}(\ob 0,\ob {2r})$.
Hence  $ \bar\eta_{\ob m}$  and $ \bar\varepsilon _{\ob m}$ are mutually inverse to each other. This proves (1).
 Thanks to \eqref{usuflelem}, there is no $\bullet$ on any strand of both $ \eta_{\ob m}$ and $\varepsilon_{\ob m}$. So,
  $ \bar\eta_{\ob m}$ sends  $\Hom_{\AB}(\ob m,\ob s)_{\leq k}$ into $\Hom_{\AB}(\ob 0,\ob {2r})_{\leq k}$,  and  $ \bar\varepsilon _{\ob m}$ sends $\Hom_{\AB}(\ob 0,\ob {2r})_{\leq k}$ into $\Hom_{\AB}(\ob m,\ob s)_{\leq k}$.
  Since  $ \bar\eta_{\ob m}$  and $ \bar\varepsilon _{\ob m}$ are mutually inverse to each other, (2) follows. \end{proof}

\begin{Lemma}\label{eaquvivalents}
Suppose $m,s,r\in\mathbb N$ such that $m>0$ and $m+s=2r$.
\begin{itemize}
\item [(1)]For any $d_1, d_2\in \mathbb D_{m,s}$, $d_1\sim d_2$ if and only if
$\bar\eta_{\ob m}(d_1)\sim\bar\eta_{\ob m}(d_2)$.
\item[(2)] For any $d_3, d_4\in \mathbb D_{0,2r}$, $d_3\sim d_4$ if and only if
$\bar\varepsilon_{\ob m}(d_3)\sim\bar\varepsilon_{\ob m}(d_4)$.
\end{itemize}
\end{Lemma}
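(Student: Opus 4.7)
Both claims reduce to the observation that $\bar\eta_{\ob m}$ and $\bar\varepsilon_{\ob m}$ act on dotted Brauer diagrams by a fixed combinatorial relabelling of endpoints, so the data governing $\sim$ --- pair partition, bubble count, and number of $\bullet$'s on each strand --- are transported bijectively.

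My first step for (1) is to unpack $\bar\eta_{\ob m}(d)=(d\otimes 1_{\ob m})\circ \eta_{\ob m}$ at the string-diagram level, using the description of $\eta_{\ob m}$ from \eqref{usuflelem}. Since $\eta_{\ob m}$ consists of $m$ nested cups matching its $i$-th top endpoint with its $(2m+1-i)$-th, and since $1_{\ob m}$ is composed of vertical strands, a careful index computation yields an explicit bijection
$$
\phi\colon\{1,\ldots,m+s\}\longrightarrow\{1,\ldots,2r\},\qquad \phi(i)=2r+1-i\ (1\le i\le m),\quad \phi(m+j)=j\ (1\le j\le s),
$$
identifying the endpoints of $d\in\mathbb{D}_{m,s}$ with the top endpoints of $\bar\eta_{\ob m}(d)\in\mathbb{D}_{0,2r}$.

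Next I verify three invariance properties, each evident from the picture: (a) $\{i,j\}$ is a pair of $d$ if and only if $\{\phi(i),\phi(j)\}$ is a pair of $\bar\eta_{\ob m}(d)$; (b) the numbers of bubbles agree, since $\eta_{\ob m}$ contributes none and no new closed loop is created---each cup of $\eta_{\ob m}$ has exactly one endpoint in $\{1,\ldots,m\}$ (attached to $d$) and one in $\{m+1,\ldots,2m\}$ (attached to $1_{\ob m}$), because $i$ and $2m+1-i$ cannot both lie in $\{1,\ldots,m\}$; (c) the number of $\bullet$'s on each strand is preserved, since neither $\eta_{\ob m}$ nor $1_{\ob m}$ carries any dots and composition does not displace the dots off their original strands. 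Because $\phi$ depends only on $m,s,r$ and not on $d$, statement (1) follows immediately from the definition of $\sim$: $d_1\sim d_2$ iff their pair partitions, bubble counts and strand-wise dot counts agree, iff those of $\bar\eta_{\ob m}(d_1)$ and $\bar\eta_{\ob m}(d_2)$ agree, iff $\bar\eta_{\ob m}(d_1)\sim\bar\eta_{\ob m}(d_2)$.

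Statement (2) is proved identically, now with $\varepsilon_{\ob m}$ bending the top $m$ endpoints of $d\in\mathbb{D}_{0,2r}$ down via nested caps. Tracing through $\bar\varepsilon_{\ob m}(d)=(1_{\ob s}\otimes\varepsilon_{\ob m})\circ(d\otimes 1_{\ob m})$ produces the bijection $\phi^{-1}\colon\{1,\ldots,2r\}\to\{1,\ldots,m+s\}$ (sending $k\mapsto m+k$ for $k\le s$ and $k\mapsto 2r+1-k$ for $k>s$) on endpoint labels, and properties (a)--(c) go through verbatim. The only mild obstacle in either part is the index bookkeeping required to pin down $\phi$ from the nested-cup/cap structure; once that is settled the rest is a direct combinatorial translation.
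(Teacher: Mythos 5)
Your proof is correct. The forward implication (that $d_1\sim d_2$ forces $\bar\eta_{\ob m}(d_1)\sim\bar\eta_{\ob m}(d_2)$) is exactly what the paper asserts directly from Definition~\ref{etae}, but you diverge on how the converse is obtained. The paper deduces it by combining the forward implication for \emph{both} maps with Lemma~\ref{key1}, i.e.\ the fact that $\bar\eta_{\ob m}$ and $\bar\varepsilon_{\ob m}$ are mutually inverse: from $\bar\eta_{\ob m}(d_1)\sim\bar\eta_{\ob m}(d_2)$ one applies $\bar\varepsilon_{\ob m}$ and recovers $d_1\sim d_2$. (Strictly, this requires reading the mutual-inverse statement at the level of diagrams up to $\sim$ rather than merely as an identity of morphisms, which is where the zigzag isotopy silently enters.) You instead make the forward direction quantitatively precise by exhibiting the fixed endpoint bijection $\phi(i)=2r+1-i$ for $i\le m$, $\phi(m+j)=j$, and checking that pair partition, bubble count, and strand-wise dot counts are transported exactly; since $\phi$ does not depend on $d$, the equivalence is visibly reversible and no appeal to $\bar\varepsilon_{\ob m}$ being an inverse is needed. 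Your route is more self-contained and avoids the implicit diagram-versus-morphism point; the paper's is shorter because it reuses Lemma~\ref{key1}. Your index computation for $\phi$ and the verification that no new bubble can close up (each cup of $\eta_{\ob m}$ has one foot under $d$ and one under $1_{\ob m}$) are both correct.
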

\begin{proof}By Definition~\ref{etae}, $\bar\eta_{\ob m}(d_1)\sim\bar\eta_{\ob m}(d_2)$ (resp.,  $\bar\varepsilon_{\ob m}(d_3)\sim\bar\varepsilon_{\ob m}(d_4)$) if  $d_1\sim d_2$ (resp., $d_3\sim d_4$).
Since $ \bar\eta_{\ob m}$  and $ \bar\varepsilon _{\ob m}$ are mutually inverse to each other, the results follow.
\end{proof}

\begin{Cor}\label{bijectivemaps}Suppose $m,s,r\in\mathbb N$ such that $m>0$ and $m+s=2r$. If $\mathbb M\in \{\mathbb D, \mathbb B, \bar {\mathbb B}\}$, then
\begin{itemize}
\item[(1)] $\bar\eta_{\ob m}$ induces a bijection between $\mathbb M_{m,s}/\sim$ and $\mathbb M_{0,2r}/\sim$,
 \item[(2)] $\bar\varepsilon_{\ob m}$ induces a bijection between   $\mathbb M _{0,2r}/\sim$ and $\mathbb M_{m,s}/\sim$
.
\end{itemize}
\end{Cor}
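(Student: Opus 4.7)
The plan is to deduce this corollary quickly from Lemmas~\ref{key1} and~\ref{eaquvivalents}, which together provide a mutually inverse pair $\bar\eta_{\ob m}, \bar\varepsilon_{\ob m}$ on the relevant Hom spaces that, according to Lemma~\ref{eaquvivalents}, is compatible with the equivalence relation $\sim$. First I would note that, by Definition~\ref{etae}, when $\bar\eta_{\ob m}$ is evaluated on a single diagram $d\in\mathbb{D}_{m,s}$ it returns a single diagram $(d\otimes 1_{\ob m})\circ \eta_{\ob m}$, and symmetrically $\bar\varepsilon_{\ob m}$ sends a single diagram in $\mathbb{D}_{0,2r}$ to a single diagram in $\mathbb{D}_{m,s}$. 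Thus at the set level $\bar\eta_{\ob m}$ and $\bar\varepsilon_{\ob m}$ already restrict to maps $\mathbb{D}_{m,s}\to \mathbb{D}_{0,2r}$ and $\mathbb{D}_{0,2r}\to \mathbb{D}_{m,s}$.

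Next I would check that these set-level maps restrict further to $\mathbb{B}$ and $\bar{\mathbb{B}}$. Since the diagrams $\eta_{\ob m}$ and $\varepsilon_{\ob m}$ in \eqref{usuflelem} carry no dots, composing any $d\in\mathbb{B}_{m,s}$ with them produces an element of $\mathbb{B}_{0,2r}$, and the $\mathbb{B}$ case is immediate. For $\bar{\mathbb{B}}$ I would additionally verify that no bubbles are created in forming $(d\otimes 1_{\ob m})\circ \eta_{\ob m}$: each cup of $\eta_{\ob m}$ has one endpoint feeding into a bottom endpoint of $d$ and the other endpoint attached to an identity strand that runs up to the top of the composite, so a bottom cap of $d$ becomes a nested top cap of the composite rather than a closed loop. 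An analogous inspection handles $\bar\varepsilon_{\ob m}$.

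Having established that the set-level restrictions exist, I would invoke Lemma~\ref{eaquvivalents} to conclude that they descend to well-defined maps between the quotients $\mathbb{M}_{m,s}/\sim$ and $\mathbb{M}_{0,2r}/\sim$. These descended maps remain mutually inverse, because on full Hom spaces $\bar\eta_{\ob m}\circ \bar\varepsilon_{\ob m}$ and $\bar\varepsilon_{\ob m}\circ \bar\eta_{\ob m}$ are the identity by Lemma~\ref{key1}, while at each stage a single diagram is sent to a single diagram, so the composed operations fix every diagram representative. This yields the desired bijections in (1) and (2). The only non-formal step is the bubble-preservation check for the $\bar{\mathbb{B}}$ case, and I expect it to be the main (though minor) obstacle; everything else follows directly from the two preceding lemmas.
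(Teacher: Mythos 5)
Your proposal is correct and follows essentially the same route as the paper: Lemma~\ref{eaquvivalents} gives that the maps descend to well-defined injections on equivalence classes, and Lemma~\ref{key1} (mutual inverseness) gives surjectivity. Your extra check that $\bar\eta_{\ob m}$ and $\bar\varepsilon_{\ob m}$ preserve the subclasses $\mathbb B$ and $\bar{\mathbb B}$ (no dots or bubbles created) is left implicit in the paper but is a sound and worthwhile observation rather than a different argument.
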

\begin{proof}
By Lemma~\ref{eaquvivalents}, $\bar\eta_{\ob m}$ and $\bar\varepsilon_{\ob m}$ induce the required injective maps.
Thanks to  Lemma~\ref{key1},  these maps  are also surjective.
\end{proof}

For any positive integer $r$,
let  $\mathfrak S_{r}$ be the symmetric group on $r$ letters. Then $\mathfrak S_r$ is  generated by    $s_i$,  $1\leq i\leq r-1$, where $s_i$  swaps $i$ and $i+1$ and fixes the others.

\begin{Defn}\label{definofs} For  $1\leq i\leq r-1$, define
 $S_{i}:=\begin{tikzpicture}[baseline = 10pt, scale=0.5, color=\clr]
                \draw[-,thick] (0,0.5)to[out=up,in=down](0,1.5);
    \end{tikzpicture} ^{\otimes i-1}\otimes S\otimes \begin{tikzpicture}[baseline = 10pt, scale=0.5, color=\clr]
                \draw[-,thick] (0,0.5)to[out=up,in=down](0,1.5);
    \end{tikzpicture} ^{\otimes r-i-1}$, and $S_w:=S_{l_1}\circ S_{l_2}\circ \ldots \circ S_{l_t}$,
    where $w=s_{l_1}s_{l_2}\ldots s_{l_t} $ is a reduced expression  of $w$.
\end{Defn}
By Proposition~\ref{equi123},  $S_w$ is independent of a reduced expression of $w$.
\begin{Lemma}\label{numberss}
Suppose $m,s\in\mathbb N$.
If $m+s$ is odd, then $|\bar {\mathbb B}_{m,s} /\sim|=0$. If $m+s=2r$, then $|\bar{\mathbb B}_{m,s}/\sim|=(2r-1)!!$, where $\bar{\mathbb B}_{m,s}$ is given  in \eqref{defofbarbms}. 
\end{Lemma}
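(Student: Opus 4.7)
When $m+s$ is odd, the discussion immediately following \eqref{mbs} already tells us that $\mathbb{B}_{m,s}=\emptyset$, since in any Brauer diagram each endpoint is matched with exactly one other. Hence $\bar{\mathbb B}_{m,s}\subseteq \mathbb{B}_{m,s}$ is empty and the cardinality of $\bar{\mathbb B}_{m,s}/\sim$ is $0$.

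Now assume $m+s=2r$. The strategy is to identify $\bar{\mathbb B}_{m,s}/\sim$ with the set of perfect matchings of $\{1,2,\ldots,2r\}$, which has cardinality $(2r-1)!!$. By definition of $\sim$ (together with the fact that diagrams in $\bar{\mathbb B}_{m,s}$ have no bubbles), the map sending an equivalence class $[d]$ to the partition of $\{1,2,\ldots,2r\}$ into pairs induced by $d$ is well-defined and injective. The content is therefore surjectivity, i.e.\ showing every perfect matching is realized by some bubble-free $(m,s)$-Brauer diagram.

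To reduce bookkeeping, I would first invoke Corollary~\ref{bijectivemaps}(1) with $\mathbb M=\bar{\mathbb B}$ (valid for $m>0$) to obtain $|\bar{\mathbb B}_{m,s}/\sim|=|\bar{\mathbb B}_{0,2r}/\sim|$, reducing to the case $m=0$, $s=2r$; the case $m=0$ itself needs no reduction. A diagram in $\bar{\mathbb B}_{0,2r}$ consists purely of $r$ cups on the top row, possibly joined by crossings, and no bubbles. For surjectivity I would write down an explicit realization: given a matching $\{\{i_1,j_1\},\ldots,\{i_r,j_r\}\}$ with $i_k<j_k$, pick any $w\in\mathfrak S_{2r}$ that takes the nested matching $\{\{1,2r\},\{2,2r-1\},\ldots,\{r,r+1\}\}$ to the given one, and take $d:=S_w\circ \eta_{\ob r}$ in the notation of Lemma~\ref{eviso} and Definition~\ref{definofs}. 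Proposition~\ref{equi123} ensures that $d$ is well-defined up to $\sim$ and realizes the prescribed pairing, giving surjectivity.

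Finally, the count $(2r-1)!!$ of perfect matchings of a $2r$-element set is classical (e.g.\ $(2r-1)(2r-3)\cdots 3\cdot 1$ by successively pairing the smallest unused element). The only step needing any genuine argument is the surjectivity above; everything else is a direct application of the results already established, in particular Corollary~\ref{bijectivemaps} and Proposition~\ref{equi123}.
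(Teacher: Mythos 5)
Your proof is correct and follows essentially the same route as the paper: reduce to the case $(m,s)=(0,2r)$ via Corollary~\ref{bijectivemaps}, identify equivalence classes of bubble-free diagrams with perfect matchings of $\{1,\dots,2r\}$ (injectivity being immediate from the definition of $\sim$), and establish surjectivity by composing a fixed cup diagram with $S_w$ for a suitable $w\in\mathfrak S_{2r}$. The only cosmetic difference is that you use the nested cups $\eta_{\ob r}$ where the paper uses $U^{\otimes r}$, which changes nothing of substance.
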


\begin{proof}
If $m+s$ is odd, then $\mathbb B_{m, s}=\emptyset$  and the result follows.
Suppose that $m+s=2r$. Thanks to  Corollary~\ref{bijectivemaps}, it is enough to prove the case that $m=0$ and $s=2r$.
Let $[2r]$ be the set of all partitions of the set $ \{1,2,\ldots,2r\}$ into disjoint union of pairs.
Then the cardinality of $[2r]$ is $(2r-1)!!$. Since any equivalence class of $\bar {\mathbb B}_{0,2r}$ gives an element in  $[2r]$, it suffices   to show that,  for any  $x\in [2r]$, there is a $d\in \bar {\mathbb B}_{0,2r}$ such that $d$  gives  $x$.

Suppose that $\{\{i_1,j_1\},\ldots, \{i_r,j_r\}\}$ is a partition  of $  \{1,2,\ldots,2r\}$ into disjoint union of pairs.
There is a $w\in\mathfrak S_{2r}$ such that $w(k)=i_k$ and $w(2k)=j_k$, $1\le k\le r$.
    Define
    \begin{equation}\label{dwa}
    d_w:= S_w\circ U^{\otimes r}\in \bar{\mathbb B}_{0,2r}.
    \end{equation}
    Then $d_w$  gives the partition   $\{\{i_1,j_1\},\ldots, \{i_r,j_r\}\}$.
        So, $|\bar{\mathbb B}_{0,2r}/\sim|=(2r-1)!!$.
\end{proof}

  \begin{Defn}\label{defofij} Suppose  $d\in \mathbb B_{0,2r}/\sim$.  We count the   endpoints at the top row of $d$ as $1, 2, \ldots, 2r$  from left to right.
 There are $r$ cups, say  $( i(d)_1, j(d)_{ 1}),\ldots,( i(d)_r,  j(d)_{r})$  of $d$ such that
  \begin{itemize}
  \item $i(d)_k$ (resp., $j(d)_k$) represents  the left (resp., right) endpoint of  $( i(d)_k, j(d)_{k})$, $1\leq k\leq r$,
 \item  $ j(d)_1< j(d)_2<\ldots< j(d)_r=2r$.
 \end{itemize}
\end{Defn}
 For example, $\{(m-i,m+i+1)\mid  0\le i\le m-1\}$ are all cups in  $\eta_{\ob m}$, where  $\eta_{\ob m}$ is given  in \eqref{usuflelem}.
In this case, $i(\eta_{\ob m})=(m,m-1,\ldots,1)$, $j(\eta_{\ob m})=(m+1,m+2,\ldots,2m)$.
 \begin{proof}[\textbf{Proof of Theorem}~\ref{basisofb}]
 First, we assume   $\kappa=\mathbb C$.  Let   $N\in \mathbb Z$ such that  $N\ge 2r$ (i.e. $n\geq r$).
For any   $d\in\bar{\mathbb B}_{0,2r}/\sim$, define  $\theta(d)\in \underline N^{2r}$ such that
 \begin{equation}\label{defofwdbed}
 \theta(d)_{i(d)_{ l}}=  l, ~~\theta(d)_{ j(d)_{ l}}=-l, \quad  1\leq l\leq r,
\end{equation}
where  $( i(d)_1, j(d)_{ 1}),\ldots,( i(d)_r,  j(d)_{r})$ are cups   of $d$.
This is well-defined since we are assuming $n\ge r$. Moreover, for all $1\leq h\leq 2r$, $1\leq l\leq r$,
\begin{equation}\label{notmiunes}
\theta(d)_{i(d)_{ l}}\neq -\theta(d)_h, \text{ if  $h\neq j(d)_l$}.
\end{equation}
Let  $d$ be $d_w$ in \eqref{dwa}, where $w(k)=i(d)_k,w(2k)=j(d)_k$, $1\leq k\leq r$.
 Thanks to Proposition~\ref{level1s} and \eqref{vistar}, up to a sign, we have  \begin{equation}\label{defofzd1} \Phi(d)(1)=  z(d):=  z(d)_1\otimes z(d)_2\otimes \ldots\otimes z(d)_{2r}\in V^{\otimes 2r},\end{equation}
where
\begin{equation}\label{defofzd}
  z(d)_{i(d)_{k}}\otimes z(d)_{  j(d)_{  k}}=\sum_{1\leq j\leq n} (\varepsilon_\mfg v_j\otimes v_{-j}+ v_{-j}\otimes v_j)+\delta_{\mfg, \mathfrak{so}_{2n+1}} v_0\otimes v_0, \text{ for }  1\leq k\leq r.
\end{equation}
 Note that $V^{\otimes 2r}$ has basis $\{v_{\bf i}\mid \mathbf i\in \underline N^{2r}\}$,  where $\mathbf i=(i_1,i_2,\ldots,i_{2r})$  and $v_{\mathbf i}=v_{i_1}\otimes v_{i_2}\otimes \ldots \otimes v_{i_{2r}}$. For each  $d'\in\bar{\mathbb B}_{0,2r}/\sim $,
 we can write  $\Phi( d')(1)$ as the linear combination of such basis  elements.
 Let $c_{d,d'}$ be  the coefficient of $v_{\theta(d)}$ in this expression.
 Thanks to \eqref{defofzd},  $c_{d, d}=\pm 1$. If $d\neq d'$, then there  is a cup  $(i(d')_k, j(d')_k)$ of $d'$ such that it is not a cup of $d$.
 By \eqref{defofzd}, we have  $i_{i(d')_k}=-i_{j(d')_k}$ if $v_{\mathbf i}$ is  a term in the expression of   $\Phi( d')(1)$. Thanks to \eqref{notmiunes}, $ c_{d,d'}=0$. Thus
 \begin{equation}\label{coeffi}
  c_{d,d'}=\pm\delta_{d,d'}.
 \end{equation}

  Now, we
assume $
\sum_{d\in \bar{\mathbb B}_{0,2r}/\sim } p_d(\Delta_0)d=0$
in $\Hom_{\B}(\ob 0,\ob {2r})$, where
  $p_d(t)\in \mathbb C[t]$.
We say that $p_d(t)=0$ for all $d\in \bar{\mathbb B}_{0,2r}/\sim$. Otherwise, there is at least a $d_0\in  \bar{\mathbb B}_{0,2r}/\sim$ such that $p_{d_0}(t)\neq 0$.  By the fundamental theorem of  algebra,  we pick up an $N$ such that $N\gg 0$ and   $ p_{d_0}(\varepsilon_{\mathfrak g}N)\neq 0$.
By \eqref{coeffi} and \eqref{ewjdngn}, the coefficient of $v_{\theta(d_0)}$ in $\sum_{d\in \bar{\mathbb B}_{0,2r}/\sim} \Phi(p_{d}(\Delta_0)d)(1) $ is the non-zero scalar  $\pm p_{d_0}( \varepsilon_{\mathfrak g}N)$. This is a   contradiction since  $
\sum_{d\in \bar{\mathbb B}_{0,2r}/\sim} p_d(\Delta_0)d=0$.
 By Corollary~\ref{00generat},  we immediately have Theorem~\ref{basisofb} when  $(m, s)=(0, 2r)$.

In general, by Corollary~\ref{bijectivemaps},
   $\bar\eta_{\ob m}$ gives a bijective   map from  $\mathbb B_{m,s}/\sim$ to $ \mathbb B_{0,2r}/\sim$.
By Corollary~\ref{00generat} and the result for $(m, s)=(0, 2r)$, we immediately have Theorem~A whenever  $m+s$ is even.
If  $m+s$ is odd, then $ \Hom_{\B}(\ob m,\ob s)=0$.
So, Theorem~\ref{basisofb} is proven whenever $\kappa=\mathbb C$.

If a set is $\mathbb C$-linear independent,  it is $\mathbb Z$-independent.  Using Corollary~\ref{00generat} again, we have Theorem~\ref{basisofb}  over $\mathbb Z$.
 Denote by $\B_\kappa$  the Brauer category over a commutative ring $\kappa$  containing $1$. Let $\mathcal C:=\kappa\otimes_{\mathbb Z}\B_{\mathbb Z}$. By base change property, $\Hom_{\mathcal C}(\ob m,\ob s)$
 has basis given by $\mathbb B_{m,s}/\sim$. Theorem~\ref{basisofb} follows from  this observation and the fact that
 there is an obvious  functor from $\B_\kappa$ to $\mathcal C$, which sends  the required  basis element of $\Hom_{\B_\kappa}(\ob m,\ob s)$ to the corresponding basis element in  $\Hom_{\mathcal C}(\ob m,\ob s)$.
\end{proof}
\section{Basis theorem for the  affine Brauer category }\label{proofb}
In this section we prove Theorem~\ref{Cyclotomic basis conjecture} by constructing a functor from $\AB$ to $ \END(\U(\mathfrak g)\text{-mod})$, where  $\U(\mathfrak g)\text{-mod}$ is the category of all left  $\U(\mathfrak g)$-modules and $ \END(\U(\mathfrak g)\text{-mod})$ is  the category of   endofunctors of $\U(\mathfrak g)\text{-mod}$, $\mathfrak g\in\{\mathfrak {so}_N, \mathfrak {sp}_N\}$.
First of all, we need some results  to explain  that  $ \Hom_{\AB}(\ob m,\ob s)$ is spanned by $\mathbb {ND}_{m,s}$.

\begin{Lemma}\label{capre}In  $\AB$, we have:
\begin{enumerate}
\item[(1)]  \begin{tikzpicture}[baseline = 5pt, scale=0.5, color=\clr]
        \draw[-,thick] (0,2) to[out=down,in=left] (1,0.35) to[out=right,in=down] (2,2);
       \draw( 0,1.5) \bdot;
    \end{tikzpicture}
    ~=~$-$
    \begin{tikzpicture}[baseline = 5pt, scale=0.5, color=\clr]
        \draw[-,thick] (0,2) to[out=down,in=left] (1,0.35) to[out=right,in=down] (2,2); \draw( 2,1.5)\bdot;
    \end{tikzpicture},
\item[(2)] \begin{tikzpicture}[baseline = 5pt, scale=0.5, color=\clr]
        \draw[-,thick] (0,0) to[out=up,in=left] (1,2) to[out=right,in=up] (2,0);\draw( 0,0.5)\bdot;
    \end{tikzpicture}
    ~=~$-$~\begin{tikzpicture}[baseline = 5pt, scale=0.5, color=\clr]
        \draw[-,thick] (0,0) to[out=up,in=left] (1,2) to[out=right,in=up] (2,0);\draw( 2,0.5) \bdot;
    \end{tikzpicture}.
\end{enumerate}
\end{Lemma}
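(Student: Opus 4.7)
My plan is to prove both parts by computing a single carefully chosen 4-strand composition in two different ways, using the snake identities from \eqref{OBR2}, the two halves of relation \eqref{AOB2} (displayed as \eqref{OB relations 2 (zigzags and invertibility2)}), and the interchange law $(f_1 \otimes f_2)\circ(g_1 \otimes g_2) = (f_1 \circ g_1)\otimes(f_2 \circ g_2)$. Write $U_L := (X \otimes 1_{\ob 1}) \circ U$ and $U_R := (1_{\ob 1} \otimes X) \circ U$ for the cup decorated with a dot on its left or right strand, respectively. The two halves of \eqref{AOB2} can then be rephrased as
\begin{equation*}
(A \otimes 1_{\ob 1}) \circ (1_{\ob 1} \otimes U_L) = -X \qquad \text{and} \qquad (1_{\ob 1} \otimes A) \circ (U_R \otimes 1_{\ob 1}) = -X.
\end{equation*}

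For (1), the plan is to evaluate the morphism $(1_{\ob 1} \otimes A \otimes 1_{\ob 1}) \circ (U \otimes U_L) \in \Hom_{\AB}(\ob 0, \ob 2)$ in two different ways. Decomposing $U \otimes U_L = (1_{\ob 2} \otimes U_L) \circ U$ and applying the interchange law with the splitting $\ob 4 = \ob 1 \otimes \ob 3$, the middle factor collapses to $1_{\ob 1} \otimes \bigl[(A \otimes 1_{\ob 1}) \circ (1_{\ob 1} \otimes U_L)\bigr] = 1_{\ob 1} \otimes (-X)$, giving $-U_R$. Alternatively, decomposing $U \otimes U_L = (U \otimes 1_{\ob 2}) \circ U_L$ and applying interchange with the splitting $\ob 4 = \ob 3 \otimes \ob 1$, the middle factor collapses via the snake identity $(1_{\ob 1} \otimes A) \circ (U \otimes 1_{\ob 1}) = 1_{\ob 1}$ to $1_{\ob 2}$, giving $U_L$. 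Equating yields $U_L = -U_R$, which is (1).

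For (2), the same strategy is applied to $(A \otimes A) \circ (1_{\ob 1} \otimes U_L \otimes 1_{\ob 1}) \in \Hom_{\AB}(\ob 2, \ob 0)$. Writing $A \otimes A = A \circ (A \otimes 1_{\ob 2})$ and using interchange with splitting $\ob 3 \otimes \ob 1$, together with $(A \otimes 1_{\ob 1}) \circ (1_{\ob 1} \otimes U_L) = -X$, gives $-A \circ (X \otimes 1_{\ob 1})$. Writing $A \otimes A = A \circ (1_{\ob 2} \otimes A)$ and using interchange with splitting $\ob 1 \otimes \ob 3$ gives $A \circ \bigl(1_{\ob 1} \otimes \bigl[(1_{\ob 1} \otimes A) \circ (U_L \otimes 1_{\ob 1})\bigr]\bigr)$; here the inner expression equals $X$, since interchange lets one pull the dot from $U_L$ across the identity strand yielding $X \circ [(1_{\ob 1} \otimes A) \circ (U \otimes 1_{\ob 1})] = X \circ 1_{\ob 1} = X$ by the snake identity. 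This gives $A \circ (1_{\ob 1} \otimes X)$, and equating the two computations yields (2). The main technical care required is choosing the correct associative splitting of $\ob 4$ (as $\ob 1 \otimes \ob 3$, $\ob 3 \otimes \ob 1$, or $\ob 2 \otimes \ob 2$) at each step so that the interchange law applies; these splittings agree as objects by strict associativity, so the bookkeeping is routine but must be kept straight.
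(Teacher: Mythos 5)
Your proof is correct and follows essentially the same route as the paper: the four-strand diagram $(1_{\ob 1}\otimes A\otimes 1_{\ob 1})\circ(U\otimes U_L)$ that you evaluate in two ways is precisely the dotted-zigzag picture the paper obtains by substituting \eqref{OB relations 2 (zigzags and invertibility2)} into one strand of the cup and then straightening with \eqref{OB relations 2 (zigzags and invertibility)}; your version just unpacks that one-line diagrammatic manipulation into explicit interchange-law bookkeeping. Your treatment of part (2), which the paper dismisses as ``similar'', is likewise correct.
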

\begin{proof} Thanks to \eqref{OB relations 2 (zigzags and invertibility2)} and  \eqref{OB relations 2 (zigzags and invertibility)}, we have
  \begin{equation*}
        \begin{tikzpicture}[baseline = 7.5pt, scale=0.5, color=\clr]
            \draw[-,thick] (0,2) to (0,0.5)
                            to[out=down, in=left] (0.5,0)
                            to[out=right, in=down] (1,0.5) to (1,2);
            \draw (1,0.75) \bdot;
        \end{tikzpicture}
        ~{=}~-
        \begin{tikzpicture}[baseline = 7.5pt, scale=0.5, color=\clr]
            \draw[-,thick] (0,2) to (0,0.5)
                            to[out=down, in=left] (0.25,0)
                            to[out=right, in=down] (0.5,0.5) to (0.5,1)
                            to[out=up, in=left] (0.75,1.5)
                            to[out=right, in=up] (1,1)
                            to[out=down, in=left] (1.25,0.5)
                            to[out=right, in=down] (1.5,1) to (1.5,2);
            \draw (1,1)\bdot;
        \end{tikzpicture}
        ~{=}~-
        \begin{tikzpicture}[baseline = 7.5pt, scale=0.5, color=\clr]
            \draw[-,thick] (0,2) to (0,0.5)
                            to[out=down, in=left] (0.5,0)
                            to[out=right, in=down] (1,0.5) to (1,2);
            \draw (0,0.75) \bdot;
        \end{tikzpicture}.
        ~
    \end{equation*}
 This verifies (1). One can verify (2) similarly.
\end{proof}

\begin{Lemma}\label{cp2}
 The relation \eqref{OB relations 2 (zigzags and invertibility2)} follows from Lemma~\ref{capre} and \eqref{OB relations 2 (zigzags and invertibility)}.
\end{Lemma}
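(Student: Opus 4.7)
The plan is to derive the dotted zigzag identity~\eqref{OB relations 2 (zigzags and invertibility2)} from its undotted counterpart~\eqref{OB relations 2 (zigzags and invertibility)} by using Lemma~\ref{capre} to slide the central dot off the meeting point of the cap and cup onto an adjacent straight segment of the S-shaped strand; the resulting undotted zigzag then collapses to the identity via~\eqref{OB relations 2 (zigzags and invertibility)}.

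In more detail, for the left-hand equality of~\eqref{OB relations 2 (zigzags and invertibility2)}, I would slice the S-shape horizontally at the height of the dot and read the diagram as the composite
\[
(1_{\ob 1}\otimes A)\circ(1_{\ob 1}\otimes X\otimes 1_{\ob 1})\circ(U\otimes 1_{\ob 1}).
\]
Using the interchange law~\eqref{wwcirrten}, I regroup the first two factors as $1_{\ob 1}\otimes(A\circ(X\otimes 1_{\ob 1}))$, and then apply Lemma~\ref{capre}(2) to replace $A\circ(X\otimes 1_{\ob 1})$ by $-A\circ(1_{\ob 1}\otimes X)$; this is the key step, producing the global sign and sliding the dot from the left to the right input strand of the cap. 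Two further applications of the interchange law push $1_{\ob 2}\otimes X$ past $U\otimes 1_{\ob 1}$ to form $U\otimes X=(U\otimes 1_{\ob 1})\circ X$, so the whole composite becomes $-(1_{\ob 1}\otimes A)\circ(U\otimes 1_{\ob 1})\circ X$. The first equality of~\eqref{OB relations 2 (zigzags and invertibility)} collapses $(1_{\ob 1}\otimes A)\circ(U\otimes 1_{\ob 1})$ to $1_{\ob 1}$, leaving $-X=-\xdot$, which is the middle term of~\eqref{OB relations 2 (zigzags and invertibility2)}.

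The right-hand equality is handled by the mirror argument: the reversed S-shape factors as $(A\otimes 1_{\ob 1})\circ(1_{\ob 1}\otimes X\otimes 1_{\ob 1})\circ(1_{\ob 1}\otimes U)$, the central dot now lies on the right input of $A$, and Lemma~\ref{capre}(2) rewrites $A\circ(1_{\ob 1}\otimes X)$ as $-A\circ(X\otimes 1_{\ob 1})$, producing the same sign change. After regrouping by the interchange law, the second equality of~\eqref{OB relations 2 (zigzags and invertibility)} collapses the remaining zigzag to $1_{\ob 1}$, giving $-\xdot$ once more. The argument is essentially pure bookkeeping with the interchange law together with a single sign flip from Lemma~\ref{capre}; the only point demanding any care is correctly identifying which input of the cap is the ``left'' one at the meeting point, so that Lemma~\ref{capre}(2) is applied in the consistent orientation.
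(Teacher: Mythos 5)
Your argument is correct and is essentially the paper's own proof: you slide the dot across the cap of the zigzag using Lemma~\ref{capre} (picking up the sign) and then collapse the undotted zigzag to the identity via \eqref{OB relations 2 (zigzags and invertibility)}, with the interchange-law regroupings making explicit exactly what the paper's diagrammatic manipulation encodes. (Your appeal to Lemma~\ref{capre}(2) for the cap move is the accurate citation; the paper's printed proof cites part (1) but performs the same move, so there is no substantive difference.)
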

\begin{proof}  Thanks to Lemma~\ref{capre}(1) and \eqref{OB relations 2 (zigzags and invertibility)}, we have
\begin{equation*}
 \begin{tikzpicture}[baseline = 10pt, scale=0.5, color=\clr]
            \draw[-,thick] (2,0) to[out=up, in=down] (2,1) to[out=up, in=right] (1.5,1.5) to[out=left,in=up] (1,1);
            \draw[-,thick] (1,1) to[out=down,in=right] (0.5,0.5) to[out=left,in=down] (0,1) to[out=up,in=down] (0,2);
           \draw( 1,1) \bdot;
        \end{tikzpicture}
        {=}~-~
        \begin{tikzpicture}[baseline = 10pt, scale=0.5, color=\clr]
            \draw[-,thick] (2,0) to[out=up, in=down] (2,1) to[out=up, in=right] (1.5,1.5) to[out=left,in=up] (1,1);
            \draw[-,thick] (1,1) to[out=down,in=right] (0.5,0.5) to[out=left,in=down] (0,1) to[out=up,in=down] (0,2);
           \draw( 2,1) \bdot;
        \end{tikzpicture}
       ~{=}~
        -~\begin{tikzpicture}[baseline = 10pt, scale=0.5, color=\clr]
            \draw[-,thick] (0,0) to (0,2);
            \draw(0,1) \bdot;
                    \end{tikzpicture}.
        \end{equation*}
This proves the first equality in \eqref{OB relations 2 (zigzags and invertibility2)}. One can check  the second one similarly.
\end{proof}
By Lemma~\ref{cp2}, the category $\AB$ can also be defined by  replacing  \eqref{OB relations 2 (zigzags and invertibility2)} with
 Lemma~\ref{capre}(1)-(2).

\begin{Lemma}\label{slides relations vertical} In $\AB$, we have
 \begin{itemize}
\item[(a)]\begin{tikzpicture}[baseline = 7.5pt, scale=0.5, color=\clr]
            \draw[-,thick] (0,0) to[out=up, in=down] (1,2);\draw[-,thick] (0,0) to[out=up, in=down] (0,-0.2);
             \draw[-,thick] (1,0) to[out=up, in=down] (0,2);\draw[-,thick] (1,0) to[out=up, in=down] (1,-0.2);
                        \draw(0,0.1)\bdot;
        \end{tikzpicture}
        ~$-$~
       \begin{tikzpicture}[baseline = 7.5pt, scale=0.5, color=\clr]
            \draw[-,thick] (0,0) to[out=up, in=down] (1,2);
            \draw[-,thick] (0,2) to[out=up, in=down] (0,2.2);
            \draw[-,thick] (1,0) to[out=up, in=down] (0,2);
            \draw[-,thick] (1,2) to[out=up, in=down] (1,2.2);
             \draw(1,1.9)\bdot;
        \end{tikzpicture}
        ~=~
       \begin{tikzpicture}[baseline = 10pt, scale=0.5, color=\clr]
          \draw[-,thick] (2,2) to[out=down,in=right] (1.5,1.5) to[out=left,in=down] (1,2);
            \draw[-,thick] (2,0) to[out=up, in=right] (1.5,0.5) to[out=left,in=up] (1,0);
        \end{tikzpicture}
        ~$-$~
        \begin{tikzpicture}[baseline = 7.5pt, scale=0.5, color=\clr]
            \draw[-,thick] (0,0) to[out=up, in=down] (0,2);
            \draw[-,thick] (1,0) to[out=up, in=down] (1,2);
                   \end{tikzpicture},
  \item[(b)]
\begin{tikzpicture}[baseline = 7.5pt, scale=0.5, color=\clr]
            \draw[-,thick] (0,0) to[out=up, in=down] (0,2);
            \draw[-,thick] (1,0) to[out=up, in=down] (1,2);
               \draw(1,1)\bdot;    \end{tikzpicture}
        ~=~ \begin{tikzpicture}[baseline = 10pt, scale=0.5, color=\clr]
           \draw[-,thick] (0,0) to[out=up, in=down] (1,1);
            \draw[-,thick] (1,1) to[out=up, in=down] (0,2);
            \draw[-,thick] (1,0) to[out=up, in=down] (0,1);
            \draw[-,thick] (0,1) to[out=up, in=down] (1,2);
             \draw(0,1)\bdot;       \end{tikzpicture}
              ~$+$~  \swap  ~$-$~
  \begin{tikzpicture}[baseline = 10pt, scale=0.5, color=\clr]
          \draw[-,thick] (2,2) to[out=down,in=right] (1.5,1.5) to[out=left,in=down] (1,2);
            \draw[-,thick] (2,0) to[out=up, in=right] (1.5,0.5) to[out=left,in=up] (1,0);
        \end{tikzpicture}.
                   \end{itemize}
\end{Lemma}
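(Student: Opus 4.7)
Both identities (a) and (b) should fall out by short algebraic manipulations from the defining relation \eqref{AOB1}, together with $S\circ S=1_{\ob 2}$ (from \eqref{OBR1}) and $S\circ U=U$, $A\circ S=A$ (from \eqref{OBR3}). The first step is to translate the pictures into morphisms: reading bottom-to-top, the first diagram on the left of (a) is $S\circ(X\otimes 1_{\ob 1})$ (dot at the bottom of the left-going strand, then crossing), the second is $(1_{\ob 1}\otimes X)\circ S$ (crossing first, then dot at the top of the right-going strand), while the right-hand side is $U\circ A-1_{\ob 2}$. Similarly, in (b) the left-hand side is $1_{\ob 1}\otimes X$, and the first term on the right is $S\circ(X\otimes 1_{\ob 1})\circ S$. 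Once this dictionary is fixed, neither identity requires any diagrammatic reasoning beyond \eqref{AOB1}.

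The plan for (a) is to compose \eqref{AOB1},
\[
(X\otimes 1_{\ob 1})\circ S-S\circ (1_{\ob 1}\otimes X)=U\circ A-1_{\ob 2},
\]
on the left with $S$, apply $S\circ S=1_{\ob 2}$ and $S\circ U=U$ to get
\[
S\circ(X\otimes 1_{\ob 1})\circ S-(1_{\ob 1}\otimes X)=U\circ A-S,
\]
and then post-compose with $S$ on the right, using $S\circ S=1_{\ob 2}$ and $A\circ S=A$, to arrive at
\[
S\circ(X\otimes 1_{\ob 1})-(1_{\ob 1}\otimes X)\circ S=U\circ A-1_{\ob 2},
\]
which is (a).

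The plan for (b) is even shorter: the intermediate identity $S\circ(X\otimes 1_{\ob 1})\circ S-(1_{\ob 1}\otimes X)=U\circ A-S$ produced on the way to (a) can simply be rearranged to
\[
1_{\ob 1}\otimes X=S\circ(X\otimes 1_{\ob 1})\circ S+S-U\circ A,
\]
which is exactly (b).

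The only delicate point, and the main potential pitfall, is the pictorial-to-algebraic translation described above: in the affine setting the position of a dot along a strand relative to a crossing genuinely matters, since \eqref{AOB1} says precisely that sliding a dot through a crossing picks up an extra $U\circ A-1_{\ob 2}$. Once each of the four decorated crossings appearing in (a) and (b) is correctly identified as one of $(X\otimes 1_{\ob 1})\circ S$, $S\circ(X\otimes 1_{\ob 1})$, $(1_{\ob 1}\otimes X)\circ S$, $S\circ(1_{\ob 1}\otimes X)$, the lemma reduces to the two lines of computation above.
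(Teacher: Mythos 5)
Your proof is correct and takes essentially the same route as the paper: both arguments conjugate/compose the defining relation \eqref{AOB1} with $S$, using $S\circ S=1_{\ob 2}$, $S\circ U=U$ and $A\circ S=A$, and your diagram-to-morphism dictionary agrees with the paper's conventions. The only (immaterial) difference is the order — the paper obtains (a) first by conjugating \eqref{AOB1} on both sides by $S$ and then gets (b) by composing (a) with $S$, whereas you produce the identity equivalent to (b) as the intermediate step en route to (a).
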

\begin{proof}
Define  $ P= \begin{tikzpicture}[baseline = 7.5pt, scale=0.5, color=\clr]
            \draw[-,thick] (0,0) to[out=up, in=down] (1,2);
            \draw[-,thick] (0,2) to[out=up, in=down] (0,2.2);
            \draw[-,thick] (1,0) to[out=up, in=down] (0,2);
            \draw[-,thick] (1,2) to[out=up, in=down] (1,2.2);
                        \draw(0,1.9)\bdot;
        \end{tikzpicture}
        ~-~
        \begin{tikzpicture}[baseline = 7.5pt, scale=0.5, color=\clr]
            \draw[-,thick] (0,0) to[out=up, in=down] (1,2);\draw[-,thick] (0,0) to[out=up, in=down] (0,-0.2);
             \draw[-,thick] (1,0) to[out=up, in=down] (0,2);\draw[-,thick] (1,0) to[out=up, in=down] (1,-0.2);
                        \draw(1,0.1)\bdot;
        \end{tikzpicture}$,   $Q=\begin{tikzpicture}[baseline = 7.5pt, scale=0.5, color=\clr]
            \draw[-,thick] (0,0) to[out=up, in=down] (1,2);\draw[-,thick] (0,0) to[out=up, in=down] (0,-0.2);
             \draw[-,thick] (1,0) to[out=up, in=down] (0,2);\draw[-,thick] (1,0) to[out=up, in=down] (1,-0.2);
                        \draw(0,0.1)\bdot;
        \end{tikzpicture}
        ~-~
       \begin{tikzpicture}[baseline = 7.5pt, scale=0.5, color=\clr]
            \draw[-,thick] (0,0) to[out=up, in=down] (1,2);
            \draw[-,thick] (0,2) to[out=up, in=down] (0,2.2);
            \draw[-,thick] (1,0) to[out=up, in=down] (0,2);
            \draw[-,thick] (1,2) to[out=up, in=down] (1,2.2);
             \draw(1,1.9)\bdot;
        \end{tikzpicture}$ and $R=\begin{tikzpicture}[baseline = 10pt, scale=0.5, color=\clr]
          \draw[-,thick] (2,2) to[out=down,in=right] (1.5,1.5) to[out=left,in=down] (1,2);
            \draw[-,thick] (2,0) to[out=up, in=right] (1.5,0.5) to[out=left,in=up] (1,0);
        \end{tikzpicture}
        ~-~
        \begin{tikzpicture}[baseline = 7.5pt, scale=0.5, color=\clr]
            \draw[-,thick] (0,0) to[out=up, in=down] (0,2);
            \draw[-,thick] (1,0) to[out=up, in=down] (1,2);
                   \end{tikzpicture}$.
                   By \eqref{OB relations 1 (symmetric group)} and \eqref{right cuaps - down cross} ,
                   we have
                   \begin{equation}\label{swape2}
                   \swap \circ P\circ \swap= Q, \quad \text{ and }\quad  \swap \circ R\circ \swap=R.
                   \end{equation}
                   So,  (a)  follows from \eqref{AOBC relations}
                    and \eqref{swape2}. By composing   both sides of (a) with $S$, (b) follows from \eqref{OB relations 1 (symmetric group)} and
                    \eqref{right cuaps - down cross}.
                    \end{proof}
Suppose that $d\in \mathbb D_{m,s}$.
There are two kinds  of movements on $d$ as follows:
\begin{equation}\label{movements}
\begin{aligned}
&\text{Movement I:~~ slide   $\bullet$'s along each strand;}\\
&\text{Movement II:}\quad \begin{tikzpicture}[baseline = 7.5pt, scale=0.5, color=\clr]
            \draw[-,thick] (0,0) to[out=up, in=down] (0,2);
            \draw[-,thick] (1,0) to[out=up, in=down] (1,2);
               \draw(1,1)\bdot; \draw(1.4,1)node{$h$};   \end{tikzpicture}\leftrightsquigarrow \begin{tikzpicture}[baseline = 10pt, scale=0.5, color=\clr]
            \draw[-,thick] (0,0) to[out=up, in=down] (1,1);\draw[-,thick] (1,1) to[out=up, in=down] (0,2);
            \draw[-,thick] (1,0) to[out=up, in=down] (0,1);
            \draw[-,thick] (0,1) to[out=up, in=down] (1,2);
             \draw(0,1)\bdot; \draw(0.4,1)node{\footnotesize$h$};\end{tikzpicture}, h\in\mathbb N.
\end{aligned}
\end{equation}
If   $\text{deg}(d)=k$ and  $d'$ is obtained from $d$ by moving $\bullet$'s on $d$ via movements in types I and II, by Lemmas~\ref{capre}, ~\ref{slides relations vertical} and \eqref{AOBC relations},

    \begin{equation}\label{slideleft}
  d\sim d' ~\text{ and } ~d\equiv \pm d' \pmod{\Hom_{\AB}(\ob m, \ob s)_{\le k-1}}.
    \end{equation}

   \begin{Lemma} \label{admissible} For any odd positive integer  $k$,  $2\Delta_k=-\Delta_{k-1}+\sum_{j=1}^{k}(-1)^{j-1}\Delta_{j-1}\Delta_{k-j}$.
\end{Lemma}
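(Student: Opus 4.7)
The plan is to compute the endomorphism $\Theta_k := A \circ (X^k \otimes 1_{\ob 1}) \circ S \circ U \in \End_{\AB}(\ob 0)$ in two different ways. Write $x_1 := X \otimes 1_{\ob 1}$, $x_2 := 1_{\ob 1} \otimes X$, $s := S$, and $e := U \circ A$. One evaluation is immediate: since $S \circ U = U$ by \eqref{OBR3}, one gets $\Theta_k = A \circ x_1^k \circ U = \Delta_k$.

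For the second evaluation, I use relation \eqref{AOB1}, which states $x_1 \circ s - s \circ x_2 = e - 1_{\ob 2}$. Because $x_1$ and $x_2$ commute (the interchange law gives $x_1 \circ x_2 = X \otimes X = x_2 \circ x_1$), a straightforward induction on $k$ yields
$$x_1^k \circ s = s \circ x_2^k + \sum_{i=0}^{k-1} x_1^{k-1-i} \circ (e - 1_{\ob 2}) \circ x_2^i.$$
Composing on the left with $A$ and on the right with $U$, and using $A \circ s = A$ from \eqref{OBR3}, one obtains
$$\Theta_k = A \circ x_2^k \circ U + \sum_{i=0}^{k-1}\Bigl[A \circ x_1^{k-1-i} \circ e \circ x_2^i \circ U - A \circ x_1^{k-1-i} \circ x_2^i \circ U\Bigr].$$

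Each summand factors through the middle cap-cup: since $\End_{\AB}(\ob 0)$ is commutative (as recalled just before the lemma), one has $A \circ x_1^{k-1-i} \circ e \circ x_2^i \circ U = (A \circ x_1^{k-1-i} \circ U)(A \circ x_2^i \circ U)$. A direct consequence of Lemma~\ref{capre} applied to caps and to cups, combined with the commutativity of $x_1$ and $x_2$, is the dot-flip identity
$$A \circ (X^a \otimes X^b) \circ U = (-1)^b \Delta_{a+b}.$$
Applying this with the appropriate $(a,b)$ reduces the second evaluation to
$$\Theta_k = (-1)^k \Delta_k + \sum_{i=0}^{k-1}(-1)^i \Delta_{k-1-i}\Delta_i - \Bigl(\sum_{i=0}^{k-1}(-1)^i\Bigr)\Delta_{k-1}.$$

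For odd $k$, $(-1)^k = -1$ and $\sum_{i=0}^{k-1}(-1)^i = 1$. Equating the two expressions for $\Theta_k$, rearranging, and reindexing by $j = i + 1$ gives the desired identity $2\Delta_k = -\Delta_{k-1} + \sum_{j=1}^{k}(-1)^{j-1}\Delta_{j-1}\Delta_{k-j}$. The only subtlety is establishing the iterated form of \eqref{AOB1}, which is a routine induction once the commutativity of $x_1$ and $x_2$ is noted; the dot-flip formula and the factorisation across the cap-cup are elementary consequences of Lemma~\ref{capre} and of the commutativity of $\End_{\AB}(\ob 0)$, respectively.
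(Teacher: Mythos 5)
Your proof is correct and follows essentially the same route as the paper's: both insert a crossing via $S\circ U=U$ (equivalently $A=A\circ S$), commute the $k$ dots past it using the iterated form of \eqref{AOB1} so as to pick up exactly the correction terms $\Delta_{j-1}\Delta_{k-j}$ and $\Delta_{k-1}$, and then use the cap/cup dot-flip of Lemma~\ref{capre} to identify the residual diagram with $(-1)^k\Delta_k=-\Delta_k$. Your closed-form commutation identity merely packages the paper's one-dot-at-a-time diagrammatic slides; the only quibble is that the factorisation $A\circ x_1^{k-1-i}\circ e\circ x_2^i\circ U=(A\circ x_1^{k-1-i}\circ U)\circ(A\circ x_2^i\circ U)$ needs only $e=U\circ A$ and associativity of composition, not the commutativity of $\End_{\AB}(\ob 0)$.
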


 \begin{proof}   By  \eqref{defofdelta}, we have
 $$\begin{aligned}\Delta_k=\begin{tikzpicture}[baseline = 5pt, scale=0.5, color=\clr]
        \draw[-,thick] (0.6,1) to (0.5,1) to[out=left,in=up] (0,0.5)
                        to[out=down,in=left] (0.5,0)
                        to[out=right,in=down] (1,0.5)
                        to[out=up,in=right] (0.5,1);
        \draw (0,0.5) \bdot;
        \draw (0.4,0.5) node{\footnotesize{$k$}};
    \end{tikzpicture}&\overset{\text{Lemma~\ref{capre}}}=-\begin{tikzpicture}[baseline = 5pt, scale=0.5, color=\clr]
        \draw[-,thick] (1.1,2) to (1,2) to[out=left,in=up] (0,1)
                        to[out=down,in=left] (1,0)
                        to[out=right,in=down] (2,1)
                        to[out=up,in=right] (1,2);
        \draw (0,1) \bdot;\draw (2,1)\bdot;
        \draw (0.75,1) node{\tiny{$k-1$}}; \draw (2.4,1) node{\footnotesize{$1$}};
    \end{tikzpicture}\overset{\eqref{right cuaps - down cross}}=-
    \begin{tikzpicture}[baseline = 5pt, scale=0.5, color=\clr]
        \draw[-,thick](1.1,2) to(1,2)to[out=left,in=up](0,1) ;
        \draw[-,thick](1,2)to[out=right,in=up](2,1);
          \draw [-,thick] (0,1) to (0,0.75); \draw [-,thick] (2,1) to (2,0.75);
            \draw [-,thick](2,-0.5) to[out=up,in=down](0,0.75);  \draw [-,thick] (0,-0.5)to [out=up,in=down](2,0.75);
        \draw (0,1) \bdot;\draw (2,1) \bdot;
        \draw (0.75,1) node{\tiny {$k-1$}};\draw (2.4,1) node{\tiny {$1$}};
        \draw [-,thick] (0,-0.5) to [out=down,in=left](1,-1.8) to [out=right,in=down](2,-0.5);
    \end{tikzpicture} \overset{ Lemma~\ref{slides relations vertical}   }=- \begin{tikzpicture}[baseline = 5pt, scale=0.5, color=\clr]
        \draw[-,thick](1.1,2) to(1,2)to[out=left,in=up](0,1) ;
        \draw[-,thick](1,2)to[out=right,in=up](2,1);
          \draw [-,thick] (0,1) to (0,0.75); \draw [-,thick] (2,1) to (2,0.75);
            \draw [-,thick] (2,-0.4)to[out=up,in=down](0,0.75);  \draw [-,thick](0,-0.4) to[out=up,in=down](2,0.75);
        \draw (0,1) \bdot;\draw (0,-0.6) \bdot;
        \draw (0.75,1) node{\tiny {$k-1$}};\draw (0.4,-0.6) node{\tiny {$1$}};
        \draw [-,thick] (0,-0.4) to [out=down,in=left](1,-1.8) to [out=right,in=down](2,-0.4);
    \end{tikzpicture}+\Delta_{k-1}\Delta_0-\Delta_{k-1}
\\
&= (-1)^k \begin{tikzpicture}[baseline = 5pt, scale=0.5, color=\clr]
        \draw[-,thick](1.1,2) to(1,2)to[out=left,in=up](0,1) ;
        \draw[-,thick](1,2)to[out=right,in=up](2,1);
          \draw [-,thick] (0,1) to (0,0.75); \draw [-,thick] (2,1) to (2,0.75);
               \draw [-,thick] (2,-0.4)to[out=up,in=down](0,0.75);  \draw [-,thick](0,-0.4) to[out=up,in=down](2,0.75);
        \draw (0,-0.6) \bdot;
        \draw (0.4,-0.6) node{\tiny {$k$}};
        \draw [-,thick] (0,-0.4) to [out=down,in=left](1,-1.8) to [out=right,in=down](2,-0.4);
    \end{tikzpicture}
     +\sum_{b=1}^k(-1)^b\Delta_{k-1}+ \sum_{j=1}^{k}(-1)^{j-1}\Delta_{j-1}\Delta_{k-j} \text{ (by  Lemmas~\ref{capre}, \ref{slides relations vertical}(a))}  \\
    &\overset{\eqref{right cuaps - down cross}}= -\Delta_k-\Delta_{k-1}+ \sum_{j=1}^{k}(-1)^{j-1}\Delta_{j-1}\Delta_{k-j}.
 \end{aligned}   $$
\end{proof}

Given  a $d\in\mathbb {ND}_{m,s}$,  let $\hat d$ be obtained from $d$  by removing all bubbles and all  $\bullet$'s   on it. Then
  $\hat{d}\in\bar{\mathbb{B}}_{m,s}$.
 For example, if
\begin{equation}\label{undot example}
    d=~
    \begin{tikzpicture}[baseline = 25pt, scale=0.35, color=\clr]
        \draw[-,thick] (2,0) to[out=up,in=down] (0,5);
        \draw[-,thick] (6,0) to[out=up,in=down] (6,5);
         \draw[-,thick] (6.7,0) to[out=up,in=down] (6.7,5);
               \draw[-,thick] (0,0) to[out=up,in=left] (2,1.5) to[out=right,in=up] (4,0);
        \draw[-,thick] (2,5) to[out=down,in=left] (3,4) to[out=right,in=down] (4,5);
        \draw[-,thick] (-1.1,4) to (-1,4) to[out=left,in=up] (-2,3) to[out=down,in=left] (-1,2)
                        to[out=right,in=down] (0,3) to[out=up,in=right] (-1,4);
                        \draw(0,0.3)\bdot;
    \draw(-2,3)\bdot; \draw(6,4.6)\bdot;\draw(3.95,4.6)\bdot;
    \draw(0,4.6)\bdot;
     \draw (-1.5,3) node{$4$};\draw (0.5,0.4) node{$l$};
           \end{tikzpicture}~,
    \quad\text{then}~
    \hat d=~
     \begin{tikzpicture}[baseline = 25pt, scale=0.35, color=\clr]
        \draw[-,thick] (2,0) to[out=up,in=down] (0,5);
        \draw[-,thick] (6,0) to[out=up,in=down] (6,5);
         \draw[-,thick] (6.7,0) to[out=up,in=down] (6.7,5);
               \draw[-,thick] (0,0) to[out=up,in=left] (2,1.5) to[out=right,in=up] (4,0);
        \draw[-,thick] (2,5) to[out=down,in=left] (3,4) to[out=right,in=down] (4,5);
           \end{tikzpicture}~
    ~.
\end{equation}
Let $\alpha(d)_k$ (resp.,$\beta(d)_k$) be the number of $\bullet$'s on the boundary near the $k$th endpoint at  the top
(resp., bottom) row of $d$. Since $d$ is normally ordered,\begin{itemize} \item  $\alpha(d)_k=0$  if the $k$th endpoint at the top  row  is the left  endpoint of a cup,
\item  $\beta(d)_k=0$  if the $k$th endpoint at the bottom row is either the right endpoint of a cap or the endpoint of a vertical strand.\end{itemize} Thanks to Definition~\ref{D:N.O. dotted  OBC diagram},
\begin{equation}\label{degdregu}
    d=  x_{s}^{\alpha(d)_{s}}\circ\ldots\circ x_1^{\alpha(d)_1} \circ\hat d\circ y_{m}^{\beta(d)_{m}}\circ\ldots\circ y_1^{\beta(d)_1}, \text{ if $d\in\bar{\mathbb {ND}}_{m,s}$},
\end{equation}
where
\begin{equation}\label{defofxy}
x_k=1_{\ob {k-1}}\otimes X\otimes 1_{\ob {s-k}} \text{ and $ y_j=1_{\ob {j-1}}\otimes X\otimes 1_{\ob {m-j}}$, for $1\leq k\leq s$, $1\leq j\leq m$.}
\end{equation}
Note that $x_k=y_k$ if $m=s$.
Suppose $d\in\mathbb {ND}_{m,s}$   such that the number of $\Delta_i$ is $k(d)_i$ for $i\in 2\mathbb N$ (only finitely many   $k(d)_i$'s are non-zero). Then
\begin{equation}\label{regularm}
d=\Delta_0^{k(d)_0}\Delta_2^{k(d)_2} \cdots   x_{s}^{\alpha(d)_{s}}\circ\ldots\circ x_1^{\alpha(d)_1} \circ\hat d\circ y_{m}^{\beta(d)_{m}}\circ\ldots\circ y_1^{\beta(d)_1},
\end{equation}
and
\begin{equation}\label{degd} \text{deg}(d)= \sum_{j} jk(d)_j+\sum_{i}\alpha(d)_i+\sum_{l}\beta(d)_l.\end{equation}

\begin{Lemma}\label{reguequai}
If $d,d'\in \mathbb {ND}_{m,s}$  and $d\sim d'$, then $d=d'$ as morphisms in $\AB$, $\CB^f$ and $\CB^f(\omega)$.
\end{Lemma}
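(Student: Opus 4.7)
The plan is to decompose any normally ordered dotted Brauer diagram into a commuting product of bubbles and a bubble-free part, and then argue each factor separately. Given $d\in\mathbb{ND}_{m,s}$, Definition~\ref{D:N.O. dotted  OBC diagram}(1)--(2) forces every bubble in $d$ to be crossing-free, unshielded from the leftmost edge, and to carry an even number of dots on its leftmost boundary. Such a bubble with $2k$ dots equals $\Delta_{2k}\in\End_{\AB}(\ob 0)$ by \eqref{defofdelta}. Since $\End_{\AB}(\ob 0)$ is commutative (\cite[Proposition~2.2.10]{EGNO}), all bubbles can be pulled out of the diagram and multiplied in any order, yielding a scalar factor $\prod_i\Delta_{2k_i}$ that depends only on the \emph{multiset} of bubble-dot counts. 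If $d\sim d'$, then their underlying Brauer diagrams (without dots) have the same number of bubbles and the corresponding bubbles carry the same dot counts, so the bubble factors of $d$ and $d'$ agree.

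Next I would compare the bubble-free parts $d_0$ and $d_0'$ obtained from $d$ and $d'$ by stripping off all bubbles. By \eqref{degdregu},
\[
 d_0= x_s^{\alpha(d)_s}\circ\cdots\circ x_1^{\alpha(d)_1}\circ \hat d\circ y_m^{\beta(d)_m}\circ\cdots\circ y_1^{\beta(d)_1},
\]
and likewise for $d_0'$ in terms of $\hat{d'}$, $\alpha(d')_k$, $\beta(d')_k$. Since $d\sim d'$, the underlying bubble-free Brauer diagrams $\hat d,\hat{d'}\in\bar{\mathbb B}_{m,s}$ induce the same partition of $\{1,\dots,m+s\}$ into pairs; thus $\hat d\sim\hat{d'}$ as Brauer diagrams, so $\hat d=\hat{d'}$ in $\B$ by Proposition~\ref{equi123}, and consequently in $\AB$ via the functor $\mathcal F$ of \eqref{functorfrombtoab}. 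It remains to check that $\alpha(d)_k=\alpha(d')_k$ and $\beta(d)_k=\beta(d')_k$ for every $k$. This is the crux: the common pairing determines, for each top (resp.\ bottom) endpoint, whether it is the left endpoint of a cup, the right endpoint of a cup, or an endpoint of a vertical strand (resp.\ the left/right endpoint of a cap, or of a vertical strand), and the normal-ordering conditions (3)--(4) of Definition~\ref{D:N.O. dotted  OBC diagram} force the entire dot count of that strand to sit at exactly one canonical boundary. Since by hypothesis the corresponding strands of $d$ and $d'$ carry equal total dot counts, the tuples $\alpha(d),\beta(d)$ and $\alpha(d'),\beta(d')$ coincide. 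Substituting into \eqref{degdregu} gives $d_0=d_0'$ in $\AB$, and combining with the bubble factor yields $d=d'$ in $\AB$.

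Finally, the statement for $\CB^f$ and $\CB^f(\omega)$ is immediate, because these categories are defined as quotients of $\AB$ in Definitions~\ref{COBC defn} and~\ref{COBC1}, and any identity holding in $\AB$ descends to any quotient category. The main obstacle is really only the bookkeeping in the previous paragraph: verifying that the positions (as opposed to the counts) of dots on each strand of a normally ordered diagram are fully determined by the underlying Brauer pairing. Once that combinatorial observation is in hand, everything else reduces to the unique-up-to-equivalence expression \eqref{degdregu} plus the already-established Proposition~\ref{equi123}.
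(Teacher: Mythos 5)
Your proof is correct and follows essentially the same route as the paper's: both reduce $d$ and $d'$ to the canonical expression \eqref{regularm} (bubble factors $\Delta_{2k}$ times $x$'s, $\hat d$, and $y$'s), observe that normal ordering pins the dots to positions determined by the underlying pairing so that the exponents agree, and conclude $\hat d=\widehat{d'}$ via Proposition~\ref{equi123}, with the quotient categories handled by functoriality. The only difference is that you spell out the bookkeeping (commutativity of $\End_{\AB}(\ob 0)$, why $\alpha(d)=\alpha(d')$ and $\beta(d)=\beta(d')$) which the paper compresses into a single appeal to Definition~\ref{D:N.O. dotted  OBC diagram}.
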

\begin{proof} Write $d$ and $d'$   as those  in \eqref{regularm}.
 By Definition~\ref{D:N.O. dotted  OBC diagram}, we have that $\hat d\sim \widehat {d'}$, $k(d)_i=k(d')_i$, $\alpha(d)_j=\alpha(d')_j$ and $\beta(d)_l=\beta(d')_l$ for all admissible $i,j,l$.
  Thanks to Proposition~\ref{equi123}, $\hat d=\widehat {d'}$  as morphisms of $\AB$. Thus, $d=d'$   as morphisms of $\AB$, $\CB^f$ and $\CB^f(\omega)$.
\end{proof}
Thanks to Lemma~\ref{reguequai},
 we can  identify  each  equivalence class of  $\mathbb {ND}_{m,s}$ with any element in it. We will freely use it later on.

\begin{Lemma}\label{equivajssdj}Suppose that $d\in\mathbb D_{m,s}$ such that $m+s=2r$. \begin{enumerate}
\item There is a $d_1\in \bar{\mathbb{ND}}_{m,s}$ such that
$d\sim \Delta(d) d_1$,
where $\Delta(d)=\Delta_{0}^{k(  d )_0}\Delta_{1}^{k(  d )_1}\cdots $ and $k(  d)_i$ is the number of the bubbles of $ d$  such that there are $i$ $\bullet$'s on each of these bubbles, $i\in\mathbb N$.
\item   Suppose $\text{deg}(d)=k$. Then $d\equiv \pm \Delta(d) d_1 \pmod {\Hom_{\AB}(\ob m,\ob s)_{\le k-1}} $, where  $d_1\in \bar{\mathbb{ND}}_{m,s}$ such that $d_1$ satisfies the condition in (a).
\end{enumerate}
\end{Lemma}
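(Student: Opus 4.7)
The plan for part (a) is a direct construction from the combinatorial definition of $\sim$ on $\mathbb{D}_{m,s}$. Given $d \in \mathbb{D}_{m,s}$, erase all bubbles and all $\bullet$'s to obtain the underlying Brauer diagram $\hat{d} \in \bar{\mathbb{B}}_{m,s}$, and for each strand of $\hat{d}$ record the number of $\bullet$'s lying on the corresponding strand of $d$. Take $d_1 \in \bar{\mathbb{ND}}_{m,s}$ to be the diagram whose underlying Brauer shape is $\hat{d}$ and whose $\bullet$'s are placed at the normal positions prescribed by Definition~\ref{D:N.O. dotted  OBC diagram}: at the top of each vertical strand, the leftmost end of each cap, and the rightmost end of each cup, with the recorded count on each strand. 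Then $d$ and $\Delta(d) d_1$ have the same bubble count $\sum_i k(d)_i$, equivalent underlying Brauer diagrams realizing the pairing of $\hat{d}$, and identical $\bullet$-counts on every corresponding strand, so $d \sim \Delta(d) d_1$ by the definition of $\sim$.

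For part (b), the plan is to induct on $k = \text{deg}(d)$. The base case $k=0$ is immediate: $d$ carries no $\bullet$'s, both $d$ and $\Delta(d) d_1$ are Brauer diagrams with bubbles in the same $\sim$-class, and hence equal in $\AB$ by Proposition~\ref{equi123} (applied through the functor $\mathcal{F}$ of \eqref{functorfrombtoab}), with a trivial $+$ sign. For the inductive step, the strategy is to transform $d$ into $\pm \Delta(d) d_1$ by a finite sequence of the two local moves recorded in \eqref{slideleft}: a type-I move slides a $\bullet$ along a strand, governed by Lemma~\ref{capre} (a sign flip as a $\bullet$ passes the apex of a cap or cup, with no degree change); a type-II move transfers a $\bullet$ between strands at a crossing, which by Lemma~\ref{slides relations vertical}(b) replaces the diagram with the one carrying the $\bullet$ on the opposite strand modulo a cap-cup minus identity correction of strictly smaller degree. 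Iterating these moves normally orders every strand of $d$ and, in the process, detaches each bubble from the rest of the diagram so that it contributes a factor to $\Delta(d)$.

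The main obstacle is the bookkeeping for bubbles that are initially entangled with the ambient diagram. Making such a bubble free-floating requires sliding it past other strands, an operation that in the purely topological setting is allowed by Proposition~\ref{equi123} but in the dotted setting can introduce lower-degree terms via Lemma~\ref{slides relations vertical}(a) whenever a $\bullet$-bearing strand is slid across another. Since every such correction has strictly smaller total $\bullet$-count than $d$, it lies in $\Hom_{\AB}(\ob m, \ob s)_{\leq k-1}$ and is absorbed by the inductive hypothesis; this is consistent with the commutativity of $\End_{\AB}(\ob 0)$ recorded via Proposition~2.2.10 of~\cite{EGNO}. The overall sign in $\pm \Delta(d) d_1$ records the parity of the sign flips from Lemma~\ref{capre} accumulated along the chosen path from $d$ to its normally ordered representative, and the induction terminates because every reduction strictly decreases the measure $\text{deg}(d)$.
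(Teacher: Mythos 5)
Your proof of part (a) is correct and is essentially the paper's: both constructions strip the bubbles off $d$, keep the underlying pairing, and reposition the dot count of each remaining strand at the normal position prescribed by Definition~\ref{D:N.O. dotted  OBC diagram}, after which $d\sim\Delta(d)d_1$ is immediate from the definition of $\sim$. Part (b), however, has a genuine gap. Movements I and II of \eqref{movements} only reposition $\bullet$'s on a \emph{fixed} underlying Brauer diagram; they never change the underlying topology. But $d$ and $\Delta(d)d_1$ generally have different underlying diagrams: a bubble of $d$ may have crossings, be shielded, or be linked with other strands, whereas the bubbles of $\Delta(d)$ are crossing-free and free-floating. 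So iterating movements I and II cannot by itself ``detach each bubble.'' The needed topology change is supplied for \emph{undotted} diagrams by Proposition~\ref{equi123}, but that is a global statement (proved via the functor to $\OB$), not a supply of local moves that can be interleaved with dots; to use it when dots are present one must first factor the diagram so that the undotted relations act on a genuinely undotted factor. Your assertion that sliding a dotted bubble past other strands is ``allowed by Proposition~\ref{equi123}'' up to corrections from Lemma~\ref{slides relations vertical}(a) is precisely the claim that requires proof, and it does not follow from the lemmas you cite.

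The paper closes this gap differently. After moving all non-bubble dots to the boundary via \eqref{slideleft}, $d$ factors as $(\text{dots})\circ\bar d\circ(\text{dots})$ with $\bar d$ dotted only on its bubbles. If no bubble is dotted, $\bar d$ is an undotted diagram and Proposition~\ref{equi123} applies to that factor directly, giving an exact equality $d=\Delta_0^{k(d)_0}d_1$. If some bubble carries $h>0$ dots, the paper concentrates them on one segment and uses the zigzag relation \eqref{OB relations 2 (zigzags and invertibility)} to rewrite $d$ \emph{exactly} as the closure, by an $h$-dotted cap--cup pair, of a diagram $\tilde d$ with one fewer dotted bubble; the argument then proceeds by induction on the number of dotted bubbles $\sum_{i\geq 1}k(d)_i$, not on $\deg(d)$ as you propose. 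Two smaller inaccuracies in your write-up: for (b) the lower-degree corrections are simply discarded modulo $\Hom_{\AB}(\ob m,\ob s)_{\le k-1}$ rather than ``absorbed by the inductive hypothesis,'' and your termination argument fails as stated because movements I and II preserve the degree of the leading term rather than strictly decreasing $\deg(d)$.
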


\begin{proof}
Thanks to \eqref{slideleft}, we can assume that $d$ satisfies the conditions in Definition~\ref{D:N.O. dotted  OBC diagram}(c)--(d). So
 \begin{equation}\label{NMO}    d= x_{s}^{\alpha( d )_{s}}\circ\ldots\circ x_1^{\alpha( d )_1} \circ  \bar d\circ y_{m}^{\beta( d )_{m}}\circ\ldots\circ y_1^{\beta( d)_1},\end{equation}
where  \begin{itemize} \item $ \bar d\in\mathbb D_{m,s}$ is  obtained from $  d$ by removing  all $\bullet$'s of $  d$ which are not on the bubbles,
 \item $\alpha(  d )_i$ is    the number of $\bullet$'s on the boundary near the $i$th   endpoint  at  the top
row of $  d$,
\item  $ \beta(  d )_{j}$ is    the number of $\bullet$'s on the boundary near the $j$th  endpoint  at  the bottom
row of $  d$.
\end{itemize}

Thanks to Lemma~\ref{numberss},  there is  a $\hat d_1\in \bar{\mathbb B}_{m,s} $ such that $\hat d_1$ and $\bar d$  correspond to  the same partition of $\{1,2,\ldots,2r\}$.
Let $$ d_1=x_{s}^{\alpha(d_1)_{s}}\circ\ldots\circ x_1^{\alpha(d_1)_1} \circ\hat d_1\circ y_{m}^{\beta(d_1)_{m}}\circ\ldots\circ y_1^{\beta(d_1)_1}$$
such that $\alpha(d_1)_i=\alpha(  d)_i$ and $\beta(d_1)_j=\beta(  d )_{j}$ for all admissible $i,j$.
Then $\alpha(d_1)_k=0$ (resp., $\beta(d_1)_k=0$) if the $k$th endpoint at the top (resp., bottom) row  is the left  endpoint of a cup (resp., the right endpoint of a cap or the endpoint of a vertical strand).
So,  $d_1\in\bar{\mathbb{ND}}_{m,s}$  and $d\sim \Delta(d) d_1$. This completes the proof of (a).

Suppose that $k( d)_i=0$ for all $i>0$.
Then  $\text{deg} (\bar d)=0$, where $\bar d$ is given in \eqref{NMO}.
Since $\Delta_{0}^{k(  d )_0}d_1\sim   d$, we have $\Delta_{0}^{k(  d )_0} \hat d_1\sim  \bar d $.
Thanks to   Proposition~\ref{equi123},
  $\Delta_{0}^{k(  d )_0} \hat d_1= \bar d$ in $\AB$. So, $  d= \Delta_{0}^{k(  d )_0}d_1$.

Suppose $k(d)_h>0$ for some $h\geq 1$, i.e.,
$d$ has  a bubble $D$ on which  there are $h$ $\bullet$'s. Thanks to \eqref{slideleft}, we can assume that all the  $h$ $\bullet$'s are on a segment of the  leftmost boundary of $d$.
%
%
 In the following, we only deal with the case that     $$D=\begin{tikzpicture}[baseline = 5pt, scale=0.5, color=\clr]
        \draw[-,thick] (1.2,2) to (1,2) to[out=left,in=up] (0,1)
                        to[out=down,in=left]
                      (1,0) to[out=right,in=down] (1.8,0.6);
                    \draw[-,thick]   (1.8,1.4) to[out=up,in=right] (1,2);
                    \draw[-,thick](1.8,0.6)to(2.2,1.4); \draw[-,thick](1.8,1.4)to(2.2,0.6);
        \draw[-,thick] (3.2,2) to (3,2) to[out=left,in=up] (2.2,1.4);
           \draw[-,thick]           (2.2,0.6)  to[out=down,in=left](3,0);
           \draw[-,thick]             (3,0)
                        to[out=right,in=down] (4,1)
                        to[out=up,in=right] (3,2);
        \draw (0,1) \bdot;
        \draw (0.4,1) node{\footnotesize{$h$}};
       \end{tikzpicture}. $$
 In general, one can verify it in a similar way.
  We have
\begin{equation}\label{jhuhd}
   d= \begin{tikzpicture}[baseline = 5pt, scale=0.5, color=\clr]
 \draw[-,thick] (0.7,-0.5) to (0.7,2.5) to (4.5,2.5) to (4.5,-0.5)to (0.7,-0.5);
        \draw[-,thick] (1.2,2) to (1,2) to[out=left,in=up] (0,1)
                        to[out=down,in=left]
                      (1,0) to[out=right,in=down] (1.8,0.6);
                    \draw[-,thick]   (1.8,1.4) to[out=up,in=right] (1,2);
                    \draw[-,thick](1.8,0.6)to(2.2,1.4); \draw[-,thick](1.8,1.4)to(2.2,0.6);
        \draw[-,thick] (3.2,2) to (3,2) to[out=left,in=up] (2.2,1.4);
           \draw[-,thick]           (2.2,0.6)  to[out=down,in=left](3,0);
           \draw[-,thick]             (3,0)
                        to[out=right,in=down] (4,1)
                        to[out=up,in=right] (3,2);
        \draw (0,1) \bdot;
        \draw (0.4,1) node{\footnotesize{$h$}};
           \end{tikzpicture}~  \\
      \overset{\eqref{OB relations 2 (zigzags and invertibility)}}
      =
      \begin{tikzpicture}[baseline = 5pt, scale=0.5, color=\clr]
               \draw[-,thick] (-2,0.5)to  (-2,1.2);
               \draw[-,thick]  (0,1.5) to[out=down, in=right] (-0.5,1) to[out=left,in=down] (-1,1.5);
           \draw[-,thick] (-1,1.5) to[out=up,in=right] (-1.5,2) to[out=left,in=up] (-2,1.5) to[out=down,in=up] (-2,0.5);
        \draw (-2,0.8) \bdot;
        \draw (-1.5,1) node{\footnotesize{$h$}};
 \draw[-,thick] (0.7,-0.5) to (0.7,2.5) to (4.5,2.5) to (4.5,-0.5)to (0.7,-0.5);
        \draw[-,thick] (1.2,2) to (1,2) to[out=left,in=up] (0,1.5);
        \draw[-,thick]   (0,0.3)   to[out=down,in=left] (1,0)       to[out=right,in=down] (1.8,0.6);
      \draw[-,thick]   (1.8,1.4) to[out=up,in=right] (1,2);
                    \draw[-,thick](1.8,0.6)to(2.2,1.4); \draw[-,thick](1.8,1.4)to(2.2,0.6);
        \draw[-,thick] (3.2,2) to (3,2) to[out=left,in=up] (2.2,1.4);
           \draw[-,thick]           (2.2,0.6)  to[out=down,in=left](3,0);
           \draw[-,thick]             (3,0)
                        to[out=right,in=down] (4,1)
                        to[out=up,in=right] (3,2);
       \draw[-,thick]  (0,0.3) to[out=up, in=right] (-0.5,0.8) to[out=left,in=up] (-1,0);
            \draw[-,thick] (-1,0) to[out=down,in=right] (-1.5,-0.5) to[out=left,in=down] (-2,0)to (-2,1);
           \end{tikzpicture}
           \end{equation}
where    each rectangle    denotes  the diagram obtained from $d$ by removing   $D$.
  Let  $$\tilde d=\begin{tikzpicture}[baseline = 5pt, scale=0.5, color=\clr]
               \draw[-,thick]  (0,1.5) to[out=down, in=right] (-0.5,1) to[out=left,in=down] (-1,1.5);
 \draw[-,thick] (0.7,-0.5) to (0.7,2.5) to (4.5,2.5) to (4.5,-0.5)to (0.7,-0.5);
        \draw[-,thick] (1.2,2) to (1,2) to[out=left,in=up] (0,1.5);
        \draw[-,thick]   (0,0.3)   to[out=down,in=left] (1,0)       to[out=right,in=down] (1.8,0.6);
      \draw[-,thick]   (1.8,1.4) to[out=up,in=right] (1,2);
                    \draw[-,thick](1.8,0.6)to(2.2,1.4); \draw[-,thick](1.8,1.4)to(2.2,0.6);
        \draw[-,thick] (3.2,2) to (3,2) to[out=left,in=up] (2.2,1.4);
           \draw[-,thick]           (2.2,0.6)  to[out=down,in=left](3,0);
           \draw[-,thick]             (3,0)
                        to[out=right,in=down] (4,1)
                        to[out=up,in=right] (3,2);
       \draw[-,thick]  (0,0.3) to[out=up, in=right] (-0.5,0.8) to[out=left,in=up] (-1,0);
           \end{tikzpicture}.$$
 Then $\Delta(\tilde d)$  is obtained from $ \Delta(d)$ by removing one $\Delta_h$ (i.e., $ \Delta(d)=\Delta_h\Delta(\tilde d)$).
 Note that
   $\tilde d\sim \begin{tikzpicture}[baseline = 10pt, scale=0.5, color=\clr]
                \draw[-,thick] (0,0.5)to[out=up,in=down](0,1.5);
    \end{tikzpicture}\otimes d_1 \Delta(\tilde d) \sim \Delta(\tilde d)\begin{tikzpicture}[baseline = 10pt, scale=0.5, color=\clr]
                \draw[-,thick] (0,0.5)to[out=up,in=down](0,1.5);
    \end{tikzpicture}\otimes d_1$, where $\begin{tikzpicture}[baseline = 10pt, scale=0.5, color=\clr]
                \draw[-,thick] (0,0.5)to[out=up,in=down](0,1.5);
    \end{tikzpicture}\otimes d_1$ is obtained from $d_1$ by horizontal stacking the \begin{tikzpicture}[baseline = 10pt, scale=0.5, color=\clr]
                \draw[-,thick] (0,0.5)to[out=up,in=down](0,1.5);
    \end{tikzpicture} from the left. So, $\begin{tikzpicture}[baseline = 10pt, scale=0.5, color=\clr]
                \draw[-,thick] (0,0.5)to[out=up,in=down](0,1.5);
    \end{tikzpicture}\otimes d_1\in \bar{\mathbb {NB}}_{m+1,s+1}$.
           By induction assumption on $\sum _{i\geq 1}{k(d)_i}$,
        \begin{equation}\label{dedeff}
        \tilde d \equiv \pm~ \Delta(\tilde d)\begin{tikzpicture}[baseline = 10pt, scale=0.5, color=\clr]
                \draw[-,thick] (0,0.5)to[out=up,in=down](0,1.5);
    \end{tikzpicture}\otimes d_1 \equiv \pm~ \begin{tikzpicture}[baseline = 10pt, scale=0.5, color=\clr]
                \draw[-,thick] (0,0.5)to[out=up,in=down](0,1.5);
    \end{tikzpicture}\otimes d_1 \Delta(\tilde d)\pmod {\Hom_{\AB}(\ob {m+1},\ob {s+1})_{\le k-h-1}}.
        \end{equation}
               ~
Then we have
$$\begin{aligned}
d&\equiv \pm ~\Delta_h d_1\Delta(\tilde d), ~\text{~  by \eqref{jhuhd}--\eqref{dedeff}},\\
&\equiv \pm~\Delta_h \Delta(\tilde d)d_1, ~\text{ ~ by induction assumption on $\sum _{i\geq 1}{k(d)_i}$},\\
&\equiv \pm~  \Delta( d)d_1~\pmod {  ~ \Hom_{\AB}(\ob m,\ob s)_{\le k-1}}.
\end{aligned}
$$

\end{proof}

 \begin{Prop}\label{regularspan}
 Suppose  $m, s\in \mathbb N$.
 \begin{enumerate}
 \item The $\kappa$-module $\Hom_{\AB}(\ob m, \ob s)$ is spanned by  $\mathbb {ND}_{m,s}/\sim$.
 \item The $\kappa$-module $\Hom_{\CB^f}(\ob m, \ob s)$ is spanned by  $\mathbb {ND}^a_{m,s}/\sim$.
 \item The $\kappa$-module $\Hom_{\CB^f(\omega)}(\ob m, \ob s)$ is spanned by  $\bar{\mathbb {ND}}^a_{m,s}/\sim$.
 \end{enumerate}
\end{Prop}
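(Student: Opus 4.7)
The plan is to prove the three parts in succession, each building on the previous.

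For (1), since $\Hom_{\AB}(\ob m, \ob s)$ is already known to be spanned by $\mathbb D_{m,s}$, it suffices to rewrite every $d \in \mathbb D_{m,s}$ as a $\kappa$-linear combination of normally ordered diagrams. I would induct on $k = \text{deg}(d)$. The base case $k = 0$ follows from Proposition~\ref{equi123} together with Lemma~\ref{equivajssdj}(a). For the inductive step, Lemma~\ref{equivajssdj}(b) gives $d \equiv \pm \Delta(d) d_1 \pmod{\Hom_{\AB}(\ob m, \ob s)_{\le k-1}}$ for some $d_1 \in \bar{\mathbb{ND}}_{m,s}$, and the lower-order remainder is handled by the inductive hypothesis. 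The only remaining obstruction to normal ordering is that $\Delta(d)$ may contain bubbles $\Delta_i$ with $i$ odd, violating the evenness requirement on bubble dots. Using the invertibility of $2 \in \kappa$ together with Lemma~\ref{admissible}, I would rewrite each such $\Delta_i$ as a $\kappa$-combination of $\Delta_{i-1}$ and products $\Delta_{j-1}\Delta_{i-j}$, all strictly lower in total dot count; iterating together with the degree induction removes every odd-indexed bubble.

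For (2), the quotient $\CB^f$ inherits from (1) the spanning set $\mathbb{ND}_{m,s}/\sim$. The defining relation $f(X) = 0$ gives $X^a = -\sum_{j=0}^{a-1} b_j X^j$ on the first strand. Using the braiding $S$ together with the slide rules \eqref{AOBC relations}, this identity propagates to every strand (including the strand of each bubble) modulo strictly lower-dot corrections; on the level of bubbles this matches the observation $\Delta_k = -\sum_{j=1}^a b_{a-j}\Delta_{k-j}$ ($k\geq a$) noted in the paper just after Definition~\ref{COBC defn}. A straightforward induction on the maximum dot count per strand then replaces every normally ordered diagram by a $\kappa$-combination of elements in $\mathbb{ND}^a_{m,s}/\sim$.

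For (3), in $\CB^f(\omega)$ each bubble $\Delta_k$ is identified with the scalar $\omega_k$. Because every $d \in \mathbb{ND}^a_{m,s}$ factors cleanly as a product of its bubbles $\Delta(d) \in \End_{\AB}(\ob 0)$ (using commutativity of $\End_{\AB}(\ob 0)$) and a bubble-free diagram in $\bar{\mathbb{ND}}^a_{m,s}$, specializing the bubbles to scalars immediately yields spanning by $\bar{\mathbb{ND}}^a_{m,s}/\sim$.

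The main obstacle is the bookkeeping in (1): one must verify that the recursion from Lemma~\ref{admissible} terminates without reintroducing odd-indexed bubbles of equal or higher degree. This is controlled by the strict degree drop in \eqref{admidekts} combined with the outer induction on $\text{deg}(d)$. A secondary subtlety arises in (2): since $I$ is merely a right (not two-sided) tensor ideal, transporting the relation $f(X) = 0$ from the first strand to an arbitrary strand requires a short argument using the braiding $S$ and the slide relations of \eqref{AOBC relations}, but the corrections produced are strictly lower in dot count and so are absorbed by the inner induction.
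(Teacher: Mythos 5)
Your proposal is correct and follows essentially the same route as the paper: an induction on the total degree $\deg(d)$, with Lemma~\ref{equivajssdj} supplying the normal-ordering step modulo lower degree, Lemma~\ref{admissible} (plus invertibility of $2$) eliminating odd-dotted bubbles by dropping the degree, the movements of \eqref{movements} together with \eqref{slideleft} transporting $f(X)=0$ to an arbitrary strand for part (2), and the specialization $\Delta_k=\omega_k$ for part (3). The only cosmetic difference is that the paper slides all $a$ dots to the leftmost boundary before invoking $f(X)=0$ rather than propagating the relation strand by strand, but the corrections and the double induction are identical in substance.
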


 \begin{proof}If $2\nmid (m+s)$, then $\mathbb D_{m,s}=\emptyset$  and hence the result is trivial. Suppose   $2\mid (m+s)$.
In order to prove (a), thanks to Lemma~\ref{reguequai}, it is enough to show that $\Hom_{\AB}(\ob m, \ob s)_{\leq k}$ is spanned by  $\{d\in \mathbb {ND}_{m,s} \mid \text{deg}(d)\le k\}$, $\forall  k\in \mathbb N$.

Suppose that $d\in \mathbb D_{m,s}$ such that $\text{deg}(d)=k$.
By lemma~\ref{equivajssdj},
$$d\equiv \pm    \Delta_0^{k(d)_0} \Delta_1^{k(d)_1}\ldots d_1\pmod {\Hom_{\AB}(\ob m,\ob s)_{\le k-1}},$$
where $d_1$ and $k(d)_i$'s are given in lemma~\ref{equivajssdj}(a).
If $ k(d)_{j}\neq 0$ for some odd number  $j$,  by Lemma~\ref{admissible}, we have  $d\in \Hom_{\AB}(\ob m, \ob s)_{\leq k-1}$. Otherwise, $\Delta_0^{k(d)_0} \Delta_2^{k(d)_2}\ldots d_1\in \mathbb {ND}_{m,s}$. Note that $ \Hom_{\AB}(\ob m,\ob s)_{\le -1}=0$.  Using inductive assumption on $k$ yields
  (a), as required.

  Suppose  $d\in \mathbb D_{m, s}$   and $\text{deg}(d)=k$.
 If $d$ has $a$ $\bullet$'s on  one of its strands, we can use movements I, II in \eqref{movements} repeatedly  to  get a $d'\in \mathbb D_{m, s}$ such that the $a$ $\bullet$'s  are on a segment  of the leftmost boundary of $d'$.
  By \eqref{slideleft},
    $$d\equiv \pm d' \pmod{\Hom_{\CB^f}(\ob m, \ob s)_{\le k-1}}, $$
where $\Hom_{\CB^f}(\ob m, \ob s)_{\leq k}$ is defined similarly as $\Hom_{\AB}(\ob m, \ob s)_{\leq k}$.  Since $f(X)=0$ in $\CB^f$, $X^a$ can be written  as a linear combination of $X^b$'s,  $0\le b\leq a-1$.   So $d'\in \Hom_{\CB^f}(\ob m, \ob s)_{\leq k-1}$.
   Using inductive assumption on $k$ yields
  (b), as required. Finally,  since $\Delta_k1_{\ob m}=\omega_k1_{\ob m}$ in $\CB^f(\omega)$ for $k,m\in\mathbb N$,  (c) follows from the proof of   (b).  \end{proof}

Suppose  $\mfg\in\{\mathfrak {so}_N,\mathfrak {sp}_N\}$.  Thanks to  \cite[(2.11)]{DRV}, we define
\begin{equation} \label{cashi} \Omega=\frac{1}{2} \sum_{i, j\in \underline N} F_{i, j}\otimes F_{j, i}.\end{equation}
Then $\Omega=\frac {1}{2} ( \Delta(C)-C\otimes 1-1\otimes C)$ where $\Delta:\U(\mfg)\rightarrow \U(\mfg)\otimes \U(\mfg)$ is the co-multiplication and
$C$ is the quadratic Casimir element.
For any two objects $M,L$ in  $\U(\mathfrak g)\text{-mod}$, define
\begin{equation}
\Omega(m\otimes l)=\frac{1}{2} \sum_{i, j\in \underline N} F_{i, j}m\otimes F_{j, i}l, \quad m\in M, l\in L.
\end{equation}
Since $C$ is a central element in $\U(\mathfrak g)$, $\Omega\in \End_{\U(\mathfrak g)}(M\otimes L)$.

The category
$\END(\U(\mathfrak g)\text{-mod})$
is a  monoidal category. The unit object is the identity functor $ \text{Id}$. The tensor product of two functors
$F,H: \END(\U(\mathfrak g)\text{-mod})\rightarrow\END(\U(\mathfrak g)\text{-mod})$ is  defined as $F\otimes H:= H\circ F$. The tensor product of two
 natural transformations $\zeta:F\rightarrow G$ and $ \xi: H\rightarrow K $ is defined as $ (\zeta\otimes \xi)_{\ob a}:=\xi_{G(\ob a)}\circ H(\zeta_{\ob a})$ for any
 $\ob a\in\U(\mathfrak g)\text{-mod} $.
\begin{Theorem}\label{affaction}
There is a monoidal functor $\Psi:\AB\to\END(\U(\mathfrak g)\text{-mod})$ such that $\Psi(\ob 0)=\text{Id}$,  $\Psi(\ob 1)=-\otimes V$.
Moreover, for any  $m$ in  a  $\U(\mathfrak g)$-module $M$ and any $v\in V$,
\begin{equation} \label{act111}
\begin{aligned}
    &\Psi(U)_{M}
     =\text{Id}_M\otimes \Phi(U), \ \
    \Psi(A)_{M}=\text{Id}_M\otimes \Phi(A) ,\ \ \Psi(S)_{M}  =\text{Id}_M\otimes \Phi(S),\\
     &    \Psi(X)_{M}(m \otimes v)= \varepsilon_\mfg(\Omega+\frac{1}{2}(1\otimes C))(m\otimes v ),
\end{aligned}\end{equation}
where $\Phi$ is the monoidal functor  in Proposition~~\ref{level1s}.
\end{Theorem}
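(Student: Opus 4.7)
\textbf{Proof plan for Theorem~\ref{affaction}.} The task splits into two parts: (i) verify that each $\Psi(g)$ with $g\in\{U,A,S,X\}$ is a natural transformation of endofunctors of $\U(\mfg)\text{-mod}$, and (ii) verify that $\Psi$ respects the defining relations \eqref{OBR1}--\eqref{OBR4} and \eqref{AOB1}--\eqref{AOB2} of $\AB$.

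For (i), naturality of $\Psi(U),\Psi(A),\Psi(S)$ is automatic because each acts only on the $V$-tensor slots via $\Phi(g)$, which is $\U(\mfg)$-linear by Proposition~\ref{level1s}. For $\Psi(X)$, the key observation is that $\Omega=\tfrac12\bigl(\Delta(C)-C\otimes 1-1\otimes C\bigr)$, with $C\in\U(\mfg)$ central; hence $\Omega$ acts on $M\otimes V$ by a $\U(\mfg)$-endomorphism, and $1\otimes C$ clearly does too, so $\Psi(X)_M$ is $\U(\mfg)$-linear. Naturality in $M$ then follows because, for any $\U(\mfg)$-map $f:M\to M'$, the operators $\Omega_{M,V}$ and $\Omega_{M',V}$ intertwine $f\otimes\id_V$, as they are built from $\U(\mfg)$-action on both slots.

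For the non-affine relations \eqref{OBR1}--\eqref{OBR4}, since $\Psi(g)_M=\id_M\otimes\Phi(g)$ for $g\in\{U,A,S\}$ and the functor $-\otimes V^{\otimes k}$ preserves composition, these follow directly from the corresponding identities for $\Phi$ established in Proposition~\ref{level1s}.

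The nontrivial content is (ii) for \eqref{AOB1} and \eqref{AOB2}. For \eqref{AOB1}, I will unfold both sides on $M\otimes V\otimes V$, using the fact that $\Omega$ commutes with the flip $\tau$ (since $\Omega=\tfrac12\sum F_{i,j}\otimes F_{j,i}$ is symmetric after relabelling $i\leftrightarrow j$). After the Casimir terms $\tfrac12(1\otimes C)$ cancel, the identity reduces to the single key relation on $V\otimes V$:
\[
\Omega_{V,V}\;=\;\varepsilon_{\mfg}\bigl(\Phi(S)-\Phi(U)\circ\Phi(A)\bigr),
\]
which encodes the Casimir eigenvalues on the trivial, symmetric, and antisymmetric summands of $V\otimes V$ for the vector representation of $\mfg$. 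This identity will be checked by direct computation using the explicit basis \eqref{basis} of $\mfg$ together with \eqref{fij}--\eqref{vistar}. For \eqref{AOB2}, I expand $(1_{\ob1}\otimes A)\circ(1_{\ob1}\otimes X\otimes 1_{\ob1})\circ(U\otimes 1_{\ob1})$ on $m\otimes v\in M\otimes V$ using $\Phi(U)(1)=\sum_i v_i\otimes v_i^*$, obtaining three contributions: the $\Omega_{M,V_2}$-part contracts through the form $(\,,\,)$ against $\sum_i v_i\otimes v_i^*$ to yield $-\Omega_{M,V}(m\otimes v)$ (using skew-symmetry of $F_{a,b}$ and $\sum_i v_i\,(v_i^*,x)=x$); the $\Omega_{V_1,V_2}$-part yields $-m\otimes Cv$ (using $C=\tfrac12\sum_{a,b}F_{a,b}F_{b,a}$, the form of the Casimir compatible with \eqref{cashi}); and the Casimir term $\tfrac12(1\otimes 1\otimes C)$ yields $+\tfrac12(m\otimes Cv)$. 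Summing with the overall $\varepsilon_{\mfg}$ gives $-\varepsilon_{\mfg}\bigl(\Omega+\tfrac12(1\otimes C)\bigr)(m\otimes v)=-\Psi(X)_M(m\otimes v)$, as required. The second equality in \eqref{AOB2} follows symmetrically (or by applying the already-verified relation \eqref{OB relations 2 (zigzags and invertibility2)} combined with Lemma~\ref{capre}).

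The main obstacle is the careful tracking of the convention $F\otimes H=H\circ F$ in $\END(\U(\mfg)\text{-mod})$ and the resulting formula $(\zeta\otimes\xi)_{\ob a}=\xi_{G(\ob a)}\circ H(\zeta_{\ob a})$, to be sure that $\Psi(X\otimes 1_{\ob1})_M$ acts as ``$X$ on the $(M,V_1)$-slots tensored with $\id_{V_2}$'' while $\Psi(1_{\ob1}\otimes X)_M$ acts as $\Psi(X)_{M\otimes V}$, so that the cross-term $\Omega_{V_1,V_2}$ emerges in the difference; and the verification of the quadratic identity for $\Omega_{V,V}$, which is essentially the source of the Brauer relation in all of this.
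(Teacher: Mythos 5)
Your proposal is correct and the reductions you carry out do arrive at the right identities (I checked that \eqref{AOB1} does reduce, after the Casimir constants cancel and the $\Omega_{M,V_1}$/$\Omega_{M,V_2}$ terms are absorbed by conjugation with the flip, to $\Omega_{V,V}=\varepsilon_\mfg(\Phi(S)-\Phi(U)\circ\Phi(A))$, and that your three contributions for \eqref{AOB2} sum to $-\varepsilon_\mfg(\Omega+\tfrac12(1\otimes C))$ as claimed, granted $C=\tfrac12\sum_{a,b}F_{a,b}F_{b,a}$, which is forced by the paper's normalization $\Omega=\tfrac12(\Delta(C)-C\otimes1-1\otimes C)$). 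The route is genuinely different from the paper's, though. The paper does not verify the quadratic identity for $\Omega_{V,V}$ at all: it computes the Casimir eigenvalue $N-\epsilon_\mfg$ on $V$, rewrites $\Psi(X)_M$ as $\varepsilon_\mfg(\Omega+\tfrac12(N-\epsilon_\mfg))$, and then imports the three operator identities \eqref{reqqq} wholesale from \cite[Theorem~2.2]{DRV}; the first of these is exactly \eqref{AOB1}. For \eqref{AOB2} the paper uses a short algebraic trick instead of your direct expansion: from $(X_1+X_2)\Psi(U)_M\Psi(A)_M=0$ it composes on the right with $\Psi(U)_M$ and cancels the invertible scalar $\Psi(A)_M\Psi(U)_M=\varepsilon_\mfg N$ to get $(X_1+X_2)\Psi(U)_M=0$, i.e.\ Lemma~\ref{capre}, and then invokes Lemma~\ref{cp2} to recover \eqref{OB relations 2 (zigzags and invertibility2)}. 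What your approach buys is self-containedness and transparency (the source of the Brauer relation is exhibited explicitly, and no nonvanishing of $N$ is needed); what it costs is the basis computation of $\Omega_{V,V}$ on $V\otimes V$, which you correctly flag as the main remaining work and which is precisely the content the paper delegates to the citation. Your explicit treatment of naturality and of the $F\otimes H=H\circ F$ convention is a point the paper leaves implicit, and it is worth spelling out as you do. One small caution: when you say the second equality of \eqref{AOB2} follows "by applying the already-verified relation \eqref{OB relations 2 (zigzags and invertibility2)}", note that this relation \emph{is} \eqref{AOB2}; the clean justification is the symmetric direct computation (or Lemma~\ref{capre}(2) plus the argument of Lemma~\ref{cp2}).
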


\begin{proof}    Thanks to  Proposition~\ref{level1s}, $\Psi(U)_{M}, \Psi(A)_{M}$ and $\Psi(S)_{M}$ satisfy the defining relations of $\B$. This verifies
\eqref{OB relations 1 (symmetric group)}--\eqref{Brauer relation 4}.
It is well known that   $C$ acts  on  $L$   via   $(\lambda,\lambda+2\rho)$ (see e.g. \cite[Lemma~8.5.3]{Mu}), where
$L$ is a highest weight $\U(\mfg)$-module with the highest weight $\lambda$ and
\begin{equation}\label{rho123} \rho=\begin{cases}
                         \sum_{i=1}^n (n-i+1)\varepsilon_i, & \text{if $\mathfrak{g}$=$\mathfrak {sp}_{2n}$,} \\
                         \sum_{i=1}^n (n-i)\varepsilon_i, & \text{if $\mathfrak{g}$=$\mathfrak {so}_{2n}$,} \\
                         \sum_{i=1}^n (n-i+\frac{1}{2})\varepsilon_i, & \text{if $\mathfrak{g}$=$\mathfrak {so}_{2n+1}$.}
                        \end{cases}
                         \end{equation} Since $V$ is the irreducible highest weight module with the highest weight $\epsilon_1$,   $C$ acts on $V$ via the scalar $N-\epsilon_\mfg$.
 So,
   \begin{equation}\label{actionofxxx}
   \Psi(X)_M=\varepsilon_\mfg(\Omega+\frac{1}{2}(N-\epsilon_g))|_{M\otimes V}.
   \end{equation}
Define
\begin{equation}\label{defofxk}
X_k=\varepsilon_\mfg(\Omega+\frac{1}{2}(N-\epsilon_g))|_{(M\otimes V^{\otimes k-1})\otimes V}, \text{ for } k=1,2,\ldots.
\end{equation}
It follows from the proof of \cite[Theorem~2.2]{DRV} that \begin{equation}\label{reqqq}\begin{aligned} &
(X_1\otimes \text{Id}_V) \Psi(S)_{M}-\Psi(S)_{M}  X_2=\Psi(U)_{M}\Psi(A)_{M}- \text{Id}_{M\otimes V^{\otimes 2}},\\
& (X_1\otimes\text{Id}_V+X_2) \Psi(U)_{M}\Psi(A)_{M} =0,\\
& \Psi(U)_{M}\Psi(A)_{M}(X_1\otimes\text{Id}_V+X_2)=0.\end{aligned}\end{equation}
Thus,
$(X_1\otimes\text{Id}_V+X_2) \Psi(U)_{M}\Psi(A)_{M}\Psi(U)_{M}=0$ and $\Psi(A)_{M} \Psi(U)_{M}\Psi(A)_{M}(X_1\otimes\text{Id}_V+X_2)=0$.
By \eqref{ewjdngn},  $ \Psi(A)_{M}\Psi(U)_{M}$ can be identified with the non-zero scalar $\varepsilon_{\mathfrak g}N$. So,
$(X_1\otimes\text{Id}_V+X_2) \Psi(U)_{M}=0$ and $\Psi(A)_{M}(X_1\otimes\text{Id}_V+X_2)=0$. Therefore, two equations in Lemma~\ref{capre}(1)-(2) are satisfied. Thanks to  Lemma~\ref{cp2},  \eqref{OB relations 2 (zigzags and invertibility2)} is satisfied. Finally,
 \eqref{AOBC relations} follows from the first equation in \eqref{reqqq}.
\end{proof}


We consider $\U(\fh )$ as a $\U(\fb )$-module by inflation, where $\fb=\fn^+\oplus \fh$.
Motivated by \cite{CK}, define \begin{equation}\label{geri}
 M^{\rm gen}:=\U(\mathfrak g )\otimes_{\U(\fb)} \U(\fh )
\end{equation}
and call it the \emph{generic Verma module} later on.
Let $\left\{f_{1}, \dotsc , f_{s} \right\}$  be a basis of $\fn^{-}$  such that
$f_i=F_{k,j}$ for some $k,j\in\underline N$.
Thanks to  the PBW theorem,  $M^{\rm gen} $ is a free right $\U(\fh)$-module with   basis given by  all  elements
\begin{equation}\label{M basis}
f_1^{a_1}\cdots f_s^{a_s}\otimes1, \quad a_k\in\mathbb N, 1\leq k\leq s.
\end{equation}
Since $\U(\fh)\cong \mathbb C[h_1,\ldots,h_n]$ (see \eqref{hi}),
 there is a natural  $\Z$-grading on $\U(\fh)$  by setting   $\text{deg}h_{i}=1$ for all admissible $i$. Let  $ \U (\fh)_{\le t}
 $ be the subspace  spanned by all monomials in $\U(\fh)$ with degree less than or equal to $t$.
There is a $\Z$-graded  filtration $0\subset M^{\rm gen}_0\subset M^{\rm gen}_1\subset \ldots$ of $M^{\rm gen} $   such that $M^{\rm gen}_{t}$ is  spanned by all $f_1^{a_1}\cdots f_s^{a_s}\otimes \U (\fh)_{\le t}$ for all $a_k$'s$\in\mathbb N$.   This gives rise to a $\mathbb Z$-graded
 structure on  the free right $\U(\fh) $-module $M^{\rm gen}  \otimes V^{\otimes r}$ with  basis given by  all  elements
 \begin{equation}\label{Mr basis}
 f_1^{a_1}\cdots f_s^{a_s}\otimes h_1^{b_1}\cdots h_n^{b_n}\otimes v_{\mathbf i}, \quad \mathbf i\in\underline N^r, a_k, b_j\in\mathbb N, 1\leq k\leq s, 1\leq j\leq n,
\end{equation}
where $v_{\mathbf i} =v_{i_1}\otimes \ldots \otimes v_{i_r}$.
 Later on, we say that  the basis element in \eqref{Mr basis}
 is  of degree $\sum_{i=1}^n b_i$.
 We write any $x\in M^{\rm gen}  \otimes V^{\otimes r}$ as a linear combination of elements in \eqref{Mr basis}.  For any $y$ in \eqref{Mr basis}, we say $y$ is a term of $x$ if $y$ appears  in this expression  with non-zero coefficient.


\begin{Lemma}\label{eM filtered degree 1}
    For any  $1\leq i,j\leq n$,  let $\phi_{\pm i,\pm j}$ (resp., $\phi_{\pm i}$)$:M^{\rm gen} \rightarrow M^{\rm gen}  $ be the linear  map
    such that $\phi_{\pm i,\pm j}(m)=F_{\pm i,\pm j}m$ (resp., $\phi_{\pm i}(m)=F_{0,\pm i}m$) for any $m\in M^{\rm gen} $.
     Then
    \begin{enumerate}
        \item[(1)] $\phi_{-i,\pm j}$ (resp., $\phi_{i}$, $\phi_{-i,i}$ ) is homogeneous of degree 0 for  $i<j$  (resp., $1\leq i\leq n$),
        \item[(2)] $\phi_{i,\pm j}$ (resp., $\phi_{-i}$, $\phi_{i,-i}$, $\phi_{i,i}$) is of  filtered degree $1$ for     $i<j$  (resp., $ 1\leq i\leq n$),
        \item[(3)]  $X_k\in \End_{\mathbb C} (M^{\rm gen} \otimes V^{\otimes k})$ is of  filtered degree $1$, where $X_k$ is given in \eqref{defofxk}.
    \end{enumerate}
\end{Lemma}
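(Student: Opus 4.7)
The plan is to exploit the triangular decomposition $\mfg=\fn^-\oplus\fh\oplus\fn^+$. First I would check, via a routine weight computation using the $\varepsilon_k$-basis, that all elements listed in (1), namely $F_{-i,\pm j}$ with $i<j$, $F_{0,i}$, and $F_{-i,i}$, are negative root vectors and hence lie in $\fn^-$, while all elements listed in (2), namely $F_{i,\pm j}$ with $i<j$, $F_{0,-i}$, and $F_{i,-i}$, are positive root vectors lying in $\fn^+$, together with $F_{i,i}=h_i\in\fh$.

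For part (1), each such $F$ lies in $\fn^-$, so left multiplication sends the basis element $f_1^{a_1}\cdots f_s^{a_s}\otimes 1$ to $(F\cdot f_1^{a_1}\cdots f_s^{a_s})\otimes 1\in\U(\fn^-)\otimes 1$, which by the PBW theorem is a $\mathbb Z$-linear combination of basis vectors of the form \eqref{M basis} with $b_1=\cdots=b_n=0$, hence of degree zero.

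For part (2), I would split into two subcases. When $F=h_i\in\fh$, each $f_k$ is a weight vector, so $[h_i,f_k]=c_k f_k$ for some scalar $c_k$, and iterating this relation yields
$$
h_i\cdot f_1^{a_1}\cdots f_s^{a_s}\otimes 1
=f_1^{a_1}\cdots f_s^{a_s}\otimes h_i+\bigg(\sum_{k=1}^s a_k c_k\bigg)f_1^{a_1}\cdots f_s^{a_s}\otimes 1,
$$
which has filtered degree $1$. When $F\in\fn^+$, I would induct on $|\alpha|:=\sum_j a_j$. The base case $|\alpha|=0$ is immediate since $F\otimes 1=1\otimes(F\cdot 1)=0$ in $M^{\rm gen}$, because $\fn^+$ annihilates the $\U(\fb)$-module $\U(\fh)$. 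For the inductive step, factor the monomial as $f_{k_0}m$ with $m$ of length $|\alpha|-1$ and write
$$
F\cdot(f_{k_0}m)\otimes 1=f_{k_0}\cdot(Fm\otimes 1)+[F,f_{k_0}]m\otimes 1.
$$
The first summand has filtered degree at most $1$ by the inductive hypothesis, since left multiplication by $f_{k_0}\in\fn^-$ preserves filtered degree by part (1). For the second summand, I would decompose $[F,f_{k_0}]\in\mfg$ along $\fn^-\oplus\fh\oplus\fn^+$ and treat the three components using (1), the $\fh$-case just proved, and the inductive hypothesis respectively (the last being applicable because $m$ has length $|\alpha|-1<|\alpha|$).

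Part (3) then follows from (1) and (2) applied to $X_k=\varepsilon_\mfg(\Omega+\tfrac{1}{2}(N-\varepsilon_\mfg))|_{(M^{\rm gen}\otimes V^{\otimes k-1})\otimes V}$ derived in \eqref{actionofxxx}. The scalar piece $\tfrac{1}{2}(N-\varepsilon_\mfg)$ is of degree $0$. For $\Omega=\tfrac12\sum_{i,j\in\underline N}F_{i,j}\otimes F_{j,i}$, primitivity of each $F_{i,j}$ together with the iterated coproduct makes $F_{i,j}$ act on $M^{\rm gen}\otimes V^{\otimes k-1}$ as a sum of actions on individual tensor factors: the action on the $V$-factors preserves the $\U(\fh)$-filtration (since $V$ carries the trivial filtration), while the action on the leading $M^{\rm gen}$-factor has filtered degree at most $1$ by (1) and (2); since $F_{j,i}$ acts on $V$ with filtered degree $0$ and the sum is finite, $X_k$ has filtered degree $1$. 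The hard part is the $\fn^+$-case of (2): one must ensure that when $[F,f_{k_0}]$ has an $\fn^+$-component, it can still be dispatched without circular reasoning, which works precisely because the induction is set up on $|\alpha|$ and that component acts on the strictly shorter monomial $m$.
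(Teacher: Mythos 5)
Your argument is correct and is essentially the paper's own proof, only written out in more detail: the paper likewise reduces everything to the commutator formula \eqref{comc} and an induction on $\sum_{j}a_j$, observing that elements of $\fn^-$ act with degree $0$ while elements of $\fn^+\oplus\fh$ act with filtered degree $1$, and then deduces (3) from the definition of $\Omega$ in \eqref{cashi}. Your explicit handling of the three components of $[F,f_{k_0}]$ in the inductive step is exactly the content the paper leaves implicit.
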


\begin{proof}Thanks to \eqref{fij}, we have \begin{equation} \label{comc}[F_{i,j}, F_{k, l}]=\delta_{k, j} F_{i, l}-\delta_{i, l} F_{k, j}+\delta_{k, -i} F_{-l, j}+\delta_{l,-j}F_{k, -i}\end{equation}  for all admissible $i, j, k$ and $l$.
So, $F_{k,l}$ maps $f_1^{a_1}\cdots f_s^{a_s}\otimes1$ to  $M^{\rm gen}_0$ (resp.,$M^{\rm gen}_{\leq 1}$)  if $F_{k,l}\in \mathfrak n^-$ (resp.,  $\mathfrak n^+\oplus\fh$). In the later case, one needs to use  induction on $\sum_{i=1}^sa_i$ together with \eqref{comc}.
Now, (1) and (2)  follow immediately from  this observation   together with the fact   that  $M^{\rm gen} $ is a $(\U(\mathfrak g ), \U(\fh))$-bimodule with the grading coming from the right $\U(\fh)$-action. Finally, (3) follows from the definition of $\Omega$ in \eqref{cashi} together with  (1)--(2).
    \end{proof}

\begin{Lemma}\label{x1 calculation} Let $\hat m:=1\otimes 1\in M^{\rm gen}$.
\begin{enumerate}
 \item[(1)] If $1\leq i\leq n$, then  $\varepsilon_\mfg\hat m h_i\otimes v_i$ (resp.,  $-\varepsilon_\mfg\hat m h_i\otimes v_{-i}$) is the unique  term of    $\Psi_{M^{\rm gen}}(X)(\hat m\otimes v_i)$ (resp., $\Psi_{M^{\rm gen}}(X)(\hat m\otimes v_{-i})$) with degree $1$.
      \item [(2)]All   terms   of  $\Psi_{M^{\rm gen}}(X)(\hat m\otimes v_0)$  are   of degree $0$.
 \item[(3)]When  $k$ is a  positive even integer,  $2\varepsilon_{\mathfrak g}\hat m(h_n^k+\ldots+h_1^k)$  is the unique term   of  $\Psi_{M^{\rm gen}}(\Delta_k)(\hat m)$ with the highest  degree $k$.
 \item[(4)] If $i_k\neq 0$ for some $1\leq k\leq r$, then, up to a sign,  $\hat m h_{|i_k|}\otimes v_{i_1}\otimes\ldots\otimes v_{i_r}$ is the unique term  of  $\Psi_{M^{\rm gen}}(x_k)(\hat m\otimes v_{i_1}\otimes\ldots\otimes  v_{i_r})$ with  degree $1$, where $x_k$ is given in \eqref{defofxy}.
 \item [(5)]    If $i_k= 0$ for some $1\leq k\leq r$, then all  terms   of  $\Psi_{M^{\rm gen}}(x_k)(\hat m\otimes v_{i_1}\otimes\ldots\otimes  v_{i_r})$ are of degree $0$.
\end{enumerate}
\end{Lemma}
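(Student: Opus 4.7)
My plan is to reduce all five claims to a single explicit computation of the degree-$1$ part of $\Omega(\hat m \otimes v_p)$ for $p \in \underline N$. Expanding $\Omega = \tfrac12 \sum_{i,j \in \underline N} F_{i,j} \otimes F_{j,i}$, a summand $F_{a,b}\hat m \otimes F_{b,a}v_p$ contributes to filtration degree $1$ only when $F_{a,b}\hat m$ has a degree-$1$ component. By Lemma~\ref{eM filtered degree 1} and the inflation structure on $M^{\rm gen}$, we have $F_{a,b}\hat m = 0$ for $F_{a,b} \in \fn^+$ (since $\fn^+$ acts as zero on $\U(\fh)$) and $F_{a,b}\hat m$ is of filtration degree $0$ for $F_{a,b} \in \fn^-$, leaving only the Cartan contributions, i.e.\ $a = b = p$ with $F_{p,p} = \text{sgn}(p) h_{|p|}$ for $p \ne 0$. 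Regrouping the $p$ and $-p$ summands reduces the Cartan piece of $\Omega$ to $\sum_{q=1}^n h_q \otimes h_q$, whose action on $\hat m \otimes v_p$ gives $\sum_q \hat m h_q \otimes h_q v_p = \text{sgn}(p)\hat m h_{|p|} \otimes v_p$ for $p \ne 0$ and $0$ for $p = 0$. Multiplying by $\varepsilon_\mfg$ and observing that the scalar summand $\tfrac12(N-\varepsilon_\mfg)$ is of degree $0$ establishes (1) and (2).

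For (4) and (5) the same computation carries over with $\hat m$ replaced by $m_0 := \hat m \otimes v_{i_1} \otimes \ldots \otimes v_{i_{k-1}}$ in the role of the $M$-factor for $X_k$: expanding $F_{a,b} m_0$ by the coproduct of $\U(\mfg)$, every summand other than $F_{a,b}\hat m \otimes v_{i_1} \otimes \ldots \otimes v_{i_{k-1}}$ keeps $\hat m$ intact and is therefore of filtration degree $0$. Hence the filtration-degree-$1$ part of $X_k(\hat m \otimes v_{i_1} \otimes \ldots \otimes v_{i_k})$ is obtained by the identical analysis applied to the $k$-th slot, yielding (4) when $i_k \ne 0$ and (5) when $i_k = 0$.

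For (3) I use $\Delta_k = A \circ (X \otimes 1_{\ob 1})^k \circ U$ and track the top-degree contribution through each stage. Starting from $\Psi(U)(\hat m) = \hat m \otimes \sum_p v_p \otimes v_p^*$ via \eqref{vistar}, I first prove the iterated statement
\begin{equation*}
\Psi(X)(\hat m h_{|p|}^j \otimes v_p) \equiv \varepsilon_\mfg \text{sgn}(p)\, \hat m h_{|p|}^{j+1} \otimes v_p \pmod{\text{degree} \le j},
\end{equation*}
which follows from the same Cartan-only analysis: $h_q(\hat m h_{|p|}^j) = \hat m h_q h_{|p|}^j$ produces a unique degree-$(j+1)$ contribution under the pairing with $h_q v_p$, namely at $q = |p|$. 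Iterating gives $\Psi(X)^k(\hat m \otimes v_p) \equiv (\varepsilon_\mfg \text{sgn}(p))^k \hat m h_{|p|}^k \otimes v_p$ modulo lower degree, and applying $\Psi(A)$ contracts $v_p \otimes v_p^*$ to the scalar $(v_p, v_p^*) = \varepsilon_\mfg$ for $p \ne 0$. The contributions from $p = \pm i$ therefore combine as $\varepsilon_\mfg^{k+1}(1 + (-1)^k)\hat m h_i^k$, which vanishes for $k$ odd and equals $2\varepsilon_\mfg \hat m h_i^k$ for $k$ even; the $p = 0$ term (in type $B$) contributes only at lower degree by (2). Summing over $i$ yields the claim.

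The main obstacle will be careful bookkeeping of signs: reconciling the $\pm$ from $F_{p,p} = h_{|p|}$ versus $F_{-p,-p} = -h_{|p|}$, the $\varepsilon_\mfg$ coming from the dual basis \eqref{vistar}, and the $\theta$-factors in the defining relations for $\mfg$, and verifying that these conspire so that the $p = i$ and $p = -i$ contributions add (rather than cancel) precisely when $k$ is even.
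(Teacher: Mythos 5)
Your proposal is correct, and for parts (1)--(3) it is essentially the paper's argument: the paper likewise observes that $F_{i,j}\hat m$ has degree $0$ unless $i=j$, that $F_{i,i}\hat m=\hat m h_i$ and $F_{-i,-i}\hat m=-\hat m h_i$ for $i>0$, and then pushes this through $\Delta_k=A\circ(X\otimes 1_{\ob 1})^k\circ U$ using \eqref{vistar}. Your explicit iterated statement $\Psi(X)(\hat m h_{|p|}^j\otimes v_p)\equiv\varepsilon_\mfg\,\mathrm{sgn}(p)\,\hat m h_{|p|}^{j+1}\otimes v_p$ modulo degree $\le j$ is actually a small improvement in rigor: the paper invokes ``(1)--(2)'' to justify the $k$-fold application of $X$, even though (1) as stated only covers $\hat m$ itself, so making the induction on $j$ explicit (and checking that $h_q(\hat m h_{|p|}^j)=\hat m h_qh_{|p|}^j$ while $\fn^+$ kills $1\otimes\U(\fh)$) closes that gap cleanly. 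Where you genuinely diverge is in (4)--(5): the paper writes $x_k-S_{(k,1)}\circ x_1\circ S_{(k,1)}$ as a combination of undotted Brauer diagrams via Lemma~\ref{slides relations vertical}(b), notes that $\Psi_{M^{\rm gen}}(d)$ for $d\in\mathbb B_{r,r}$ preserves degree $0$, and thereby reduces to the $k=1$ case; you instead expand the first tensor leg of $\Omega$ through the iterated coproduct on $M^{\rm gen}\otimes V^{\otimes k-1}$ and observe that any summand acting on a $V$-factor leaves $\hat m$ intact and hence contributes only in degree $0$. Both are valid; your route is more direct and self-contained, while the paper's swap trick is the one that generalizes to the later computations (e.g.\ Lemma~\ref{top component lemma}) where the $M$-factor is no longer just $\hat m$. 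The sign bookkeeping you flag does work out exactly as you describe: the Cartan piece of $\Omega$ collapses to $\sum_{q=1}^n h_q\otimes h_q$, and the $p=\pm i$ contributions to (3) carry coefficients $\varepsilon_\mfg^{k+1}$ and $\varepsilon_\mfg^{k+1}(-1)^k$, which add to $2\varepsilon_\mfg$ precisely when $k$ is even.
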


\begin{proof} If $i\neq j$, then $F_{i,j}\notin\mathfrak h$ and  $F_{i,j} \hat m$ is of degree $0$. When $i=j$, we have
     $F_{i,i} \hat m=\hat m h_i$ and $F_{-i,-i}\hat m =-\hat m h_i$ for $i>0$. Recall $ \Psi_{M^{\rm gen}}(X)$ is given in \eqref{actionofxxx}. Thus,  both   (1) and (2) follow immediately from the definition of $\Omega$ in  \eqref{cashi}.

Suppose $k$ is a positive even integer.   By Lemma~\ref{eM filtered degree 1}(3), up to a linear combination of some terms with degree less than $k$,  we have
\begin{align*}
        \Psi_{M^{\rm gen}}(\Delta_k)(\hat m)\overset{\eqref{modstr}}=&\Psi_{M^{\rm gen}}(A)\circ \Psi_{M^{\rm gen}}(X^k)\otimes \text{Id}_V(\hat m\otimes \sum_{i\in\underline N}v_i\otimes v^*_{i})\\
                             =\ \ & \Psi_{M^{\rm gen}}(A)(\sum_{1\leq i\leq n}(\varepsilon_{\mathfrak g}\hat m h_i^k \otimes v_i\otimes v_{-i} +(-1)^k\hat mh_i^k \otimes v_{-i}\otimes v_i) ), \text{ by (1)-(2) and \eqref{vistar}} \\
           \overset{\eqref{modstr}} =&  \varepsilon_{\mathfrak g}\sum_{1\leq i\leq n} (\hat m h_i^k + (-1)^k\hat m h_i^k)=
            2\varepsilon_{\mathfrak g}\hat m(h_n^k+\ldots+h_1^k).
                   \end{align*}
   This verifies (3). 
Let $(k,1)\in \mathfrak S_r$ such that it swaps $k$ and $1$ and fixes others.
Thanks to  Definition~\ref{definofs} and Lemma~\ref{slides relations vertical}(b), $x_k-S_{(k,1)}\circ x_1\circ S_{(k,1)}$  is a linear combination of  some elements in  $\mathbb B_{r, r}$ whenever $k>1$. For any    $d\in \mathbb B_{r, r}$, $$\Psi_{M^{\rm gen}}(d)(\hat m\otimes v_{i_1}\otimes\ldots\otimes v_{i_r})\overset{\eqref{act111}}=\hat m\otimes \Phi(d)( v_{i_1}\otimes\ldots\otimes v_{i_r}).$$
  Thus any term  of $\Psi_{M^{\rm gen}}(d)(\hat m\otimes v_{i_1}\otimes\ldots\otimes v_{i_r})$ is  of degree $0$. Now, (4)-(5) follow from this  observation and  (1)-(2)
 \end{proof}

 Thanks to Proposition~\ref{regularspan},  $\Hom_{\AB}(\ob 0,\ob {2r})$ is spanned by
 \begin{equation}\label{lbasis}\{
p_d(\Delta_0, \Delta_2, \ldots) d\mid d\in\bar{\mathbb {ND}}_{0,2r}/\sim \text{ and } p_d(t_0,t_2,\ldots)\in \kappa[t_0, t_2, \ldots]\}.
\end{equation}
 Suppose that $d\in\bar{\mathbb {ND}}_{0,2r}/\sim$. Let $\hat d\in \bar{\mathbb B}_{0,2r}/\sim$  be obtained from $d$ by removing all $\bullet$'s. By \eqref{regularm},
\begin{equation}\label{defwrtofd}
    d=  x_{2r}^{\alpha(d)_{2r}}\circ\ldots\circ x_1^{\alpha(d)_1} \circ\hat d,
\end{equation}
  and $\alpha(d)_k=0$ if the $k$th point at the top row of $d$ is the left endpoint of a cup.
Recall the notations $i(\hat d), j(\hat d), \theta( \hat d)$ and $z(\hat d)$  in Definition~\ref{defofij}, \eqref{defofwdbed}, and  \eqref{defofzd}.
Suppose that  $v_{\mathbf i}\in V^{\otimes 2r}$, $\mathbf i\in \underline N^{2r}$. The  $v_{\mathbf i}$-component of $M^{\rm gen}\otimes V^{\otimes 2r}$ is defined  to be the subspace  spanned by $ y\otimes v_{\mathbf i}$ for all  $y\in M^{\rm gen}$.
\begin{Lemma}\label{top component lemma}
    Suppose that $d, d'\in\bar{\mathbb {ND}}_{0,2r}/\sim$ and $\hat m=1\otimes 1\in M^{\rm gen}$.
  \begin{enumerate}
  \item[(1)] If there is a   term   of $\Psi_{M^{\rm gen}}(d')(\hat m )$  in the
   $v_{\theta(\hat d)}$-component of $M^{\rm gen}\otimes V^{\otimes 2r}$ with the highest degree $\text{deg}(d')$,  then  $\hat d= \widehat d'$.
  \item [(2)] Assume $h(d')= h_{1}^{\alpha(d')_{j_1}} \cdots h_{r}^{\alpha(d')_{j_r}}$, where
       $i(\widehat {d'})=(i_1,\ldots,i_r)$ and  $j(  \widehat{d'})=(j_1,\ldots, j_r)$.  If  $\hat{d}= \widehat{d'}\in\bar{\mathbb B}_{0,2r}/\sim$, then, up to a sign,  $\hat mh(d') \otimes v_{\theta(\hat{d})}$ is  the unique term of  $\Psi_{M^{\rm gen}}(d')(\hat m)$  in the  $v_{\theta(\hat d)}$-component of  $M^{\rm gen}\otimes V^{\otimes 2r}$ with the highest  degree  $\text{deg} (d')$.\end{enumerate}
\end{Lemma}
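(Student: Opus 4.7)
The plan is to analyze the action of $\Psi_{M^{\rm gen}}(d')$ on $\hat m$ by expanding $d'$ in its normal form \eqref{defwrtofd} as $d'=x_{2r}^{\alpha(d')_{2r}}\circ\cdots\circ x_1^{\alpha(d')_1}\circ\widehat{d'}$. Applying the monoidal functor $\Psi$ and using \eqref{act111} together with \eqref{defofzd1} on the bubble-free Brauer diagram $\widehat{d'}$, we obtain $\Psi_{M^{\rm gen}}(\widehat{d'})(\hat m)=\pm\,\hat m\otimes z(\widehat{d'})$. By \eqref{defofzd}, every basis monomial $v_{\mathbf j}$ occurring in $z(\widehat{d'})$ satisfies $j_{i(\widehat{d'})_l}+j_{j(\widehat{d'})_l}=0$ at each cup index $l$, and in particular $v_{\theta(\widehat{d'})}$ occurs exactly once (up to a sign depending only on $\widehat{d'}$).

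The key step is to establish that, for $p(h)\in\U(\fh)$ of degree $t$, the top-degree part of $\Psi_{M^{\rm gen}}(x_k)$ applied to $\hat m\,p(h)\otimes v_{\mathbf j}$ acts diagonally in the $V^{\otimes 2r}$-basis: when $j_k\ne 0$ it sends this element to $\pm\,\hat m\,p(h)h_{|j_k|}\otimes v_{\mathbf j}$ modulo terms of filtered degree $\le t$, and when $j_k=0$ it yields only degree-$\le t$ terms. This extends Lemma~\ref{x1 calculation}(1)--(2),(4)--(5) from $\hat m$ to $\hat m\,p(h)$, and it is obtained by expanding $\Omega$ via \eqref{cashi} and noting that, by Lemma~\ref{eM filtered degree 1}, the only pairs $(i,j)$ whose $F_{i,j}\otimes F_{j,i}$ both raises polynomial degree on the $M^{\rm gen}$-side and preserves the $v_{j_k}$-component on the $k$-th slot are $(\pm l,\pm l)$ with $l=|j_k|$. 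Iterating this observation across the successive dot operators $x_1^{\alpha(d')_1},\ldots,x_{2r}^{\alpha(d')_{2r}}$ by induction on the total number of dots, one finds that the degree-$\deg(d')$ part of $\Psi_{M^{\rm gen}}(d')(\hat m)$ is a sum, over the monomials $v_{\mathbf j}$ of $z(\widehat{d'})$ with $j_k\ne 0$ whenever $\alpha(d')_k>0$, of terms $\pm\,\hat m\prod_{k=1}^{2r}h_{|j_k|}^{\alpha(d')_k}\otimes v_{\mathbf j}$.

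Armed with this, both parts follow directly. For (1), if $\hat d\ne\widehat{d'}$ then by Lemma~\ref{numberss} the equivalence class of a bubble-free diagram is determined by its cup partition, so some cup $(i(\widehat{d'})_l,j(\widehat{d'})_l)$ of $\widehat{d'}$ fails to be a cup of $\hat d$; then \eqref{notmiunes} gives $\theta(\hat d)_{i(\widehat{d'})_l}+\theta(\hat d)_{j(\widehat{d'})_l}\ne 0$, so $v_{\theta(\hat d)}$ does not appear in $z(\widehat{d'})$, and by the diagonal top-degree formula no top-degree term of $\Psi_{M^{\rm gen}}(d')(\hat m)$ can land in the $v_{\theta(\hat d)}$-component. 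For (2), when $\hat d=\widehat{d'}$ the monomial $v_{\theta(\hat d)}$ appears uniquely with coefficient $\pm 1$ in $z(\widehat{d'})$, giving a unique top-degree $v_{\theta(\hat d)}$-contribution $\pm\,\hat m\prod_{k=1}^{2r}h_{|\theta(\hat d)_k|}^{\alpha(d')_k}\otimes v_{\theta(\hat d)}$. The normal-ordering condition on $d'$ forces $\alpha(d')_k=0$ for $k=i(\hat d)_l$, so only the indices $k=j(\hat d)_l$ contribute, and since $|\theta(\hat d)_{j(\hat d)_l}|=l$ the product collapses precisely to $h(d')=h_1^{\alpha(d')_{j_1}}\cdots h_r^{\alpha(d')_{j_r}}$.

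The main obstacle will be the inductive step in the middle paragraph: verifying that applying $\Psi_{M^{\rm gen}}(x_k)$ to $\hat m\,p(h)\otimes v_{\mathbf j}$ preserves the ``diagonal top-degree'' structure requires commuting $F_{i,j}$ past the polynomial $p(h)$ via the Cartan relations and ensuring that the resulting commutator terms stay within the filtered degree budget recorded in Lemma~\ref{eM filtered degree 1}. Once this is settled, the uniqueness of $v_{\theta(\hat d)}$ in $z(\widehat{d'})$ together with the explicit top-degree formula yield both assertions with essentially no further bookkeeping.
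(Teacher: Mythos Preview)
Your proof is correct and follows essentially the same strategy as the paper: decompose $d'$ via \eqref{defwrtofd}, use \eqref{defofzd1} to identify $\Psi_{M^{\rm gen}}(\widehat{d'})(\hat m)$ with $\pm\hat m\otimes z(\widehat{d'})$, and then iterate Lemma~\ref{x1 calculation}(4)--(5) through the dot operators $x_k$ to isolate the top-degree contribution in each $v_{\mathbf j}$-component. The ``main obstacle'' you flag is in fact simpler than you suggest: since $M^{\rm gen}$ is a $(\U(\mfg),\U(\fh))$-bimodule with the grading coming from the right $\U(\fh)$-action (as used in the proof of Lemma~\ref{eM filtered degree 1}), each $X_k$ commutes with right multiplication by $p(h)$, so the extension of Lemma~\ref{x1 calculation} from $\hat m$ to $\hat m\,p(h)$ is immediate without any need to commute $F_{i,j}$ past $p(h)$ via Cartan relations.
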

\begin{proof}   By \eqref{defwrtofd} and \eqref{defofzd1}, up to a sign,
\begin{equation}\label{phid}
\Psi_{M^{\rm gen}}(d')(\hat m)=\Psi_{M^{\rm gen}}(x_{2r}^{\alpha(d')_{2r}}\circ\ldots\circ x_1^{\alpha(d')_1})(\hat m\otimes z(\widehat{d'})),
\end{equation}
 where $z(\widehat{d'})$ is given in \eqref{defofzd} and $\alpha(d')_{i_l}=0$ for $1\leq l\leq r$. By \eqref{act111} and \eqref{actionofxxx}, $\Psi_{M^{\rm gen}}(x_k)=X_k\otimes \text{Id}_{V^{\otimes 2r-k}}$, $1\leq k\leq 2r$, where $X_k$ is given in \eqref{defofxk}.
 Thanks to Lemma~\ref{eM filtered degree 1}(3), the highest degree term of $\Psi_{M^{\rm gen}}(d')(\hat m)$ is of degree $\text{deg}(d')$.
By Lemma~\ref{x1 calculation}(4)--(5) and \eqref{phid},   the  summation of  terms of $\Psi_{M^{\rm gen}}(d')(\hat m)$ with the highest degree $\text{deg}(d')$ is in some
$v_{\mathbf i}$-component of $M^{\rm gen}\otimes V^{\otimes 2r}$ such that each  $v_{\bf i}$ appears in $z(\widehat{d'})$.
  In  the proof of Theorem~\ref{basisofb}, we have proven that $ \hat d= \widehat d'$  if $v_{\theta(\hat d)}$ appears in $z(\widehat{d'})$.  This verifies  (1).
 If $\hat{d}= \widehat{d'}$, by arguments in the proof of Theorem~A,   $v_{\theta(\hat d)}$ appears in $z(\widehat{d'})$ with coefficient $\pm 1$.
  By \eqref{defofwdbed}, $\theta(\hat d)_{j_l}=-l$, for $1\leq l\leq r$.  Using  Lemma~\ref{x1 calculation}(4)--(5) repeatedly together with \eqref{phid} yields (2).
\end{proof}

\begin{Prop}\label{Basis theorem for affine Hecke-Clifford with bubbles} Suppose  $\kappa=\mathbb C$. Then $\Hom_\AB(\ob 0,\ob {2r})$ has  basis given by $\mathbb{ND}_{0,2r}/\sim$.
\end{Prop}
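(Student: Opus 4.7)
The plan is to apply the functor $\Psi$ of Theorem~\ref{affaction} to the generic Verma module $M^{\rm gen}$ and extract top-degree contributions in a distinguished component of $M^{\rm gen}\otimes V^{\otimes 2r}$, extending the argument used in the proof of Theorem~\ref{basisofb}. By Proposition~\ref{regularspan}(a), $\Hom_\AB(\ob 0,\ob{2r})$ is already spanned by $\mathbb{ND}_{0,2r}/\sim$, so only linear independence remains. Suppose $\sum_{d\in S}c_d d=0$ for a finite $S\subset\mathbb{ND}_{0,2r}/\sim$ and $c_d\in\mathbb C$. Each $d\in S$ admits a decomposition $d=\Delta_0^{k(d)_0}\Delta_2^{k(d)_2}\Delta_4^{k(d)_4}\cdots d_1(d)$ with $d_1(d)\in\bar{\mathbb{ND}}_{0,2r}/\sim$ (no odd-indexed $\Delta$'s appear by Definition~\ref{D:N.O. dotted  OBC diagram}(b)). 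Take $\mathfrak g=\mathfrak{sp}_{N}$ with $N=2n$ and $n$ to be chosen arbitrarily large, apply $\Psi$, and evaluate at $\hat m=1\otimes 1\in M^{\rm gen}$ to obtain $\sum_d c_d\,\Psi_{M^{\rm gen}}(d)(\hat m)=0$.

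Each $\Psi(\Delta_{2k})_{M^{\rm gen}}$ is a $\U(\mathfrak g)$-endomorphism of $M^{\rm gen}$ and hence, since $\End_{\U(\mathfrak g)}(M^{\rm gen})$ is realized by right multiplication by $\U(\mathfrak h)$, equals right multiplication by some $P_{2k}\in\U(\mathfrak h)$; by \eqref{ewjdngn} we have $P_0=\varepsilon_{\mathfrak g}N$, and by Lemma~\ref{x1 calculation}(3), $P_{2k}=2\varepsilon_{\mathfrak g}p_{2k}+(\text{lower degree})$ for $k\ge 1$, where $p_{2k}=h_1^{2k}+\cdots+h_n^{2k}$. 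Right multiplications by elements of $\U(\mathfrak h)$ are themselves $\U(\mathfrak g)$-endomorphisms, so the naturality of the natural transformation $\Psi(d_1(d))$ yields
$$\Psi(d)_{M^{\rm gen}}(\hat m)=\Psi(d_1(d))_{M^{\rm gen}}(\hat m)\cdot(\varepsilon_{\mathfrak g}N)^{k(d)_0}P_2^{k(d)_2}P_4^{k(d)_4}\cdots,$$
with the right action on the $M^{\rm gen}$ factor. Combined with Lemma~\ref{top component lemma}, the top-degree-$\deg(d)$ contribution (in the $h_i$'s) to the $v_{\theta(\hat d_0)}$-component of $\Psi(d)_{M^{\rm gen}}(\hat m)$ is, up to a sign,
$$\hat m\cdot(\varepsilon_{\mathfrak g}N)^{k(d)_0}(2\varepsilon_{\mathfrak g})^{\sum_l k(d)_{2l}}h(d_1(d))\,p_2^{k(d)_2}p_4^{k(d)_4}\cdots\otimes v_{\theta(\hat d_0)}$$
when $\widehat{d_1(d)}=\hat d_0$, and vanishes when $\widehat{d_1(d)}\ne\hat d_0$.

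Let $K=\max_{d\in S}\deg(d)$ and fix $\hat d_0\in\bar{\mathbb B}_{0,2r}/\sim$. Extracting the degree-$K$ part of the $v_{\theta(\hat d_0)}$-component of $\sum_d c_d\Psi(d)(\hat m)=0$ retains contributions only from those $d\in S$ with $\deg(d)=K$ and $\widehat{d_1(d)}=\hat d_0$, producing a vanishing linear combination in $\hat m\cdot\U(\mathfrak h)$ of polynomials of the form $h_1^{a_1}\cdots h_r^{a_r}p_2^{k_2}p_4^{k_4}\cdots$, weighted by nonzero scalars that depend on $k(d)_0$ and $\sum_l k(d)_{2l}$. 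The main technical step, which I expect to be the principal obstacle, is establishing that for $n$ sufficiently large (depending on $S$) these polynomials are $\mathbb C$-linearly independent in $\mathbb C[h_1,\ldots,h_n]$. I would prove this by embedding into $\mathbb C[h_1,h_2,\ldots]$ and showing algebraic independence of $h_1,\ldots,h_r,p_2,p_4,\ldots$ via the substitution $p_{2l}=(h_1^{2l}+\cdots+h_r^{2l})+\sigma_{2l}(h_{r+1},h_{r+2},\ldots)$: expanding in powers of $\sigma_{2l}$ and invoking the classical algebraic independence of the even power sums $\sigma_{2l}$ in sufficiently many remaining variables reduces the identity to a polynomial identity in $h_1,\ldots,h_r$, from which all coefficients vanish.

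Given this linear independence, the coefficient of each monomial $h_1^{a_1}\cdots h_r^{a_r}p_2^{k_2}\cdots$ must vanish, yielding $\sum_{k_0}\pm c_{d(k_0)}(\varepsilon_{\mathfrak g}N)^{k_0}=0$ after cancelling the common nonzero factor $(2\varepsilon_{\mathfrak g})^{\sum k_{2l}}$. Since this holds for every sufficiently large $n$ and the left-hand side is a polynomial in $N$ vanishing at infinitely many values, every $c_{d(k_0)}=0$. Varying $\hat d_0$ over $\bar{\mathbb B}_{0,2r}/\sim$ kills every $c_d$ with $\deg(d)=K$; removing these from $S$ and iterating downward on $K$ yields $c_d=0$ for all $d\in S$, which completes the proof.
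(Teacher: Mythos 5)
Your argument is correct and follows the paper's own proof in all essentials: it reduces to linear independence via Proposition~\ref{regularspan}, applies $\Psi_{M^{\rm gen}}$ and evaluates at $\hat m$, and isolates the top-degree part of the $v_{\theta(\hat d_0)}$-component of $\Psi_{M^{\rm gen}}(d)(\hat m)$ using Lemma~\ref{x1 calculation}(3) and Lemma~\ref{top component lemma}. The only real difference is the endgame: you separate variables and invoke algebraic independence of $h_1,\dots,h_r,p_2,p_4,\dots$ for $n\gg 0$ (then peel off the powers of $N$ by varying $n$), whereas the paper instead puts a total order on monomials of $\U(\fh)$ and chooses $n$ large so that the leading monomials of the symmetric (bubble) factors avoid $h_1,\dots,h_r$; both devices accomplish the same separation of the bubble contribution from the dot contribution $h(d)$.
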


\begin{proof}
Thanks to Proposition~\ref{regularspan},
     it is enough to prove  that   the required elements are  linear independent over $\mathbb C$. By \eqref{lbasis}, it suffices to prove  $p_{d}(t_0,t_2, t_4,\ldots)=0$ for all $d\in P$ if
    \begin{equation}\label{linear combo}
        \sum_{d\in P}p_{d}(\Delta_0,\Delta_2,\Delta_4,\ldots)d=0,
    \end{equation}
    where $P$ is any  finite subset of $\bar{\mathbb {ND}}_{0,2r}/\sim$,  and   $p_{d}$'s$\in \mathbb C[t_0, t_2, t_4, \ldots]$.
    For the simplification of notation, we denote $ p_{d}(\Delta_0,\Delta_2,\Delta_4,\ldots)
  $ by $p_d(\Delta)$. Let $W=\{d \in P\mid p_{d}\neq 0 \}$.

   We claim that  $W=\emptyset $. Otherwise,    $p_d=\sum_{\beta\in Q}f_\beta(t_0)t^\beta$
     for any $d\in W$,   where $f_\beta(t_0)\in\mathbb C[t_0] $, $t^\beta:= t_2^{\beta_1}t_4^{\beta_2}\cdots$ and $Q=\{\beta=(\beta_1,\beta_2,\ldots)\mid \beta_i\in \mathbb N, f_\beta(t_0)\neq 0\}$. Note that $Q$ is a finite set. We can find an $n\in \mathbb N$ and  $n\gg 0$ such that  $f_\beta(\varepsilon_{\mathfrak g}N)\neq 0$ for all $\beta\in Q$.
 Since $W$ is finite and  all the power sum symmetric polynomials are algebraic independent, we can choose $n\gg 0$ such that
   $$p_{d}(\varepsilon_{\mathfrak g}N,2\varepsilon_\mfg(h_n^2+\ldots+h_1^2),2\varepsilon_\mfg(h_n^4+\ldots+h_1^4),\ldots)\neq 0, \text{ for all } d\in W.$$
  Let $g_d(h_1,h_2,\ldots,h_n)$ be the summation of terms of   $p_{d}(\varepsilon_{\mathfrak g}N,2\varepsilon_\mfg(h_n^2+\ldots+h_1^2),2\varepsilon_\mfg(h_n^4+\ldots+h_1^4),\ldots)$ with the highest degree. Then $g_d(h_1,h_2,\ldots,h_n)$ is a homogenous symmetric polynomial.

  By Lemma~\ref{x1 calculation}(3), the summation of terms of  $\Psi_{M^{\rm gen}}(p_d(\Delta))( \hat m )$ with the highest degree  is $ \pm \hat m g_d(h_1,h_2,\ldots,h_n) $,  where $d\in W$.   Choose  $d_0\in W$ such that
       $ \text{deg} d_0 + \text{deg} g_{d_0}$ is maximal.
  Define
    $$B=\{d\in W \mid \hat d= \widehat d_0\text{ and }\text{deg} d_0 + \text{deg} g_{d_0}=\text{deg} d+\text{deg} g_{d}\}.$$
  Thanks to  \eqref{defwrtofd}, we write $d=x_{2r}^{\alpha(d)_{2r}}\circ\ldots\circ x_1^{\alpha(d)_1}\circ \widehat d_0$ for all  $d\in B$.
       By Lemma~\ref{top component lemma},  the summation of terms of  $\Psi_{M^{\rm gen}}(\sum_{d\in P}p_d(\Delta_0,\Delta_2,\Delta_4,\ldots)d)(\hat m)$, which is  in the $v_{\theta(\widehat d_0)} $-component of $M^{\rm gen}\otimes V^{\otimes 2r}$ with the  highest degree,  is
       equal to
    \begin{equation}\label{top degree component under Psi_M}
        \sum_{d\in B}\pm \hat m g_d(h_1,h_2,\ldots,h_n)h(d)    \otimes v_{\theta(\widehat  d_0)},
    \end{equation}
    where
   $h(d)$'s are  defined in Lemma~\ref{top component lemma}(2).

For any two monomials $h_{n}^{r_{n}}\dotsb h_{1}^{r_{1}},  h_{n}^{s_{n}}\dotsb h_{1}^{s_{1}}\in \U(\fh)$, we say  $h_{n}^{r_{n}}\dotsb h_{1}^{r_{1}}$ is  less than  $h_{n}^{s_{n}}\dotsb h_{1}^{s_{1}}$ and write $h_{n}^{r_{n}}\dotsb h_{1}^{r_{1}} < h_{n}^{s_{n}}\dotsb h_{1}^{s_{1}}$ if either $\sum_{i} r_{i} < \sum_{i} s_{i}$ or $\sum_{i} r_{i} = \sum_{i} s_{i}$ and there is a $t\in \{1, 2, \ldots, n\}$ such that  $r_t<s_t$ and $r_j=s_j$ whenever $j>t$. This gives rise to a total  ordering on the set of all monomials of
  $\U(\fh)$.

 Since   $g_d(h_1,h_2,\ldots,h_n)$ is a homogenous symmetric polynomial, we can    choose an  $n\in \mathbb N$ and $n\gg 0$  such that
      for each $d\in B$,  the leading monomial of  $g_d(h_1,h_2,\ldots,h_n)$ with respect to the previous  total  ordering  does not contain a factor in $\{h_{1},h_{2},\ldots,h_{r}\}$.
        This shows that  any  factor of $h(d)$ could not be a factor of the  leading monomial of each $g_d(h_1,h_2,\ldots,h_n)$.
      Note that $h(d)\neq h(d')$ if $d, d'\in B$ and $d\neq d'$.
         Therefore,
 the leading monomials of $g_d(h_1,h_2,\ldots,h_n)h(d)$'s are pairwise distinct for all $d\in B$, forcing
  the summation in  \eqref{top degree component under Psi_M} is nonzero. This   contradicts   \eqref{linear combo}. So, $W=\emptyset$  and the result is proven.\end{proof}

\begin{proof}[\textbf{Proof of Theorem~\ref{Cyclotomic basis conjecture}}] If $m+s$ is not even, then $ \Hom_{\AB}(\ob m,\ob s)=0$ and Theorem~\ref{Cyclotomic basis conjecture} is trivial. Suppose that $m+s=2r$ for some $r\in \mathbb N$.
First, we consider $\AB$ over  $\mathbb Z_{(2)}$ and $(m, s)=(0, 2r)$, where $\mathbb Z_{(2)}$ is the localization of $\mathbb Z$ by the set $\{2^a \mid a\in \mathbb N\}$. Thanks to Propositions~\ref{regularspan}, it is enough to verify  that  $\mathbb {ND}_{0,2r}/\sim$
is linear independent over $\mathbb Z_{(2)}$. By  Proposition~\ref{Basis theorem for affine Hecke-Clifford with bubbles}, we immediately have  such  a result over $\mathbb C$ and hence over $\mathbb Z_{(2)}$. This proves Theorem~\ref{Cyclotomic basis conjecture} over $\mathbb Z$ when $(m, s)=(0, 2r)$.

Suppose  $m>0$. Recall  $ \bar\eta _{\ob m}$
  in Definition~\ref{etae}.
  By \eqref{usuflelem}, $\bar \eta _{\ob m}(d)\in \mathbb D_{0,2r}$ with degree $k$ if
     $d\in\mathbb{ND}_{m,s}$ and  $\text{deg}(d)=k$.
  Moreover,  conditions (a)--(b) in Definition~\ref{D:N.O. dotted  OBC diagram} are satisfied for $\bar \eta _{\ob m}(d)$.
  We move $\bullet$'s on each cup of $\bar \eta _{\ob m}(d)$ to the right endpoint and get a $d'\in\mathbb {ND}_{0,2r} $ such that $d'\sim \bar \eta _{\ob m}(d) $ and $\text{deg}(d')=k$.
   For example, if   $d\in \Hom_{\AB}(\ob 4,\ob 2)_{\leq 2}$ as follows, then the  corresponding $\bar\eta _{\ob m}(d)$ and  $d'$  are depicted as follows:
$$
d=\begin{tikzpicture}[baseline = 10pt, scale=0.5, color=\clr]
        \draw[-,thick] (0,0) to[out=up, in=down] (1,2);
        \draw[-,thick] (1,0) to[out=up, in=down] (0,2);
               \draw (0,1.8)\bdot;
                         \draw[-,thick] (2,0) to[out=up,in=left] (3,1) to[out=right,in=up] (4,0);
         \draw (2.1,0.2)\bdot;
                 \end{tikzpicture},\quad
    \bar\eta _{\ob m}(d)=\begin{tikzpicture}[baseline = 10pt, scale=0.5, color=\clr]
        \draw[-,thick] (0,1) to[out=up, in=down] (1,2);
        \draw[-,thick] (1,1) to[out=up, in=down] (0,2);
               \draw (0,1.8)\bdot;
                         \draw[-,thick] (2,1) to[out=up,in=left] (2.5,1.8) to[out=right,in=up] (3,1);
         \draw (2.1,1.2)\bdot;
         \draw[-,thick] (0,1) to[out=down,in=left] (3.5,-0.8) to[out=right,in=down] (7,1);\draw[-,thick] (7,1)to(7,2);
           \draw[-,thick] (1,1) to[out=down,in=left] (3.5,-0.6) to[out=right,in=down] (6,1);\draw[-,thick] (6,1)to(6,2);
           \draw[-,thick] (2,1) to[out=down,in=left] (3.5,0) to[out=right,in=down] (5,1);\draw[-,thick] (5,1)to(5,2);
           \draw[-,thick] (3,1) to[out=down,in=left] (3.5,0.2) to[out=right,in=down] (4,1);\draw[-,thick] (4,1)to(4,2);
                            \end{tikzpicture},\quad
              d'=\begin{tikzpicture}[baseline = 10pt, scale=0.5, color=\clr]
        \draw[-,thick] (0,1) to[out=up, in=down] (1,2);
        \draw[-,thick] (1,1) to[out=up, in=down] (0,2);
               \draw (5,1.8)\bdot;
                         \draw[-,thick] (2,1) to[out=up,in=left] (2.5,1.8) to[out=right,in=up] (3,1);
         \draw (6,1.8)\bdot;
         \draw[-,thick] (0,1) to[out=down,in=left] (3.5,-0.8) to[out=right,in=down] (7,1);\draw[-,thick] (7,1)to(7,2);
           \draw[-,thick] (1,1) to[out=down,in=left] (3.5,-0.6) to[out=right,in=down] (6,1);\draw[-,thick] (6,1)to(6,2);
           \draw[-,thick] (2,1) to[out=down,in=left] (3.5,0) to[out=right,in=down] (5,1);\draw[-,thick] (5,1)to(5,2);
           \draw[-,thick] (3,1) to[out=down,in=left] (3.5,0.2) to[out=right,in=down] (4,1);\draw[-,thick] (4,1)to(4,2);
                            \end{tikzpicture}.
                 $$
   By \eqref{slideleft},
 \begin{equation}\label{etadff}
 \bar\eta _{\ob m}(d)\equiv \pm d' \pmod{\Hom_{\AB} (\ob 0, \ob{2r})_{\le k-1}}.
 \end{equation}
Suppose that  $d_1, d_2\in \mathbb{ND}_{m,s}$ and $\text{deg}(d_1)=\text{deg}(d_2)=k$.  By Lemma~\ref{eaquvivalents}, $\bar \eta _{\ob m}(d_1)\sim\bar \eta _{\ob m}(d_2)$ if and only if $d_1\sim d_2$. Hence, $d_1'\sim d_2'$ if and only if $d_1\sim d_2$.
  So   the map $d  \mapsto d'$ is an injective map between the set of all elements in $ \mathbb {ND}_{m,s}/\sim$ with degree $k$  and the set of  all elements in $ \mathbb {ND}_{0,2r}/\sim$ with degree $k$.
Suppose that $$\sum_{d\in B}c_d d=0,$$
where $c_d\in \kappa$ and $B$ is a finite subset of $ \mathbb {ND}_{m,s}/\sim$.
Let $k=\max\{\text{deg}d\mid d\in B\}$.

If $k=0$, then $\bar\eta_{\ob m}(d)\in \mathbb {B}_{0,2r}/\sim$  and $\bar\eta_{\ob m}(d)= d'$ in \eqref{etadff}.
Moreover, $\sum_{d\in B}c_d  \bar\eta_{\ob m}(d)=0$. Thanks to the result on $\Hom_{\AB}(\ob 0,\ob {2r})$, $c_d=0$ for all $d\in B$.

Suppose that  $k>0$. Let $B_1=\{d\in B\mid \text{deg}d=k\}$.
It follows from  the proof of Proposition~\ref{regularspan} that $\Hom_{\AB} (\ob 0, \ob{2r})_{\le k-1}$ is spanned by all elements in $ \mathbb {ND}_{0,2r}/\sim$ with degree  $<k$.  By \eqref{etadff}, we have $$\sum_{d\in B_1}\pm c_d  d'\equiv 0\pmod { \Hom_{\AB}(\ob 0, \ob {2r})_{\le k-1}},$$
where $d'$'s are   elements in $ \mathbb {ND}_{0,2r}/\sim$  with degree $k$.
It follows from the result on $\Hom_{\AB}(\ob 0,\ob {2r})$ again that $c_d=0$ for all $d\in B_1$.
By induction on $k$ we see that $c_d=0$ for all $d\in B$.
Therefore, the set  $ \mathbb {ND}_{m,s}/\sim$
 is   linearly independent.
  This proves Theorem~B   over $\mathbb Z_{(2)}$.

  When $\kappa$ is a commutative ring containing multiplicative identity $1$ and invertible element $2$,  we consider $\kappa$ as  a $\mathbb Z_{(2)}$-module in a standard way.  Now, standard arguments on base change property  shows Theorem~\ref{Cyclotomic basis conjecture}  over $\kappa$ (see  the arguments at the end of the  proof of Theorem~\ref{basisofb}). \end{proof}

At the end of this section, we establish a relationship between $\AB$ and the affine Wenzl algebra in \cite{Na}. This algebra is also called  the    affine Nazarov-Wenzl algebra in \cite{AMR}.

\begin{Defn}\label{definition of Na}
\cite{Na} The affine Nazarov-Wenzl algebra $\bar{\mathcal W}_r^{\rm aff}$ is the unital associative $\kappa$-algebra generated by
 $\{e_i, s_i\mid 1 \le i\le r-1\} $ and $\{\ob  x_j\mid
1\le j\leq r\}$  and a class of central elements  $\hat\omega=\{\hat\omega_j\mid j\in \mathbb N\}$, subject to the  relations:
\begin{multicols}{2}
\begin{enumerate}
\item [(1)] $s_i^2=1$,  $1\le i< r$,
\item[(2)] $s_is_j=s_js_i$,  $|i-j|>1$,
\item[(3)] $s_is_{i+1}s_i\!=\!s_{i+1}s_is_{i+1}$, $1\!\le\! i\!<\!r\!-\!1$,
\item[(4)]$e_i s_i=e_i=s_ie_i$, $1\leq i\leq r-1$,
\item[(5)]  $e_1^2=\hat\omega_0 e_1$,
\item[(6)] $s_ie_j=e_js_i$, $e_ie_j=e_je_i$, if  $|i-j|>1$,
\item[(7)]$s_ie_{i+1}e_i=s_{i+1}e_i$, $1\leq i\leq r-2$,
\item [(8)]$e_i e_{i+1}s_i =e_i s_{i+1}$, $1\leq i\leq r-2$,
\item[(9)]$e_i e_{i+1}e_i =e_{i+1}$, $1\leq i\leq r-2$,
\item[(10)]$ e_{i+1}e_i e_{i+1} =e_i$, $1\leq i\leq r-2$,
\item [(11)]  $e_1\ob x_1^ke_1=\hat\omega_k e_1$,  $\forall k\in \mathbb Z^{>0}$,
\item[(12)]$s_i\ob x_j=\ob x_js_i$, $j\neq i,i+1$,
\item[(13)]$e_i\ob x_j =\ob x_j e_i$, if $j\neq i,i+1$,
\item[(14)]$\ob x_i\ob x_j=\ob x_j\ob x_i$, for $1\leq i,j\leq r$,
\item[(15)]$s_i\ob x_i-\ob x_{i+1}s_i=e_i-1$,
\item[(16)]$\ob x_i s_i-s_i\ob  x_{i+1}=e_i-1 $,
\item [(17)]  $e_i(\ob x_i+\ob x_{i+1})=(\ob x_i+\ob x_{i+1})e_i=0$.
\end{enumerate}
\end{multicols} \end{Defn}
Let $\mathcal W_r^{\rm aff}$ be the $\kappa$-algebra obtained from $\bar{\mathcal W}_r^{\rm aff}$ by imposing the additional relation
in \eqref{admomega} with all $\omega_j$ replaced by $\hat \omega_j$ for $j\in\mathbb N$.
Given $\omega_0\in\kappa$, let $\mathcal W_r^{\rm aff}(\omega_0)$ be the algebra obtained from $\mathcal W_r^{\rm aff}$ by imposing the additional relation that $\hat\omega_0=\omega_0$. Then $\mathcal W_r^{\rm aff}(\omega_0)$ is the affine Wenzl algebra in  \cite{Na}.

\begin{Theorem}\label{affiso} We have a $\kappa$-algebra isomorphism
$\End_{\AB(\omega_0)}(\ob r) \cong \mathcal W_r^{\rm aff}(\omega_0)$.
\end{Theorem}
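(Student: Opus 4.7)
The plan is to construct an explicit $\kappa$-algebra homomorphism
$\phi: \mathcal W_r^{\rm aff}(\omega_0)\to \End_{\AB(\omega_0)}(\ob r)$ by sending the generators to
$$\phi(s_i)=S_i,\quad \phi(e_i)=1_{\ob{i-1}}\otimes (U\circ A)\otimes 1_{\ob{r-i-1}},\quad \phi(\ob x_j)=1_{\ob{j-1}}\otimes X\otimes 1_{\ob{r-j}},$$
and sending each central element $\hat\omega_k$ to $\Delta_k$ (understood as $\Delta_k\otimes 1_{\ob r}$, which is central in $\End_{\AB(\omega_0)}(\ob r)$ by the interchange law \eqref{wwcirrten}). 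The first step is to verify that the 17 defining relations of Definition~\ref{definition of Na} plus the admissible relation defining $\mathcal W_r^{\rm aff}$ are satisfied under $\phi$. Relations (1)--(3) on $s_i$ are \eqref{OBR1}; relations (4)--(10) relating $s_i$ and $e_i$ are consequences of \eqref{OBR2}--\eqref{OBR4} and Proposition~\ref{equi123}; relation (11) on bubbles is precisely the definition \eqref{defofdelta} of $\Delta_k$ together with $\Delta_0=\omega_0$; the commutation relations (12)--(14) follow from the interchange law; the affine relations (15)--(17) translate directly to \eqref{AOB1}--\eqref{AOB2} after invoking Lemma~\ref{capre}; and the admissibility relation on the $\hat\omega_j$'s is Lemma~\ref{admissible}.

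The second step is to observe that $\phi$ is surjective. Indeed, any morphism in $\End_{\AB(\omega_0)}(\ob r)$ lies in the $\kappa$-span of $\widetilde{\mathbb {ND}}_{r,r}$ by Corollary~\ref{omega00s}(1). A representative in $\bar{\mathbb {ND}}_{r,r}$ has the form \eqref{regularm}, so it is a product of the morphisms $X_j=\phi(\ob x_j)$ with a crossingless Brauer diagram $\hat d\in \bar{\mathbb B}_{r,r}$; each such $\hat d$ is a product of $S_i=\phi(s_i)$ and $E_i=\phi(e_i)$ (this is the classical fact that $\End_{\B(\omega_0)}(\ob r)$ is generated by these morphisms, provable from the classical Brauer algebra presentation and Proposition~\ref{equi123}). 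Multiplication by bubbles $\Delta_{2k}$ is then realized via the central images $\phi(\hat\omega_{2k})$.

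The third and final step, which is the main obstacle, is injectivity. The cleanest approach is a basis comparison: Nazarov and AMR (see \cite{Na} and \cite[Theorem~4.6]{AMR}) established that $\mathcal W_r^{\rm aff}(\omega_0)$ is a free $\kappa$-module with basis consisting of monomials
$\hat\omega_2^{k_2}\hat\omega_4^{k_4}\cdots \ob x_1^{a_1}\cdots \ob x_r^{a_r}\, w\, \ob x_1^{b_1}\cdots \ob x_r^{b_r}$
where $w$ ranges over a set of normally ordered representatives of the Brauer algebra and the exponent patterns obey exactly the same vanishing conditions as in Definition~\ref{D:N.O. dotted OBC diagram}. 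The plan is to check that $\phi$ sends this basis term-by-term, using \eqref{regularm} and the slide moves \eqref{slideleft}, to the basis $\widetilde{\mathbb {ND}}_{r,r}$ given by Corollary~\ref{omega00s}(1); this matching is essentially tautological once one recognises that normally ordered monomials in Nazarov's sense correspond precisely to normally ordered dotted Brauer diagrams. The delicate point is verifying that the two normal-form conventions agree on dotted cups and caps, which is the content of Definition~\ref{D:N.O. dotted OBC diagram}(c)--(d); since $\phi$ is a $\kappa$-linear bijection between two bases of free $\kappa$-modules, it must be a $\kappa$-module isomorphism, and combined with Step~1 this finishes the proof.
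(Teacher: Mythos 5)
Your homomorphism, the relation checks, and the surjectivity argument coincide with the paper's proof (the paper first defines the map on the unspecialized $\mathcal W_r^{\rm aff}$ into $\End_{\AB}(\ob r)$ and only then passes to the quotient, but that is an organizational difference). The genuine problem is in your injectivity step. You assert that \cite{Na} and \cite{AMR} \emph{established} that $\mathcal W_r^{\rm aff}(\omega_0)$ is free over $\kappa$ with basis the regular monomials. Those references do not provide this: Nazarov's Lemmas~4.4--4.5 give only that the regular monomials \emph{span} $\mathcal W_r^{\rm aff}(\omega_0)$, and \cite{AMR} prove freeness only for the cyclotomic quotients under an admissibility hypothesis, not for the affine algebra. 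In fact the freeness of $\mathcal W_r^{\rm aff}(\omega_0)$ is a \emph{consequence} of Theorem~\ref{affiso} (combined with Corollary~\ref{omega00s}(1)), not an available input; taking it as known makes the argument circular.

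The repair is short and is exactly what the paper does: only the spanning result is needed. The map carries the spanning set of regular monomials $\hat\omega_2^{k_2}\hat\omega_4^{k_4}\cdots\tilde d$ onto the set $\Delta_2^{k_2}\Delta_4^{k_4}\cdots d$ with $d\in\bar{\mathbb{ND}}_{r,r}/\sim$, which is a $\kappa$-basis of $\End_{\AB(\omega_0)}(\ob r)$ by Corollary~\ref{omega00s}(1); hence any element of the kernel, written as a linear combination of regular monomials, has all coefficients zero, and the map is injective. To know that distinct regular monomials really land on distinct basis elements one must match the undotted Brauer-diagram parts correctly; the paper does this by identifying the Brauer subalgebra $\mathcal W_r$ with the span of $\bar{\mathbb B}_{r,r}/\sim$ via \cite[Lemma~2.8]{LZ} together with Proposition~\ref{equi123}, which is the precise form of the ``classical fact'' you invoke only for surjectivity. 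With these two adjustments your argument becomes the paper's proof.
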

\begin{proof}
We define   the  algebra homomorphism $ \gamma:\mathcal W_r^{\rm aff}\rightarrow \End_{\AB}(\ob r) $ such that, for all admissible $i, j$ and $k$,
$\gamma(\hat\omega_k)=\Delta_k1_{\ob r}$, $\gamma(e_i)=E_i$, $\gamma(\ob x_j)= x_j$ and $\gamma(s_i)=S_i$,
where $x_j$ is defined in \eqref{defofxy} and
$$E_i=\begin{tikzpicture}[baseline = 25pt, scale=0.35, color=\clr]
\draw[-,thick](-2,1.1)to[out= down,in=up](-2,3.9);
         \draw(-1.5,1.1) node{$ \cdots$}; \draw(-1.5,3.9) node{$ \cdots$};
         \draw[-,thick](-0.5,1.1)to[out= down,in=up](-0.5,3.9);
        \draw[-,thick] (0,4) to[out=down,in=left] (0.5,3.5) to[out=right,in=down] (1,4);
         \draw[-,thick] (0,1) to[out=up,in=left] (0.5,1.5) to[out=right,in=up] (1,1);
         \draw(0,4.5)node{\tiny$i$}; \draw(1.3,4.5)node{\tiny$i+1$};
         \draw[-,thick](2,1.1)to[out= down,in=up](2,3.9);
         \draw(3.5,1.1) node{$ \cdots$}; \draw(3.5,3.9) node{$ \cdots$};
         \draw[-,thick](4.5,1.1)to[out= down,in=up](4.5,3.9);
           \end{tikzpicture}~ \text{ and }~
      S_i= \begin{tikzpicture}[baseline = 25pt, scale=0.35, color=\clr]
       \draw[-,thick](0,1.1)to[out= down,in=up](0,3.9);
       \draw(0.5,1.1) node{$ \cdots$}; \draw(0.5,3.9) node{$ \cdots$};
       \draw[-,thick](1.5,1.1)to[out= down,in=up](1.5,3.9);
       \draw[-,thick] (2,1) to[out=up, in=down] (3,4);
       \draw(2,4.5)node{\tiny$i$}; \draw(3.2,4.5)node{\tiny$i+1$};
        \draw[-,thick] (3,1) to[out=up, in=down] (2,4);
         \draw[-,thick](4,1.1)to[out= down,in=up](4,3.9);
         \draw(4.5,1.1) node{$ \cdots$}; \draw(4.5,3.9) node{$ \cdots$};
         \draw[-,thick](5.5,1.1)to[out= down,in=up](5.5,3.9);
           \end{tikzpicture}.$$
  To see that $\gamma$ is well-defined, it is enough to check that   $E_i, S_i$, $x_j$ and $\Delta_k1_{\ob r}$ satisfy  the defining relations in \eqref{admomega} and Definition~\ref {definition of Na} for all admissible $i, j$ and $k$.
Thanks to Lemma~\ref{admissible}, \eqref{admomega} is satisfied. By Lemma~\ref{reguequai},  $E_i, S_i,1\leq i\leq r-1$   satisfy Definition~\ref{definition of Na}(1)-(4) and (6)-(10). By \eqref{defofdelta}, $E_1x_1^kE_1=\Delta_k E_1, \forall k\in \mathbb N$. This verifies  Definition~\ref{definition of Na}(5),(11). By the definition of composition of morphisms in $\AB$, we see that
   $E_i, S_i, x_j$ satisfy    Definition~\ref{definition of Na}(12)-(14). Thanks to  \eqref{AOBC relations} and   Lemma~\ref{slides relations vertical}(a), $x_i, x_{i+1}, S_i,  E_i$ satisfy  Definition~\ref{definition of Na}(15)-(16).
  Finally, by Lemma~\ref{capre}, $E_i, x_i, x_{i+1}$ satisfy Definition~\ref{definition of Na}(17). Thus $ \gamma$ is a well-defined  algebra homomorphism.

  Suppose  $d\in\bar{\mathbb B}_{r,r}$ such that  $d$ has $t$ cups, $0\leq t\leq \lfloor r/2\rfloor$.
  Then $d\sim S_{w_1}\circ E_1\circ E_3\circ \cdots \circ E_{2t-1}\circ S_{w_2}$ for some $w_1,w_2\in \mathfrak S_r$,  where $S_w$ is given in Definition~\ref{definofs}.
So, any  $(r,r)$-Brauer diagram is equivalent  to a $(r,r)$-Brauer diagram  which is obtained by composing of  $E_i, S_i$, for $1\leq i\leq r-1$.
Thanks to Theorem~\ref{Cyclotomic basis conjecture}, the algebra $\End_{\AB}(\ob r)$ is generated by $\Delta_k1_{\ob r}$, $k\in 2\mathbb N$, $E_i$, $ x_j$ and $S_l$ for all admissible $i,j,l$. So, $\gamma$ is surjective.

 There is an obvious epimorphism $\pi:\End_{\AB}(\ob r)\rightarrow \End_{\AB(\omega_0)}(\ob r)$ such that $\pi$ sends $E_i$, $ x_j$ and $S_l$ to the elements with the same names for all admissible $i,j,l$ and
 \begin{equation}\label{eejjehhd}
 \pi(\Delta_01_{\ob r})=\omega_01_{\ob r}, ~\pi(\Delta_k1_{\ob r})=\Delta_k1_{\ob r}, ~ k\in 2\mathbb N, k>0.
 \end{equation}
  Let $\gamma_1:= \pi\circ \gamma$. Then $\gamma_1$ is surjective. By \eqref{eejjehhd}, $\gamma_1$ factors through  $\mathcal W_r^{\rm aff}(\omega_0)$.
  Let $$\gamma_2:\mathcal W_r^{\rm aff}(\omega_0)\rightarrow \End_{\AB(\omega_0)}(\ob r)$$ be the induced surjective homomorphism.

Let $B_r$ be the subalgebra of $\End_{\AB(\omega_0)}(\ob r)$ generated by $ \bar{\mathbb B} _{r,r}/\sim  $.  For any $d_1,d_2\in \bar{\mathbb B} _{r,r}/\sim $,   $d_1\circ d_2\in \bar{\mathbb B} _{r,r}/\sim $ whenever no bubble appears.  If  the composition $d_1\circ d_2$ contains $k$ bubbles,
then there is a $d_3\in \bar{\mathbb B} _{r,r}/\sim $ obtained from $d_1\circ d_2\in\mathbb B_{r,r}/\sim$ by removing the bubbles. Moreover,   $d_1\circ d_2 =\Delta_0^kd_3$ in $\AB$.
So, $d_1\circ d_2=\omega_0^kd_3$ in $\AB(\omega_0)$. Therefore, $B_r$ is spanned by $ \bar{\mathbb B} _{r,r}/\sim  $. Thanks  to Corollary~\ref{omega00s}(a),
$B_r$ has a basis given by $\bar{\mathbb B} _{r,r}/\sim $.
 Let $\mathcal W_r$ be the subalgebra of $\mathcal W_r^{\rm aff}(\omega_0)$ generated by $e_i, s_i, 1\le i\le r-1$. Then $\mathcal W_r$ is  isomorphic to the Brauer algebra over $\kappa$ with the defining parameter $\omega_0$~\cite{B}.
By \cite[Lemma~2.8]{LZ},  the restriction of $\gamma_2$ to ${\mathcal W_r}$ gives an isomorphism between  $\mathcal W_r$ and  $B_r$.   We identify  $\mathcal W_r$ and  $B_r$ under this isomorphism. Then  $\mathcal W_r$ has $\kappa$-basis given by $\bar{\mathbb B}_{r,r}/\sim$ and $ \gamma_2$ fixes each element in $\bar{\mathbb B}_{r,r}/\sim$.

 Recall that  any   $d\in  \bar{\mathbb {ND}}_{r,r}/\sim$    is of form   in \eqref{degdregu}.  If we use $\ob x_i$ (resp., the multiplication of $\mathcal W_r^{\rm aff}(\omega_0)$) to replace $x_i$ and $y_i$ (resp., each $\circ$) in \eqref{degdregu}, $1\le i\le r$,
we obtain an element, say $\tilde d\in \mathcal W_r^{\rm aff}(\omega_0)$.
So, $\gamma_2(\tilde d)=d$. Consider
\begin{equation}\label{regulainw}  \hat\omega_2^{k_2}\hat\omega_4^{k_4}\cdots \tilde d\in \mathcal W_r^{\rm aff}(\omega_0),\ \  k_2, k_{4},\ldots \in \mathbb N,  \end{equation} and call it  a regular  monomial in $  \mathcal W_r^{\rm aff}(\omega_0)$ (see \cite[(4.18)]{Na}). We remark that there are finitely many $\hat\omega_{2i}$'s in the product  in \eqref{regulainw}. By \cite[Lemmas~4.4,4.5]{Na},   $\mathcal W_r^{\rm aff}(\omega_0)$ is spanned by all the regular monomials in \eqref{regulainw}, where  $ d$ ranges over  $\bar{\mathbb {ND}}_{r,r}/\sim$.
  Moreover, $\gamma_2 ( \hat\omega_2^{k_2}\hat\omega_4^{k_4}\cdots \tilde d)=   \Delta_2^{k_2} \Delta_4^{k_4} \cdots d$.
  Thanks to Corollary~\ref{omega00s}(a), $\gamma_2$ is an isomorphism.
                      \end{proof}

 Suppose that $\omega$ is  admissible in the sense  \eqref{admomega}. Define   $\mathcal W_r^{\rm aff}(\omega)=\mathcal W_r^{\rm aff}/J$ where $J$ is the two-sided ideal generated by  $\hat \omega_j-\omega_j$, for $j\in\mathbb N$.
Then  $\mathcal W_r^{\rm aff}(\omega)$ is the affine Nazarov-Wenzl algebra in \cite{AMR}. The following result follows from   Theorem~\ref{affiso}, immediately.
 \begin{Cor} We have a $\kappa$-algebra isomorphism
$\End_{\AB(\omega)}(\ob r) \cong \mathcal W_r^{\rm aff}(\omega)$.
\end{Cor}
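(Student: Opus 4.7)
The plan is to deduce the Corollary directly from Theorem~\ref{affiso} by specializing each side by a matching family of relations. Recall that $\mathcal W_r^{\rm aff}(\omega)$ is defined as $\mathcal W_r^{\rm aff}/J$, and since the admissibility relation \eqref{admomega} already forces $\hat\omega_k$ for odd $k$ (and more generally determines $\hat\omega_k$ for $k$ not in a small initial segment) in terms of the others, the ideal $J$ is generated just as well by $\{\hat\omega_0-\omega_0\}\cup\{\hat\omega_{2j}-\omega_{2j}\mid j\geq 1\}$. Thus
\[
\mathcal W_r^{\rm aff}(\omega) \;\cong\; \mathcal W_r^{\rm aff}(\omega_0)\big/\langle \hat\omega_{2j}-\omega_{2j}\mid j\geq 1\rangle,
\]
since passing to $\mathcal W_r^{\rm aff}(\omega_0)$ already kills $\hat\omega_0-\omega_0$.

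Next, I would transport this presentation across the isomorphism $\gamma_2\colon \mathcal W_r^{\rm aff}(\omega_0)\xrightarrow{\sim}\End_{\AB(\omega_0)}(\ob r)$ of Theorem~\ref{affiso}. Since $\gamma_2(\hat\omega_{2j}) = \Delta_{2j}1_{\ob r}$, the images of the generators of $J$ coincide with the family $\{\Delta_{2j}1_{\ob r}-\omega_{2j}1_{\ob r}\mid j\geq 1\}$. Therefore $\gamma_2$ induces a $\kappa$-algebra isomorphism
\[
\mathcal W_r^{\rm aff}(\omega) \;\cong\; \End_{\AB(\omega_0)}(\ob r)\big/\langle \Delta_{2j}1_{\ob r}-\omega_{2j}1_{\ob r}\mid j\geq 1\rangle.
\]

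It remains to identify the right-hand side with $\End_{\AB(\omega)}(\ob r)$. By construction $\AB(\omega)=\kappa\otimes_{\kappa[\Delta_2,\Delta_4,\ldots]}\AB(\omega_0)$, with $\Delta_{2j}$ acting on morphisms by tensoring from the left. Using the diagrammatic bases of Corollary~\ref{omega00s}, $\End_{\AB(\omega_0)}(\ob r)$ is a free $\kappa[\Delta_2,\Delta_4,\ldots]$-module with basis $\bar{\mathbb{ND}}_{r,r}/\sim$, and specialization at $\Delta_{2j}=\omega_{2j}$ yields precisely $\End_{\AB(\omega)}(\ob r)$, again free on $\bar{\mathbb{ND}}_{r,r}/\sim$ by Corollary~\ref{omega00s}(b). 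Hence quotienting by $\langle \Delta_{2j}1_{\ob r}-\omega_{2j}1_{\ob r}\mid j\geq 1\rangle$ gives $\End_{\AB(\omega)}(\ob r)$, and composing with the induced isomorphism above yields the desired $\mathcal W_r^{\rm aff}(\omega)\cong \End_{\AB(\omega)}(\ob r)$.

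I do not foresee a substantive obstacle: the whole statement is a compatibility between two parallel specializations, and the matching bases in Corollary~\ref{omega00s} make the base change on the categorical side transparent. The only point requiring a moment of care is checking that the generating set $\{\hat\omega_{2j}-\omega_{2j}\mid j\geq 1\}$ (together with $\hat\omega_0-\omega_0$) suffices for $J$, which follows at once from admissibility \eqref{admomega} by induction on the odd index $k$.
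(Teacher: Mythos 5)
Your proof is correct and is essentially the paper's intended argument: the paper simply states that the corollary "follows from Theorem~\ref{affiso} immediately," and your write-up supplies exactly the details behind that claim — reducing the generating set of $J$ to the even-indexed differences via \eqref{admomega} (using that $2$ is invertible), transporting along $\gamma_2$, and matching the two specializations through the free-module structure over $\kappa[\Delta_2,\Delta_4,\ldots]$ given by Corollary~\ref{omega00s}. No gaps.
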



\section{Cyclotomic Brauer category and parabolic BGG category $\mathcal O$}\label{proofc}
In this section, we establish some relationships between  $\CB^f(\omega)$ and parabolic BGG category $\mathcal O$ associated with  $\mathfrak {so}_{2n}, \mathfrak {so}_{2n+1}$ and $\mathfrak {sp}_{2n}$. Recall that $\Pi$ is the set of simple roots  and  $R$ is the root system.
\begin{Defn}\label{defofpi}
Fix positive integers $q_1,q_2,\ldots,q_k$ such that $\sum_{j=1}^{k}q_j=n$.
Define
 $$I_1=\Pi\setminus \{\alpha_{p_1}, \alpha_{p_2}, \ldots, \alpha_{p_{k}} \} \text{ and $I_2= I_1\cup \{ \alpha_{n}\}$, }$$ where $ p_j=\sum_{l=1}^jq_l$,   $1\le j\le k$.
\end{Defn}

For each $I_i$, $i=1,2$, there is a root system $R_{I_i}:= R\cap \mathbb Z I_i$ with positive roots $R_{I_i}^+=R^+\cap \mathbb Z I_i$.  There are some subalgebras of $\mfg$ associated with the root system $R_{I_i} $:
 \begin{itemize}
\item $\mathfrak p_{I_i}$:  the standard  parabolic subalgebra of $\mathfrak g$  with respect to  $I_i$ in Definition~\ref{defofpi},
  \item   $\mathfrak l_{I_i}$: $\mathfrak h\oplus \sum _{\alpha\in R_{I_i}}\mfg_\alpha$, the Levi subalgebra of $\mathfrak p_{I_i}$,
  \item $\mathfrak u_{I_i}$: $ \sum _{\alpha\in R^+\setminus R^+_{I_i}}\mfg_\alpha$, the nilradical of $\mathfrak p_{I_i}$,
  \item $\mathfrak u_{I_i}^-$: $ \sum _{\alpha\in -R^+\setminus -R^+_{I_i}}\mfg_\alpha$, \end{itemize}
  where $\mfg_\alpha$ is the root space of $\mfg$ with weight $\alpha$.
Then $\mathfrak p_{I_i}=\mathfrak l_{I_i}\oplus \mathfrak u_{I_i}$ and
\begin{equation}\label{defoflevi}
\mathfrak l_{I_i}\cong \bigoplus_{j=1}^{k-1}\mathfrak{gl}_{q_j}\oplus \mathfrak g_i,
\end{equation}
where
\begin{equation}\label{defofgi}
\mathfrak g_i=\left\{
               \begin{array}{ll}
                 \mathfrak{gl}_{q_k}, & \hbox{if $i=1$;} \\
                 \mathfrak {so}_{2q_k+1}, & \hbox{if $i=2$ and $\mfg =\mathfrak{so}_{2n+1}$;}\\
 \mathfrak {so}_{2q_k}, & \hbox{if $i=2$ and $\mfg =\mathfrak{so}_{2n}$;}\\
              \mathfrak {sp}_{2q_k}, & \hbox{if $i=2$ and $\mfg =\mathfrak{sp}_{2n}$.}
\end{array}
             \right.
\end{equation}
\begin{rem}In order to avoid some overlaps, we assume that $q_k\geq 4$ (reps., $ q_k\geq 2$, $q_k\geq 3$) when we consider $I_2$ for $\mathfrak {so}_{2n}$ (resp., $\mathfrak {so}_{2n+1}$, $\mathfrak {sp}_{2n}$).
\end{rem}
Let $  \Lambda^{\mathfrak p_{I_i}}$  be the set of  all $\mathfrak p_{I_i}$-dominant integral weights. Then
 \begin{equation}\label{pdwt}  \Lambda^{\mathfrak p_{I_i}}=\{\lambda\in \mathfrak h^*\mid  \langle \lambda, \alpha\rangle \in \mathbb N, \forall \alpha\in I_i\}, i\in\{1,2\}.\end{equation}
For each $\lambda\in \Lambda^{\mathfrak p_{I_i}}$, the irreducible $\mathfrak l_{I_i}$-module $L_{I_i}(\lambda)$ with the highest weight $\lambda$ is  finite dimensional.
Recall that the BGG  category  $\mathcal O$ is  the category of finitely generated $\mathfrak {g}$-modules which are locally finite over $\mathfrak{n}^+$ and semi-simple over $\mathfrak h$.
Let $\mathcal{O}^{\mathfrak p_{I_i}}$ be  the full subcategory of $\mathcal O$  consisting of all $\mfg$-modules which are locally   $\mathfrak p_{I_i}$-finite.
Any  irreducible $\mathfrak l_{I_i}$-module $L_{I_i}(\lambda)$ can be considered as a $\mathfrak p_{I_i}$-module by inflation.  The  parabolic
Verma module with the highest weight $\lambda\in \Lambda^{\mathfrak p_{I_i}}$ is
$$M^{\mathfrak p_{I_i}}(\lambda):=\U(\mfg) \otimes_{\U(\mathfrak p_{I_i})} L_{I_i}(\lambda), ~i\in\{1,2\}.$$
\begin{Defn} \label{deltac}For $i=1,2$, define  $\lambda_{I_i, \mathbf c}=\sum_{j=1}^{k} c_j(\epsilon_{p_{j-1}+1}+\epsilon_{p_{j-1}+2}+\ldots+\epsilon_{p_j})$, where $p_0=0$, $p_j$'s are given in Definition~\ref{defofpi} and $\mathbf c=(c_1, c_2, \ldots, c_k)\in  \mathbb C^k$ such that
  $c_k=0$ if $i=2$. \end{Defn}
 Since $\langle \lambda_{I_i, \mathbf c}, \alpha\rangle=0, \forall \alpha\in I_i$,
$\lambda_{I_i, \mathbf c}\in \Lambda^{\mathfrak p_{I_i}}$, $i=1,2$.  Moreover,  $\text{dim}_{\mathbb C }L_{I_i}(\lambda_{I_i, \mathbf c})=1$.
In the remaining part of the paper, we denote by $m_i$   the highest weight vector of  $L_{I_i}(\lambda_{I_i, \mathbf c})$. It is unique up to a non-zero scalar.  Then
\begin{equation}\label{lmihighset}
L_{I_i}(\lambda_{I_i, \mathbf c})=\mathbb C\text{-span}\{m_i\}, ~ i\in\{1,2\}.
\end{equation}
If we allow $c_k\neq 0$ in the definition of $\lambda_{I_2, \mathbf c}$, the dimension of  simple $\mathfrak l_{I_2}$-module with the highest weight $\lambda_{I_2, \mathbf c}$ may not be  $1$.  This is the reason why
 we assume  $c_k=0$ in this case.

\begin{Defn} For $i\in \{1, 2\}$, define
 $\mathcal  B_{I_i}=\{F_{ -f,\pm g}\mid 1\le f<g \le n,\varepsilon_f\pm\varepsilon_g\in R^+\setminus R^+_{I_i}\}\cup T_i$,
where  $F_{f,g}$'s are given in \eqref{fij} and
\begin{itemize} \item $T_i=\emptyset$ if $\mathfrak{g}=\mathfrak {so}_{2n}$,
   \item $T_i=\{F_{0,f}\mid 1\le f\le n, \varepsilon_f\in R^+\setminus R^+_{I_i} \}$
if  $\mathfrak{g}=\mathfrak {so}_{2n+1}$,
\item $ T_i=\{F_{-f,f}\mid 1\le f\le n, 2\varepsilon_f\in R^+\setminus R^+_{I_i}\}$
if $\mathfrak{g}= \mathfrak {sp}_{2n}$.
 \end{itemize}
 \end{Defn}
 It is known that  $\mathcal  B_{I_i}$ is a basis of $\mathfrak u^-_{I_i}$.
For any positive integer $b$,  we denote  $(f_b, f_{b-1}, \ldots, f_1)$
(resp., $(g_b, g_{b-1}, \ldots, g_1)$) by $\mathbf f$ (resp., $\mathbf g$) if
 $F_{f_j,g_j}\in \mathcal B_{I_i}$, $1\le j\le b$.
  Write \begin{equation}\label{eijv} F_{{\mathbf f, \mathbf g}}^\alpha=F_{f_b,g_b}^{\alpha_b}F_{f_{b-1},g_{b-1}}^{\alpha_{b-1}}\cdots F_{f_1,g_1}^{\alpha_1}, \forall \alpha\in \mathbb N^b.\end{equation}
  If $b=0$, we set $ F_{{\mathbf f, \mathbf g}}^\alpha=1$.   Fix a total order  $\prec$  on  $\mathcal B_{I_i}$.
Let $\mathcal M_{I_i}$ be the set of all $F_{{\mathbf f, \mathbf g}}^\alpha$, $\alpha\in \mathbb N^b, b\in \mathbb N$ such that $F_{f_{j},g_{j}}\prec F_{f_{j+1}, g_{j+1}}$,  $1\le j\le b-1$.

 \begin{Lemma}\label{tsmodule}  For $i\in \{1, 2\}$ and $r\in \mathbb N$, define $M_{I_i, r}=M^{\mathfrak p_{I_i}}(\lambda_{I_i, \mathbf c})\otimes V^{\otimes r}$.
Then $M_{I_i, r}$ has  basis  $   \mathcal S_{i,r}=\{ym_i\otimes v_{\mathbf i}\mid y\in \mathcal M_{I_i},  \mathbf i\in \underline N^r\}$, where $v_{\mathbf i}=v_{i_1}\otimes v_{i_2}\otimes \ldots\otimes v_{i_r}$.\end{Lemma}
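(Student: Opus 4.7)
The plan is to derive the lemma from the Poincaré-Birkhoff-Witt (PBW) theorem applied in two stages: first to identify the parabolic Verma module with a polynomial algebra in the negative root directions acting on $m_i$, and then to tensor with the standard basis of $V^{\otimes r}$.

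First, I would use the triangular decomposition $\mathfrak g = \mathfrak u_{I_i}^-\oplus \mathfrak p_{I_i}$. By the PBW theorem, the multiplication map induces an isomorphism of right $\U(\mathfrak p_{I_i})$-modules $\U(\mathfrak g)\cong \U(\mathfrak u_{I_i}^-)\otimes_{\mathbb C}\U(\mathfrak p_{I_i})$. Tensoring over $\U(\mathfrak p_{I_i})$ with $L_{I_i}(\lambda_{I_i,\mathbf c})$, we obtain a $\mathbb C$-linear isomorphism
\[
M^{\mathfrak p_{I_i}}(\lambda_{I_i,\mathbf c})\;\cong\; \U(\mathfrak u_{I_i}^-)\otimes_{\mathbb C} L_{I_i}(\lambda_{I_i,\mathbf c}).
\]
By Definition~\ref{deltac} we have $\langle \lambda_{I_i,\mathbf c},\alpha\rangle =0$ for every $\alpha\in I_i$ (using $c_k=0$ in the case $i=2$ to kill pairings with $\alpha_n$), so $L_{I_i}(\lambda_{I_i,\mathbf c})$ is one-dimensional and equal to $\mathbb C m_i$ as in \eqref{lmihighset}. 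Consequently $M^{\mathfrak p_{I_i}}(\lambda_{I_i,\mathbf c})\cong \U(\mathfrak u_{I_i}^-) m_i$.

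Next I would verify that $\mathcal B_{I_i}$ is a basis of $\mathfrak u_{I_i}^-$. For each $\alpha\in -R^+\setminus -R^+_{I_i}$ the root space $\mathfrak g_\alpha$ is one-dimensional, and the elements listed in $\mathcal B_{I_i}$ exhaust all such negative root spaces: the elements $F_{-f,\pm g}$ with $1\le f<g\le n$ account for the root spaces of weights $-\varepsilon_f\mp \varepsilon_g$, while $T_i$ accounts for the additional short/long roots specific to types $B$ and $C$ (and is empty in type $D$). Comparing with \eqref{basisofg} one checks that $\mathcal B_{I_i}$ is precisely a basis of the nilpotent radical $\mathfrak u_{I_i}^-$. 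A second application of PBW, now to the abstract Lie algebra $\mathfrak u_{I_i}^-$ with ordered basis $(\mathcal B_{I_i},\prec)$, shows that the ordered monomials in $\mathcal M_{I_i}$ form a $\mathbb C$-basis of $\U(\mathfrak u_{I_i}^-)$. Combining this with the previous step yields that $\{ym_i\mid y\in \mathcal M_{I_i}\}$ is a basis of $M^{\mathfrak p_{I_i}}(\lambda_{I_i,\mathbf c})$.

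Finally I would tensor with $V^{\otimes r}$. Since $V^{\otimes r}$ has the standard $\mathbb C$-basis $\{v_{\mathbf i}\mid \mathbf i\in \underline N^r\}$ (introduced in Section~\ref{iso}) and tensor product over $\mathbb C$ of two bases is again a basis, we conclude that
\[
\mathcal S_{i,r}=\{\,ym_i\otimes v_{\mathbf i}\mid y\in \mathcal M_{I_i},\ \mathbf i\in \underline N^r\,\}
\]
is a $\mathbb C$-basis of $M_{I_i,r}=M^{\mathfrak p_{I_i}}(\lambda_{I_i,\mathbf c})\otimes V^{\otimes r}$, as required. The only point requiring genuine care here is the uniform case analysis needed to confirm that $\mathcal B_{I_i}$ spans all the required negative root spaces without redundancy across all three Lie algebra types $\mathfrak{so}_{2n}$, $\mathfrak{so}_{2n+1}$ and $\mathfrak{sp}_{2n}$; once this bookkeeping is in place, the rest is a direct consequence of PBW.
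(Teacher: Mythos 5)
Your proof is correct and is exactly the argument the paper intends: the paper states this lemma without proof, treating it as an immediate consequence of the PBW theorem together with the facts (already recorded in the text) that $\dim_{\mathbb C}L_{I_i}(\lambda_{I_i,\mathbf c})=1$ and that $\mathcal B_{I_i}$ is a basis of $\mathfrak u_{I_i}^{-}$. Your two-stage PBW argument, followed by tensoring with the standard basis of $V^{\otimes r}$, fills in precisely these implicit steps.
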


We say that    $F_{\mathbf k,\mathbf l}^{\alpha}m_i\otimes v_{\bf i}$
    is  of degree    $\sum_{j} {\alpha_j}$. This is
the degree of $F_{\mathbf k,\mathbf l}^\alpha $   defined via the  usual  $\mathbb Z$-grading on $\U(\mathfrak g)$.
For $j\in\mathbb N$, let $$M _{I_i,r}^{\leq j}=\mathbb C\text{-span} \{v\in \mathcal S_{i,r}\mid
\text{deg} v\leq j\}.$$ This gives a $\mathbb Z$-graded filtration of  $M_{I_i,r}$.
\begin{Lemma}\label{eM filtered de }
    Suppose that   $h,l \in \underline N$ and $j\in\mathbb N$.
     Then
    \begin{enumerate}
        \item[(1)] $F_{ h,  l}(M^{\mathfrak p_{I_i}}(\lambda_{I_i, \mathbf c})^{\leq j} )\subseteq M^{\mathfrak p_{I_i}}(\lambda_{I_i, \mathbf c})^{\leq j+1} $   if $ F_{h,l}\in \mathfrak u_{I_i}^-$,
        \item[(2)] $F_{ h,  l}(M^{\mathfrak p_{I_i}}(\lambda_{I_i, \mathbf c})^{\leq j} )\subseteq M^{\mathfrak p_{I_i}}(\lambda_{I_i, \mathbf c})^{\leq j} $   if $ F_{h,l}\notin \mathfrak u_{I_i}^-$.
            \end{enumerate}
\end{Lemma}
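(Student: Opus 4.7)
The plan is to exploit the fact that $L_{I_i}(\lambda_{I_i,\mathbf c})$ is one-dimensional, so $\mathfrak p_{I_i}$ acts on $m_i$ via a character $\chi\colon\mathfrak p_{I_i}\to\mathbb C$ which extends to an algebra homomorphism $\chi\colon\U(\mathfrak p_{I_i})\to\mathbb C$. Combined with the PBW decomposition $\U(\mathfrak g)=\U(\mathfrak u_{I_i}^-)\cdot\U(\mathfrak p_{I_i})$, this reduces both statements to the interaction between the standard degree filtration $\U^{\leq j}(\mathfrak g)$ on $\U(\mathfrak g)$ and the filtration on $M^{\mathfrak p_{I_i}}(\lambda_{I_i,\mathbf c})$ defined before the statement.

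First I would establish the auxiliary identity $\U^{\leq j}(\mathfrak g)\,m_i = M^{\mathfrak p_{I_i}}(\lambda_{I_i,\mathbf c})^{\leq j}$. The inclusion $\supseteq$ is immediate from the definitions of $\mathcal M_{I_i}$ and the filtration. For $\subseteq$, write any $X\in\U^{\leq j}(\mathfrak g)$ via PBW as $\sum_k u_kp_k$ with $u_k\in\U(\mathfrak u_{I_i}^-)$ and $p_k\in\U(\mathfrak p_{I_i})$ satisfying $\text{deg}(u_k)+\text{deg}(p_k)\le j$; applying to $m_i$ gives $\sum_k\chi(p_k)\,u_km_i$, and expanding each $u_k$ in the ordered PBW basis $\mathcal M_{I_i}$ shows $u_km_i\in M^{\mathfrak p_{I_i}}(\lambda_{I_i,\mathbf c})^{\leq\text{deg}(u_k)}\subseteq M^{\mathfrak p_{I_i}}(\lambda_{I_i,\mathbf c})^{\leq j}$.

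For part (1), take $F_{h,l}\in\mathfrak u_{I_i}^-$ and $ym_i\in M^{\mathfrak p_{I_i}}(\lambda_{I_i,\mathbf c})^{\leq j}$ with $y\in\mathcal M_{I_i}$ of degree at most $j$. Since $[\mathfrak u_{I_i}^-,\mathfrak u_{I_i}^-]\subseteq\mathfrak u_{I_i}^-$, PBW reordering inside the subalgebra $\U(\mathfrak u_{I_i}^-)$ rewrites $F_{h,l}y\in\U^{\leq j+1}(\mathfrak u_{I_i}^-)$ as a combination of $\mathcal M_{I_i}$-basis elements of degree at most $j+1$, and applying to $m_i$ lands the result in $M^{\mathfrak p_{I_i}}(\lambda_{I_i,\mathbf c})^{\leq j+1}$.

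For part (2), take $F_{h,l}\in\mathfrak p_{I_i}$ and $ym_i$ as above. Because $\mathrm{gr}\,\U(\mathfrak g)\cong S(\mathfrak g)$ is commutative, the commutator $[F_{h,l},y]$ lies in $\U^{\leq j}(\mathfrak g)$. Thus
\[
F_{h,l}\,ym_i \;=\; yF_{h,l}m_i + [F_{h,l},y]m_i \;=\; \chi(F_{h,l})\,ym_i + [F_{h,l},y]m_i,
\]
and the auxiliary identity places each summand on the right in $M^{\mathfrak p_{I_i}}(\lambda_{I_i,\mathbf c})^{\leq j}$. No step is genuinely hard; the only delicate point is the reliance on one-dimensionality of $L_{I_i}(\lambda_{I_i,\mathbf c})$ (ensured by the hypothesis $c_k=0$ when $i=2$), without which $\mathfrak p_{I_i}$ could raise the $\mathfrak u_{I_i}^-$-degree and (2) would fail.
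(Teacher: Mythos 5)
Your proof is correct and is essentially the paper's argument: both hinge on $L_{I_i}(\lambda_{I_i,\mathbf c})$ being one-dimensional (so $F_{h,l}m_i$ is a scalar multiple of $m_i$ when $F_{h,l}\in\mathfrak p_{I_i}$, and of degree one otherwise) together with the fact that commuting the degree-one element $F_{h,l}$ past a PBW monomial $y$ only contributes terms of degree at most $\deg y$. The paper runs this as a direct induction on $\deg y$ using the explicit commutator formula \eqref{comc}, while you package the same induction into the auxiliary identity $\U^{\leq j}(\mathfrak g)m_i=M^{\mathfrak p_{I_i}}(\lambda_{I_i,\mathbf c})^{\leq j}$ via PBW and the commutativity of the associated graded algebra; this is a cosmetic reorganization, not a different route.
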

\begin{proof} If  $F_{h,l}\notin \mathfrak u_{I_i}^- $, then   $F_{h,l}m_i=b m_i$   for some scalar $b$. Otherwise, $F_{h,l}m_i$ is of degree $1$.
Suppose    $y\in \mathcal M_{I_i}$ and $\text{deg}(y)=j$.  By \eqref{comc} and induction on $j$,
\begin{equation}\label{usefact}
F_{h,l}ym_i=yF_{h,l} m_i ~~~ (\text{mod } M^{\mathfrak p_{I_i}}(\lambda_{I_i, \mathbf c})^{\leq j}).
\end{equation}
This proves  (2).
Suppose   $ F_{h,l}\in \mathfrak u_{I_i}^-$.
By \eqref{comc} and induction on $j$ again, $yF_{h,l}m_i$ is of degree  $j+1$ up to a linear combination of some elements in $M^{\mathfrak p_{I_i}}(\lambda_{I_i, \mathbf c})^{\leq j} $, proving (1).
\end{proof}
Thanks to \eqref{usefact}, we can move $F_{j,l}$ to the right until it meets $m_i$ when we compute the term of $F_{j,l}ym_i$ with degree bigger than   $\text{deg}(y)$, $y\in \mathcal M_{I_i}$.
This  fact will be used frequently in the proof of Corollary~\ref{usecorollary}.

For $1\le h\le k$, define
\begin{equation}\label{bigh} \mathbf p_{\leq h}=\bigcup_{j=0}^{h }\mathbf p_j\text{ and } \mathbf p_{>h}=\bigcup_{j=h+1}^k \mathbf p_j, \end{equation}
where $\mathbf p_0=\emptyset$  and $\mathbf p_j=\{p_{j-1}+1, p_{j-1}+2,\ldots, p_j\}$ for $1\leq j\leq k$. The following results can be verified by straightforward computation.

\begin{Lemma} \label{xh1} Fix an $i\in \{1, 2\}$ and  let  $m_i$ be the highest weight vector of   $M^{\mathfrak p_{I_i}} (\lambda_{I_i, \mathbf c})$ (see \eqref{lmihighset}).
 \begin{enumerate}\item[(1)]If $\mathfrak g=\mathfrak {so}_{2n+1}$, then  $\Omega (m_i\otimes  v_0)= \sum _{1\leq j\leq p_{k-1}}F_{0,j}m_i\otimes v_{j}+
\delta_{i,1}\sum _{j\in\mathbf  p_{k}}F_{0,j}m_i\otimes v_{j}$.
\item[(2)]  If $l\in \mathbf p_h$ and  $h\geq 1$, then $\Omega (m_i\otimes  v_l)=c_h m_i\otimes v_l+\sum_{j\in \mathbf p_{\leq h-1}}F_{l,j}m_i \otimes  v_j$,
\item[(3)] If $-l\in \mathbf p_h$ and  $h\geq 1$, then $\Omega (m_i\otimes  v_l)=-c_h m_i\otimes v_l+\sum _{j\in \mathbf p_{>h}} F_{l,-j}m_i\otimes v_{-j}+A+B+C$,
where
\begin{itemize}
 \item  $A=\sum_{j\in\mathbf p_{\leq k}\setminus\{ -l\},\{-l,j\}\nsubseteq \mathbf p_k}F_{l,j}m_i\otimes v_j$ $+ \delta_{i,1}\sum _{j\in\mathbf p_{\leq k}\setminus\{ -l\},\{-l,j\}\subset \mathbf  p_{k}}F_{l,j}m_i\otimes v_j$,
\item $B=0$ (resp., $\delta_{\mathfrak g,\mathfrak {sp}_{2n}} F_{l,-l}m_i\otimes v_{-l}$) if $-l\in \mathbf p_k$ and $i=2$ (resp., otherwise),
 \item $ C=0$ (resp., $-\delta_{\mathfrak g,\mathfrak {so}_{2n+1}} F_{0,-l}m_i\otimes v_{0}$) if
$-l\in \mathbf p_k$ and $i=2$ (resp.,  otherwise).
  \end{itemize}
\end{enumerate}
\end{Lemma}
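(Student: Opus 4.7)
The plan is to first reduce the double sum defining $\Omega$ to a single sum, then to classify which of the resulting terms are nonzero. Starting from
\begin{equation*}
\Omega(m_i\otimes v_l)=\tfrac12\sum_{a,b\in\underline N}F_{a,b}m_i\otimes F_{b,a}v_l
\end{equation*}
and using $F_{b,a}v_l=\delta_{a,l}v_b-\theta_{b,a}\delta_{-b,l}v_{-a}$ from \eqref{fij}, the sum splits into two pieces (from the two terms of $F_{b,a}v_l$). Reindexing the second piece via $c=-a$ and applying $F_{-l,-c}=-\theta_{-l,-c}F_{c,l}$ (which follows directly from \eqref{fij} using $\theta_{p,q}^2=1$), the two pieces coincide, canceling the $\tfrac12$. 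This yields the key simplification
\begin{equation*}
\Omega(m_i\otimes v_l)=\sum_{b\in\underline N}F_{l,b}m_i\otimes v_b,\qquad \forall\, l\in\underline N.
\end{equation*}

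Next, I would classify the nonzero $F_{l,b}m_i$. Since $m_i$ generates the one-dimensional simple $\mathfrak l_{I_i}$-module $L_{I_i}(\lambda_{I_i,\mathbf c})$, we have $F_{l,b}m_i=0$ whenever $F_{l,b}\in\mathfrak u_{I_i}\cup(\mathfrak l_{I_i}\setminus\mathfrak h)$. Only two types of contributions survive: the Cartan term $F_{l,l}m_i=\lambda_{I_i,\mathbf c}(F_{l,l})m_i$ (which equals $c_h m_i$ if $l\in\mathbf p_h$ and $-c_h m_i$ if $-l\in\mathbf p_h$, using $F_{-l,-l}=-F_{l,l}$), and the negative-non-Levi contributions $F_{l,b}m_i$ with $F_{l,b}\in\mathfrak u_{I_i}^-$, which are of filtered degree one.

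The three assertions then follow by case analysis on the position of $b$ relative to the block decomposition of $\underline N$. For (1), $F_{0,0}=0$ kills the Cartan piece, and for $b>0$ the weight $-\varepsilon_b$ is negative; it is a Levi root only when $i=2$ and $b\in\mathbf p_k$ (absorbed by $\mathfrak g_2=\mathfrak{so}_{2q_k+1}$), which produces the indicator $\delta_{i,1}$; for $b<0$ the weight $\varepsilon_{-b}$ is positive, so $F_{0,b}m_i=0$. For (2), after extracting $c_hm_i\otimes v_l$, the surviving $b$ are forced to lie in $\mathbf p_{\leq h-1}$: $b\in\mathbf p_h\setminus\{l\}$ makes $F_{l,b}$ Levi, $b\in\mathbf p_{>h}$ makes $\varepsilon_l-\varepsilon_b$ positive, and $b\leq 0$ gives the positive weight $\varepsilon_l+\varepsilon_{-b}$ or $\varepsilon_l$. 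For (3), writing $l'=-l\in\mathbf p_h$ and extracting $-c_hm_i\otimes v_l$, the remaining $b$ naturally split into the four families recorded in the statement: $b=-j$ with $j\in\mathbf p_{>h}$ yields the main sum; $b=j>0$, $j\neq l'$, yields $A$, whose dichotomy reflects that $\{l',j\}\subseteq\mathbf p_k$ is a Levi root for $I_2$ but not $I_1$; $b=-l=l'$ yields $B$, nonzero only for $\mathfrak{sp}$ because $F_{-l',l'}=2E_{-l',l'}$ there and vanishes for $\mathfrak{so}$; and $b=0$ (for $\mathfrak{so}_{2n+1}$) yields $C$, whose minus sign originates from the $\theta$-identity $F_{-l',0}=-F_{0,l'}$.

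The main obstacle is the case-heavy bookkeeping in (3), where one must simultaneously track the $\theta$-identity, the block partition $\{\mathbf p_j\}$, and the structural difference between the Levis of $I_1$ and $I_2$ (especially the role of $\mathbf p_k$ under $\mathfrak g_2$). Once the initial single-sum simplification is in place and the Levi/unipotent classification is pinned down, the remaining work is a direct enumeration; matching the indicators $\delta_{i,1}$, $\delta_{\mathfrak g,\mathfrak{sp}_{2n}}$, $\delta_{\mathfrak g,\mathfrak{so}_{2n+1}}$, and the sign in $C$ correctly is the only delicate point.
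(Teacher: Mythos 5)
Your proposal is correct and takes essentially the same route as the paper's proof: a direct computation of $\Omega(m_i\otimes v_l)$ that exploits the symmetry $F_{i,j}=-\theta_{i,j}F_{-j,-i}$ to cancel the factor $\tfrac12$, together with the facts that $\mathfrak n^+$ and all root vectors of $\mathfrak l_{I_i}$ annihilate $m_i$ while $\mathfrak h$ acts by $\lambda_{I_i,\mathbf c}$, followed by the same block-by-block enumeration yielding the terms $A$, $B$, $C$ with the indicators $\delta_{i,1}$, $\delta_{\mathfrak g,\mathfrak{sp}_{2n}}$, $\delta_{\mathfrak g,\mathfrak{so}_{2n+1}}$ and the sign in $C$. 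The only difference is organizational: you first collapse the double sum to the identity $\Omega(m\otimes v_l)=\sum_{b\in\underline N}F_{l,b}m\otimes v_b$ valid for arbitrary $m$, whereas the paper applies $\mathfrak n^+m_i=0$ first and works with the four pieces $\gamma_1,\ldots,\gamma_4$; the subsequent case analysis is the same.
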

\begin{proof}
Since $m_i$ is a highest weight vector, $\mathfrak n^+m_i=0$. Thus $\Omega$ acts on $m_i\otimes v_l$ via $\sum_{i=1}^4 \gamma_i$, where
$\gamma_1=\sum_{1\leq j\leq n}F_{j,j}\otimes F_{j,j}$, $\gamma_2= \sum_{1\leq r<j\leq n} F_{-r,\pm j}\otimes F_{\pm j,-r}$, $\gamma_3=
\frac{1}{2}\delta_{\mathfrak g,\mathfrak {sp}_{2n}}\sum_{1\leq j\leq n}F_{-j,j}\otimes F_{j,-j}$ and $\gamma_4= \delta_{\mathfrak g,\mathfrak {so}_{2n+1}}\sum_{1\leq j\leq n} F_{0,j}\otimes F_{j,0}$. We have
\begin{itemize}\item $ \gamma_j m_i\otimes v_0=0$, if $j\in \{1,2\}$,
 \item $\gamma_1m_i\otimes v_l=\pm c_{h}m_i\otimes v_{l}$, if $\pm l\in \mathbf p_h$.\end{itemize}
Since  $F_{r,j}m_i=0$ for any  $F_{r,j}\in \mathfrak p_{I_i}\cap \mathfrak n^-$,  we have
\begin{itemize}\item $ \gamma_2  (m_i\otimes v_l)=  \sum _{j\in \mathbf p_{\leq h-1}} F_{l,j}m_i\otimes v_j$, if $l\in \mathbf p_h$,
\item  $\gamma_2  (m_i\otimes v_l)= \sum _{j\in \mathbf p_{>h}} F_{l,-j}m_i\otimes v_{-j}+A$, if $-l\in \mathbf p_h$,
 \item $\gamma_3 (m_i\otimes v_l) =    0$  if either $l\in \mathbf p_h$ or $-l\in \mathbf p_k$ and $i=2$,
 \item $\gamma_3 (m_i\otimes v_l)=\delta_{\mathfrak g,\mathfrak {sp}_{2n}} F_{l,-l}m_i\otimes v_{-l}$, otherwise,
 \item $\gamma_4(m_i\otimes v_0)=\sum _{1\leq j\leq p_{k-1}}F_{0,j}m_i\otimes v_{j}+
\delta_{i,1}\sum _{j\in\mathbf  p_{k}}F_{0,j}m_i\otimes v_{j}$, if $\mathfrak g=\mathfrak {so}_{2n+1}$,
\item $\gamma_4(m_i\otimes v_l)=0$ if either $l\in \mathbf p_h$ or $-l\in \mathbf p_k$ and $i=2$,
\item $\gamma_4(m_i\otimes v_l)=-\delta_{\mathfrak g,\mathfrak {s0}_{2n+1}} F_{0,-l}m_i\otimes v_{0}$, otherwise.
\end{itemize}
Combining the above equations yields (1)--(3).
\end{proof}

We also denote by $V$ the natural $\mathfrak{gl}_N$-module  with basis $\{v_j\mid 1\leq j\leq N\}$. Let  $V^*$ be the linear dual of $V$.
Let  $L(\lambda)$ be the irreducible $\mfg$-module with the highest weight $\lambda$.
\begin{Lemma}\label{tensorwithv} \cite[VIII, \S9~Prop.2]{Bou}
Suppose that  $\lambda$ is a dominant integral weight.
 Then
$$\begin{aligned}
L(\lambda)\otimes V&\cong\left\{
                           \begin{array}{ll}
                             \oplus_j L(\lambda+\varepsilon_j), & \hbox{if $\mfg=\mathfrak{gl}_N$,} \\
                             \oplus_j L(\lambda\pm\varepsilon_j ), & \hbox{if $\mfg\in\{\mathfrak{sp}_{2n},\mathfrak{so}_{2n}\}$ or $\mfg=\mathfrak{so}_{2n+1}, \lambda_n=0$}\\
                             L(\lambda)\oplus(\oplus_j L(\lambda\pm\varepsilon_j )), & \hbox{if $\mfg=\mathfrak{so}_{2n+1}, \lambda_n>0$}
                           \end{array}
                         \right. \\
                          L(\lambda)\otimes V^* &\cong\quad \oplus_j L(\lambda-\varepsilon_j) \quad\quad\quad\quad\quad\text{ if $\mfg=\mathfrak{gl}_N$,}
\end{aligned}$$
where the direct sum is over all $j$ such that    $\lambda\pm\varepsilon_j$ is dominant integral.
\end{Lemma}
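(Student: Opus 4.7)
The plan is to determine the decomposition of $L(\lambda)\otimes V$ by the Racah--Brauer--Klimyk formula, exploiting the fact that $V$ is minuscule in types $A$, $C$, $D$ and quasi-minuscule with a single zero weight in type $B$. Explicitly, for dominant integral $\lambda$ and a finite-dimensional $\U(\mfg)$-module $M$ one has
\begin{equation*}
[L(\lambda)\otimes M]=\sum_{\mu}(\dim M_\mu)\,\varepsilon(w_\mu)\,[L(w_\mu\cdot(\lambda+\mu))],
\end{equation*}
where the sum is over weights $\mu$ of $M$ with $\lambda+\mu+\rho$ regular, $w_\mu$ is the unique Weyl group element making $w_\mu(\lambda+\mu+\rho)$ dominant, $\varepsilon(w_\mu)=\det w_\mu$, and $\cdot$ denotes the dot action. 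Setting $M=V$ reduces the problem to a weight-by-weight analysis.

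Next I would enumerate the weights of $V$. In type $A$ they are $\epsilon_1,\ldots,\epsilon_N$; in types $C$ and $D$ they are $\pm\epsilon_1,\ldots,\pm\epsilon_n$; in type $B$ they are $\pm\epsilon_1,\ldots,\pm\epsilon_n$ together with the single zero weight. For each $\mu=\pm\epsilon_j$ I would check, using the explicit form of $\rho$ in \eqref{rho123} (whose coordinates are strictly decreasing integers or half-integers), that $\lambda+\rho\pm\epsilon_j$ is either dominant regular or lies on exactly one reflection wall: in the first case $w_\mu=1$ and the contribution is $[L(\lambda\pm\epsilon_j)]$ with $\lambda\pm\epsilon_j$ dominant, and in the second case the contribution vanishes because $w_\mu$ fixes the point and has sign depending on whether the reflection preserves it. A short case analysis shows that this precisely reproduces the sum $\bigoplus_j L(\lambda\pm\epsilon_j)$ over $j$ for which $\lambda\pm\epsilon_j$ is dominant, giving the types $A$, $C$, $D$ cases (and with $-\epsilon_j$ in place of $+\epsilon_j$ for the $V^\ast$ statement in type $A$).

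The remaining point is the contribution from the zero weight of $V$ in type $B$. Since $\rho_n=\tfrac12>0$, the vector $\lambda+\rho$ is strictly dominant, so $w_\mu=1$ and the contribution is $[L(\lambda)]$. I would then inspect the $\mu=-\epsilon_n$ contribution: if $\lambda_n=0$, then $\lambda+\rho-\epsilon_n$ lies on the wall of $\alpha_n=\epsilon_n$, producing an $[L(\lambda)]$ with opposite sign that cancels the zero-weight contribution; if $\lambda_n>0$, then $\lambda-\epsilon_n$ is itself dominant and yields its own $L(\lambda-\epsilon_n)$ summand while the $[L(\lambda)]$ from $\mu=0$ survives uncancelled. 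This gives the split in the lemma between $\lambda_n=0$ and $\lambda_n>0$.

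The main technical obstacle is carrying out the cancellation bookkeeping in type $B$ with the correct signs; I would carry this out by writing $\lambda+\rho-\epsilon_n$ and checking that it is invariant under $s_{\alpha_n}$ precisely when $\lambda_n=0$, and otherwise lies strictly inside the dominant chamber. Once this is verified, the remaining verifications in types $A$, $C$, $D$ are straightforward since every term is either dominant regular or genuinely on a wall.
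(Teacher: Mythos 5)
The paper offers no proof of this lemma at all: it is quoted directly from Bourbaki \cite[VIII, \S9, Prop.~2]{Bou}. So there is nothing in the paper to compare against, and your plan of deriving the decomposition from the Racah--Brauer--Klimyk formula, using that $V$ is minuscule in types $A$, $C$, $D$ and quasi-minuscule in type $B$, is exactly the standard route to this Pieri-type rule. The minuscule cases and the $\mathfrak{gl}_N$ statements (including the $V^*$ one) go through as you describe: each $\lambda+\rho\pm\varepsilon_j$ is either strictly dominant (and then $\lambda\pm\varepsilon_j$ is dominant and contributes) or singular (and then the term vanishes), and no reflection with a nontrivial sign ever occurs.

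There is, however, a concrete error in your type $B$ bookkeeping, which is precisely the step you flag as the main obstacle. With $\rho_n=\tfrac12$, the last coordinate of $\lambda+\rho-\varepsilon_n$ is $\lambda_n-\tfrac12$. Hence $\lambda+\rho-\varepsilon_n$ lies on the wall $\nu_n=0$ of $\alpha_n=\varepsilon_n$ precisely when $\lambda_n=\tfrac12$, \emph{not} when $\lambda_n=0$; and a term on a wall contributes zero, it cannot ``produce an $[L(\lambda)]$ with opposite sign.'' The correct mechanism for $\lambda_n=0$ is different: the point $\lambda+\rho-\varepsilon_n$ is then \emph{regular} but non-dominant (last coordinate $-\tfrac12$, all others at least $\tfrac32$ and pairwise distinct), and the reflection $s_{\varepsilon_n}$ carries it exactly to $\lambda+\rho$, so the Klimyk term is $\varepsilon(s_{\varepsilon_n})[L(\lambda)]=-[L(\lambda)]$, which cancels the $+[L(\lambda)]$ coming from the zero weight of $V$. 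For $\lambda_n=\tfrac12$ the $\mu=-\varepsilon_n$ term is singular and vanishes while the zero-weight term survives, and for $\lambda_n\geq 1$ both $L(\lambda-\varepsilon_n)$ and $L(\lambda)$ appear; both situations fall under the lemma's case $\lambda_n>0$. So your plan is salvageable, but the verification you propose (``check that $\lambda+\rho-\varepsilon_n$ is $s_{\alpha_n}$-invariant precisely when $\lambda_n=0$'') is false as stated and, followed literally, would place the cancellation at $\lambda_n=\tfrac12$ instead of $\lambda_n=0$.
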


  Suppose  that $V$ is the natural $\mfg$-module, where $\mfg\in \{\mathfrak{sp}_{N},\mathfrak{so}_{N}\}$. Recall   $\mathbf p_j$'s in \eqref{bigh}.   We consider   subspaces $\tilde V_j\subset V$  spanned by $  V_j$, with $1\leq j\leq 2k$ (resp., $1\leq j\leq 2k-1$) if $i=1$(resp., $i=2$), where
\begin{equation}\label{eevlll}
  V_j=\left\{
      \begin{array}{ll}
        &\{v_l\mid l\in\mathbf p_j \},\quad\quad \quad\quad\hbox{  if either $1\leq j\leq k$ and $i=1$ or $1\leq j\leq k-1$ and $i=2$;} \\
       &\{v_l\mid -l\in \mathbf p_{2k-j+1} \}, \quad  \hbox{  if $k+1\leq j\leq 2k$ and $i=1$;} \\
  &\{v_l\mid -l\in \mathbf p_{2k-j}\},\quad \quad \hbox{ if $k+1\leq j\leq 2k-1$ and $i=2$;} \\
 &\{v_l, \delta_{\mfg, \mathfrak{so}_{2n+1}}v_0\mid \pm l\in\mathbf p_k\},   \hbox{ if $j=k$ and $i=2$.} \\
      \end{array}
    \right.
 \end{equation}
Then $\tilde V_j$ is isomorphic to the natural $\mathfrak {gl}_{q_j}$-module  (resp., $\mathfrak g_i$-module) if $ 1\leq j\leq k-1$ (resp., $j=k$), where $\mathfrak g_i$ is given in \eqref{defofgi}.
If $k+1\leq j\leq 2k$ and $i=1$ (resp., $k+1\leq j\leq 2k-1$ and $i=2$), then $\tilde V_j$ is isomorphic to the dual of the natural $\mathfrak {gl}_{q_{2k-j+1}}$-module  (resp., $\mathfrak {gl}_{q_{2k-j}}$-module). Thanks to  Lemma~\ref{tensorwithv}, we have the  following result, immediately.
\begin{Cor}\label{teroofvl}
As $\mathfrak l_{I_i}$-modules,
\begin{equation*}
L_{I_i}(\lambda_{I_i, \mathbf c})\otimes V\cong \left\{
                                                 \begin{array}{ll}
                                                  \bigoplus_{j=1}^{k} L_{I_i}(\lambda_{I_i, \mathbf c}+ \varepsilon_{p_{j-1}+1})\oplus L_{I_i}(\lambda_{I_i, \mathbf c}- \varepsilon_{p_{j}})\oplus\delta_{\mfg,\mathfrak {so}_{2n+1}} L_{I_i}(\lambda_{I_i, \mathbf c})
 , & \hbox{ if $i=1$;} \\
   \bigoplus_{j=1}^{k-1} L_{I_i}(\lambda_{I_i, \mathbf c}+ \varepsilon_{p_{j-1}+1})\oplus L_{I_i}(\lambda_{I_i, \mathbf c}- \varepsilon_{p_{j}})\oplus L_{I_i}(\lambda_{I_i, \mathbf c}+ \varepsilon_{p_{k-1}+1})                                                , & \hbox{ if $i=2$.}
                                                 \end{array}
                                               \right.
\end{equation*}
Moreover,
\begin{itemize}
\item[(1)] $ L_{I_i}(\lambda_{I_i, \mathbf c}+ \varepsilon_{p_{j-1}+1})$ is spanned by $\{m_i\otimes v\mid v\in V_j\}$ if either $ 1\leq j\leq k$ and $i=1$ or $ 1\leq j\leq k-1$ and $i=2$;
\item [(2)] $L_{I_i}(\lambda_{I_i, \mathbf c}- \varepsilon_{p_{j}})$ is spanned by \begin{enumerate}\item  $\{m_i\otimes v\mid v\in V_{2k-j+1}\}$ if $ 1\leq j\leq k$ and $i=1$,
\item  $\{m_i\otimes v\mid v\in V_{2k-j}\}$ if  $ 1\leq j\leq k-1$ and $i=2$;\end{enumerate}
\item [(3)] $L_{I_i}(\lambda_{I_i, \mathbf c})$ is spanned by $\{m_i\otimes v_0\}$ if $i=1$ and $ \mfg=\mathfrak{so}_{2n+1}$;
\item [(4)] $L_{I_i}(\lambda_{I_i, \mathbf c}+ \varepsilon_{p_{k-1}+1})$ is spanned by $\{m_i\otimes v\mid v\in V_k\}$ if $i=2$.
\end{itemize}
\end{Cor}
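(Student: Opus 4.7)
The key observation is that $L_{I_i}(\lambda_{I_i,\mathbf c})$ is one-dimensional, so as an $\mathfrak l_{I_i}$-module we can identify $L_{I_i}(\lambda_{I_i,\mathbf c})\otimes V$ with $V$ twisted by the character $\lambda_{I_i,\mathbf c}$. Thus the plan is to decompose $V$ as an $\mathfrak l_{I_i}$-module using the structure \eqref{defoflevi}, and then shift each summand's highest weight by $\lambda_{I_i,\mathbf c}$. The subspaces $\tilde V_j$ defined in \eqref{eevlll} are precisely the irreducible $\mathfrak l_{I_i}$-submodules: for $1\le j\le k-1$ (and also $j=k$ when $i=1$), $\tilde V_j$ is the natural $\mathfrak{gl}_{q_j}$-module; for the ``upper'' range of $j$, $\tilde V_j$ is the dual of a natural $\mathfrak{gl}_{q_{2k-j+1}}$-module (resp.\ $\mathfrak{gl}_{q_{2k-j}}$-module); and $\tilde V_k$ is the natural $\mathfrak g_i$-module when $i=2$. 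The vector $v_0$ in the $\mathfrak{so}_{2n+1}$, $i=1$ case spans a one-dimensional trivial $\mathfrak l_{I_1}$-submodule.

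The next step is to identify highest weights of each $m_i\otimes \tilde V_j$. Since $\lambda_{I_i,\mathbf c}$ restricted to $\mathfrak{gl}_{q_j}$ equals $c_j(\varepsilon_{p_{j-1}+1}+\cdots+\varepsilon_{p_j})$, adding the highest weight $\varepsilon_{p_{j-1}+1}$ of the natural $\mathfrak{gl}_{q_j}$-module (resp.\ adding $-\varepsilon_{p_j}$ from its dual) yields a dominant weight in each $\mathfrak{gl}_{q_j}$-component, while all other components of $\lambda_{I_i,\mathbf c}$ remain unaltered. For $i=2$, the restriction of $\lambda_{I_2,\mathbf c}$ to $\mathfrak g_i$ is zero (since $c_k=0$), and tensoring the trivial $\mathfrak g_i$-module with the natural $\mathfrak g_i$-module yields the irreducible with highest weight $\varepsilon_{p_{k-1}+1}$. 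Applying Lemma~\ref{tensorwithv} componentwise to $\mathfrak l_{I_i}\cong \bigoplus_{j=1}^{k-1}\mathfrak{gl}_{q_j}\oplus\mathfrak g_i$ then gives exactly the stated direct sum decomposition.

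For the spanning statements (1)--(4), observe that each $m_i\otimes \tilde V_j$ is already an $\mathfrak l_{I_i}$-submodule of $L_{I_i}(\lambda_{I_i,\mathbf c})\otimes V$ isomorphic to a single irreducible summand; since the decomposition has distinct isomorphism types (the weights $\lambda_{I_i,\mathbf c}\pm\varepsilon_{p_{j-1}+1}$, $\lambda_{I_i,\mathbf c}-\varepsilon_{p_j}$, etc.\ are pairwise distinct), each $m_i\otimes \tilde V_j$ must coincide with the unique summand of its isomorphism type. Matching the indexing conventions of $V_j$ in \eqref{eevlll} against the list $\{\lambda_{I_i,\mathbf c}+\varepsilon_{p_{j-1}+1}\}$ and $\{\lambda_{I_i,\mathbf c}-\varepsilon_{p_j}\}$ then identifies the spanning sets in (1)--(4).

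The main obstacle is bookkeeping: one must verify that each candidate shifted weight $\lambda_{I_i,\mathbf c}\pm\varepsilon_l$ is dominant integral with respect to $I_i$ \emph{only} when $l$ is the extremal index ($p_{j-1}+1$ for $+\varepsilon_l$, $p_j$ for $-\varepsilon_l$) within a block $\mathbf p_j$, so that Lemma~\ref{tensorwithv} contributes exactly one summand per block. For the $\mathfrak{so}_{2n+1}$ cases one must also handle the trivial summand $\mathbb C v_0$ correctly, noting it is absorbed into the $\mathfrak g_2$-natural module for $i=2$ but produces a separate copy of $L_{I_1}(\lambda_{I_1,\mathbf c})$ for $i=1$; this accounts for the $\delta_{\mfg,\mathfrak{so}_{2n+1}}$ term appearing only in the $i=1$ formula.
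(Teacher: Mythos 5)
Your proposal is correct and follows essentially the same route as the paper: the paper introduces the subspaces $\tilde V_j$ spanned by the $V_j$ of \eqref{eevlll}, identifies them with natural $\mathfrak{gl}_{q_j}$-modules, their duals, or the natural $\mathfrak g_i$-module according to the decomposition \eqref{defoflevi}, and then deduces the corollary immediately from Lemma~\ref{tensorwithv}. Your additional bookkeeping (one-dimensionality of $L_{I_i}(\lambda_{I_i,\mathbf c})$, pairwise distinctness of the shifted highest weights, and the treatment of $v_0$) simply makes explicit what the paper leaves implicit.
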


  \begin{Defn}\label{polf12}  Let  $\mathbf c$ be  given in Definition~\ref{deltac}. \begin{enumerate} \item
   If $\mfg\in \{\mathfrak {sp}_{2n}, \mathfrak{so}_{2n}\}$, we  define  $ f_1(t)=f_{2}(t) (t-u_{k+1})$ and  $f_2(t)=\prod_{j\in J} (t-u_{j})$, where  $J=\{1, 2, \ldots, 2k\}\setminus \{ k+1\}$
 and
 \begin{equation} \label{uij1} u_j=\begin{cases} \epsilon_\mathfrak g (c_j-p_{j-1}+\frac{1}{2} (2n-\epsilon_{\mathfrak g})), &\text{if $1\le j\le k$},\\
    \epsilon_{\mathfrak g}(-c_{2k-j+1}+p_{2k-j+1}- \frac 12(2n-\epsilon_{\mathfrak g})),  & \text{$k+1\leq j\leq 2k$.}\\ \end{cases} \end{equation}

  \item If  $\mfg=\mathfrak{so}_{2n+1}$,  define $f_1(t)=f_2(t) (t-u_{k+1}) (t-u_{k+2})$, and $f_2(t)=\prod_{j\in J} (t-u_j)$, where $J=\{1,, 2, \ldots, 2k+1\}\setminus   \{k+1, k+2\}$ and
 \begin{equation}\label{uij2} u_j=\begin{cases} c_j-p_{j-1}+n, &\text{if $1\leq j\leq k$,}\\
  0,  &\text{if $j=k+1$,}\\
    -c_{2k-j+2}+p_{2k-j+2}-n, &\text{if $k+2\leq j\leq 2k+1$.}\\
    \end{cases}
    \end{equation}\end{enumerate}
  \end{Defn}

Recall the functor $\Psi: \AB\rightarrow \END(\U(\mfg)\text{-mod})$ in Theorem~\ref{affaction}.
Let $\Psi_M:\AB\to \U(\mfg)\text{-mod}$ be the functor obtained by  the composition of $\Psi$ followed by evaluation at $M\in \U(\mfg)\text{-mod}$.
By Theorem~\ref{affaction}, for any $b\in \Hom_{\AB}(\ob m,\ob s)$ and $b_1\in \Hom_{\AB}(\ob h,\ob t)$, $m,s,h,t\in \mathbb N$,  we have
\begin{equation}\label{ddnamphis}
\Psi_M(b\otimes b_1)=\Psi(b_1)_{M\otimes V^{\otimes s}}\circ (\Psi(b)_M\otimes \text{Id}_{V^{\otimes h}}).
\end{equation}
Thanks to \cite[Proposition~9.3]{Hum},  $\mathcal{O}^{\mathfrak p_{I_i}}$ is closed under  tensoring with finite dimensional $\U(\mfg)$-modules.
So, we have  a functor
\begin{equation}\label{fucotssjd}
 \Psi_{M^{\mathfrak p_{I_i}}(\lambda_{I_i, \mathbf c})}:\AB\to \mathcal O^{\mathfrak p_{I_i}}.
\end{equation}
To simplify the notation, we denote $\Psi_{M^{\mathfrak p_{I_i}}(\lambda_{I_i, \mathbf c})}(X)$  by $X$  in the following two results.
\begin{Lemma}\label{polyofx} Suppose  $\mathfrak g\in \{\mathfrak {so}_{2n}, \mathfrak {sp}_{2n}\}$ and $i\in\{1,2\}$.
Then    $M^{\mathfrak p_{I_i}}(\lambda_{I_i, \mathbf c})\otimes V$ has    a parabolic Verma flag
$0=N_0\subseteq N_1\subseteq  \ldots\subseteq N_{2k}=M^{\mathfrak p_{I_i}}(\lambda_{I_i, \mathbf c})\otimes V$
   such that
 \begin{equation} \label{pbvf1} N_j/N_{j-1}\cong \begin{cases}  M^{\mathfrak p_{I_i}}(\lambda_{I_i, \mathbf c}+\varepsilon_{p_{j-1}+1}), & \text{if $1\leq j\leq k$,} \\
 M^{\mathfrak p_i}(\lambda_{I_i, \mathbf c}-\varepsilon_{p_{k}}), & \text{if $i=1$ and $j=k+1$,} \\
 0, & \text{if $i=2$ and $j=k+1$,} \\
                               M^{\mathfrak p_{I_i}}(\lambda_{I_i, \mathbf c}-\varepsilon_{p_{2k+1-j}}) , & \text{if $k+2\leq j\leq 2k$.}
                               \end{cases}\end{equation}
 Moreover, $X$ preserves this filtration and  $f_i(X)$
 acts  on  $M^{\mathfrak p_{I_i}}(\lambda_{I_i, \mathbf c})\otimes V$  trivially,   where $f_i(t)$ is given in Definition~\ref{polf12}(a).
   \end{Lemma}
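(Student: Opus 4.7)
The plan is to build the filtration by descent from a $\mathfrak p_{I_i}$-stable filtration of $V$, to note that $X$ preserves it automatically, and to pin down its eigenvalue on each subquotient via the Casimir element.

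First I would order the $\mathfrak l_{I_i}$-isotypic summands $\tilde V_j$ of $V$ from \eqref{eevlll} by decreasing highest weight (dominance order) and set $V^{(j)}:=\tilde V_1\oplus\cdots\oplus\tilde V_j$. Because $\mathfrak u_{I_i}$ raises weights, each $V^{(j)}$ is then a $\mathfrak p_{I_i}$-submodule of $V$ and the successive quotients are the $\tilde V_j$ as $\mathfrak l_{I_i}$-modules. When $i=2$ the summands $\tilde V_k$ and $\tilde V_{k+1}$ of Corollary~\ref{teroofvl} collapse into one, which I would accommodate by declaring $V^{(k+1)}=V^{(k)}$, so that in either case the filtration has length $2k$. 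Applying $\U(\mfg)\otimes_{\U(\mathfrak p_{I_i})}(\mathbb C m_i\otimes-)$ to this filtration and using the tensor identity $M^{\mathfrak p_{I_i}}(\lambda_{I_i,\mathbf c})\otimes V\cong \U(\mfg)\otimes_{\U(\mathfrak p_{I_i})}(L_{I_i}(\lambda_{I_i,\mathbf c})\otimes V)$ produces the required $N_j$, with the quotients listed in~\eqref{pbvf1}.

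Next, because $2\Omega=\Delta(C)-C\otimes 1-1\otimes C$ is a $\U(\mfg)$-endomorphism of $M^{\mathfrak p_{I_i}}(\lambda_{I_i,\mathbf c})\otimes V$, the operator $X=\varepsilon_{\mfg}(\Omega+\tfrac12(N-\varepsilon_{\mfg}))$ commutes with the $\mfg$-action and hence preserves each $N_j$. To compute its scalar action on $N_j/N_{j-1}\cong M^{\mathfrak p_{I_i}}(\mu_j)$ I would exploit the fact that such a highest weight quotient carries a single Casimir eigenvalue $(\mu_j,\mu_j+2\rho)$, while $C$ acts by $(\lambda_{I_i,\mathbf c},\lambda_{I_i,\mathbf c}+2\rho)$ on $M^{\mathfrak p_{I_i}}(\lambda_{I_i,\mathbf c})$ and by $N-\varepsilon_\mfg$ on $V$. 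Therefore $2\Omega|_{N_j/N_{j-1}}=(\mu_j,\mu_j+2\rho)-(\lambda_{I_i,\mathbf c},\lambda_{I_i,\mathbf c}+2\rho)-(N-\varepsilon_{\mfg})$, and plugging in the explicit $\mu_j$ together with the $\rho$-values from~\eqref{rho123} yields exactly the $u_j$ of~\eqref{uij1} after a routine type-by-type simplification under the dichotomy $\varepsilon_\mfg=\mp 1$. A more hands-on alternative would apply Lemma~\ref{xh1} directly to the highest weight vector $m_i\otimes v_{p_{j-1}+1}$ (for $j\le k$) or $m_i\otimes v_{-p_{2k+1-j}}$ (for $j\ge k+2$), noting that the lower-weight correction terms produced there all lie in $N_{j-1}$ because the filtration of $V$ was built in dominance order.

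Finally, since $(X-u_j)N_j\subseteq N_{j-1}$ and polynomials in $X$ commute, the product $f_i(X)=\prod_j(X-u_j)$ sends $N_{2k}$ into $N_0=0$, so $f_i(X)$ acts as zero on $M^{\mathfrak p_{I_i}}(\lambda_{I_i,\mathbf c})\otimes V$. The main obstacle in the argument is really the initial bookkeeping: verifying that the ordering on the $\tilde V_j$'s can be chosen so that each $V^{(j)}$ is genuinely $\mathfrak p_{I_i}$-stable and that the induced quotients match exactly the highest weights prescribed in~\eqref{pbvf1} (including the collapse $N_{k+1}/N_k=0$ when $i=2$); once that is settled, the remainder reduces to the short Casimir computation above.
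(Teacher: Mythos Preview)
Your proposal is correct and follows essentially the same route as the paper: build the filtration from a $\mathfrak p_{I_i}$-stable filtration of $L_{I_i}(\lambda_{I_i,\mathbf c})\otimes V$ via the tensor identity, identify the subquotients through Corollary~\ref{teroofvl}, and read off the eigenvalue of $X$ on each subquotient using the Casimir formula \eqref{actionfomega}. One small difference worth noting: the paper invokes Lemma~\ref{xh1} explicitly to check that $\Omega$ sends the generators $m_i\otimes v$ of $N_j$ back into $N_j$, whereas you observe more directly that $X$ is a $\U(\mfg)$-endomorphism and therefore preserves every $\U(\mfg)$-submodule automatically---this is a cleaner justification for the same step.
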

\begin{proof}
By the tensor identity,
 \begin{equation}\label{tensoridentity}
M^{\mathfrak p_{I_i}}(\lambda_{I_i, \mathbf c})\otimes V\cong\U(\mfg)  \otimes_{\U(\mathfrak p_{I_i})} (L_{I_i}(\lambda_{I_i, \mathbf c})\otimes V).
\end{equation}
 Define  $N_j$ to be     the left $\U(\mfg)$-module  generated by $\tilde N_j$, where
\begin{itemize}
\item $\tilde N_j:=\{m_i\otimes v\mid v\in \oplus _{l=1}^j V_l\}$ if either  $i=1$ or $i=2$ and $1\leq j\leq k$,
\item $ \tilde N_j:=\{m_i\otimes v\mid v\in \oplus _{l=1}^k V_l \} $  if $i=2$ and  $j=k+1$,
\item $\tilde N_j:=\{m_i\otimes v\mid v\in \oplus _{l=1}^{j-1} V_{l}\}$  if  $i=2$ and $k+2\leq j\leq 2k$,
\end{itemize}
and  $V_l$'s are given in \eqref{eevlll}.
Thanks to Corollary~\ref{teroofvl},   $0=N_0\subseteq N_1\subseteq  \ldots\subseteq N_{2k}=M^{\mathfrak p_{I_i}}(\lambda_{I_i, \mathbf c})\otimes V$ is the  required parabolic Verma flag.
By Lemma~\ref{xh1}, $\Omega(m_i\otimes v)\in N_j$ if $m_i\otimes v\in \tilde N_j$ for all admissible $j$.
So, the action of $\Omega$ preserves this filtration. So does $X$.

It is well known that the Casimir  $C$ acts  on a highest weight module with the highest weight $\lambda$ via the scalar   $(\lambda,\lambda+2\rho)$ (see e.g. \cite[Lemma~8.5.3]{Mu}), where $\rho$ is given in \eqref{rho123}.
Since $\Omega=\frac {1}{2} ( \Delta(C)-C\otimes 1-1\otimes C)$,   $\Omega$ acts on
$ M^{\mathfrak {p}_{I_i}}(\lambda)$ via  the scalar
\begin{equation}\label{actionfomega}
\frac{1}{2}((\lambda,\lambda+2\rho) -(\lambda_{I_i, \mathbf c},\lambda_{I_i, \mathbf c}+2\rho)-(\varepsilon_1,\varepsilon_1+2\rho)).
\end{equation}
 By \eqref{actionfomega} and straightforward computation,    $X$ acts on  $N_j/N_{j-1}$ via $u_j$ if $i=1$, where $u_j$'s are given in \eqref{uij1}.
Hence, $f_1(X)$ acts on $M^{\mathfrak p_{I_1}}(\lambda_{I_1, \mathbf c})\otimes V$  trivially. Finally, one can verify that  $f_2(X)$  acts trivially on $M^{\mathfrak p_{I_2}}(\lambda_{I_2, \mathbf c})\otimes V$  in a similar way.
\end{proof}
\begin{Lemma}\label{polyofxodd}  Suppose  $\mathfrak g=\mathfrak {so}_{2n+1}$ and $i\in\{1,2\}$. Then
  $M^{\mathfrak p_{I_i}}(\lambda_{I_i, \mathbf c})\otimes V$   has a   parabolic Verma flag
$0=N_0\subseteq N_1\subseteq \ldots\subseteq N_{2k+1}=M^{\mathfrak p_{I_i}}(\lambda_{I_i, \mathbf c})\otimes V$ such that
$$N_j/N_{j-1}\cong \begin{cases}
                                M^{\mathfrak p_{I_i}}(\lambda_{I_i, \mathbf c}+\varepsilon_{p_{j-1}+1}), & \text{if $1\leq j\leq k$,} \\
                                M^{\mathfrak p_{I_i}}(\lambda_{I_i, c}), & \text{if $i=1$ and $j= k+1$,}\\
                                 M^{\mathfrak p_{I_i}}(\lambda_{I_i, c}-\varepsilon_{p_{k}}),  & \text{if $i=1$ and $j= k+2$,}\\
                                 0,  & \text{if $i=2$ and $j=k+1, k+2 $,} \\
                               M^{\mathfrak p_{I_i}}(\lambda_{I_i, \mathbf c}-\varepsilon_{p_{2k+2-j}}), & \text{if $k+3\leq j\leq 2k+1$.}
                               \\
                               \end{cases}$$      Moreover, $X$ preserves this filtration and  $f_i(X)$
 acts  on  $M^{\mathfrak p_{I_i}}(\lambda_{I_i, \mathbf c})\otimes V$  trivially,   where $f_i(t)$ is given in Definition~\ref{polf12}(b).

\end{Lemma}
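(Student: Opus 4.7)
The proof will run parallel to Lemma~\ref{polyofx}, with the essential new feature being the presence of the zero-weight basis vector $v_0 \in V$, which produces an extra subquotient in the Verma flag.

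The plan is the following. First, as in the proof of Lemma~\ref{polyofx}, I would use the tensor identity \eqref{tensoridentity} to reduce the construction of the flag to constructing an $\mathfrak{l}_{I_i}$-stable filtration of $L_{I_i}(\lambda_{I_i,\mathbf{c}}) \otimes V$ whose subquotients are the one-dimensional $\mathfrak{l}_{I_i}$-modules identified in Corollary~\ref{teroofvl}. For $i=1$ and $\mathfrak{g} = \mathfrak{so}_{2n+1}$, Corollary~\ref{teroofvl} gives $2k+1$ summands (including the extra $L_{I_1}(\lambda_{I_1,\mathbf{c}})$ spanned by $m_1 \otimes v_0$), so I would define $N_j$ for $1 \le j \le 2k+1$ to be the $\U(\mathfrak{g})$-submodule generated by $\tilde{N}_j$, taking $\tilde{N}_j = \{m_1 \otimes v \mid v \in \bigoplus_{l=1}^{j} V_l\}$ for $j \le k$, inserting the $v_0$-piece at $j = k+1$ (so $\tilde{N}_{k+1}$ also includes $m_1 \otimes v_0$), and then for $j \ge k+2$ using the $V_{j-1}$'s from \eqref{eevlll}. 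For $i=2$, since $L_{I_2}(\lambda_{I_2,\mathbf{c}} + \varepsilon_{p_{k-1}+1})$ already contains $m_2 \otimes v_0$ by Corollary~\ref{teroofvl}(4), I would not split this piece further; this is what forces $N_{k+1}/N_k = N_{k+2}/N_{k+1} = 0$ in the stated filtration (the indexing is padded to keep the same length of the flag as in the $i=1$ case).

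Next, I must verify that $\Omega$ (equivalently $X$) preserves this filtration. Using Lemma~\ref{xh1}, each application of $\Omega$ to a generator $m_i \otimes v$ with $v \in V_j$ produces a sum of vectors of the form $F_{*,*} m_i \otimes v'$ with $v' \in \bigoplus_{l \le j} V_l$, together with the extra $v_0$-term coming from Lemma~\ref{xh1}(3) (the $C$-summand) when $v = v_l$ with $-l \in \mathbf{p}_h$ and Lemma~\ref{xh1}(1) for the $v_0$-case. A careful bookkeeping of these terms, together with the fact that the relevant $F_{*,*}$ lie in $\U(\mathfrak{g})$ so the submodules generated by $\tilde{N}_{\le j}$ absorb them, shows that $\Omega(N_j) \subseteq N_j$. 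This is the main technical step and is where the presence of the middle zero-weight vector matters: the term $\delta_{\mathfrak{g},\mathfrak{so}_{2n+1}} F_{0,-l} m_i \otimes v_0$ appearing in Lemma~\ref{xh1}(3) must be checked to lie in the appropriate $N_j$.

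For the eigenvalue computation, each subquotient $N_j/N_{j-1}$ is (by construction and the tensor identity) a parabolic Verma module with some highest weight $\mu_j$, and $\Omega$ acts on it by the scalar given in \eqref{actionfomega}. Substituting $\mu_j = \lambda_{I_i,\mathbf{c}} + \varepsilon_{p_{j-1}+1}$ for $1 \le j \le k$, $\mu_{k+1} = \lambda_{I_1,\mathbf{c}}$ in the $i=1$ case, and $\mu_j = \lambda_{I_i,\mathbf{c}} - \varepsilon_{p_{2k+2-j}}$ for $j \ge k+2$, a direct calculation using $\rho = \sum_{i=1}^n (n-i+\tfrac12)\varepsilon_i$ from \eqref{rho123} and the formula $X = \Omega + \tfrac12(1 \otimes C)|_{M \otimes V}$ (recall $\varepsilon_{\mathfrak{g}} = 1$) yields precisely the scalars $u_j$ listed in \eqref{uij2}. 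Consequently $f_i(X)$ annihilates each subquotient and therefore kills $M^{\mathfrak{p}_{I_i}}(\lambda_{I_i,\mathbf{c}}) \otimes V$, which is the desired conclusion. The main obstacle will be the case-by-case eigenvalue bookkeeping for the middle index $j = k+1$, where the contribution of $v_0$ must be matched carefully against the definition of $u_{k+1} = 0$ in Definition~\ref{polf12}(b).
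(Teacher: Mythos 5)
Your proposal is correct and follows essentially the same route as the paper: the paper's proof defines $N_j$ as the $\U(\mfg)$-submodules generated by exactly the subspaces you describe (with the $v_0$-piece inserted at $j=k+1$ for $i=1$ and the repeated generators giving zero subquotients for $i=2$), invokes the tensor identity and Corollary~\ref{teroofvl} for the Verma flag, and then refers to the arguments of Lemma~\ref{polyofx} for the preservation of the filtration by $X$ and the eigenvalue computation, which is precisely what you carry out.
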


\begin{proof}  Recall  $V_l$'s   in \eqref{eevlll}. Define   $N_j$ to be     the left $\U(\mfg)$-module  generated by
\begin{itemize}
\item $\{m_i\otimes v\mid v\in \oplus _{l=1}^j V_l\}$ if  $1\leq j\leq k$,
\item $\{m_i\otimes v\mid v\in \oplus _{l=1}^k V_l\}$ if  $i=2$ and $k+1\leq j\leq k+2$,
\item $ \{m_i\otimes v\mid v\in \oplus _{l=1}^{j-2} V_l  \} $ if $i=2$ and  $k+3\leq j\leq 2k+1$,
\item $\{m_i\otimes v\mid v\in \oplus _{l=1}^k V_l\cup\{v_0\}\}$ if $i=1$ and $j=k+1$,
\item $ \{m_i\otimes v\mid v\in \oplus _{l=1}^{j-1} V_l\cup\{v_0\} \} $  if $i=1$ and $k+2\leq j\leq 2k+1$.
\end{itemize}
By \eqref{tensoridentity} and Corollary~\ref{teroofvl},   $0=N_0\subseteq N_1\subseteq \ldots\subseteq N_{2k+1}=M^{\mathfrak p_{I_i}}(\lambda_{I_i, \mathbf c})\otimes V$  is the required parabolic Verma flag.
One can check the remaining results by arguments similar to those in the proof of Lemma~\ref{polyofx}.
\end{proof}

%


 It is well known that  the natural transformations between the identity functor and itself  in $\U(\mfg)\text{-mod}$ forms  an algebra which can be identified with   the center $Z(\U(\mathfrak g) )$ of $\U(\mathfrak g )$.  In particular, $\Psi_{\U(\mathfrak g)}(\Delta_j)(1)\in Z(\U(\mathfrak g) )$, where $\Delta_j$ is given in \eqref{defofdelta}.
 Define
 \begin{equation} \label{defogzk}
 z_j:=\varepsilon_{\mathfrak g}^{j+1}(\text{Id}\otimes \text{tr}_V)((\frac{1}{2}(N-\epsilon_\mfg)+\Omega)^j), \text{ for all }j\in \mathbb N.
 \end{equation}
 Suppose $u$ is an indeterminate. In   \cite[page~646]{DRV}, Daugherty, Ram and Virk proved that \begin{equation}\label{zkco}
\sum_{j\in\mathbb Z_{\geq0}} \varepsilon_{\mathfrak g}^{j+1}z_ju^{-j}=u\sum_{j\in\mathbb Z_{\geq0}} \frac{(\text{Id}\otimes \text{tr}_V)(\Omega^j) }{ (u-\frac{1}{2}(N-\epsilon_\mfg))^{j+1}}.
\end{equation}

\begin{Lemma}\label{zk}   For all $j\in \mathbb N$, $z_j=\Psi_{\U(\mathfrak g)}(\Delta_j)(1)$.
  \end{Lemma}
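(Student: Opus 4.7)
The plan is to unwind $\Psi_{\U(\mfg)}(\Delta_j)(1)$ directly from the factorization $\Delta_j = A \circ (X \otimes 1_{\ob 1})^j \circ U$ given in \eqref{defofdelta}, using the explicit formulas of Theorem~\ref{affaction} for the action of $U$, $A$ and $X$.

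First I would compute $\Psi(U)_{\U(\mfg)}(1) = 1 \otimes \sum_{i \in \underline N} v_i \otimes v_i^* \in \U(\mfg) \otimes V \otimes V$ using Proposition~\ref{level1s} and the fact that $\Psi(U)_M = \text{Id}_M \otimes \Phi(U)$. Next, using the interchange law and the fact that $\Psi(1_{\ob 1})$ is the identity natural transformation, I would identify $\Psi\bigl((X\otimes 1_{\ob 1})^j\bigr)_{\U(\mfg)}$ with $X_1^j \otimes \text{id}_V$ acting on $\U(\mfg) \otimes V \otimes V$, where
\[
X_1 = \varepsilon_\mfg\bigl(\Omega + \tfrac12(N-\varepsilon_\mfg)\bigr)
\]
is viewed as an element of $\U(\mfg) \otimes \End_{\mathbb C}(V)$ (the Casimir $C$ acting on $V$ as the scalar $N-\varepsilon_\mfg$ by the standard computation with $\rho$ from \eqref{rho123} and the highest weight $\epsilon_1$). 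By multiplicativity in $\U(\mfg) \otimes \End_{\mathbb C}(V)$, one has $X_1^j = \varepsilon_\mfg^j\bigl(\Omega + \tfrac12(N-\varepsilon_\mfg)\bigr)^j$.

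Finally I would apply $\Psi(A)_{\U(\mfg)} = \text{Id} \otimes \Phi(A)$, which contracts the two $V$-factors via the bilinear form $\Phi(A)(u\otimes v)=(u,v)$. Writing $X_1^j(1\otimes v_i) = \sum_{l} c_{il}\otimes v_l$ with $c_{il}\in \U(\mfg)$, the result becomes $\sum_{i,l} c_{il}\,(v_l,v_i^*)$. A short calculation using \eqref{vivjb} and the identification \eqref{vistar} of $V$ with $V^*$ shows that
\[
(v_l,v_i^*) = \varepsilon_\mfg\,\delta_{i,l} \quad \text{for all } i,l\in\underline N
\]
(a uniform formula that holds also for $i=0$ in the $\mathfrak{so}_{2n+1}$-case, since $\varepsilon_\mfg=1$ there). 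Consequently $\Psi_{\U(\mfg)}(\Delta_j)(1) = \varepsilon_\mfg\,(\text{Id}\otimes \text{tr}_V)(X_1^j)$, and substituting the expression for $X_1^j$ above and comparing with \eqref{defogzk} yields
\[
\Psi_{\U(\mfg)}(\Delta_j)(1) = \varepsilon_\mfg^{j+1}\,(\text{Id}\otimes \text{tr}_V)\bigl((\Omega+\tfrac12(N-\varepsilon_\mfg))^j\bigr) = z_j.
\]

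The only genuine bookkeeping obstacle is to track the powers of $\varepsilon_\mfg$: one factor $\varepsilon_\mfg^j$ comes from the scalar in $X_1$, one further factor $\varepsilon_\mfg$ comes from the pairing $(v_l,v_i^*)$ via the isomorphism $\psi\colon V\to V^*$, and these must combine to the $\varepsilon_\mfg^{j+1}$ that appears in the definition of $z_j$. Once this sign check is made, the lemma follows from a direct evaluation of the three composition steps.
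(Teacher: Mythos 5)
Your proposal is correct and follows essentially the same route as the paper: the paper first records the general identity $\Psi_{\U(\mathfrak g)}(A)\circ(y\otimes\phi\otimes\mathrm{Id}_V)\circ\Psi_{\U(\mathfrak g)}(U)(1)=\varepsilon_{\mathfrak g}(\mathrm{Id}\otimes\mathrm{tr}_V)(y\otimes\phi)$ for arbitrary $\phi\in\End(V)$ and then substitutes $\Psi(X)^j=\varepsilon_{\mathfrak g}^j(\Omega+\frac12(N-\epsilon_{\mathfrak g}))^j$, which is exactly your computation with the pairing $(v_l,v_i^*)=\varepsilon_{\mathfrak g}\delta_{i,l}$ producing the trace and the extra factor of $\varepsilon_{\mathfrak g}$. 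Your sign bookkeeping for $\varepsilon_{\mathfrak g}^{j+1}$ matches the paper's.
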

\begin{proof} Suppose $\phi\in\End(V)$ such that  $ \phi(v_l)=\sum_{j\in \underline N}a_{l,j}v_j$. Recall the monoidal  functor $\Phi$ in Proposition~\ref{level1s}.
Then
\begin{equation}\label{phiii}
\Phi(A)\circ (\phi\otimes \text{Id}_V)\circ \Phi(U)(1)=\Phi(A)(\sum_{l\in \underline N}\phi(v_l)\otimes v_l^*)= \varepsilon_{\mathfrak g}\sum_{l\in \underline N}a_{l,l}= \varepsilon_{\mathfrak g}\text{tr}_V(\phi).\end{equation}
For any  $y\in \U(\mfg)$, we have the  $\mathbb C$-linear map in $\End( \U(\mfg)) $  sending  $g$ to $yg $, for any $g\in \U(\mfg)$.
Denote this map by $y$. Then
\begin{equation}\label{traceddd}
\begin{aligned}
\Psi_{\U(\mathfrak g)}( A)\circ (y\otimes \phi\otimes \text{Id}_V)\circ \Psi_{\U(\mathfrak g)}(U)(1)\overset{ \eqref{act111} } =&(\text{Id}\otimes \Phi(A))\circ (y\otimes \phi\otimes \text{Id}_V)\circ (\text{Id}\otimes \Phi(U))(1)\\
=~~~\ &y\otimes (\Phi(A)\circ (\phi\otimes \text{Id}_V)\circ \Phi(U)(1))\\
 \overset{\eqref{phiii}}=&\varepsilon_{\mathfrak g}\text{tr}_V(\phi) y=\varepsilon_{\mathfrak g}(\text{Id}\otimes \text{tr}_V)(y\otimes \phi).
\end{aligned}
\end{equation}
    By  \eqref{defofdelta}, we have
$$\begin{aligned}
\Psi_{\U(\mathfrak g)}(\Delta_j)(1)&=\Psi_{\U(\mathfrak g)}( A)\circ (\Psi_{\U(\mathfrak g)}(X^j)\otimes \text{Id}_V)\circ \Psi_{\U(\mathfrak g)}( U)(1)\\
&\overset{\eqref{actionofxxx}}=\varepsilon_{\mathfrak g}^j \Psi_{\U(\mathfrak g)}( A)\circ ((\frac{1}{2}(N-\epsilon_\mfg)+\Omega)^j\otimes \text{Id}_V)\circ \Psi_{\U(\mathfrak g)}( U)(1)\\
&\overset{\eqref{traceddd}}=\varepsilon_{\mathfrak g}^{j+1}(\text{Id}\otimes
\text{tr}_V)((\frac{1}{2}(N-\epsilon_\mfg)+\Omega)^j).\\
\end{aligned}$$
 \end{proof}

\begin{Lemma}\label{ghom123} $\Psi_{ M^{\mathfrak p_{I_i}} (\lambda_{I_i, \mathbf c})}(\Delta_j)=\omega_j\text{Id}_{M^{\mathfrak p_{I_i}}(\lambda_{I_i, \mathbf c})}$
 for all  $j\in \mathbb N$, where   $\omega_j$'s are some scalars in  $ \mathbb C$. Further,
 \begin{equation}\label{omegaa}
  u-\frac{1}{2}+\sum_{j=0}^{\infty}\frac{\omega_j}{u^{j}}=(u-\frac{1}{2}(-1)^a)\prod_{j\in J_i} \frac {u+u_j}{u- u_j },
\end{equation}
 where $a={\rm deg}f_i(t)$ (see Definition~\ref{polf12}),  $u$ is an indeterminate and \begin{enumerate}\item
  $J_1=\{1, 2, \ldots, 2k\}$   and $J_2=J_1\setminus \{k+1\}$ if   $\mathfrak g=\mathfrak {so}_{2n}, \mathfrak {sp}_{2n}$, \item
 $J_1=\{1, 2, \ldots, 2k+1\}$ and $J_2=J_1\setminus \{k+1, k+2\}$ if  $\mathfrak g=\mathfrak {so}_{2n+1}$.\end{enumerate}
\end{Lemma}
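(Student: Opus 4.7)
The proof has two parts. For the scalar assertion, Lemma~\ref{zk} yields $\Psi_{\U(\mfg)}(\Delta_j)(1) = z_j \in Z(\U(\mfg))$, so by naturality $\Psi_M(\Delta_j)$ is left multiplication by $z_j$ on any $\U(\mfg)$-module. Since $M = M^{\mathfrak p_{I_i}}(\lambda_{I_i, \mathbf c})$ is a highest weight module it has central character $\chi_{\lambda_{I_i, \mathbf c}}$, so $z_j$ acts on $M$ by the scalar $\omega_j := \chi_{\lambda_{I_i, \mathbf c}}(z_j)$. (Equivalently, $\mathrm{End}_{\U(\mfg)}(M) = \mathbb C$ since $M$ is highest weight, and $\Psi_M(\Delta_j)\in \mathrm{End}_{\U(\mfg)}(M)$ by functoriality.)

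For the explicit identity \eqref{omegaa}, I would transport \eqref{zkco} to $M$: the left side becomes $\sum_k \varepsilon_\mfg^{k+1}\omega_k u^{-k}$. Using $c + \Omega = \varepsilon_\mfg X$ with $c = \tfrac{1}{2}(N-\epsilon_\mfg)$, the right side rewrites as $u\cdot(\mathrm{Id}\otimes\mathrm{tr}_V)\bigl((u - \varepsilon_\mfg X)^{-1}\bigr)$, viewed as a central endomorphism of $M$ via the partial trace of an $\mfg$-equivariant operator on $M \otimes V$. Lemmas~\ref{polyofx} and \ref{polyofxodd} supply the essential input: $X$ preserves the parabolic Verma flag of $M\otimes V$, acts as the scalar $u_j$ on each non-zero subquotient $N_j/N_{j-1}$ for $j \in J_i$, and satisfies $f_i(X) = \prod_{j \in J_i}(X - u_j) = 0$. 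For generic $\mathbf c$ the $u_j$'s are distinct, so $X$ is semisimple on $M\otimes V$; the module splits as $\bigoplus_{j\in J_i} E_j$ with $E_j \cong N_j/N_{j-1}$ and $X|_{E_j} = u_j$, and partial fractions give $(u - \varepsilon_\mfg X)^{-1} = \sum_{j\in J_i} (u - \varepsilon_\mfg u_j)^{-1} P_j$ with $P_j$ the $\mfg$-equivariant projection onto $E_j$.

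The crux is identifying the scalars $\beta_j := (\mathrm{Id}\otimes\mathrm{tr}_V)(P_j)|_M \in \mathbb C$; once they are known, \eqref{omegaa} reduces to matching $\sum_{j\in J_i} u\beta_j/(u - \varepsilon_\mfg u_j)$ (with the $\varepsilon_\mfg^{k+1}$-sign adjustment on Laurent coefficients) against the shifted product $(u - \tfrac{1}{2}(-1)^a)\prod_{j \in J_i}(u+u_j)/(u-u_j)$ as rational functions in $u$. I plan to pin down each $\beta_j$ by comparing formal characters in $\mathcal O^{\mathfrak p_{I_i}}$: the identity $\mathrm{ch}(M)\cdot\mathrm{ch}(V) = \sum_j \mathrm{ch}(N_j/N_{j-1})$, combined with the highest weights of the Verma factors supplied by Corollary~\ref{teroofvl}, expresses $\beta_j$ as an explicit character ratio. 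A cleaner alternative that bypasses the character manipulation is to verify the equivalent Schur-$q$-function form \eqref{wdeterminedbyu}: the base case $\omega_0 = \varepsilon_\mfg N$ is immediate from $z_0 = \varepsilon_\mfg N$, Lemma~\ref{admissible} already supplies \eqref{admomega} for odd $k$, and the remaining even $\omega_{2k}$ can be computed directly from $X^{2k}(\hat m \otimes v_l)$ using Lemma~\ref{xh1}. In either route, the principal technical hurdle is the algebraic simplification bridging the partial-trace computation to the specific product form on the right of \eqref{omegaa}, using the balance between the $u_j$'s in $J_i$ and the parity-dependent shift $(-1)^a$ fixed by $a = \deg f_i$.
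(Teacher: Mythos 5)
Your first paragraph is fine and is essentially the paper's argument: centrality of $z_j$ (Lemma~\ref{zk}) plus the fact that $M^{\mathfrak p_{I_i}}(\lambda_{I_i,\mathbf c})$ is a highest weight module gives the scalars $\omega_j$, and naturality transports this to $\Psi_{M^{\mathfrak p_{I_i}}(\lambda_{I_i,\mathbf c})}(\Delta_j)$.

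For the identity \eqref{omegaa}, however, there is a genuine gap: everything you write up to ``the crux is identifying the scalars $\beta_j$'' is sound (the resolvent form of \eqref{zkco}, the flag from Lemmas~\ref{polyofx}--\ref{polyofxodd}, semisimplicity of $X$ for generic $\mathbf c$), but the scalars $\beta_j=(\mathrm{Id}\otimes\mathrm{tr}_V)(P_j)|_M$ are exactly the content of the lemma and your proposed way of computing them does not work. The partial trace of the projection $P_j$ is not visible at the level of formal characters: characters of $M\otimes V$ only record traces over full weight spaces $(M\otimes V)_\nu$, whereas $\beta_j\dim M_\mu$ is the trace of the compression of $P_j$ to $M_\mu\otimes V$, and these differ because $P_j$ mixes the summands $M_{\mu'}\otimes V_{\nu-\mu'}$ inside a fixed total weight. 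Concretely, the naive character-ratio limit would give $\beta_j=1$ for every $j\in J_i$, which is incompatible with $\sum_{j\in J_i}\beta_j=(\mathrm{Id}\otimes\mathrm{tr}_V)(\mathrm{Id}_{M\otimes V})=N$; the $\beta_j$ genuinely depend on the block sizes $q_l$ and the shifts $c_l$. Your fallback route is also not complete as stated: verifying \eqref{wdeterminedbyu} for the even $\omega_{2k}$ by ``computing $X^{2k}(\hat m\otimes v_l)$ using Lemma~\ref{xh1}'' is not possible with that lemma alone, since Lemma~\ref{xh1} only gives the action of $\Omega$ on $m_i\otimes v_l$; after one application you sit on vectors of the form $F_{l,j}m_i\otimes v_j$, and extracting the coefficient of $m_i\otimes v_l$ after $2k$ steps requires new commutator computations (the terms that return to the highest weight line), which is precisely the hard part.

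The paper avoids this by importing the Perelomov--Popov type formula for the Harish--Chandra image of the generating series of the Gelfand invariants $\mathrm{tr}\,F^j$ (equation \eqref{anothere}, quoted from \cite[Theorem~4.1]{DRV}, see also \cite[Corollary~7.1.8]{Mol}), together with $(\mathrm{Id}\otimes\mathrm{tr}_V)(\Omega^j)=(-1)^j\mathrm{tr}\,F^j$ and \eqref{zkco}; evaluating the Harish--Chandra image at the highest weight $\lambda_{I_i,\mathbf c}$ makes the product $\prod_{j=-n}^{n}\frac{u+\ell_j+\frac12}{u+\ell_j-\frac12}$ telescope within each block $\mathbf p_l$, leaving exactly the factors $\frac{u+u_j}{u-u_j}$, $j\in J_i$, and the parity factor $(u-\frac12(-1)^a)$ after the substitution $u\mapsto \varepsilon_{\mathfrak g}u$. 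If you want to keep your spectral-decomposition strategy, you would need to either compute the $\beta_j$ explicitly (e.g.\ via Lagrange interpolation $P_j=\prod_{l\neq j}\frac{X-u_l}{u_j-u_l}$ evaluated on $m_i\otimes v_l$, which again amounts to serious commutator bookkeeping) or reprove the Perelomov--Popov identity, so as written the proposal does not yet establish \eqref{omegaa}.
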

\begin{proof} By Lemma~\ref{zk},  $z_j$ is central. Thus,
 $ z_j|_{M^{\mathfrak p_{I_i}} (\lambda_{I_i, \mathbf c})}=\omega_j \text{Id}_{M^{\mathfrak p_{I_i}}(\lambda_{I_i, \mathbf c})}$ for some  $ \omega_j\in \mathbb C$.
Let $$\phi_{M^{\mathfrak p_{I_i}} (\lambda_{I_i, \mathbf c})}: \U(\mfg)\rightarrow M^{\mathfrak p_{I_i}} (\lambda_{I_i, \mathbf c})$$ be the $\U(\mfg)$-module homomorphism such that
$\phi_{M^{\mathfrak p_{I_i}} (\lambda_{I_i, \mathbf c})}(1)=m_i$.
Then
$$
\Psi_{ M^{\mathfrak p_{I_i}} (\lambda_{I_i, \mathbf c})}(\Delta_j) \circ \phi_{M^{\mathfrak p_{I_i}} (\lambda_{I_i, \mathbf c})}=\phi_{M^{\mathfrak p_{I_i}} (\lambda_{I_i, \mathbf c})}\circ  \Psi_{\U(\mfg)}(\Delta_j), $$  for all  $j\in\mathbb N$.
So,
\begin{equation}\label{wewww}
\Psi_{ M^{\mathfrak p_{I_i}} (\lambda_{I_i, \mathbf c})}(\Delta_j)(m_i)=\Psi_{\U(\mfg)}(\Delta_j)(1)(m_i)\overset{Lemma~\ref{zk}}=\omega_jm_i,
\end{equation}
 and   $\Psi_{ M^{\mathfrak p_{I_i}} (\lambda_{I_i, \mathbf c})}(\Delta_j)=\omega_j\text{Id}_{M^{\mathfrak p_{I_i}}(\lambda_{I_i, \mathbf c})}$, for $j\in\mathbb N$.

 Let $F$ be the $N\times N$ matrix such that the $(l,j)$th entry of $F$  is  $F_{l,j}$ in  \eqref{fij}.
By \cite[Corollary~7.1.4]{Mol},  the \textit{Gelfand invariant} ${\rm {tr}}(F^j)$ is  central in $\U(\mfg)$ for any $j\in \mathbb N$.
By \cite[(4.3)]{DRV},
\begin{equation}\label{tesfij}
(\text{Id}\otimes \text{tr}_V)(\Omega^j)=(-1)^j{\rm{tr}}F^j, j\in \mathbb N.
\end{equation}
 Let $\chi:Z(\U(\mfg))\rightarrow \U(\mathfrak h)$ be the Harish-Chandra homomorphism, where
 $\ell_j=h_j+\rho(h_j)$, $1\le  j\le n$ and $\rho$ is given in \eqref{rho123}.  Set $\ell_{-j}=-\ell_j$ and $\ell_0=0$. It follows from \cite[Theorem~4.1]{DRV} (see also \cite[Corollary~7.1.8]{Mol}) that
\begin{equation}\label{anothere}
1+\frac{u+\frac{1}{2}}{u+\frac{1}{2}-\frac{1}{2}\epsilon_{\mathfrak g} }\sum_{j=0}^\infty \frac{(-1)^j\chi({\rm{tr}}F^j)}{(u+  \frac{1}{2}-\frac{1}{2}(N-\epsilon_\mfg) )^{j+1}}=\prod_{j=-n}^n \frac{u+\ell_j+1}{u+\ell_j},
\end{equation}
 and the index $0$ is skipped in the product if $\mathfrak g\in \{\mathfrak {sp}_{2n}, \mathfrak {so}_{2n}\}$. Replacing $u+\frac12$ by $u$ in \eqref{anothere} yields
\begin{equation}\label{ser}1+\frac{u}{u-\frac{1}{2}\epsilon_{\mathfrak g} }\sum_{j=0}^\infty \frac{(-1)^j\chi({\rm{tr}}F^j)}{(u-\frac{1}{2}(N-\epsilon_\mfg))^{j+1}}=\prod_{j=-n}^n \frac{u+\ell_j+\frac{1}{2}}{u+\ell_j-\frac{1}{2}}.
\end{equation}
By \eqref{zkco} and \eqref{tesfij},
\begin{equation}\label{ser1}
\sum_{j\geq0} \epsilon_{\mathfrak g}^{j+1}\omega_ju^{-j}\text{Id}_{M^{\mathfrak p_{I_i}}(\lambda_{I_i, \mathbf c})}=u\sum_{j\geq 0}\frac{(-1)^j{\rm{tr}}F^j}{(u-\frac{1}{2}(N-\epsilon_\mfg) )^{j+1}}|_{M^{\mathfrak p_{I_i}}(\lambda_{I_i, \mathbf c})}.
\end{equation}
Note that $\text{tr}F^j$ acts on $M^{\mathfrak p_{I_i}}(\lambda_{I_i, \mathbf c})$ via a scalar,  and this  scalar can be computed  by  considering  the action of $\chi({\rm{tr}}F^j)$ on the highest weight vector of $M^{\mathfrak p_{I_i}}(\lambda_{I_i, \mathbf c})$.
By \eqref{ser}--\eqref{ser1},
\begin{equation} \label{comofw}
\sum_{j\geq0}\omega_j(\epsilon_{\mathfrak g}u)^{-j}+\epsilon_{\mathfrak g}u-\frac{1}{2}   =(\epsilon_{\mathfrak g}u-\frac{1}{2})\prod_{j=-n}^n \frac{u+\lambda_{I_i,\mathbf c}(\ell_j)+\frac{1}{2}}{u+\lambda_{I_i,\mathbf c}(\ell_j)-\frac{1}{2}}= (\epsilon_{\mathfrak g}u-\frac{1}{2})  \frac{\prod_{j=1}^b(u+\epsilon_{\mathfrak g}u_j) }{\prod_{j=1}^b(u-\epsilon_{\mathfrak g}u_j)} A_{\mathfrak g},
\end{equation}
where  $(A_{\mfg}, b) =( \frac{u+\frac{1}{2}}{u-\frac{1}{2}}, 2k+1)$ if $\mathfrak g=\mathfrak {so}_{2n+1}$, and $(A_{\mfg}, b)= (1, 2k)$,  otherwise.
Since  $a={\rm deg}f_i(t)$, we have
\begin{equation}\label{defofd}
a=\begin{cases}
      2k, & \text{if $ i=1$, $\mathfrak g\in \{\mathfrak {so}_{2n}, \mathfrak {sp}_{2n}\}$} \\
      2k+1, & \text{if $i=1$, $\mathfrak g=\mathfrak {so}_{2n+1}$} \\
2k-1, & \text{if  $ i=2$, $\mathfrak g\in \{\mathfrak {so}_{2n}, \mathfrak {sp}_{2n}, \mathfrak {so}_{2n+1}\}$.}
    \end{cases}
\end{equation}
Thanks to  Definition~\ref{polf12},
\begin{equation}\label{ukplus}
u_{k+1}=\left\{
          \begin{array}{ll}
            0, & \hbox{if $\mathfrak g=\mathfrak {so}_{2n+1}$;} \\
            \frac{1}{2}, & \hbox{ if $i=2$ and  $\mathfrak g\in \{\mathfrak {so}_{2n}, \mathfrak {sp}_{2n}\}$,}
          \end{array}
        \right. \quad u_{k+2}= 0 \text{ if } i=2 \text{ and $\mathfrak g=\mathfrak {so}_{2n+1}$ }
\end{equation}
So, \eqref{omegaa} follows from \eqref{comofw} and \eqref{ukplus} by replacing $\epsilon_{\mathfrak g}u $ with $u$.
\end{proof}
Fixing  an   $i\in \{1, 2\}$ and a  $\mathfrak g\in \{\mathfrak {so}_{2n+1},\mathfrak {so}_{2n}, \mathfrak {sp}_{2n}\}$,  we have a polynomial $f_i(t)$ in Definition ~\ref{polf12}
and a set of scalars $\omega$   in Lemma~\ref{ghom123}. In this case, $\omega$ is $\mathbf u$-admissible in the sense of \eqref{admc}.

\begin{Theorem}\label{fcycy} The  functor $\Psi_{M^{\mathfrak p_{I_i}}(\lambda_{I_i, \mathbf c}) } $ factors through both  $\CB^{f_i}$ and $\CB^{f_i}(\omega)$.
\end{Theorem}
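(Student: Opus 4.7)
The plan is to verify that the functor $\Psi := \Psi_{M^{\mathfrak p_{I_i}}(\lambda_{I_i,\mathbf c})}$ annihilates the generators of the right tensor ideals $I$ and $J$ defining $\CB^{f_i}$ and $\CB^{f_i}(\omega)$ respectively. Since $\Psi$ is a $\kappa$-linear strict monoidal functor, it is compatible with composition and tensor product (via \eqref{ddnamphis}), so once the generators go to zero, every element of the ideal does as well.

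First I would show that $\Psi(f_i(X)) = 0$. This morphism is the polynomial $f_i$ applied to $\Psi(X)_{M^{\mathfrak p_{I_i}}(\lambda_{I_i,\mathbf c})}$, viewed as an endomorphism of $M^{\mathfrak p_{I_i}}(\lambda_{I_i,\mathbf c}) \otimes V$. When $\mathfrak g \in \{\mathfrak{so}_{2n}, \mathfrak{sp}_{2n}\}$ this is precisely the content of Lemma~\ref{polyofx}, while Lemma~\ref{polyofxodd} covers the case $\mathfrak g = \mathfrak{so}_{2n+1}$: in both the parabolic Verma flag built from the $\mathfrak l_{I_i}$-decomposition of Corollary~\ref{teroofvl} is $X$-stable with $X$ acting by the scalar $u_j$ on the $j$-th subquotient, so $f_i(X)$ kills every composition factor and hence vanishes on the whole module. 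This already handles the factorization through $\CB^{f_i}$, because for any morphism $h \circ (f_i(X) \otimes 1_{\ob t}) \circ g$ in $I$ we have $\Psi(f_i(X) \otimes 1_{\ob t}) = \Psi(f_i(X))_{(-)} \otimes \mathrm{Id}_{V^{\otimes t}} = 0$ by \eqref{ddnamphis}.

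Next I would handle the additional generators $\Delta_k - \omega_k \in \End_{\AB}(\ob 0)$ needed to factor through $\CB^{f_i}(\omega)$. By Lemma~\ref{ghom123} the central element $z_j$ acts on $M^{\mathfrak p_{I_i}}(\lambda_{I_i,\mathbf c})$ by a scalar, giving $\Psi(\Delta_k) = \omega_k \mathrm{Id}$ with the $\omega_k$'s satisfying \eqref{omegaa}. A brief but crucial bookkeeping check is that this $\omega$ coincides with the $\mathbf u$-admissible sequence defined by \eqref{admc} from the roots of $f_i$: in each of the four cases of Definition~\ref{polf12} the index set $J_i$ from Lemma~\ref{ghom123} is exactly the set of indices $1,\dots,a$ with $a = \deg f_i$, after incorporating the special roots $u_{k+1}$ (and $u_{k+2}$) from \eqref{ukplus}. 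Hence $\Psi(\Delta_k - \omega_k) = 0$, and tensoring on the right with $1_{\ob t}$ again yields zero by \eqref{ddnamphis}.

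The main potential obstacle is not any one step but the verification that the particular $\omega$ produced by Lemma~\ref{ghom123} really is the $\mathbf u$-admissible sequence attached to the roots of $f_i$ in the sense of \eqref{admc}; once one matches the right-hand sides of \eqref{omegaa} and \eqref{admc} via the explicit roots listed in \eqref{uij1}, \eqref{uij2} and \eqref{ukplus} (with the uniqueness coming from \eqref{wdeterminedbyu}), the two relations $\Psi(f_i(X)) = 0$ and $\Psi(\Delta_k - \omega_k) = 0$ combined with monoidal functoriality give the required factorizations through both $\CB^{f_i}$ and $\CB^{f_i}(\omega)$.
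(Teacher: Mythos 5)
Your proposal is correct and follows the paper's own argument: the paper likewise deduces $\Psi(f_i(X))=0$ from Lemmas~\ref{polyofx}--\ref{polyofxodd}, $\Psi(\Delta_j-\omega_j)=0$ from Lemma~\ref{ghom123}, and then concludes via \eqref{ddnamphis} that the right tensor ideals $I$ and $J$ are annihilated. The admissibility bookkeeping you flag is exactly what \eqref{omegaa} in Lemma~\ref{ghom123} establishes (it matches \eqref{admc} verbatim), so nothing further is needed.
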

\begin{proof}
Thanks to  Lemmas~\ref{polyofx}-\ref{polyofxodd} and \ref{ghom123}, we have
$$\Psi_{M^{\mathfrak p_{I_i}}(\lambda_{I_i, \mathbf c}) }(f_i(X))=0 \text{ and }\Psi_{M^{\mathfrak p_{I_i}}(\lambda_{I_i, \mathbf c}) }(\Delta_j-\omega_j)=0, \forall j\in\mathbb N.$$
 By \eqref{ddnamphis},
$\Psi_{M^{\mathfrak p_{I_i}}(\lambda_{I_i, \mathbf c})}(I(\ob m,\ob s))=0$ and $\Psi_{M^{\mathfrak p_{I_i}}(\lambda_{I_i, \mathbf c})}(J(\ob m,\ob s))=0$, for all  $ m,s\in\mathbb N$, where $I$ (resp., $J$) is the right tensor ideal of $\AB$ generated by $f_i(X)$ (resp., $f_i(X)$ and $ \Delta_j-\omega_j$, $j\in\mathbb N$).
 So, $\Psi_{M^{\mathfrak p_{I_i}}(\lambda_{I_i, \mathbf c})}$   factors through both  $\CB^{f_i}$ and $\CB^{f_i}(\omega)$.
\end{proof}
To simplify the notation, we also denote by $\Psi_{M^{\mathfrak p_{I_i}}(\lambda_{I_i, \mathbf c}) }$ the resulting functor from $\CB^{f_i}(\omega)$ to $ \mathcal O^{\mathfrak p_{I_i}}$  in Theorem~\ref{fcycy}.
%
%
%

\begin{Assumptions}\textsf{From here onwards until Theorem~\ref{calz},   we assume that  $2r\le \min\{q_1, q_2, \ldots, q_k\}$, where $r$ is a given  positive integer. Fix  $i\in \{1, 2\}$ and  $\mathfrak g\in \{\mathfrak {so}_{2n+1},\mathfrak {so}_{2n}, \mathfrak {sp}_{2n}\}$ such that
$\mathfrak g\neq \mathfrak {so}_{2n+1}$ if $i=1$.  Let $a$ be the positive integer  given in  \eqref{defofd}. }
\end{Assumptions}

By \eqref{defwrtofd},   $d= x_{1}^{\alpha(d)_{1}}\circ \ldots \circ x_{2r}^{\alpha(d)_{2r}}\circ \hat{d}$ for any   $d\in\bar{\mathbb {ND}}^a_{0,2r}/\sim$,
where $\hat d $ is obtained from $d$ by removing all dots on  $d$.
 So,   $0\leq \alpha(d)_h\leq a-1 $ for $1\leq h\leq 2r$.
 Recall the notations $i(\hat d)$ and $j(\hat d)$ in Definition~\ref{defofij}. Write $i(\hat d)=(i_1,i_2,\ldots,i_r)$ and $j(\hat d)=(j_1,j_2,\ldots,j_r)$. Then $\{(i_l,j_l)\mid 1\leq l \leq r\}$ is the set of all cups of $d$. Note that  $\alpha(d)_{i_l}=0$ for $1\leq l\leq r$. 
 \begin{Defn}\label{defofaandf}
 Keep the notations above. Suppose $d\in\bar{\mathbb {ND}}^a_{0,2r}/\sim$. For $1\leq l\leq r$, we define
 \begin{itemize}
 \item[(1)]  $ a_{i_l,\alpha(d)_{i_l}}=l$ and $ F(d)_{i_l,\alpha(d)_{i_l}}=1$,
\item[(2)] $-a_{j_l,j}=p_{j}+l\in \mathbf p_{j+1}$ if $0\leq j\leq k-1$,
 \item[(3)] $a_{j_l,j}=p_{2k-j}-l+1\in \mathbf p_{2k-j}$ if $i=1$ and $k\leq j\leq a-1$,
 \item[(4)]  $a_{j_l,j}=p_{2k-j-1}-l+1\in \mathbf p_{2k-j-1}$ if $i=2$ and  $k\leq j\leq a-1$,
\item[(5)]  $F(d)_{j_l,0}=1$, $F(d)_{j_l,j}= F_{a_{j_l,j-1}, a_{j_l,j}} F(d)_{j_l,j-1} $ for $1\leq j\leq a-1$,
\end{itemize}
 where  $\mathbf p_j$'s  are given in \eqref{bigh}.
 \end{Defn}
 \begin{example}
 Let $d=x_1^{\alpha(d)_1}\circ\ldots\circ x_{2r}^{\alpha(d)_{2r}} \circ \eta_{\ob r}\in \bar{\mathbb {ND}}^a_{0,2r}/\sim$, where $\eta_{\ob r}$ is given in \eqref{usuflelem}.
 Then $i(\eta_{\ob r})=(r, r-1, \ldots,1)$ and  $j(\eta_{\ob r})=(r+1, r+2,\ldots,2r)$.
 So, $i_l=r-l+1$ and $j_l=r+l$ for  $1\leq l\leq r$.
 Since $d$ is normally ordered,   $\alpha(d)_{h}=0$ for  $1\leq h\leq r$.
 In this case,  $a_{h,\alpha(d)_{h}}=r-h+1$, $ F(d)_{h,\alpha(d)_{h}}=1$ for $1\leq h\leq r$.
 Moreover, for $r+1\leq h\leq 2r$,
 \begin{enumerate}
 \item[(1)]  $a_{h,j}=-(p_{j}+h-r) $ if $0\leq j\leq k-1$,
  \item[(2)] $a_{h,j}=p_{2k-j}-h+r+1$ if $i=1$ and $k\leq j\leq a-1$,
 \item[(3)]  $a_{h,j}=p_{2k-j-1}-h+r+1$ if $i=2$ and  $k\leq j\leq a-1$.
 \end{enumerate}

 \end{example}

%
The following result follows from  Definition~\ref{defofaandf}.
\begin{Lemma}\label{finuiss}
Keep the notations in Definition~\ref{defofaandf}.
\begin{enumerate}
\item $F_{a_{h,j-1}, a_{h,j}}\in \U(\mathfrak u^-_{I_i})$ for all $1\leq h\leq 2r$ and $0\leq j\leq a-1$.
\item For all admissible $m,s,h,t$,
\begin{equation}\label{noteauql}
\begin{aligned}
a_{m,s}&= a_{h,t} \text{ only if   } (m,s)=(h,t), \\
  a_{m,s}&= -a_{h,t} \text{ only if } (m,s)=(i_l,0), (h,t)=(j_l,0) \text{ for  } 1\leq l\leq r.
\end{aligned}
\end{equation}
\end{enumerate}

\end{Lemma}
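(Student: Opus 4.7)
Both parts reduce to unwinding Definition~\ref{defofaandf} and tracking which block $\mathbf p_t$ each index $a_{h,j}$ belongs to. The crucial input in both cases is the running assumption $2r\le \min\{q_1,\dots,q_k\}$: it guarantees that inside each block $\mathbf p_t$ of size $q_t$, the ``bottom $r$'' indices $\{p_{t-1}+1,\dots,p_{t-1}+r\}$ and the ``top $r$'' indices $\{p_t-r+1,\dots,p_t\}$ are disjoint subsets of $\mathbf p_t$.

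For part (1) I would argue by cases on $j$, observing first that only $h=j_l$ yields a non-trivial sequence (when $h=i_l$, only $a_{i_l,0}=l$ is defined). For $1\le j\le k-1$, both $a_{j_l,j-1}=-(p_{j-1}+l)$ and $a_{j_l,j}=-(p_j+l)$ lie in consecutive blocks $-\mathbf p_j$ and $-\mathbf p_{j+1}$; the weight of $F_{a_{j_l,j-1},a_{j_l,j}}$ is $-\varepsilon_{p_{j-1}+l}+\varepsilon_{p_j+l}$, whose negative is a positive root crossing the omitted simple root $\alpha_{p_j}$, hence not in $\mathbb Z I_i$. A parallel argument covers $k\le j\le a-1$ (two positive indices in consecutive blocks) and the transitional value $j=k$; in the latter case the resulting positive root is of $\varepsilon_f+\varepsilon_g$ or $\varepsilon_f$ type, and so must involve either the ``tail'' simple root $\alpha_n$ of $\mathfrak g_i$ (when $i=2$) or a removed $\alpha_{p_s}$ (when $i=1$), and therefore lies in $R^+\setminus R^+_{I_i}$. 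In each case the root space is spanned by the explicit element listed in $\mathcal B_{I_i}$, which yields $F_{a_{h,j-1},a_{h,j}}\in\mathfrak u^-_{I_i}\subset\U(\mathfrak u^-_{I_i})$.

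For part (2) I would tabulate the image of $(h,j)\mapsto a_{h,j}$: the positive values consist of $\{1,\dots,r\}\subset\mathbf p_1$ (from $a_{i_l,0}=l$) together with, for each $k\le j\le a-1$, the top $r$ of $\mathbf p_{2k-j}$ when $i=1$ or of $\mathbf p_{2k-j-1}$ when $i=2$; the negative values are $-(p_j+l)$ for $0\le j\le k-1$, $1\le l\le r$, i.e.\ minus the bottom $r$ of $\mathbf p_{j+1}$. Under the hypothesis $2r\le q_t$ these ranges sit either in distinct blocks or in the disjoint ``top''/``bottom'' halves of a common block, and within each range $l\mapsto a_{h,j}$ is an affine bijection onto its image, so $(h,j)\mapsto a_{h,j}$ is injective. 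For the ``$-$'' equality, $a_{m,s}=-a_{h,t}$ forces $a_{m,s}>0$ and $-a_{h,t}$ to lie in some bottom-$r$ piece $\{p_j+1,\dots,p_j+r\}\subset\mathbf p_{j+1}$; matching this against the positive tabulation shows the only consistent option is $j=0$ and $a_{m,s}=l\in\mathbf p_1$, i.e.\ $(m,s)=(i_l,0)$ and $(h,t)=(j_l,0)$.

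The main difficulty is purely bookkeeping: one has to handle the edge case $k=1$ (where $\mathbf p_1$ houses both tops and bottoms, and disjointness relies strictly on $2r\le q_1$), the index shift when $i=2$, and the $v_0$-contribution (i.e.\ the $T_i$-generators $F_{0,\pm f}$) when $\mathfrak g=\mathfrak{so}_{2n+1}$. None of these introduces any conceptual difficulty beyond the sketch above.
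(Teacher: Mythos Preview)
Your proposal is correct and is exactly the kind of verification the paper has in mind: the paper's own ``proof'' is the single sentence ``The following result follows from Definition~\ref{defofaandf},'' so your block-by-block tabulation of the $a_{h,j}$ and the disjointness argument coming from $2r\le\min\{q_1,\dots,q_k\}$ is precisely the intended unwinding.

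One small slip in your transitional case $j=k$: your parentheticals for $i=1$ and $i=2$ are interchanged. When $i=1$ the two indices $p_{k-1}+l$ and $p_k-l+1$ both lie in $\mathbf p_k$, so the root $\varepsilon_{p_{k-1}+l}+\varepsilon_{p_k-l+1}$ falls outside $R^+_{I_1}$ because it requires the removed ``tail'' simple root $\alpha_n=\alpha_{p_k}$. When $i=2$ the indices are $p_{k-1}+l\in\mathbf p_k$ and $p_{k-1}-l+1\in\mathbf p_{k-1}$; here $\alpha_n\in I_2$, and the reason the root lies outside $R^+_{I_2}$ is that it crosses the removed $\alpha_{p_{k-1}}$. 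With this correction your argument goes through unchanged.
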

Recall $M_{I_i,l}$ has a basis $\mathcal S_{i,l}$ (see Lemma~\ref{tsmodule}), for  $l\in\mathbb N$.
Write any element $x\in M_{I_i,l}$ as a linear combination of  basis elements in $\mathcal S_{i,l}$.
  We say $ym_i\otimes v_{\bf j}$ is a term of $x$ if    $ym_i\otimes v_{\bf j}$ appears in the expression of $x$ with non-zero coefficient.
For any PBW monomials $y_1,y_2$ in $\U(\mathfrak u^-_{I_i})$, we write  $y_1 \approx y_2$ if $y_1$ is obtained  from $y_2$ by changing the orders of the factors in $y_2$.
Thanks to \eqref{usefact}, up to a linear combination of lower degree terms,
\begin{equation}\label{changingoreder}
y_1m_i=  y_2m_i \text{ if } y_1 \approx y_2.
\end{equation}
 Note that $F_{h,l}=-\theta_{h,l}F_{-l,-h}$ for $h,l\in \underline N$ (see \eqref{fij}). So, for any PBW monomial $y\in \U(\mathfrak u^-_{I_i})$, there is a unique $\tilde y\in \mathcal M_{I_i}$ (see \eqref{eijv}) such that $y\approx\pm \tilde y$. For example, $\widetilde F_{p_2, p_1}\approx-F_{-p_1,-p_2} $.

\begin{Cor}\label{keyfca}Keep the notations above. Suppose  $d\in\bar{\mathbb {ND}}^a_{0,2r}/\sim$ such that   $\alpha(d)_h>0$ for some $h$.     For $1\leq j\leq a-1$,
$\widetilde F_{a_{h,j-1},a_{h,j}}m_i\otimes v_{a_{h,j}}$  is a term of  $\Omega(m_i\otimes v_{l}) $ if and only if $l= a_{h,j-1}$. In this case, the coefficient of $\widetilde F_{a_{h,j-1},a_{h,j}}m_i\otimes v_{a_{h,j}}$ is $\pm1$.
\end{Cor}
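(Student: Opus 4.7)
The plan is to perform a direct case analysis based on Lemma~\ref{xh1}, matching each clause of the formula for $\Omega(m_i\otimes v_l)$ against the target basis element $\widetilde F_{a_{h,j-1},a_{h,j}}m_i\otimes v_{a_{h,j}}$. Since $\alpha(d)_h>0$ and $d$ is normally ordered, $h=j_{l'}$ for some $l'$ (left endpoints of cups carry no dots in a normally ordered diagram), so Definition~\ref{defofaandf} governs where $a_{h,j-1}$ and $a_{h,j}$ sit in the root blocks $\pm\mathbf p_s$, and this location dictates which clause of Lemma~\ref{xh1} is relevant.

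For the \emph{if} direction, I will fix $l=a_{h,j-1}$ and split according to where $j-1$ lies relative to $k$. When $0\le j-1\le k-2$, both $l$ and $a_{h,j}$ are negative indices in consecutive negative blocks $-\mathbf p_{j}$ and $-\mathbf p_{j+1}$, so Lemma~\ref{xh1}(3) applies and the desired summand $F_{l,a_{h,j}}m_i\otimes v_{a_{h,j}}$ is extracted from the main sum $\sum_{j'\in\mathbf p_{>j}}F_{l,-j'}m_i\otimes v_{-j'}$. When $j-1=k-1$, the strand transitions from $-\mathbf p_k$ to a positive block, and the desired term is contributed by the $A$-piece of Lemma~\ref{xh1}(3): for $i=1$ it is the $\delta_{i,1}$-part since $\{-l,a_{h,j}\}\subset\mathbf p_k$, while for $i=2$ it is the first part, because $a_{h,j}\in\mathbf p_{k-1}$ ensures $\{-l,a_{h,j}\}\nsubseteq\mathbf p_k$. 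When $k\le j-1\le a-1$, both $l$ and $a_{h,j}$ are positive, with $a_{h,j}$ strictly in an earlier block than $l$, so Lemma~\ref{xh1}(2) delivers the term via its sum $\sum_{j'\in\mathbf p_{\le s-1}}F_{l,j'}m_i\otimes v_{j'}$. In every case, Lemma~\ref{finuiss}(1) ensures $F_{l,a_{h,j}}\in\U(\mathfrak u^-_{I_i})$, so reordering via~\eqref{changingoreder} produces $\pm\widetilde F_{a_{h,j-1},a_{h,j}}m_i\otimes v_{a_{h,j}}$ with coefficient $\pm1$.

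For the \emph{only if} direction, I will appeal to the PBW basis $\mathcal S_{i,1}$ from Lemma~\ref{tsmodule}, under which any term in $M_{I_i,1}$ is determined by its second tensor slot together with its normally ordered factor in $\mathcal M_{I_i}$. Inspection of Lemma~\ref{xh1} shows that every term of $\Omega(m_i\otimes v_l)$ with second slot $v_{a_{h,j}}$ has the form $\alpha\,F_{l,a_{h,j}}m_i\otimes v_{a_{h,j}}$, the residual scalar pieces $\pm c_sm_i\otimes v_l$ having trivial PBW part and therefore not matching a nontrivial $\widetilde F_{a_{h,j-1},a_{h,j}}$. Matching $\widetilde F_{l,a_{h,j}}$ with $\widetilde F_{a_{h,j-1},a_{h,j}}$ and invoking Lemma~\ref{finuiss}(2)---whose exceptional pairing $a_{i_{l'},0}=-a_{j_{l'},0}$ never intervenes here since $j\ge 1$---forces $l=a_{h,j-1}$.

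The main obstacle is the bookkeeping at the transition $j=k$, where the strand crosses from negative to positive indices via the delicately split $A$-term of Lemma~\ref{xh1}(3). One has to verify that the pair $(-a_{h,k-1},a_{h,k})$ either both lies in $\mathbf p_k$ (for $i=1$, triggering the $\delta_{i,1}$ summand) or straddles $\mathbf p_k$ and $\mathbf p_{k-1}$ (for $i=2$, triggering the complementary summand), and that no accidental collision $a_{h,k-1}=-a_{h,k}$ occurs inside $\mathbf p_k$. The running hypothesis $2r\le q_k$ is what excludes this collision and makes the extraction unambiguous.
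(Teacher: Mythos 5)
Your proposal is correct and follows exactly the route the paper takes: the paper's own proof is a one-line appeal to Lemma~\ref{xh1} together with Definition~\ref{defofaandf} (after noting that $\alpha(d)_h>0$ forces $h$ to be the right endpoint of a cup), and your case analysis over the blocks $\pm\mathbf p_s$, the transition at $j=k$ via the split $A$-term, and the use of Lemma~\ref{finuiss}(2) to rule out collisions is precisely the verification the paper leaves implicit. The only cosmetic slip is that in your third case the range should read $k\le j-1\le a-2$ since $j\le a-1$.
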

\begin{proof} Since $\alpha(d)_h>0$ for some $h$, the  $h$th endpoint on the top row of $d$ is the right endpoint of some cup of $d$.  So,  $h=j(\hat d)_l$ for some $l$,  $1\leq l\leq r$. By  Lemma~\ref{xh1} and the definitions of $F_{a_{h,j-1},a_{h,j}}$ and $a_{h,j}$ in Definition~\ref{defofaandf}, we have the result.
\end{proof}

\begin{Defn}\label{fdtad}Keep the notations in Definition~\ref{defofaandf}.
Define $ v(d)=\widetilde F(d)m_i\otimes v_{\mathbf w(d)}$, where  $F(d)=F(d)_{1,\alpha(d)_1}F(d)_{2,\alpha(d)_2}\cdots F(d)_{2r,\alpha(d)_{2r}}$ and
    $\mathbf w(d)=(a_{1,\alpha(d)_1},\ldots, a_{2r,\alpha(d)_{2r} })$.
\end{Defn}
Thanks to Lemma~\ref{finuiss}(a), $F(d)\in \U(\mathfrak u^-_{I_i})$. Hence $v(d)\in  \mathcal S_{i,2r} $ (see Lemma~\ref{tsmodule}).
By Definition~\ref{fdtad},  $  \text{deg}(v(d))=\text{deg} F(d)=\text{deg}(d)$,
 where  $\text{deg}(d)=\sum_{j=1}^{2r}\alpha(d)_j$ (see \eqref{degd}).
\begin{example} Suppose that $r=2$ and $d=x_3^{2}\circ x_4^{a-1}\circ \eta_{\ob 2}$, where $\eta_{\ob 2}$ is given in \eqref{usuflelem}.
Then $\alpha(d)=(0,0,2,a-1)$, $i(\eta_{\ob 2})=(2,1)$ and $j(\eta_{\ob 2})=(3,4)$.
Since $\alpha_{i_l}=0$ for $l=1,2$, $d\in \bar{\mathbb {ND}}^a_{0,4}/\sim$. So,
\begin{enumerate}
\item $a_{1,\alpha(d)_1}=2$, $a_{2,\alpha(d)_1}=1$, $a_{3,\alpha(d)_3}=-(p_2+1)$, $a_{4, \alpha(d)_4}=p_1-1$, $v_{\mathbf w(d)}=v_2\otimes v_1\otimes v_{-(p_2+1)}\otimes v_{p_1-1}$,
\item $F(d)_{1,\alpha(d)_1}=F(d)_{2,\alpha(d)_2}=1$, $F_{3,\alpha(d)_3}=F_{-(p_1+1), -(p_2+1)} F_{-1,  -(p_1+1)}$,
\item $F(d)_{4,j}=F_{-(p_{j-1}+2),-(p_j+2)} \cdots  F_{-(p_1+2),-(p_2+2)} F_{-2,-(p_1+2)}$, for $1\leq j\leq k-1$.
\item $F(d)_{4,a-1}=y F_{p_{k}-1, p_{k-1}-1}F_{-(p_{k-1}+2), p_{k}-1}F(d)_{4,k-1}$ if $i=1$,
\item $F(d)_{4,a-1}=y  F_{-(p_{k-1}+2), p_{k-1}-1}F(d)_{4,k-1}$ if $i=2$,

\end{enumerate}where $y=F_{p_2-1,p_1-1} F_{p_3-1,p_2-1}\cdots F_{p_{k-1}-1, p_{k-2}-1}$.
\end{example}

For all $0\le h<l\le 2r$, let
$\phi_{h,l}:\U(\mfg)^{\otimes2}\to \U(\mfg)^{\otimes(2r+1)}$  be the linear map  such that
\begin{equation}\label{pi-ab}
\phi_{h,l}(x\otimes y)=1\otimes\ldots\otimes 1\otimes \overset{h+1\text{th}} {x}\otimes 1\otimes\ldots\otimes1\otimes\overset{l+1\text{th}} y\otimes 1\otimes\ldots\otimes1, \forall x\otimes y\in \U(\mfg)^{\otimes2} . \end{equation}

\begin{Cor} \label{usecorollary}
 Suppose that  $d\in\bar{\mathbb {ND}}^a_{0,2r}/\sim$ such that $\alpha(d)_h>0$ for some $h$. If \begin{enumerate}\item  $\bar d\in\bar{\mathbb {ND}}^a_{0,2r}/\sim$  such that $\hat d=\widehat{\bar d}$ and $\alpha(\bar d)=\alpha(d)-(0^{h-1}, 1,0^{2r-h})$,
  \item  $ym_i\otimes v_{\mathbf i}\in \mathcal S_{i,2r} $ (see Lemma~\ref{tsmodule}) with degree $\text{deg}(d)-1$,
   \end{enumerate}
   then
  $v(d)$ is a term of $\phi_{0,h}(\Omega)(ym_i\otimes v_{\mathbf i})$  if and only if $\mathbf i=\mathbf w(\bar d)$ and  $ y\approx\pm F(\bar d)$. In this case,  the  coefficient is  $\pm1$.
\end{Cor}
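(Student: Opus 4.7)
The plan is to reduce the computation of $\phi_{0,h}(\Omega)(ym_i\otimes v_{\mathbf i})$ to top degree analysis and then invoke Corollary~\ref{keyfca} together with the injectivity property of $a_{\bullet,\bullet}$ from Lemma~\ref{finuiss}(2). The starting observation is that $\mathrm{deg}(v(d)) = \mathrm{deg}(d)$ while $\mathrm{deg}(ym_i\otimes v_{\mathbf i}) = \mathrm{deg}(d)-1$, and $\Omega$ raises the $M$-filtration degree by at most $1$ (Lemma~\ref{eM filtered de}(1)). So $v(d)$ can only appear as a top-degree contribution. Expanding $\Omega=\tfrac12\sum_{p,q}F_{p,q}\otimes F_{q,p}$ and applying the key identity \eqref{usefact} to slide each $F_{p,q}$ past the PBW monomial $y$ (picking up only lower-degree corrections in $M$), I obtain the congruence
\[
\phi_{0,h}(\Omega)(ym_i\otimes v_{\mathbf i})\equiv y\cdot \phi_{0,h}(\Omega)(m_i\otimes v_{\mathbf i}) \pmod{M^{\mathfrak{p}_{I_i}}(\lambda_{I_i,\mathbf c})^{\le \mathrm{deg}(y)}\otimes V^{\otimes 2r}}.
\]
Since $\phi_{0,h}(\Omega)$ only alters the $h$th $V$-factor, the top-degree terms are supplied entirely by the action of $\Omega$ at position $(0,h)$.

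For the \emph{only if} direction, suppose $v(d)$ appears in the right-hand side. Matching $V$-factors at positions $l\ne h$ forces $\mathbf i_l=\mathbf w(d)_l=\mathbf w(\bar d)_l$. At position $h$, the outgoing $V$-vector must equal $v_{a_{h,\alpha(d)_h}}$. By Corollary~\ref{keyfca}, the only top-degree contributions producing $v_{a_{h',j}}$ at the modified slot come from pairs $(h',j)$ with the prescribed shape $\widetilde F_{a_{h',j-1},a_{h',j}}m_i\otimes v_{a_{h',j}}$; the uniqueness statement in Lemma~\ref{finuiss}(2) (together with the fact that the exceptional case $a_{m,s}=-a_{h,t}$ requires one of the indices to be zero, which is ruled out here since $\alpha(d)_h>0$ forces $h$ to be a right endpoint $j_l$ of a cup with corresponding indices in $\mathbf p_{\ne 0}$) then pins down $(h',j)=(h,\alpha(d)_h)$. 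Hence $\mathbf i_h=a_{h,\alpha(d)_h-1}=\mathbf w(\bar d)_h$, giving $\mathbf i=\mathbf w(\bar d)$. The $M$-part modulo lower degree then reads $y\cdot F_{a_{h,\alpha(d)_h-1},\,a_{h,\alpha(d)_h}}\,m_i$; on the other hand, the recursive definition of $F(d)_{h,\alpha(d)_h}$ in Definition~\ref{defofaandf}(5) combined with $\alpha(\bar d)_j=\alpha(d)_j$ for $j\ne h$ yields $F(d)\approx F_{a_{h,\alpha(d)_h-1},\,a_{h,\alpha(d)_h}}\,F(\bar d)$, so \eqref{changingoreder} forces $y\approx\pm F(\bar d)$.

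For the \emph{if} direction, I simply reverse the argument: starting from $\mathbf i=\mathbf w(\bar d)$ and $y\approx\pm F(\bar d)$, Corollary~\ref{keyfca} produces, with coefficient $\pm 1$, the term $\widetilde F_{a_{h,\alpha(d)_h-1},a_{h,\alpha(d)_h}}m_i\otimes v_{\mathbf w(d)}$ from $\phi_{0,h}(\Omega)(m_i\otimes v_{\mathbf w(\bar d)})$; multiplying by $y$ and applying \eqref{changingoreder} gives $\pm\widetilde F(d)m_i\otimes v_{\mathbf w(d)}=\pm v(d)$ modulo lower degree. Since $v(d)$ itself is a basis element of top degree, nothing can cancel it, and the coefficient is $\pm 1$.

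The step I expect to be most delicate is verifying the uniqueness of the pair $(h',j)$ in the only-if direction: one must rule out accidental coincidences $a_{h',j}=a_{h,\alpha(d)_h}$ coming from distinct indices and also the possibility of the sign-twisted occurrence $F_{-a_{h,\alpha(d)_h},\,-a_{h,\alpha(d)_h-1}}$ being mistaken for a separate contribution. Both issues are resolved by carefully applying Lemma~\ref{finuiss}(2) and by recognising that $F_{p,q}$ and $-\theta_{p,q}F_{-q,-p}$ represent the same element of $\U(\mathfrak u^-_{I_i})$, so the two candidate $(p,q)$-pairs in $\Omega$ give a single contribution of sign $\pm 1$.
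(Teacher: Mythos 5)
Your proposal is correct and follows essentially the same route as the paper: reduce to top filtration degree via \eqref{usefact}, use the explicit action of $\Omega$ on $m_i\otimes v_l$ (Lemma~\ref{xh1}/Corollary~\ref{keyfca}) to identify the shape of the top-degree terms, and invoke \eqref{noteauql} to pin down the indices; your extra care about the exceptional case $a_{m,s}=-a_{h,t}$ and the identification $F_{p,q}=-\theta_{p,q}F_{-q,-p}$ is consistent with, and slightly more explicit than, the paper's terse argument.
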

 \begin{proof}  ``$\Leftarrow:$'' The result follows from \eqref{usefact} and Corollary~\ref{keyfca}.
 \medskip

``$\Rightarrow:$''
By  \eqref{usefact} and Lemma~\ref{xh1},  any   term of $\phi_{0,h}(\Omega)(ym_i\otimes v_{\mathbf i})$ with  degree $\text{deg}(d)$ is \textbf{of the  form}
$\bar y  m_i\otimes v_{\mathbf j}$,
 where\begin{itemize}\item  $\bar y\approx \pm y F_{i_h,l}$  for some  $ F_{i_h,l}\in\mathfrak u^-_{I_i}$, \item  $ \mathbf j$ is obtained from $\mathbf i$   by replacing $i_h$ with $l$.\end{itemize}
 If $v(d)$ is one of such terms,  then
$F(d) \approx \pm y F_{i_h, a_{h,\alpha(d)_h}}$ and  $i_t=\mathbf w(d)_t$ whenever $t\neq h$.
 By \eqref{noteauql}, $y\approx\pm  F(\bar d)$ and
 $i_h=\mathbf w(\bar d)_h$. So, $\mathbf i=\mathbf w(\bar d)$.
 \end{proof}

Suppose  $d, d'\in\bar{\mathbb {ND}}^a_{0,2r}/\sim$. We write $\Psi_{M^{\mathfrak p_{I_i}}(\lambda_{I_i, \mathbf c})}(d')(m_i)$ as a linear combination of  basis elements  of  $ M_{I_i, 2r}$ in Lemma~\ref{tsmodule}.
Let $c_{d,d'}$ be the coefficient of $v(d)$ in this expression.
\begin{Lemma}\label{L:coefficients technical lemma}
Suppose  $d, d'\in\bar{\mathbb {ND}}^a_{0,2r}/\sim $.   \begin{enumerate} \item   $c_{d,d'}=\pm 1$ if $d=d'$. \item $c_{d,d'}=0$ if either $\text{deg}(d')<\text{deg}(d)$ or $\text{deg}(d')=\text{deg}(d)$
and $  d\neq  d'$.
\end{enumerate} \end{Lemma}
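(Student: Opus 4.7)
The plan is to run a single induction on $k=\text{deg}(d)$ that proves parts (1) and (2) simultaneously, with Corollary~\ref{usecorollary} as the main engine.

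For the base case $k=0$, $d=\hat d$ so $v(d)=m_i\otimes v_{\theta(\hat d)}$. If $\text{deg}(d')=0$ then $d'=\widehat{d'}$ and $\Psi_{M^{\mathfrak{p}_{I_i}}(\lambda_{I_i,\mathbf c})}(d')(m_i)=\pm\, m_i\otimes z(\widehat{d'})$ by \eqref{act111} and the definition of $z$, so both parts reduce to the coefficient of $v_{\theta(\hat d)}$ in $z(\widehat{d'})$; this is already computed in the proof of Theorem~\ref{basisofb} (see \eqref{coeffi}) and equals $\pm 1$ when $\hat d=\widehat{d'}$ and vanishes otherwise. The filtered-degree case of part~(2) is then immediate: by Theorem~\ref{affaction} each $\Psi(x_h)$ equals $\varepsilon_\mfg\phi_{0,h}(\Omega)$ plus a scalar plus $V$-to-$V$ terms coming from the coproduct, and Lemma~\ref{eM filtered de } shows that $\phi_{0,h}(\Omega)$ raises filtered degree by at most one, so $\Psi(d')(m_i)$ lies in $M^{\leq\text{deg}(d')}_{I_i,2r}$; when $\text{deg}(d')<\text{deg}(d)$, $v(d)$ has strictly larger filtered degree and $c_{d,d'}=0$.

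For the inductive step I would exploit that the $x_h$'s commute pairwise in $\AB$: whenever $\alpha(d')_h>0$ one may write $d'=x_h\circ\tilde d'$ with $\tilde d'\in\bar{\mathbb{ND}}^a_{0,2r}/\sim$ and $\alpha(\tilde d')=\alpha(d')-(0^{h-1},1,0^{2r-h})$, giving $\Psi(d')(m_i)=\Psi(x_h)\bigl(\Psi(\tilde d')(m_i)\bigr)$. Since $\Psi(\tilde d')(m_i)$ has filtered degree at most $k-1$ and all summands of $\Psi(x_h)$ other than $\varepsilon_\mfg\phi_{0,h}(\Omega)$ preserve filtered degree, the only contributions to the degree-$k$ element $v(d)$ come from applying $\phi_{0,h}(\Omega)$ to the degree-$(k-1)$ part of $\Psi(\tilde d')(m_i)$.

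For part~(1) with $d=d'$, pick any $h$ with $\alpha(d)_h>0$ and let $\bar d$ be $d$ with that dot removed; Corollary~\ref{usecorollary} identifies $v(\bar d)$ as the unique degree-$(k-1)$ basis element of $\mathcal S_{i,2r}$ whose $\phi_{0,h}(\Omega)$-image contributes $v(d)$, with coefficient $\pm 1$, so $c_{d,d}=\pm c_{\bar d,\bar d}=\pm 1$ by induction. For the remaining case of part~(2) with $\text{deg}(d)=\text{deg}(d')=k>0$ and $d\neq d'$, pick any $h$ with $\alpha(d')_h>0$. If $\alpha(d)_h>0$ then the same argument gives $c_{d,d'}=\pm c_{\bar d,\tilde d'}$, and the discrepancy between $d$ and $d'$ (in either $\hat d\ne\widehat{d'}$ or in the $\alpha$-vector) is inherited by the pair $\bar d,\tilde d'$, so $c_{\bar d,\tilde d'}=0$ by induction. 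The main obstacle will be the remaining subcase $\alpha(d)_h=0$, where Corollary~\ref{usecorollary} supplies no candidate $\bar d$ at all: here I would retrace its proof and use the uniqueness clause \eqref{noteauql} of Lemma~\ref{finuiss} to argue that no basis element in $\mathcal S_{i,2r}$ whatsoever can map to $v(d)$ under $\phi_{0,h}(\Omega)$, since any such source would force $\widetilde F(d)$ to contain a factor of the form $F_{\ast,a_{h,0}}$, whereas every factor $F_{a_{g,j-1},a_{g,j}}$ of $\widetilde F(d)$ has $g$ with $\alpha(d)_g>0$ and $j\geq 1$, so by \eqref{noteauql} its terminal label cannot coincide with the inert value $a_{h,0}$. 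The delicate point is exactly this extension of the corollary: verifying that the absence of a valid $\bar d$ reflects a genuine absence of contributors rather than merely falling outside the hypotheses of the corollary as stated.
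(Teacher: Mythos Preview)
Your reduction for $\alpha(d)_h>0$ and your treatment of the base case and of $\text{deg}(d')<\text{deg}(d)$ are fine; the gap is exactly where you suspected, in the subcase $\alpha(d)_h=0$. Your claim that no factor of $\widetilde{F(d)}$ has ``terminal label'' $a_{h,0}$ tacitly ignores the relation $F_{p,q}=\pm F_{-q,-p}$. A factor $F_{a_{g,j-1},a_{g,j}}$ can match $F_{*,a_{h,0}}$ not only via $a_{g,j}=a_{h,0}$ (which \eqref{noteauql} indeed rules out) but also via $-a_{g,j-1}=a_{h,0}$. By \eqref{noteauql} this second possibility occurs precisely when $h=i(\hat d)_l$ is a left endpoint of some cup of $\hat d$ whose right endpoint $j_l$ carries a dot in $d$: then $a_{h,0}=l=-a_{j_l,0}$ and the factor $F_{a_{j_l,0},a_{j_l,1}}$ of $F(d)$ equals $\pm F_{p_1+l,\,l}=\pm F_{*,a_{h,0}}$. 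A concrete instance: for $r=2$, take $\hat d$ with cups $(1,3),(2,4)$ and $\alpha(d)=(0,0,0,1)$, and $\widehat{d'}$ with cups $(1,2),(3,4)$ and $\alpha(d')=(0,1,0,0)$. The only legal choice is $h=2$, which is $i(\hat d)_2$, and $F(d)=F_{-2,-(p_1+2)}=\pm F_{p_1+2,\,2}$ has second index $a_{2,0}=2$. So your ``no contributor'' argument fails, yet $c_{d,d'}=0$ for a different reason: the untouched cup $(3,4)$ of $\widehat{d'}$ forces $l_3=-l_4$ in every top-degree term of $\Psi(d')(m_i)$, while $\mathbf w(d)$ has $(-1,-(p_1{+}2))$ there.

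This is exactly what the paper's proof builds in and yours does not. The paper always works at $h=2r$, which is a right endpoint in \emph{every} Brauer diagram, so the problematic ``$h$ is a left endpoint of $\hat d$'' scenario never arises when a dot sits at $2r$. The price is that when $\alpha(d')_{2r}=0$ one cannot peel off a dot there; the paper then either invokes the cup constraint $l_j=-l_{2r}$ coming from the untouched cup $(j,2r)$ of $\widehat{d'}$ (its Cases~1 and~4), or reduces both $d$ and $d'$ by deleting that common cup and runs a second induction on $r$. Your single induction on $\text{deg}(d)$ alone, with an arbitrary $h$, cannot replace that step.
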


\begin{proof} Following \eqref{defwrtofd}, we have  $d'=x_{1}^{\alpha(d')_{1}}\circ \ldots \circ x_{2r}^{\alpha(d')_{2r}}\circ \widehat{d'}$ and   $\widehat{d'}\in \bar {\mathbb B}_{0,2r}/\sim$.  Then
  \begin{equation}\label{copwidfhfgff}\begin{aligned}\Psi_{M^{\mathfrak p_{I_i}}(\lambda_{I_i, \mathbf c})}(d')(m_i)\overset{\eqref{defofzd1}} =&\pm\Psi_{M^{\mathfrak p_{I_i}}(\lambda_{I_i, \mathbf c})}(x_{1}^{\alpha(d')_{1}}\circ \ldots \circ x_{2r}^{\alpha(d')_{2r}})(m_i\otimes z(\widehat {d'}))\\
  \overset{\eqref{actionofxxx}}=& \pm Y_{1}^{\alpha(d')_{1}} Y_{2}^{\alpha(d')_{2}} \cdots   Y_{2r}^{\alpha(d')_{2r}}(m_i\otimes z(\widehat {d'})),
  \end{aligned} \end{equation}
  where $z(\widehat {d'})$ is given in \eqref{defofzd}, $Y_j=X_j\otimes \text{Id}_{V^{\otimes 2r-j}}$ and $X_j$'s are given in \eqref{defofxk} under the assumption that  $M=M^{\mathfrak p_{I_i}}(\lambda_{I_i, \mathbf c})$.
Thanks to  \eqref{defofxk} and Lemma~\ref{eM filtered de },  the degree of  any   term    of $\Psi_{M^{\mathfrak p_{I_i}}(\lambda_{I_i, \mathbf c})}(d')(m_i)$
is less than or equal to $\text{deg}(d')$.  We can use $\phi_{0, j}(\Omega)$ to replace
$Y_j$ in \eqref{copwidfhfgff} when we  compute the terms
of  $\Psi_{M^{\mathfrak p_{I_i}}(\lambda_{I_i, \mathbf c})}(d')(m_i)$ with the highest degree $\text{deg}(d')$ (see \eqref{pi-ab} for  $\phi_{0, j}$).
Since $\text{deg}(d)=\text{deg} (v(d))$, we have   $c_{d,d'}=0$ if  $\text{deg}(d')<\text{deg}(d)$.

Suppose that $\text{deg}(d)=\text{deg}(d')=0$.   Then  $F(d)=1$ and $v(d)=m_i\otimes v_{\theta(d)}$, where $\theta(d)$ is given in \eqref{defofwdbed}. By  \eqref{coeffi},
we have $ c_{d,d'}=\pm\delta_{d,d'}$.
 Using   Corollary~\ref{usecorollary} and induction on $\text{deg}(d)$ yields (a). If $r=1$, then
 $d=d'$ if and only if $\text{deg}(d)=\text{deg}(d')$, for any  $d, d'\in  \bar{\mathbb {ND}}^a_{0, 2}/\sim$. So, the result is proven for the case $r=1$.
In the following, we assume that     $r>1$, $\text{deg}(d')=\text{deg}(d)>0$
and $  d\neq  d'$.

Since $ 2r $ is the rightmost endpoint of $d'$, there is a unique $j$ such that  $(j,2r)$ is a cup of $d'$. In this case,    $j$  is the left endpoint of $(j, 2r)$. Suppose  $ um_i \otimes v_\mathbf l\in \mathcal S_{i,2r} $ (see Lemma~\ref{tsmodule}) for some PBW monomial $u\in \U(\mathfrak u_{I_i}^-)$ and $\mathbf l=(l_1,l_2,\ldots,l_{2r})\in\underline N^{2r}$, which   appears as a term of $\Psi_{M^{\mathfrak p_{I_i}}(\lambda_{I_i, \mathbf c})}(d')(m_i)$
 with the  highest degree $\text{deg}(d')$.  There are four cases  we have to deal with as follows.

 {\textbf{Case 1:} $\alpha(d')_{2r}=0$ and   either $\alpha(d)_{2r}>0$ or  $\alpha(d)_{2r}=0$ and   $(j,2r)$ is not a cup of $d$.}
  In this case,  we  have  $l_j=-l_{2r}$ by \eqref{defofzd}. Thanks to  \eqref{noteauql}, $a_{j,\alpha(d)_j}\neq - a_{2r,\alpha(d)_{2r}}$. Thus  $v(d)$  can not be a term   of $\Psi_{M^{\mathfrak p_{I_i}}(\lambda_{I_i, \mathbf c})}(d')(m_i)$, forcing     $c_{d,d'}=0$.

{\textbf{Case 2:} $\alpha(d)_{2r}>0$ and $\alpha(d')_{2r}>0$.} In this case, we define $\bar d$ such that $\hat{d}=\widehat{\bar d }$ and $ \alpha(\bar d)= \alpha(d)-(0^{2r-1},1) $. Similarly, we define  $\bar {d'}$.  So, $d=x_{2r}\circ \bar d$, $d'=x_{2r}\circ \bar {d'}$ and $\bar {d'} \neq \bar d$.
 By   Corollary~\ref{usecorollary} and \eqref{changingoreder},
 $ v(\bar d)$ is  a term   of  $\Psi_{M^{\mathfrak p_{I_i}}(\lambda_{I_i, \mathbf c})}(\bar {d'})(m_i)$   if  $c_{ d, {d'}}\neq0$. However, by inductive assumption,
 $c_{\bar d,\bar {d'}}=0$ since  $\text{deg} (\bar d)=\text{deg} (\bar {d'})=\text{deg}(d)-1$  and $\bar {d'} \neq \bar d$.  So,  $ v(\bar d)$ can not be  a term   of  $\Psi_{M^{\mathfrak p_{I_i}}(\lambda_{I_i, \mathbf c})}(\bar {d'})(m_i)$.

 {\textbf{Case 3:} $\alpha(d')_{2r}>0$ and $\alpha(d)_{2r}=0$.}   By Lemma~\ref{xh1} and \eqref{usefact},  $-l_{2r}\notin \mathbf p_1$. Since $\alpha(d)_{2r}=0$,  we have $-a_{2r,\alpha(d)_{2r}}=r\in\mathbf p_1$. Thus
 $v(d)$  can not be a term    of $\Psi_{M^{\mathfrak p_{I_i}}(\lambda_{I_i, \mathbf c})}(d')(m_i)$ and   $c_{d,d'}=0$.

 {\textbf{Case 4:} $\alpha(d')_{2r}=\alpha(d)_{2r}=0$ and $(j,2r)$ is    a cup of $d$.} In this case,  $(a_{j, \alpha(d)_j},a_{2r,\alpha(d)_{2r}})=(r,-r)$  and there is no $\bullet$'s on the cup $(j,2r)$ of $d'$ and $d$.  So,  $\alpha(y)_j=\alpha(y)_{2r}=0$ for $y=d,d'$.
 Recall the notations $i(\hat d)$ and $j(\hat d)$ in Definition~\ref{defofij}.
Let $d_1,  d'_1\in \bar{\mathbb{ND}}^a_{0,2(r-1)}/\sim$ such that
\begin{equation}\label{rfrgrg}
 \alpha(y_1)_{l}=\left\{
                   \begin{array}{ll}
                    \alpha(y)_{l}, & \hbox{ for $1\leq l\leq j-1$;} \\
                   \alpha(y)_{l+1}  , & \hbox{ for $j\leq l\leq 2(r-1)$,}
                   \end{array}
                 \right.
 \end{equation} and for $1\leq l\leq r-1$,
\begin{equation}\label{eudheudh}
\begin{aligned}
i(\widehat y_1)_{l}&= i(\hat y)_{l},  \text{ if $1\leq i(\hat y)_{l}<j$},~~\quad  j(\widehat y_1)_{l}= j(\hat y)_{l}, \text{ if $1\leq j(\hat y)_{l}<j$},\\
i(\widehat y_1)_{l}&= i(\hat y)_{l}-1, \text{ if $j<i(\hat y)_{l}<2r$},~~\quad j(\widehat y_1)_{l}= j(\hat y)_{l}-1, \text{ if $j<j(\hat y)_{l} <2r$}.
\end{aligned}\end{equation}
Then $d_1$ and $d_1'$ can be viewed  as the diagrams obtained from $d$ and $d'$ by removing the cup $(j,2r)$, respectively (see Example~\ref{eaxmdkfofddll}).
  By \eqref{rfrgrg} and \eqref{eudheudh},  $d_1\neq d_1'$,
 $\text{deg}(d_1)=\text{deg}(d_1')=\text{deg}(d)$.
Moreover,  $F(d)=F(d_1)$ and $\mathbf w(d_1)$ is obtained from $\mathbf w(d)$ by deleting $a_{j, \alpha(d)_j}$ and $a_{2r,\alpha(d)_{2r}}$.
 By \eqref{defofzd1} and \eqref{eudheudh}, $z(\widehat d_1')$ is obtained from $z(\widehat d')$ by deleting the tensor factors $z(\widehat d')_j$ and $z(\widehat d')_{2r}$.
 Note that  $\alpha(d')_j=\alpha(d')_{2r}=0$. By \eqref{copwidfhfgff} and \eqref{rfrgrg}, $ um_i \otimes v_\mathbf l$ is a term of  $\Psi_{M^{\mathfrak p_{I_i}}(\lambda_{I_i, \mathbf c})}(d')(m_i)$ with the highest degree  $\text{deg}(d')$ only if
   $um_i\otimes v_{ {\mathbf j}}$ is also a term of $\Psi_{M^{\mathfrak p_{I_i}}(\lambda_{I_i, \mathbf c})}(d_1')(m_i)$
 with the  highest degree $\text{deg}(d')$, where ${\mathbf j}$ is obtained from $\mathbf l$ by deleting $l_j$ and $l_{2r}$.
  Therefore,  $c_{d_1,d_1'}\neq0$ if $c_{d,d'}\neq0$. However,
by induction assumption on $r-1$, we have $c_{d_1,d_1'}=0$, forcing  $c_{d,d'}=0$.
\end{proof}
\begin{example}\label{eaxmdkfofddll}Suppose that $k\geq2$.
Define
$$ d=\begin{tikzpicture}[baseline = 25pt, scale=0.35, color=\clr]
        \draw[-,thick] (0,4) to[out=down,in=left] (1.5,2) to[out=right,in=down] (3,4);
        \draw[-,thick] (1.5,4) to[out=down,in=left] (3,2) to[out=right,in=down] (4.5,4);
         \draw[-,thick] (3.5,4) to[out=down,in=left] (5,2) to[out=right,in=down] (6.5,4); \draw(3,3.6)\bdot;\draw(4.5,3.6)\bdot;
           \end{tikzpicture}~,\quad d_1=
           \begin{tikzpicture}[baseline = 25pt, scale=0.35, color=\clr]
        \draw[-,thick] (0,4) to[out=down,in=left] (1.5,2) to[out=right,in=down] (3,4);
        \draw[-,thick] (1.5,4) to[out=down,in=left] (3,2) to[out=right,in=down] (4.5,4); \draw(3,3.6)\bdot;\draw(4.5,3.6)\bdot;
           \end{tikzpicture}.
           $$
     Then $d_1$ can be viewed as the diagram  obtained from $d$ by deleting the cup $(4,6)$.  We have
\begin{itemize}
\item $\alpha(d)=(0,0,1,0,1,0)$, $\alpha(d_1)=(0,0,1,1)$;
\item  $i(\hat d)=(1,2,4)$, $ j(\hat d)=(3,5,6)$, $i(\widehat d_1)=(1,2)$, $ j(\widehat d_1)=(3,4)$;
\item   $\mathbf w(d)=(1,2,-(p_1+1),3,-(p_1+2),-3)$,  $\mathbf w( d_1)=(1,2,-(p_1+1),-(p_1+2))$,
\item   $ F(d)=F(d_1)=F_{-1,-p_1-1}F_{-2,-p_1-2}$.
 \end{itemize}     \end{example}
It follows from Lemma~\ref{numberss} and Definition~\ref {D:N.O. dotted  OBC diagram} that the cardinality of $\bar{\mathbb {ND}}^a_{m,s}/\sim$ is  $a^r(2r-1)!!$ (resp., 0) if $m+s=2r$ (resp., $m+s$ is odd).
\begin{Prop}\label{keytheorem}
 $\Hom_{\CB^{f_i}(\omega)}(\ob 0,\ob {2r})$ has  $\mathbb C$-basis given by $\bar{\mathbb {ND}}^a_{0,2r}/\sim$.
\end{Prop}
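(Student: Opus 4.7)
The plan is to combine the spanning result from Proposition~\ref{regularspan}(c) with a linear-independence argument driven by the parabolic-category-$\mathcal{O}$ functor $\Psi_{M^{\mathfrak p_{I_i}}(\lambda_{I_i,\mathbf c})}$ of Theorem~\ref{fcycy} and the triangularity property recorded in Lemma~\ref{L:coefficients technical lemma}. Since spanning is already handled, the whole proof reduces to showing that $\bar{\mathbb{ND}}^a_{0,2r}/\!\sim$ is $\mathbb{C}$-linearly independent in $\Hom_{\CB^{f_i}(\omega)}(\ob 0,\ob{2r})$.

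First I would suppose $\sum_{d'} c_{d'}\, d' = 0$ in $\Hom_{\CB^{f_i}(\omega)}(\ob 0,\ob{2r})$ for a finite family of scalars $c_{d'}\in\mathbb{C}$, where $d'$ ranges over $\bar{\mathbb{ND}}^a_{0,2r}/\!\sim$. Applying the functor $\Psi_{M^{\mathfrak p_{I_i}}(\lambda_{I_i,\mathbf c})}$ (which factors through $\CB^{f_i}(\omega)$ by Theorem~\ref{fcycy}) and evaluating at the highest-weight vector $m_i$ produces the equality
\begin{equation*}
\sum_{d'} c_{d'}\,\Psi_{M^{\mathfrak p_{I_i}}(\lambda_{I_i,\mathbf c})}(d')(m_i) = 0
\end{equation*}
in $M_{I_i,2r}$. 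Expanding each term in the basis $\mathcal{S}_{i,2r}$ furnished by Lemma~\ref{tsmodule} and reading off the coefficient of the basis vector $v(d)$ for each $d\in\bar{\mathbb{ND}}^a_{0,2r}/\!\sim$ yields the scalar identities $\sum_{d'} c_{d'}\, c_{d,d'} = 0$, one for every $d$.

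The heart of the argument is then a standard triangularity manoeuvre. Assume for contradiction that not all $c_{d'}$ vanish, and pick a $d$ with $c_d\neq 0$ whose degree $\text{deg}(d)$ is maximal among those appearing in the relation. Inspecting the coefficient equation for this $d$: any contributing $d'$ with $\text{deg}(d')<\text{deg}(d)$ satisfies $c_{d,d'}=0$ by Lemma~\ref{L:coefficients technical lemma}(b); any $d'\neq d$ with $\text{deg}(d')=\text{deg}(d)$ likewise satisfies $c_{d,d'}=0$ by the same part; and any $d'$ with $\text{deg}(d')>\text{deg}(d)$ has $c_{d'}=0$ by the maximality of $\text{deg}(d)$. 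Only the term $d'=d$ survives, and $c_{d,d}=\pm 1$ by Lemma~\ref{L:coefficients technical lemma}(a). Hence the coefficient equation collapses to $\pm c_d = 0$, contradicting $c_d\neq 0$.

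Because Lemma~\ref{L:coefficients technical lemma} has already absorbed all the technical content (construction of $v(d)$, PBW normalization of $F(d)$, degree filtration on $M_{I_i,2r}$, and the careful case analysis of cups versus free endpoints), no further combinatorial or representation-theoretic obstacle remains; the only point requiring mild care is that $v(d)$ really is an element of the basis $\mathcal{S}_{i,2r}$, which relies on Lemma~\ref{finuiss}(a) together with the constraint $0\le\alpha(d)_h\le a-1$ built into $\bar{\mathbb{ND}}^a_{0,2r}$. The cardinality $a^r(2r-1)!!$ noted just before the statement is recovered as an immediate byproduct once the basis is established.
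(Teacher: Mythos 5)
Your proposal is correct and follows essentially the same route as the paper: spanning from Proposition~\ref{regularspan}(c), then linear independence by applying $\Psi_{M^{\mathfrak p_{I_i}}(\lambda_{I_i,\mathbf c})}$ to the relation, evaluating at $m_i$, and extracting the coefficient of $v(d_0)$ for a maximal-degree $d_0$ with nonzero coefficient, where Lemma~\ref{L:coefficients technical lemma} supplies exactly the triangularity you invoke. No gaps.
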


\begin{proof}
  By Proposition~\ref{regularspan}(c), it is enough to prove that   $\bar{\mathbb {ND}}^a_{0,2r}/\sim$ is linearly independent. Suppose
   \begin{equation}\label{combesjd}
   \sum_{d\in \bar{\mathbb {ND}}^a_{0,2r}/\sim}  g_{d} d=0,
   \end{equation}
where  $g_d$'s $\in \mathbb C$.  If $g_d\neq 0$ for some $d\in \bar{\mathbb {ND}}^a_{0,2r}/\sim $, we choose    $d_0$ among  such elements   such that   $ \text{deg}(d_0)$ is maximal.
      Thanks to Lemma~\ref{L:coefficients technical lemma},
  the coefficient  of $v(d_0)$ in $\Psi_{M^{\mathfrak p_{I_i}}(\lambda_{I_i, \mathbf c})}(\sum_{d\in \bar{\mathbb {ND}}^a_{0,2r}/\sim}g_{d} d)(m_i)$ is the non-zero scalar $\pm g_{d_0} $. This contradicts \eqref{combesjd}. \end{proof}

Recall that  $\mathbb Z_{(2)}$ is the localization of $\mathbb Z$ by  the set $\{2^m\mid m\in\mathbb N\}$.
\begin{Theorem}\label{calz}Let $\hat f(t)=\prod_{j=1}^a(t-\hat u_j)\in \mathcal Z[t]$, where $\mathcal Z=\mathbb Z_{(2)}[\hat u_1, \ldots, \hat u_a]$ is the  ring of polynomials in variables $\hat u_1, \ldots, \hat u_a$. Suppose  $\tilde \omega=\{\tilde\omega_j\mid j\in \mathbb N \}$. If $\tilde \omega$ is      $\hat{\mathbf u}$-admissible in the sense of \eqref{admc},
then the $\mathcal Z$-module  $\Hom_{\CB^{\hat f}(\tilde\omega)}(\ob 0,\ob {2r})$ has basis given by $\bar{\mathbb {ND}}^a_{0,2r}/\sim$.
\end{Theorem}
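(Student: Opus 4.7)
The plan is to combine the spanning result of Proposition~\ref{regularspan}(c) (which works over any commutative base with $2$ invertible) with a Zariski-density argument that reduces linear independence over $\mathcal Z$ to the complex case already handled in Proposition~\ref{keytheorem}. Spanning is immediate: Proposition~\ref{regularspan}(c) gives that $\bar{\mathbb{ND}}^a_{0,2r}/\sim$ generates $\Hom_{\CB^{\hat f}(\tilde\omega)}(\ob 0,\ob{2r})$ over $\mathcal Z$.

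For linear independence, I would suppose $\sum_{d\in\bar{\mathbb{ND}}^a_{0,2r}/\sim} g_d\, d = 0$ in $\Hom_{\CB^{\hat f}(\tilde\omega)}(\ob 0,\ob{2r})$ with $g_d\in\mathcal Z$. For each $\mathbf u=(u_1,\dots,u_a)\in\mathbb C^a$ that arises via Definition~\ref{polf12} from some admissible choice $(i,\mathfrak g,k,\mathbf c,\mathbf q)$ satisfying $2r\leq\min q_j$, the ring homomorphism $\phi\colon\mathcal Z\to\mathbb C$ sending $\hat u_j\mapsto u_j$ sends $\hat f\mapsto f_i$, and by applying \eqref{wdeterminedbyu} to both the $\hat{\mathbf u}$-admissibility of $\tilde\omega$ and the $\mathbf u$-admissibility of the corresponding $\omega$, it sends $\tilde\omega_j\mapsto\omega_j$ for the scalars appearing in Lemma~\ref{ghom123}. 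Thus $\phi$ induces a base-change functor $\CB^{\hat f}(\tilde\omega)\to\CB^{f_i}(\omega)$ that is the identity on diagrams, and the hypothesized relation specializes to $\sum_d\phi(g_d)\, d=0$ in $\Hom_{\CB^{f_i}(\omega)}(\ob 0,\ob{2r})$. Proposition~\ref{keytheorem} then forces $\phi(g_d)=0$ for every $d$.

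The main obstacle, and the heart of the argument, is verifying that the set $S\subset\mathbb C^a$ of all such specializations $\mathbf u$ is Zariski dense in $\mathbb C^a$. I would split this into two cases. If $a$ is even, take $\mathfrak g=\mathfrak{so}_{2n}$, $i=1$, $k=a/2$: for each fixed $\mathbf q\in\mathbb Z_{\geq 2r}^k$, varying $\mathbf c\in\mathbb C^k$ sweeps out the affine subspace $\{u_j+u_{2k-j+1}=q_j:1\leq j\leq k\}$, and since each $q_j$ ranges over an infinite discrete subset of $\mathbb C$ (hence Zariski dense), the union over $\mathbf q$ is Zariski dense in $\mathbb C^a$. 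If $a$ is odd, take $\mathfrak g=\mathfrak{so}_{2n}$, $i=2$, $k=(a+1)/2$: the forced condition $c_k=0$ yields $u_k=q_k-\tfrac{1}{2}$ (which sweeps a Zariski dense subset of $\mathbb C$ as $q_k$ varies) while $u_{k+1}$ is dropped from the parameter list; the constraints $u_j+u_{2k-j+1}=q_j$ for $1\leq j\leq k-1$ combined with the free $c_1,\dots,c_{k-1}$ again produce a Zariski dense subset of $\mathbb C^a$. The odd case is the more delicate of the two, since one must keep track of both the deleted coordinate $u_{k+1}$ and the integer-valued constraint imposed on $u_k$.

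Once density is established, any $g_d\in\mathcal Z\subset\mathbb C[\hat u_1,\dots,\hat u_a]$ vanishing on $S$ must be identically zero in $\mathbb C[\hat u_1,\dots,\hat u_a]$, and hence $g_d=0$ in $\mathcal Z$, completing linear independence.
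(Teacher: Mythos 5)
Your proposal is correct and follows essentially the same route as the paper: spanning from Proposition~\ref{regularspan}(c), specialization of $(\hat u_1,\dots,\hat u_a)$ to the parameters of Definition~\ref{polf12} with $\mathfrak g=\mathfrak{so}_{2n}$ and $i=1$ or $2$ according to the parity of $a$, linear independence over $\mathbb C$ via Proposition~\ref{keytheorem}, and then Zariski density of the set of attainable specializations. The only (cosmetic) difference is in how density is verified: the paper shows the affine maps $(\mathbf q,\mathbf c)\mapsto\mathbf u$ are dominant by computing their Jacobians to be $\pm1$, whereas you exhibit the specialization locus directly as a product of Zariski dense subsets after a linear change of coordinates — the same computation packaged differently, and your identities $u_j+u_{2k-j+1}=q_j$ and $u_k=q_k-\tfrac12$ check out against \eqref{ujkshd}.
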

\begin{proof}
 We claim that
 \begin{equation}\label{cliamhdg}
 \sum_{d\in \bar{\mathbb {ND}}^a_{0,2r}/\sim}  g_{d}(\hat u_1,\ldots, \hat u_a) d=0 \ \ \text{  if and only if  $g_d(\hat u_1,\ldots, \hat u_a)=0$,}
 \end{equation}
where  $ g_d(\hat u_1,\ldots, \hat u_a)\in \mathcal Z$.
In order to prove the claim, we need to  specialize $\hat u_j$'s  at some scalars $u_j$'s in $\mathbb C$.
If $a=2k$, we assume $i=1$. If $a=2k-1$, we assume $i=2$.
 Define
$$\mathbf u_1= (u_1,\ldots, u_{2k}) \text { and } \mathbf u_2=(u_1,u_2,\ldots, u_k,u_{k+2}, u_{k+3},\ldots,u_{2k}), $$
  where $u_j$'s are  given in \eqref{uij1} such that   $n=\sum_{j=1}^k q_j$  and     $\mfg=\mathfrak{so}_{2n}$.
   If $a=2k$  and $(\hat u_1,\hat u_2,\ldots, \hat u_a)$ is specialized  at $\mathbf u_1$, then $\hat f(t)$ is specialized to $f_1(t)$ in Definition~\ref{polf12}(a).
If $a=2k-1$ and  $(\hat u_1,\hat u_2,\ldots, \hat u_a)$ is specialized  at $\mathbf u_2$, then $\hat f(t)$ is specialized to $f_2(t)$ in Definition~\ref{polf12}(a).
 Since $\tilde \omega_j$'s are uniquely  determined by $\hat u_j$'s if it is $\hat {\mathbf u}$-admissible (see \eqref{wdeterminedbyu}), $\tilde \omega$ is  specialized to $\omega$ in Lemma~\ref{ghom123} for $\mathfrak g=\mathfrak{so}_{2n}$ and $i=1,2$.

Denote by $\mathcal C_\kappa$  the category $\mathcal C$ over $\kappa$. Considering $\mathbb C$ as the $\mathcal Z$-module such that $(\hat u_1,\hat u_2,\ldots, \hat u_a)$ acts on $\mathbb C$ via   $\mathbf u_i$, we have  a $ \mathcal Z$-linear monoidal functor $\mathcal F_i: \AB_{\mathcal Z}\rightarrow \AB_{\mathbb C}$ which sends the generators to generators with the same names.
Let   $I_{\mathbb C}^i$  be the right tensor ideal of   $\AB_{\mathbb C}$ generated by
 $f_i(X)$, $\Delta_j-\omega_j$, $j\in\mathbb N$, where  $f_i(t)$ is given in Definition~\ref{polf12}(a) and $\omega_j$'s are given in Lemma~\ref{ghom123} for $\mfg=\mathfrak{so}_{2n}$.
Then $\mathcal F_i(\hat f(X))=f_i(X)$ and $\mathcal F_i(\Delta_j-\tilde \omega_j)= \Delta_j-\omega_j$, $j\in\mathbb N$.
So, $\mathcal F_i$ induces a $\mathcal Z$-linear functor $\tilde {\mathcal F}_i: \CB^{\hat f}(\tilde\omega)\rightarrow \CB^{  f_i}( \omega)$, where $\CB^{  f_i}( \omega)=\AB_{\mathbb C}/I_{\mathbb C}^i$.

Suppose that  $\sum_{d\in \bar{\mathbb {ND}}^a_{0,2r}/\sim}  g_{d}(\hat u_1,\ldots, \hat u_a) d=0$ in $\CB^{\hat f}(\tilde\omega)$. Then
$$\sum_{d\in \bar{\mathbb {ND}}^a_{0,2r}/\sim}  \tilde {\mathcal F}_i (g_{d}(\hat {\mathbf u}) d)=\sum_{d\in \bar{\mathbb {ND}}^a_{0,2r}/\sim}  g_{d}( \mathbf u_i) d=0.$$
Therefore,
$$\Psi_{M^{\mathfrak p_{I_i}}(\lambda_{I_i, \mathbf c})}\left( \sum_{d\in \bar{\mathbb {ND}}^a_{0,2r}/\sim}  g_{d}( \mathbf u_i) d\right)(m_i)=0, \quad i=1,2,$$
where $\Psi_{M^{\mathfrak p_{I_i}}(\lambda_{I_i, \mathbf c})}$ is given in Theorem~\ref{fcycy}.
Suppose  that  $2r\leq \min\{q_j\mid 1\leq j\leq k\}$. By Proposition~\ref{keytheorem},
 we have
 \begin{equation}\label{equalyozr}
 g_{d}( \mathbf u_i)=0, \text{ for all $d\in\bar{\mathbb {ND}}^a_{0,2r}/\sim$}.
 \end{equation}
 Thanks to \eqref{uij1},
 \begin{equation}\label{ujkshd}
 u_j=\left\{
       \begin{array}{ll}
         c_j+\sum_{j\leq l\leq k}q_l-1/2, & \hbox{$1\leq j\leq k$;} \\
         -c_{2k-j+1}-\sum_{k+2\leq l\leq j}q_{2k-l+2} +1/2, & \hbox{$k+1\leq j\leq 2k$.}
       \end{array}
     \right.
 \end{equation}
So,
  we can view $ u_j$'s as polynomials of $c_l$'s and $q_j$'s, where $1\leq j\leq k$ and  $ 1\leq l\leq k$ (resp., $1\leq l\leq k-1 $) if $a=2k$ (resp., $a=2k-1$).
If $a=2k$, define $\phi: \mathbb C^a\rightarrow \mathbb C^a$ such that
 $$\phi(q_1, \ldots,q_k, c_1,\ldots, c_k)=(u_1,\ldots,u_k,u_{2k}, \ldots, u_{k+1}).$$
If $a=2k-1$, define $\psi: \mathbb C^a\rightarrow \mathbb C^a$ such that  $$\psi(q_1, \ldots,q_k, c_1,\ldots, c_{k-1})=(u_1,\ldots,u_k,u_{2k}, \ldots, u_{k+2}).$$
Thanks to \eqref{ujkshd},  the Jacobi matrix of $\phi$ is
$$J_\phi=\left(
  \begin{array}{cc}
    U_{k} & I_k \\
    -U_k+I_k & -I_k \\
  \end{array}
\right)$$
where $I_k$ is the identity matrix of rank $k$ and $U_k$ is the upper-triangular matrix such that the $(l,j)$th entry is $1$ for $l\leq j$. The  Jacobi matrix $J_\psi$ is obtained from $J_\phi$ by deleting the last column and last row.
It is routine to check that   $\text{det}J_\phi=(-1)^{k}$ and $\text{det}J_\psi=(-1)^{k-1}$. So, both $\phi$ and $\psi$  are dominant.
 Define
   $$\begin{aligned} & U_1=\{(q_1,q_2,\ldots, q_k,c_1,c_2,\ldots,c_k)\mid q_j\in \mathbb Z \text{ and }q_j\geq 2r, 1\leq j\leq k\},\\
   & U_2=\{(q_1,q_2,\ldots, q_k,c_1,c_2,\ldots,c_{k-1})\mid q_j\in \mathbb Z \text{ and }q_j\geq 2r, 1\leq j\leq k\}.\\
   \end{aligned} $$
 Then  $U_i$      is Zariski dense in $\mathbb C^a$, $i=1,2$.
 Since $\phi$ and $\psi$  are dominant, $\phi(U_1)$ and $\psi(U_2)$ are Zariski dense in $\mathbb C^a$.
This observation together with \eqref{equalyozr} yield  the claim in \eqref{cliamhdg}.
 Finally,  the result follows from the claim and Proposition~\ref{regularspan}(c).

\end{proof}
\begin{Cor}\label{basiskf}
Let $\kappa$ be a commutative ring containing the multiplicative identity $1$ and invertible element $2$.  If $f(t)$ is  the polynomial  in Definition~\ref{COBC defn} and $\omega$ is $\mathbf u$-admissible in the sense of \eqref{admc}, then
  $\Hom_{\CB^f(\omega)}(\ob m,\ob {s})$ has  $\kappa$-basis given by  $\bar{\mathbb {ND}}^a_{m,s}/\sim$,
for all   $m,s \in \mathbb N$.
\end{Cor}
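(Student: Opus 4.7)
The plan is to reduce to the case $(m,s)=(0,2r)$ treated by Theorem~\ref{calz}, and then perform a base change from the universal ring $\mathcal Z=\mathbb Z_{(2)}[\hat u_1,\dots,\hat u_a]$ to $\kappa$. If $m+s$ is odd there is nothing to prove, since $\bar{\mathbb{ND}}^a_{m,s}=\emptyset$ and $\Hom_{\CB^f(\omega)}(\ob m,\ob s)=0$. So assume $m+s=2r$ for some $r\in\mathbb N$. By Proposition~\ref{regularspan}(c) the set $\bar{\mathbb{ND}}^a_{m,s}/\sim$ already spans $\Hom_{\CB^f(\omega)}(\ob m,\ob s)$, so the only thing to prove is linear independence.

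First I would establish the case $(m,s)=(0,2r)$. Since $2$ is invertible in $\kappa$, there is a unique $\mathbb Z_{(2)}$-algebra structure on $\kappa$, and hence a $\mathcal Z$-algebra structure on $\kappa$ obtained by sending $\hat u_j\mapsto u_j$. Because $\omega$ is $\mathbf u$-admissible, the scalars $\omega_j$ are determined by $\mathbf u$ through the same universal formula \eqref{wdeterminedbyu} that determines $\tilde\omega_j$ from $\hat{\mathbf u}$; consequently the specialization map $\mathcal Z\to\kappa$ sends $\tilde\omega_j\mapsto\omega_j$. This gives a $\kappa$-linear monoidal functor $\kappa\otimes_{\mathcal Z}\CB^{\hat f}(\tilde\omega)\to\CB^f(\omega)$ that sends each generator to the generator of the same name, and on morphism spaces it sends the universal spanning set $\bar{\mathbb{ND}}^a_{0,2r}/\sim$ bijectively to the corresponding spanning set in $\CB^f(\omega)$. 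By Theorem~\ref{calz} the $\mathcal Z$-module $\Hom_{\CB^{\hat f}(\tilde\omega)}(\ob 0,\ob{2r})$ is free on $\bar{\mathbb{ND}}^a_{0,2r}/\sim$, so after tensoring with $\kappa$ the set $\bar{\mathbb{ND}}^a_{0,2r}/\sim$ remains $\kappa$-linearly independent in $\kappa\otimes_{\mathcal Z}\Hom_{\CB^{\hat f}(\tilde\omega)}(\ob 0,\ob{2r})$; combined with the spanning result from Proposition~\ref{regularspan}(c) on the $\kappa$-side this gives the claim for $(0,2r)$.

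Next, for arbitrary $m,s$ with $m+s=2r$, I would transfer the basis via the map $\bar\eta_{\ob m}$ of Definition~\ref{etae}. By Lemma~\ref{key1}(1) (applied with $\mathcal C=\CB^f(\omega)$), $\bar\eta_{\ob m}$ and $\bar\varepsilon_{\ob m}$ are mutually inverse $\kappa$-isomorphisms between $\Hom_{\CB^f(\omega)}(\ob m,\ob s)$ and $\Hom_{\CB^f(\omega)}(\ob 0,\ob{2r})$. Since $\eta_{\ob m}$ (see \eqref{usuflelem}) consists entirely of dot- and bubble-free cups, $\bar\eta_{\ob m}$ sends elements of $\bar{\mathbb{ND}}^a_{m,s}$ to elements of $\mathbb D_{0,2r}$ in which the number of $\bullet$'s on each strand is unchanged, hence bounded by $a-1$; after straightening using the movements of \eqref{movements} (as in the proof of Theorem~\ref{Cyclotomic basis conjecture}) we obtain a representative in $\bar{\mathbb{ND}}^a_{0,2r}$. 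Lemma~\ref{eaquvivalents}, restricted to $\bar{\mathbb{ND}}^a$, then yields a bijection $\bar{\mathbb{ND}}^a_{m,s}/\sim\;\leftrightarrow\;\bar{\mathbb{ND}}^a_{0,2r}/\sim$ compatible with $\bar\eta_{\ob m}$ up to an invertible sign, so linear independence on the $(0,2r)$-side transfers to linear independence on the $(m,s)$-side.

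The potentially delicate step is the base-change step: one must check that the natural functor $\kappa\otimes_{\mathcal Z}\CB^{\hat f}(\tilde\omega)\to\CB^f(\omega)$ is well-defined, i.e.\ that all defining relations of $\CB^f(\omega)$ lift to $\mathcal Z$-relations in $\CB^{\hat f}(\tilde\omega)$ under the specialization $\hat u_j\mapsto u_j$. This is straightforward because the relations of $\AB$ are independent of parameters, while the cyclotomic relation $\hat f(X)=0$ and the bubble relations $\Delta_k=\tilde\omega_k$ specialize by construction to $f(X)=0$ and $\Delta_k=\omega_k$; the $\mathbf u$-admissibility assumption on $\omega$ is exactly what is needed to ensure that the universal admissibility of $\tilde\omega$ specializes consistently. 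Once this is verified, the rest of the argument is formal.
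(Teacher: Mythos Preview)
Your overall strategy matches the paper's: reduce to $(0,2r)$ via the isomorphism of Lemma~\ref{key1}, and handle $(0,2r)$ by base-changing Theorem~\ref{calz} from $\mathcal Z$ to $\kappa$. However, you have set up the base-change functor in the wrong direction for the argument you want. You construct $\kappa\otimes_{\mathcal Z}\CB^{\hat f}(\tilde\omega)\to\CB^f(\omega)$, establish that $\bar{\mathbb{ND}}^a_{0,2r}/\sim$ is a basis on the source side, and that it spans on the target side. But knowing a set is a basis in the domain and spans in the codomain does \emph{not} give a basis in the codomain (think of $\mathbb Z\to\mathbb Z/2$). What you need is the opposite direction: the paper builds a functor $F:\CB^f(\omega)\to\mathcal C:=\kappa\otimes_{\mathcal Z}\CB^{\hat f}(\tilde\omega)$ by checking that the defining relations $f(X)=0$ and $\Delta_j=\omega_j$ hold in $\mathcal C$, and then the spanning set in $\CB^f(\omega)$ is sent to a basis in $\mathcal C$, hence is linearly independent. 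Your last paragraph actually describes the check needed for this reverse direction (``relations of $\CB^f(\omega)$ lift to $\mathcal Z$-relations''), so you have the ingredients; just reverse the arrow.

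For general $(m,s)$, your straightening argument via $\bar\eta_{\ob m}$ and the movements of \eqref{movements} would work (it is the filtered triangularity argument from the proof of Theorem~\ref{Cyclotomic basis conjecture}), but note that $\bar\eta_{\ob m}(d)$ equals $\pm d'$ only modulo lower-degree terms, not ``up to an invertible sign'' on the nose. The paper bypasses all of this: since $\Hom_{\CB^f(\omega)}(\ob m,\ob s)\cong\Hom_{\CB^f(\omega)}(\ob 0,\ob{2r})$ is free of rank $a^r(2r-1)!!$ by the $(0,2r)$ case, and $\bar{\mathbb{ND}}^a_{m,s}/\sim$ is a spanning set of that same cardinality (Proposition~\ref{regularspan}(c) and the count preceding Proposition~\ref{keytheorem}), it is automatically a basis over a commutative ring.
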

\begin{proof} If $m+s $  is odd, then $\bar{\mathbb {ND}}^a_{m,s}=\emptyset $  and  $\Hom_{\CB^f(\omega)}(\ob m,\ob {s})=0$.
  Suppose that $m+s=2r$ for some $r\in\mathbb N$. Recall $\mathcal Z$ and
$\CB^{\hat f}(\tilde \omega)$ in Theorem~\ref{calz}.
  Define  $\mathcal C:= \kappa\otimes _{\mathcal Z}\CB^{\hat f}(\tilde \omega)$   such that $\hat u_j$ acts on $\kappa$ as $u_j$ for $1\leq j\leq a$.
  By Theorem~\ref{calz} and standard arguments on the base change property, $\Hom_{\mathcal C}(\ob 0,\ob {2r})$ has  $\kappa$-basis given by  $\bar{\mathbb {ND}}^a_{0,2r}/\sim$.
There is an obvious $\kappa$-linear functor $F: \AB\rightarrow \mathcal C$,  which sends  elements in  ${\mathbb {D}}_{m,s}$ to elements in  $\mathbb {D}_{m,s}$ with the same names for all $m, s\in \mathbb N$.
Then $F(f(X))=0$.   By \eqref{wdeterminedbyu}, $\tilde \omega_j$'s are uniquely  determined by $\hat u_j$'s if it is $\hat {\mathbf u}$-admissible. So, $\tilde \omega_j$ acts on $\kappa$ as $\omega_j$  and  $F(\Delta_j-\omega_j)=0$, $j\in\mathbb N$. Moreover,  $F(J(\ob m,\ob s))=0$ for all $\ob m,\ob s$, where $J$ is the right tensor ideal
of $\AB$
generated by $f(X)$ and $\Delta_j-\omega_j$, $j\in\mathbb N$. Therefore, $  F$ factors through $ \CB^f(\omega)$. It   sends the required basis of   $\Hom_{\CB^f(\omega)}(\ob 0,\ob {2r})$ to the corresponding basis of
 $\Hom_{\mathcal C}(\ob 0,\ob {2r})$.
 Then the result for $\Hom_{\CB^f(\omega)}(\ob 0,\ob {2r})$ follows from the result for $\Hom_{\mathcal C}(\ob 0,\ob {2r})$.


Suppose  $m>0$.  Thanks to   Lemma~\ref{key1}, there is a $\kappa$-module isomorphism
  \begin{equation}\label{isommnor}
   \Hom_{\CB^{f}(\omega)}(\ob m,\ob s) \cong \Hom_{\CB^{f}(\omega)}(\ob 0,\ob {2r}).
  \end{equation}
 Now, the result follows from  Proposition~\ref{regularspan}(c), immediately.   \end{proof}

\begin{proof}[\textbf{Proof of Theorem~\ref{basis of cyc thm2}}]

``$\Leftarrow$": Thanks to Corollary~\ref{basiskf}, the result is true.

``$\Rightarrow$": In particular, $ \End_{\CB^f(\omega)}(\ob r)$ has a $\kappa$-basis given by $\bar{\mathbb {ND}}^a_{r,r}/\sim$.
Since $\CB^f(\omega)$ is the quotient category of $\AB$, we have the quotient functor  from  $\AB$ to $ \CB^f(\omega)$.
 Thus, we have an epimorphism $\pi:\End_{\AB}(\ob r)\rightarrow  \End_{\CB^f(\omega)}(\ob r)$. Moreover,
 \begin{equation}\label{piepim}
 \pi(f(x_1))=f(X)\otimes 1_{\ob{r-1}}=0,  \pi(\Delta_j1_{\ob r})=\omega_j1_{\ob r}  \text{ for $j\in\mathbb N$,}
 \end{equation}
 where $x_1$ is given in \eqref{defofxy}.
 Recall the affine Nazarov-Wenzl algebra $\mathcal W_r^{\rm aff}$ in Definition~\ref{definition of Na} and the epimorphism   $\gamma: \mathcal W_r^{\rm aff}\rightarrow \End_{\AB}(\ob r)$  in the proof of  Theorem~\ref{affiso}. So,
 \begin{equation}\label{gammapedjh}
 \gamma(f(\ob x_1))=f(x_1), \gamma(\hat\omega_j)=\Delta_j1_{\ob r}, \text{ for $j\in\mathbb N$,}
  \end{equation}
  where $\ob x_1$ is the generator of $\mathcal W_r^{\rm aff}$ in Definition~\ref{definition of Na}.
  Let  $ \gamma_1:=\pi\circ\gamma:  \mathcal W_r^{\rm aff} \rightarrow \End_{\CB^f(\omega)}(\ob r)$. Then  $  \gamma_1$ is also surjective.
 By \eqref{piepim}--\eqref{gammapedjh},
 \begin{equation}\label{ssssff}
 \gamma_1(f(\ob x_1))=0, \gamma_1(\hat\omega_j)=\omega_j1_{\ob r}, ~j\in\mathbb N.
 \end{equation}
Recall the algebra $\mathcal W_{r}^{\rm aff }(\omega)$   is  obtained from  $\mathcal W_r^{\rm aff}$ by specializing the central elements  $\hat\omega_j$  at $\omega_j$ in $\kappa$
 for all admissible $j$.
  Let $J=\langle f(\ob x_1)\rangle$ be the two sided ideal of $\mathcal W_{r}^{\rm aff }(\omega) $ generated by $f(\ob x_1)=\prod_{i=1}^a (\ob x_1-u_i)$.
 By \eqref{ssssff},   $ \gamma_1$   factors through  the cyclotomic Nazarov-Wenzl algebra $\mathcal  W_{a, r}(\omega)$, where
  \begin{equation} \label{liewall} \mathcal  W_{a, r}(\omega)=\mathcal W_{r}^{\rm aff }(\omega)/J.\end{equation}
We   denote the induced epimorphism by $\bar\gamma: \mathcal  W_{a, r}(\omega) \rightarrow\End_{\CB^f(\omega)}(\ob r)$.

  Suppose  $d\in\bar{\mathbb {ND}}^a_{r,r}/\sim$. By \eqref{degdregu},
  \begin{equation}\label{cycregular}
  d=  x_{r}^{\alpha(d)_{r}}\circ\ldots\circ x_1^{\alpha(d)_1} \circ\hat d\circ x_{r}^{\beta(d)_{r}}\circ\ldots\circ x_1^{\beta(d)_1},
  \end{equation}
  where $0\leq \alpha(d)_j, \beta(d)_h\leq a-1$ and   $\alpha(d)_j=0$ (resp., $\beta(d)_h=0$) if the $j$th (resp., $h$th) endpoint at the top (resp., bottom) row  is the left  endpoint of a cup (resp., the right endpoint of a cap or the endpoint on a vertical strand).
Similar to \eqref{regulainw},  if we use $\ob x_j$ (resp., the multiplication of $\mathcal W_{a,r}(\omega)$) to replace $x_j$ (resp., each $\circ$) in \eqref{cycregular}, $1\le j\le r$,
we obtain an element, say $\tilde d\in \mathcal W_{a, r}(\omega)$.
  Such an element is called a  regular  monomial in $\mathcal W_{a, r}(\omega)$ (see, e.g., \cite[Definition~2.14]{AMR}).
   Further, $\bar \gamma(\tilde d)=d$.
   This shows   that the set of all  regular monomials $\tilde d$ is $\kappa$-linear independent in  $\mathcal W_{a, r}(\omega)$. By \cite[Theorem~5.5]{AMR},  the number of all such regular monomials    is $a^r(2r-1)!!$ . In \cite{G09},  Goodman proved that
$\mathcal  W_{a, r}(\omega)$ is always  free over $\kappa$. However,  its   rank is less than or equal to  $a^r(2r-1)!!$. Thanks to  \cite[Theorem~5.2]{G09}, the equality holds if and only if $\omega$  is  $\mathbf u$-admissible.  This completes the proof.
\end{proof}

In the proof   Theorem~\ref{basis of cyc thm2}, we have the epimorphism  $\bar \gamma :\mathcal  W_{a, r}(\omega)\rightarrow\End_{\CB^f(\omega)}(\ob r)$.
By  \cite[Theorem~5.5]{AMR},  $\mathcal  W_{a, r}(\omega)$ is  free over $\kappa$ with rank $(2r-1)!! a^r$ if $\omega$ is $\mathbf u$-admissible in the sense of \eqref{admc}.
So, \begin{equation} \label{catec}\bar \gamma:\mathcal  W_{a, r}(\omega)\rightarrow   \End_{\CB^f(\omega)}(\ob r) \end{equation} is actually an isomorphism if $\omega$ is $\mathbf u$-admissible.
\section{Decomposition numbers of cyclotomic Nazarov-Wenzl algebras}
 The aim of this section is  to  establish isomorphisms  between   cyclotomic Nazarov-Wenzl algebras and certain endomorphism algebras in the BGG parabolic categories  $\mathcal O$
associated to  orthogonal and symplectic Lie algebras.
Using standard arguments in \cite{AST, RS1, RS2}, we obtain formulae to compute  decomposition numbers of  (level $a$) cyclotomic Nazarov-Wenzl algebras.
Throughout, we fix $\kappa=\mathbb C$,   $i\in\{1,2\}$  and $\mfg\in\{ \mathfrak {so}_N, \mathfrak {sp}_{2n} \}$. Let
$$M_{I_i,r}:= M^{\mathfrak p_{I_i}}(\lambda_{I_i, \mathbf c})\otimes V^{\otimes r} \text{ and $ \mathcal W_{a, r}(\omega)=\mathcal W_{r}^{\rm aff} (\omega)/J$,}$$
where
\begin{itemize}
 \item  $a=\text{deg}f_i(t)$, and $f_i(t)$ is given  in Definition~\ref{polf12},
 \item $J$ is the two sided ideal generated by $f_i(\ob x_1)$,
\item $\omega$ is given  in Lemma~\ref{ghom123}.
   \end{itemize}

Recall the functor $\Psi_{M^{\mathfrak p_{I_i}}(\lambda_{I_i, \mathbf c})}:\AB\to \mathcal O^{\mathfrak p_{I_i}}$ in  \eqref{fucotssjd}. By Theorem~\ref{fcycy}, it factors through  $\CB^{f_i}(\omega)$.

\begin{Defn}\label{tileta}
 For any $m,s,r\in\mathbb N$ such that $m+s=2r$ and $m>0$, define
 two  $\mathbb C$-linear maps
\begin{enumerate}\item
 $\tilde\eta_{\ob m}=(-\otimes \text{Id}_{V^{\otimes  m}}) \circ \Psi_{M^{\mathfrak p_{I_i}}(\lambda_{I_i, \mathbf c})}(\eta_{\ob m}): \Hom_{\mathcal O}( M_{I_i, m}, M_{I_i, s})\rightarrow\Hom_{\mathcal O}(M_{I_i, 0}, M_{I_i, 2r} )$,
 \item $\tilde\varepsilon _{\ob m}= (\text{Id}_{M_{I_i, s} }\otimes \Phi(\varepsilon _{\ob m})) \circ(-\otimes \text{Id}_{V^{\otimes  m}}): \Hom_{\mathcal O}(M_{I_i, 0},M_{I_i, 2r} )\rightarrow\Hom_{\mathcal O}(M_{I_i, m}, M_{I_i,  s}) $,\end{enumerate}
where $\eta_{\ob m}$, $\varepsilon _{\ob m}$ are given in \eqref{usuflelem} and $\Phi$ is given in Proposition~\ref{level1s}.\end{Defn}

\begin{Lemma}\label{conmuf} For any $m,s,r\in\mathbb N$ such that $m+s=2r$ and $m>0$,
$\tilde\eta_{\ob m}$ and $\tilde\varepsilon _{\ob m}$ are mutually inverse to each other. Moreover, the following diagrams are commutative:
\begin{equation}\label{conmuf1}
\begin{aligned}
\Hom_{\CB^{f_i}(\omega)}(\ob 0, \ob {2r})& \overset{\Psi_{M^{\mathfrak p_{I_i}}(\lambda_{I_i, \mathbf c})}}\longrightarrow \Hom_{\mathcal O}( M_{I_i, 0},M_{I_i, 2r})\\
\bar \varepsilon _{\ob m}\downarrow\quad\quad & \quad\quad\quad\quad\quad \quad\downarrow \tilde\varepsilon _{\ob m}\\
\Hom_{\CB^{f_i}(\omega)}(\ob m, \ob {s})& \overset{\Psi_{M^{\mathfrak p_{I_i}}(\lambda_{I_i, \mathbf c})}}\longrightarrow \Hom_{\mathcal O}( M_{I_i, m},M_{I_i, s}),\\
\end{aligned}
\end{equation}
\begin{equation}\label{conmuf2}
\begin{aligned}
\Hom_{\CB^{f_i}(\omega)}(\ob 0, \ob {2r})& \overset{\Psi_{M^{\mathfrak p_{I_i}}(\lambda_{I_i, \mathbf c})}}\longrightarrow \Hom_{\mathcal O}( M_{I_i, 0},M_{I_i, 2r})\\
\bar \eta_{\ob m}\uparrow\quad& \quad\quad \quad\quad\quad\quad\uparrow \tilde\eta_{\ob m}\\
\Hom_{\CB^{f_i}(\omega)}(\ob m, \ob {s})& \overset{\Psi_{M^{\mathfrak p_{I_i}}(\lambda_{I_i, \mathbf c})}}\longrightarrow  \Hom_{\mathcal O}( M_{I_i, m},M_{I_i, s}),\\
\end{aligned}
\end{equation}
where $ \bar \varepsilon _{\ob m}$ and  $\bar\eta_{\ob m}$ are given  in Definition~ \ref{etae}.
\end{Lemma}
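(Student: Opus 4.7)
The strategy is to transfer the arguments of Lemma~\ref{eviso} and Lemma~\ref{key1} from $\AB$ to the category $\mathcal O^{\mathfrak p_{I_i}}$ via the monoidal functor $\Psi_{M^{\mathfrak p_{I_i}}(\lambda_{I_i, \mathbf c})}$. The two commutative squares will be addressed first, and the fact that $\tilde\eta_{\ob m}$ and $\tilde\varepsilon_{\ob m}$ are mutually inverse will then follow essentially formally.

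The first step is to establish the identities $\Psi_M(\eta_{\ob m})=\operatorname{Id}_M\otimes\Phi(\eta_{\ob m})$ and $\Psi_M(\varepsilon_{\ob m})=\operatorname{Id}_M\otimes\Phi(\varepsilon_{\ob m})$ (where $M=M^{\mathfrak p_{I_i}}(\lambda_{I_i,\mathbf c})$), obtained by iterating \eqref{ddnamphis} using that $\eta_{\ob m}$ and $\varepsilon_{\ob m}$ are built only from $U$ and $A$ (cf.\ \eqref{usuflelem}) and that $\Psi(U)_M,\Psi(A)_M$ act only on the $V$-tensor factors. Granting this, the commutativity of \eqref{conmuf1} reduces to a direct computation: for $f\in\Hom_{\CB^{f_i}(\omega)}(\ob 0,\ob{2r})$, the functoriality of $\Psi_M$ together with \eqref{ddnamphis} gives
\[
\Psi_M(\bar\varepsilon_{\ob m}(f))=\Psi_M(1_{\ob s}\otimes\varepsilon_{\ob m})\circ\Psi_M(f\otimes 1_{\ob m})=(\operatorname{Id}_{M\otimes V^{\otimes s}}\otimes\Phi(\varepsilon_{\ob m}))\circ(\Psi_M(f)\otimes\operatorname{Id}_{V^{\otimes m}}),
\]
which is exactly $\tilde\varepsilon_{\ob m}(\Psi_M(f))$. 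Commutativity of \eqref{conmuf2} is completely analogous, using $\bar\eta_{\ob m}(g)=(g\otimes 1_{\ob m})\circ\eta_{\ob m}$.

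To verify that $\tilde\eta_{\ob m}$ and $\tilde\varepsilon_{\ob m}$ are mutually inverse, I would mimic the proof of Lemma~\ref{key1} inside $\mathcal O^{\mathfrak p_{I_i}}$. For any $b\in\Hom_{\mathcal O}(M_{I_i,m},M_{I_i,s})$, the interchange law in the monoidal category $\U(\mathfrak g)$-mod yields
\[
\tilde\varepsilon_{\ob m}\bigl(\tilde\eta_{\ob m}(b)\bigr)=(\operatorname{Id}_{M\otimes V^{\otimes s}}\otimes\Phi(\varepsilon_{\ob m}))\circ(b\otimes\operatorname{Id}_{V^{\otimes 2m}})\circ(\Psi_M(\eta_{\ob m})\otimes\operatorname{Id}_{V^{\otimes m}}),
\]
and repeated application of interchange lets us absorb $b$ out of the middle, leaving
\[
b\circ\bigl[(\operatorname{Id}_{M\otimes V^{\otimes m}}\otimes\Phi(\varepsilon_{\ob m}))\circ(\Psi_M(\eta_{\ob m})\otimes\operatorname{Id}_{V^{\otimes m}})\bigr].
\]
Applying the monoidal functor $\Phi$ (which is even symmetric rigid monoidal, cf.\ Proposition~\ref{level1s} and Lemma~\ref{eviso}) to the zigzag identity of Lemma~\ref{eviso} shows that the bracket equals $\operatorname{Id}_{M\otimes V^{\otimes m}}$, so $\tilde\varepsilon_{\ob m}\circ\tilde\eta_{\ob m}=\operatorname{id}$. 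The reverse composition is handled identically using the second zigzag identity.

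The only technical point to be careful about is keeping track of how the tensor factors are associated in $M\otimes V^{\otimes k}$ when forming $b\otimes\operatorname{Id}_{V^{\otimes 2m}}$ and $\Psi_M(\eta_{\ob m})\otimes\operatorname{Id}_{V^{\otimes m}}$, but since both $\AB$ and $\U(\mathfrak g)$-mod are treated as strict monoidal categories (see the footnote preceding Proposition~\ref{level1s}), no coherence isomorphisms intervene and the calculation is a verbatim translation of the proof of Lemma~\ref{key1}. I do not foresee any genuine obstacle.
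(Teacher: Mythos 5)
Your proposal is correct and follows essentially the same route as the paper: establish $\Psi_{M}(\eta_{\ob m})=\operatorname{Id}_{M}\otimes\Phi(\eta_{\ob m})$ and $\Psi_{M}(1_{\ob s}\otimes\varepsilon_{\ob m})=\operatorname{Id}_{M_{I_i,s}}\otimes\Phi(\varepsilon_{\ob m})$ from \eqref{ddnamphis} and Theorem~\ref{affaction}, verify the squares by the direct computation you display, and get the mutual-inverse property from the interchange law plus the zigzag identity of Lemma~\ref{eviso} transported by $\Phi$. The only differences are cosmetic (order of the two claims, and which of the two compositions is written out explicitly).
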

\begin{proof}
By Theorem~\ref{affaction} and \eqref{ddnamphis},
\begin{equation}\label{egama}
\Psi_{M^{\mathfrak p_{I_i}}(\lambda_{I_i, \mathbf c})}(\eta_{\ob m})=\text{Id}_{M^{\mathfrak p_{I_i}}(\lambda_{I_i, \mathbf c})}\otimes \Phi(\eta_{\ob m}),\ \ \Psi_{M^{\mathfrak p_{I_i}}(\lambda_{I_i, \mathbf c})}(1_{\ob s}\otimes \varepsilon _{\ob m})=\text{Id}_{M_{I_i,s}}\otimes  \Phi(\varepsilon _{\ob m}).
\end{equation}
For any $b\in \Hom_{\mathcal O}( M_{I_i, 0},M_{I_i, 2r})$,
$$\begin{aligned}
\tilde\eta_{\ob m}\circ \tilde\varepsilon _{\ob m}(b)&=(\text{Id}_{M_{I_i, s} }\otimes \Phi(\varepsilon _{\ob m})\otimes \text{Id}_{V^{\otimes  m}}) \circ(b\otimes \text{Id}_{V^{\otimes  2m}})\circ (\text{Id}_{M^{\mathfrak p_{I_i}}(\lambda_{I_i, \mathbf c})}\otimes \Phi(\eta_{\ob m})),~ \text{by \eqref{egama}}\\
&=(\text{Id}_{M_{I_i, s} }\otimes \Phi(\varepsilon _{\ob m})\otimes \text{Id}_{V^{\otimes  m}})\circ (\text{Id}_{M_{I_i,2r}}\otimes \Phi(\eta_{\ob m}))\circ b\\
&=\left(\text{Id}_{M_{I_i, s} }\otimes\left(( \Phi(\varepsilon _{\ob m})\otimes \text{Id}_{V^{\otimes  m}})\circ (\text{Id}_{V^{\otimes  m}}\otimes \Phi(\eta_{\ob m}))\right)\right)\circ b\\
&= (\text{Id}_{M_{I_i, s}}\otimes \Phi(1_{\ob m}))\circ b, ~ \text{ by Lemma~\ref{eviso}}\\
&=\text{Id}_{M_{I_i, 2r}}\circ b=b.\\
\end{aligned}
$$
Similarly, we have $\tilde\varepsilon _{\ob m}\circ\tilde\eta_{\ob m}(b) =b$ for any $b\in \Hom_{\mathcal O}( M_{I_i, m},M_{I_i, s})$. So,
 $\tilde\eta_{\ob m}$ and $\tilde\varepsilon _{\ob m}$ are mutually inverse to each other.
 For any $d\in \Hom_{\CB^{f_i}(\omega)}(\ob 0, \ob {2r})$,
 we have
 $$\begin{aligned}\Psi_{M^{\mathfrak p_{I_i}}(\lambda_{I_i, \mathbf c})}(\bar \varepsilon _{\ob m}(d))=&\Psi_{M^{\mathfrak p_{I_i}}(\lambda_{I_i, \mathbf c})}(1_{\ob s}\otimes \varepsilon _{\ob m})\circ \Psi_{M^{\mathfrak p_{I_i}}(\lambda_{I_i, \mathbf c})}(d\otimes 1_{\ob m}), \quad \text{ by Definition~\ref{etae},}\\
  =&\Psi_{M^{\mathfrak p_{I_i}}(\lambda_{I_i, \mathbf c})}(1_{\ob s}\otimes \varepsilon _{\ob m})\circ(\Psi_{M^{\mathfrak p_{I_i}}(\lambda_{I_i, \mathbf c})} (d)\otimes \text{Id}_{V^{\otimes m}}), \quad \text{ by  \eqref{ddnamphis},}\\
 =&( \text{Id}_{M_{I_i, s}}\otimes \Phi( \varepsilon _{\ob m}))\circ (\Psi_{M^{\mathfrak p_{I_i}}(\lambda_{I_i, \mathbf c})}(d)\otimes \text{Id}_{V^{\otimes m}}),\quad \text{by the second equation in \eqref{egama}},\\
 =&\tilde\varepsilon _{\ob m}( \Psi_{M^{\mathfrak p_{I_i}}(\lambda_{I_i, \mathbf c})}(d)).
 \end{aligned}$$
 Hence, the diagram in  \eqref{conmuf1} is commutative.  One can check that the diagram in \eqref{conmuf2} is commutative  in a similar way. In this case, we need the first equation in \eqref{egama}.
 \end{proof}

\begin{Cor}\label{injective} Suppose $2r\le \min\{q_1, q_2, \ldots, q_k\}$ and $\mathfrak g\neq \mathfrak {so}_{2n+1}$ if $i=1$.
\begin{enumerate}
\item The homomorphism $\Psi_{M^{\mathfrak p_{I_i}}(\lambda_{I_i, \mathbf c})}: \Hom_{\CB^{f_i}(\omega)}(\ob 0,\ob {2r}) \rightarrow \Hom_{\mathcal O}(M_{I_i, 0},M_{I_i, 2r} ) $ is injective.
    \item The homomorphism $\Psi_{M^{\mathfrak p_{I_i}}(\lambda_{I_i, \mathbf c})}: \End_{\CB^{f_i}(\omega)}(\ob r) \rightarrow \End_{\mathcal O}(M_{I_i, r}) $ is injective.
\end{enumerate}
\end{Cor}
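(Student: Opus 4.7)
The plan is to reduce both parts to the content of Proposition~\ref{keytheorem} and then transport part~(1) to part~(2) through the descent isomorphisms of Lemmas~\ref{key1} and~\ref{conmuf}. No new computation is needed beyond what has already been done; the corollary is essentially a clean packaging of existing results.

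For part~(1), I would argue as follows. By Proposition~\ref{keytheorem} the set $\bar{\mathbb {ND}}^a_{0,2r}/\sim$ is a $\mathbb C$-basis of $\Hom_{\CB^{f_i}(\omega)}(\ob 0,\ob{2r})$, so it suffices to show that whenever $\sum_d g_d d$ lies in the kernel of $\Psi_{M^{\mathfrak p_{I_i}}(\lambda_{I_i,\mathbf c})}$, all scalars $g_d\in\mathbb C$ vanish. Evaluate $\Psi_{M^{\mathfrak p_{I_i}}(\lambda_{I_i,\mathbf c})}(\sum_d g_d d)$ at the highest weight vector $m_i\in M_{I_i,0}$ and expand the result in the basis $\mathcal S_{i,2r}$ of $M_{I_i,2r}$ given by Lemma~\ref{tsmodule}. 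If some $g_d\neq 0$, choose $d_0$ maximal-degree among such $d$; then Lemma~\ref{L:coefficients technical lemma}(a)--(b) identifies the coefficient of $v(d_0)\in\mathcal S_{i,2r}$ as $\pm g_{d_0}\neq 0$, a contradiction. This is verbatim the argument already used in the proof of Proposition~\ref{keytheorem}, so no new work is required.

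For part~(2), I would use the commutative diagram~\eqref{conmuf1} of Lemma~\ref{conmuf} specialized at $m=s=r$:
\begin{equation*}
\begin{aligned}
\Hom_{\CB^{f_i}(\omega)}(\ob 0, \ob {2r}) & \overset{\Psi_{M^{\mathfrak p_{I_i}}(\lambda_{I_i, \mathbf c})}}\longrightarrow \Hom_{\mathcal O}(M_{I_i, 0}, M_{I_i, 2r}) \\
\bar\varepsilon_{\ob r} \downarrow \quad\quad & \quad\quad\quad\quad\quad\quad \downarrow \tilde\varepsilon_{\ob r} \\
\End_{\CB^{f_i}(\omega)}(\ob r) & \overset{\Psi_{M^{\mathfrak p_{I_i}}(\lambda_{I_i, \mathbf c})}}\longrightarrow \End_{\mathcal O}(M_{I_i, r}).
\end{aligned}
\end{equation*}
The left vertical arrow $\bar\varepsilon_{\ob r}$ is a $\mathbb C$-linear isomorphism by Lemma~\ref{key1}(1), and the right vertical arrow $\tilde\varepsilon_{\ob r}$ is a $\mathbb C$-linear isomorphism by Lemma~\ref{conmuf}. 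Hence injectivity of the top horizontal arrow, which has just been established in part~(1), propagates through this diagram to give injectivity of the bottom horizontal arrow, completing part~(2). The only substantive input is Lemma~\ref{L:coefficients technical lemma}, which in turn rests on the detailed analysis of the filtered action of $X$ carried out in Lemmas~\ref{polyofx}, \ref{polyofxodd}, \ref{xh1}, and Corollaries~\ref{keyfca}, \ref{usecorollary}; there is no additional obstacle to overcome in this corollary.
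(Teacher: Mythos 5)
Your proposal is correct and follows essentially the same route as the paper: part (1) is exactly the coefficient argument from the proof of Proposition~\ref{keytheorem} (via Lemma~\ref{L:coefficients technical lemma}), and part (2) is deduced from part (1) by transporting injectivity through the commutative square of Lemma~\ref{conmuf} with both vertical maps isomorphisms (Lemma~\ref{key1} and Lemma~\ref{conmuf}), which is precisely what the paper's terse ``(b) follows from (a) and Lemma~\ref{conmuf}'' means.
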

\begin{proof}In Proposition~\ref{keytheorem}, we have proven that $\Hom_{\CB^{f_i}(\omega)}(\ob 0,\ob {2r})$ has a basis given by $\bar{\mathbb {ND}}^a_{0,2r}/\sim$.
Suppose that
$\Psi_{M^{\mathfrak p_{I_i}}(\lambda_{I_i, \mathbf c})}(\sum_{d\in \bar{\mathbb {ND}}^a_{0,2r}/\sim}g_{d} d)=0$,
  where  $g_d$'s $\in \mathbb C$. In particular,
   \begin{equation}\label{miphiw}
   \Psi_{M^{\mathfrak p_{I_i}}(\lambda_{I_i, \mathbf c})}(\sum_{d\in \bar{\mathbb {ND}}^a_{0,2r}/\sim}g_{d} d)(m_i)=0.
   \end{equation}
   By  the proof of Proposition~\ref{keytheorem}, $g_d=0$ for all $d\in \bar{\mathbb {ND}}^a_{0,2r}/\sim$. So,  (a) is proven.
(b) follows from (a) and Lemma~\ref{conmuf}.
\end{proof}



A partition $\lambda=(\lambda_1,\lambda_2,\ldots)$    is a sequence  of non-negative integers such that $\lambda_1\geq \lambda_2\geq \ldots$.
Write $|\lambda|=\sum_{j}\lambda_j$.
 An $a$-multipartition of $m\in\mathbb N$ is a sequence of partitions $(\mu^{(1)},\mu^{(2)},\ldots,\mu^{(a)})$
such that  $\sum_{j=1}^a|\mu^{(j)}|=m$.
Define  $$\Lambda_{a,r}:=\{\mu \mid \mu \text{ is an $a$-multipartition of $r-2l$} , 0\leq l\leq \lfloor r/2 \rfloor\}.$$
Let   $\Lambda_{I_i,r}$ be the set of all
 $\mathfrak p_{I_i}$-dominant integral weights $\lambda$ such that $ M^{\mathfrak p_{I_i}}(\lambda)$   appears in some parabolic Verma flag  of $M_{I_i, r}$.

\begin{Theorem}\label{main123}
Suppose  $M^{\mathfrak p_{I_i}}(\lambda_{I_i, \mathbf c})$ is tilting and $\mathfrak g\neq \mathfrak {so}_{2n+1}$ if $i=1$. If $2r\leq \min\{q_1, q_2, \ldots, q_k\}$, then
$ \mathcal  W_{a, r}(\omega) \cong \text{End}_{\mathcal O}(M_{I_i, r})$.
\end{Theorem}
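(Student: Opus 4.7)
The plan is to combine the isomorphism $\bar\gamma:\mathcal W_{a,r}(\omega)\xrightarrow{\sim}\End_{\CB^{f_i}(\omega)}(\ob r)$ from \eqref{catec} (which applies because $\omega$ is $\mathbf u$-admissible by Lemma~\ref{ghom123}) with the injection of Corollary~\ref{injective}(2) to obtain an injective algebra homomorphism
\[
\Phi_r := \Psi_{M^{\mathfrak p_{I_i}}(\lambda_{I_i, \mathbf c})}\circ \bar\gamma : \mathcal W_{a,r}(\omega)\hookrightarrow \End_{\mathcal O}(M_{I_i, r}).
\]
Since $\dim_{\mathbb C}\mathcal W_{a,r}(\omega) = a^{r}(2r-1)!!$ by \cite[Theorem~5.5]{AMR}, it will suffice to show that $\dim_{\mathbb C}\End_{\mathcal O}(M_{I_i, r}) = a^{r}(2r-1)!!$, for then $\Phi_r$ is automatically surjective.

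First I would observe that $M_{I_i, r}=M^{\mathfrak p_{I_i}}(\lambda_{I_i, \mathbf c})\otimes V^{\otimes r}$ is tilting in $\mathcal{O}^{\mathfrak p_{I_i}}$: tensoring a tilting module with a finite-dimensional $\U(\mfg)$-module yields a tilting module, and the starting point is tilting by hypothesis. For such a tilting module the standard double-flag computation, using a parabolic Verma flag on the source, a dual parabolic Verma flag on the target, and the vanishing of $\mathrm{Ext}^{\ge 1}(M^{\mathfrak p_{I_i}}(\mu),M^{\mathfrak p_{I_i}}(\lambda)^{\vee})$ together with $\dim\Hom(M^{\mathfrak p_{I_i}}(\mu),M^{\mathfrak p_{I_i}}(\lambda)^{\vee})=\delta_{\mu,\lambda}$, gives
\[
\dim_{\mathbb C}\End_{\mathcal O}(M_{I_i, r}) = \sum_{\lambda \in \Lambda_{I_i, r}} \bigl( M_{I_i, r} : M^{\mathfrak p_{I_i}}(\lambda) \bigr)^{2}.
\]

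The Verma multiplicities on the right are then read off by iterating Lemmas~\ref{polyofx} and~\ref{polyofxodd}: each tensoring with $V$ refines the parabolic Verma flag into exactly $a$ new subquotients (the $k{+}1$st and $k{+}2$nd factors in Lemma~\ref{polyofxodd} are zero precisely because the theorem excludes $\mfg=\mathfrak{so}_{2n+1}$ when $i=1$), whose highest weights are shifted from the existing ones by $+\varepsilon_{p_{j-1}+1}$ or $-\varepsilon_{p_j}$. The hypothesis $2r \le \min\{q_1,\ldots,q_k\}$ is designed exactly so that, over $r$ steps, the indices $p_{j-1}+l$ and $p_j-l+1$ appearing in Definition~\ref{defofaandf} for $1\le l\le r$ remain strictly inside the corresponding $\mathbf p_j$. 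Consequently distinct length-$r$ sequences of moves produce weights that are not related by the affine Weyl group, and the multiplicity $(M_{I_i, r}:M^{\mathfrak p_{I_i}}(\lambda_{I_i, \mathbf c}+\nu))$ equals the number of length-$r$ paths in the resulting weight Bratteli graph that terminate at $\nu$.

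The main obstacle is the combinatorial identification of this weight Bratteli graph with the branching graph underlying the cellular structure of $\mathcal W_{a,r}(\omega)$ on $\Lambda_{a,r}$: each move $+\varepsilon_{p_{j-1}+1}$ or $-\varepsilon_{p_j}$ must be matched with adding or removing a box in one of the $a$ components of an $a$-multipartition, and length-$r$ paths terminating at $\nu$ must biject onto up-down tableaux of length $r$ ending at the matching $\mu\in\Lambda_{a,r}$. Once this matching is installed, cellularity of $\mathcal W_{a,r}(\omega)$ supplies
\[
\sum_{\lambda\in\Lambda_{I_i,r}}\bigl( M_{I_i, r}:M^{\mathfrak p_{I_i}}(\lambda)\bigr)^{2}=\sum_{\mu\in\Lambda_{a,r}}\bigl(\dim S^{\mu}\bigr)^{2}=\dim_{\mathbb C}\mathcal W_{a,r}(\omega)=a^{r}(2r-1)!!,
\]
completing the dimension count and hence the proof.
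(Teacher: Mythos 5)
Your proposal is correct and follows essentially the same route as the paper: injectivity via $\Psi_{M^{\mathfrak p_{I_i}}(\lambda_{I_i,\mathbf c})}\circ\bar\gamma$, then a dimension count of $\End_{\mathcal O}(M_{I_i,r})$ using that $M_{I_i,r}$ is tilting, the parabolic Verma multiplicities from Lemma~\ref{tensorwithv}, and the sum-of-squares identity $\sum_{\mu}(\#\text{up-down }\mu\text{-tableaux})^2=a^r(2r-1)!!$ from \cite[Lemma~5.1]{AMR}. The step you flag as ``the main obstacle'' --- matching each move $\pm\varepsilon_l$ with adding or removing a box in one of the $a$ components --- is exactly the content of the paper's explicit bijections $\iota_j:\Lambda_{I_i,j}\to\Lambda_{a,j}$, $\lambda\mapsto\tilde\lambda$, so your outline is the paper's argument with that bijection left to be written out (and note one small imprecision: the $(k{+}1)$st and $(k{+}2)$nd subquotients in Lemma~\ref{polyofxodd} vanish when $i=2$, while the exclusion of $\mathfrak{so}_{2n+1}$ for $i=1$ is what removes the extra ``stay-put'' summand $L(\lambda)$ of Lemma~\ref{tensorwithv} that would break the multipartition combinatorics).
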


\begin{proof}By  Lemma~\ref{ghom123},
 $\omega$ is $\mathbf u$-admissible. So, we have the isomorphism  $\bar\gamma$ in \eqref{catec}  and     $ \dim \mathcal W_{a, r}(\omega)=a^r(2r-1)!!$.
   By Corollary~\ref{injective}(b), there is a  monomorphism
$$ \Psi_{M^{\mathfrak p_{I_i}}(\lambda_{I_i, \mathbf c})}\circ \bar\gamma:  \mathcal  W_{a, r}(\omega) \hookrightarrow \text{End}_{\mathcal O}(M_{I_i, r}).$$
 It is enough to show that  the dimension of    $\text{End}_{\mathcal O}(M_{I_i, r})$  is $a^r(2r-1)!!$.

By the tensor identity (c.f. \eqref{tensoridentity}),
\begin{equation}\label{teoncushd}
M_{I_i,j}=M^{\mathfrak p_{I_i}}( \lambda_{I_i,\bf c})\otimes V^{\otimes j}\cong \U(\mathfrak g)\otimes _{\U(\mathfrak p_{I_i})}\otimes (L_{I_i}( \lambda_{I_i,\bf c})\otimes V^{\otimes j}).
\end{equation}
Recall that  $\lambda_{I_i,\bf c}$ is given  in Definition ~\ref{deltac} and $c_k=0$ if $i=2$.
For $0\leq j\leq r$ and  any  $ \lambda\in\Lambda_{I_i,j}$, we write $$\hat\lambda:=\lambda-\lambda_{I_i,\bf c}.$$ In particular, $\hat\lambda_{I_i,\bf c}=0$.
Then $\hat\lambda$ is $\mathfrak p_{I_i}$-dominant integral. By \eqref{teoncushd}, Lemma~\ref{tensorwithv} and \eqref{pdwt}, we have
\begin{equation}\label{pdomedh}
\begin{aligned}&
\text{$\hat \lambda_{l}\in \mathbb Z$, $1\leq l\leq n$, $\sum_{l=1}^{n} |\hat \lambda_{l}|\leq j$,}\\
 &\text{$\hat \lambda_{p_{l-1}+1}\geq\hat \lambda_{p_{l-1}+2}\geq \ldots\geq\hat \lambda_{p_{l}},$  if  either $i=1$ and $1\leq l\leq k$ or $i=2$ and $1\leq l\leq k-1$,}\\
& \text{$ \hat\lambda_{p_{k-1}+1}\geq \hat \lambda_{p_{k-1}+2} \geq \ldots\geq \hat\lambda_n\geq 0$, if $i=2$ and $\mathfrak g\in\{\mathfrak {so}_{2n+1}, \mathfrak {sp}_{2n}\}$,}\\
& \text{$ \hat\lambda_{p_{k-1}+1}\geq \hat \lambda_{p_{k-1}+2} \geq \ldots \geq|\hat\lambda_n|\geq 0$, if $i=2$ and $\mathfrak g=\mathfrak {so}_{2n}$,}
\end{aligned}\end{equation}
 where $p_0=0$, $p_j$'s are given in Definition~\ref{defofpi}. Since we are assuming that  $2r\leq \min\{q_1, q_2, \ldots, q_k\}$,
 \begin{equation}\label{ispart}
\begin{aligned}
& \hat \lambda_{p_{l-1}+r+1}=\ldots=\hat\lambda_{p_{l}-r}=0, \text{ if either } i=1 \text{ and } 1\leq l\leq k\text{ or }i=2 \text{ and } 1\leq l\leq k-1, \\
& \hat \lambda_{p_{k-1}+r+1}=\ldots=\hat\lambda_{n}=0, \text{ if } i=2.
\end{aligned}
\end{equation}
So,  $\lambda_n=0$ if $i=2$ and  $\lambda\in \Lambda_{I_i,j}$ for $ 0\leq j\leq r$.
 Since  $\mathfrak g\neq \mathfrak {so}_{2n+1}$ if $i=1$, by  Lemma~\ref{tensorwithv} and \eqref{teoncushd},
 \begin{equation}\label{inducpartion}
 \lambda \in \Lambda_{I_i,j} \text{ if and only if } \lambda=\mu\pm \epsilon_l \text{  for some } \mu\in \Lambda_{I_i,j-1}, 1\leq j\leq r,
 \end{equation}
 where $l\in \{1, 2, \ldots, n\}$ such that  $\mu\pm \epsilon_l$ is    $\mathfrak p_{I_i}$-dominant integral.
  Moreover,
 the multiplicity of $M^{\mathfrak p_{I_i}}(\lambda)$ in $M_{I_i,r}$ is equal to the number of
sequence of $\mathfrak p_{I_i}$-dominant integral  weights
\begin{equation}\label{sequnmuiltp}
(\gamma_0, \gamma_1, \ldots, \gamma_r)
\end{equation}
 such that $\gamma_0=\lambda_{I_i, \mathbf c}$, and $\gamma_r=\lambda$ and  $\gamma_j=\gamma_{j-1}\pm \epsilon_l\in\Lambda_{I_i,j} $ for some $l\in \{1, 2, \ldots, n\}$, $1\leq j\leq r$.

For  $0\leq j\leq r$ and  any  $ \lambda\in\Lambda_{I_i,j}$, we
 define
    $\tilde{\lambda}=( \nu^{(1)},\nu^{(2)}, \ldots, \nu^{(a)} )$   such that
  \begin{enumerate}
\item $\nu^{(l)}=(\hat \lambda_{p_{l-1}+1},\hat\lambda_{p_{l-1}+2}, \ldots,\hat \lambda_{p_{l-1}+r}) $ for $1\leq l\leq k$,
\item $\nu^{(l)}=(-\hat \lambda_{p_{2k-l+1}},-\hat\lambda_{p_{2k-l+1}-1}, \ldots,-\hat \lambda_{p_{2k-l+1}-r+1}) $ for $k+1\leq l\leq a$, $i=1$,
\item $\nu^{(l)}=(-\hat \lambda_{p_{2k-l}},-\hat \lambda_{p_{2k-l}-1}, \ldots,-\hat \lambda_{p_{2k-l}-r+1}) $ for $k+1\leq l\leq a$, $i=2$.
\end{enumerate}
\noindent Thanks to \eqref{ispart}, $\tilde{\lambda}$ is an $a$-multipartition.

It's easy to see that $\tilde \lambda \in\Lambda_{a,j}$ if  $\tilde \mu\in\Lambda_{a,j-1}$, where  $\lambda=\mu\pm \epsilon_l $  for some $ \mu\in\Lambda_{I_i,j-1}$, $1\leq j\leq r$.
By \eqref{inducpartion} and induction on $j$,  $\tilde \lambda \in\Lambda_{a,j}$  if $\lambda\in \Lambda_{I_i,j}$, $0\leq j\leq r$.
So,  we have an injective map
$$\iota_j: \Lambda_{I_i,j}\rightarrow  \Lambda_{a,j},~~ \lambda\mapsto \tilde \lambda,\ \   \text{for any $j, 0\leq j\leq r$.}$$
If $1\leq j\leq r$ and   $\nu\in \Lambda_{a,j}$, then   there is  an $a$-multipartition  $\alpha\in \Lambda_{a,j-1}$ such that
 \begin{equation}\label{ejdhhdf}
    \nu ^{(l)}=\alpha^{(l)}\pm (0^{t-1},1,0^{r-t}), \text{ for some $t$ and $l$}, \text{ and }  \nu ^{(h)}=\alpha^{(h)} \text{ for $h\neq l$.}
   \end{equation}
Suppose that $\tilde \mu=\alpha$ for some $\mu\in\Lambda_{I_i,j-1}$. Define
  $$  \begin{aligned}
      \lambda:=\left\{
                \begin{array}{ll}
                  \mu \pm\epsilon_{p_{l-1}+t}, & \hbox{ if $1\leq l\leq k$;} \\
                  \mu \mp\epsilon_{p_{2k-l+1}-t+1}, & \hbox{ if $k+1\leq l\leq a$ and $i=1$;} \\
 \mu \mp\epsilon_{p_{2k-l}-t+1}, & \hbox{ if $k+1\leq l\leq a$ and $i=2$.}
                \end{array}
              \right.
  \end{aligned}
   $$
  By \eqref{ejdhhdf},   $ \lambda-\lambda_{I_i,\mathbf c}$ satisfies the conditions in \eqref{pdomedh}. So, both  $ \lambda-\lambda_{I_i,\mathbf c}$ and  $ \lambda$ are  $\mathfrak p_{I_i}$-dominant integral.
   By \eqref{inducpartion}, $\lambda \in \Lambda_{I_i,j}$ and  $ \tilde \lambda=\nu$.
   By induction on $j$, we can verify that for any $\nu\in \Lambda_{a,j}$, there is a   $\lambda\in \Lambda_{I_i,j}$ such that $\tilde \lambda=\nu$. Thus, the map $\iota_j$ is surjective. Since   $\iota_j$ is  bijective,
  each $\gamma_j$ in \eqref{sequnmuiltp} corresponds to a unique multipartition $\tilde \gamma_j\in\Lambda_{a,j} $, $0\leq j\leq r$. So,   each  $(\gamma_0, \gamma_1, \ldots, \gamma_r)$ in \eqref{sequnmuiltp} corresponds to a unique up-down $\tilde \lambda$-tableau in the sense of \cite[\S4]{AMR} (e.g., Example~\ref{tableux}). This shows that  the
multiplicity of $M^{\mathfrak p_{I_i}}(\lambda)$ in a   parabolic Verma flag of $M_{I_i, r}$  is equal to the number of all up-down  $\tilde\lambda$-tableaux.
 Thanks to \cite[Proposition~11.1]{Hum}, the tensor product of any tilting module  and a finite dimensional module is also tilting. This implies that  $M_{I_i, r}$ is a tilting module in
$\mathcal O^{\mathfrak p_{I_i}}$.
 A standard arguments on endomorphism algebra of a tilting module shows that
the dimension of $\text{End}_{\mathcal O}(M_{I_i, r})$ is equal to the summation of squares of numbers of all up-down $\mu$-tableaux, where $\mu$ ranges over all elements in  $\Lambda_{a,r}$. Thanks to \cite[Lemma~5.1]{AMR}, this  summation is equal to $a^r (2r-1)!!$. So,  the dimension of    $\text{End}_{\mathcal O}(M_{I_i, r})$  is $a^r(2r-1)!!$.
\end{proof}
\begin{rem}\label{remasimple}
It follows from \cite[Theorem~9.12]{Hum} that $M^{\mathfrak p_{I_i}}(\lambda)$ is simple (hence tilting) if
$\langle \lambda+\rho, \beta^\vee \rangle \not\in \mathbb Z^{>0}$  for all $\beta\in R^+\setminus R_{I_i}^+$.
 So,  $M^{\mathfrak p_{I_i}}(\lambda_{I_i, \mathbf c})$   is  tilting in $\mathcal{O}^{\mathfrak p_{I_i}}$
if  the following conditions hold:
 \begin{enumerate} \item  $c_j-c_{l}+p_l-p_{j-1}-1\not\in \mathbb Z^{>0}$,  $\forall\  1\leq j<l\leq k$,
 \item $c_j-p_{j-1}+n\not\in \mathbb Z^{>0}$, and $2(c_j-p_{j-1}+n)-1\not\in \mathbb Z^{>0}$,  $\forall \ 1\leq j\leq k_i$ if $\mathfrak g= \mathfrak {sp}_{2n}$,
\item $2(c_j-p_{j-1}+n)-1\not\in \mathbb Z^{>0}$,  $\forall \ 1\leq j\leq k_i$ if $\mathfrak g=\mathfrak{so}_{2n+1}$,
\item $2(c_j-p_{j-1}+n)-3\not\in \mathbb Z^{>0}$,  $\forall \ 1\leq j\leq k_i$ if $\mathfrak g=\mathfrak{so}_{2n}$,

 \item  $c_j+c_l-p_{j-1}-p_{l-1}+N-2\not\in \mathbb Z^{>0}$,    $\forall 1\leq j< l\leq k$ if  $\mathfrak g=\mathfrak {so}_{N}$,
\item $c_j+c_l-p_{j-1}-p_{l-1}+N\not\in \mathbb Z^{>0}$,   $\forall \ 1\leq j< l\leq k$ if   $\mathfrak g=\mathfrak {sp}_{2n}$,
\end{enumerate}
where  $k_1=k$  and  $k_2= k-1$.
The Jantzen's criterion for simplicity of a parabolic Verma module can be found in \cite[Theorem~9.13]{Hum}.
\end{rem}

 Suppose that $2r\leq \min\{q_1,q_2,\cdots,q_k\}$ and $\mathfrak g\neq \mathfrak {so}_{2n+1}$ if $i=1$. Given any $\lambda_{I_i,\mathbf c}$ in Definition~\ref{deltac} such that $M^{\mathfrak p_{I_i}}(\lambda)$ is simple (see Remark~\ref{remasimple}), we have polynomials $f_i(t)$ given in Definition~\ref{polf12} and $\mathbf u$-admissible parameter $\omega$ given in Lemma~\ref{ghom123}. Hence we have the cyclotomic Nazarov-Wenzl algebra $ \mathcal W_{a, r}(\omega)$ associated to $f_i(t)$ and $\omega$, where $a=\text{deg}f_i(t)$ (see \eqref{defofd}). Thanks to Theorem~\ref{main123}, such  $ \mathcal W_{a, r}(\omega)$ can be realized as the endomorphism algebra $\text{End}_{\mathcal O}(M_{I_i, r})$.

We keep the assumption in Theorem~\ref{main123} in the remaining part of the paper.
There is a well-known contravariant character preserving duality functor $\vee: \mathcal O^{\mathfrak p_{I_i}}\rightarrow\mathcal O^{\mathfrak p_{I_i}}$ \cite[\S3.2]{Hum}. Since $M_{I_i, r}$ is a tilting module in   $\mathcal O^{\mathfrak p_{I_i}}$, it is   self-dual under the functor $\vee$.  There is a partial order $\leq$ on $\Lambda_{I_i,r}$
 such that $\lambda\leq\mu$ if and only if $\mu-\lambda\in \mathbb N\Pi$.
 By standard arguments in \cite[\S4]{AST}, $\text{End}_{\mathcal O}(M_{I_i, r})$ is a cellular algebra with respect to the poset $(\Lambda_{I_i,r},\leq)$ in the sense of \cite{GL}. The corresponding anti-involution is given by
 \begin{equation}\label{antiinvolution}
 \sigma:\text{End}_{\mathcal O}(M_{I_i, r})\rightarrow\text{End}_{\mathcal O}(M_{I_i, r}), \phi\mapsto \phi^\vee.
 \end{equation}
 By  \cite[Definition~4.1]{AST}, define  $$S(\lambda):=\Hom_{\mathcal O}(M^{\mathfrak p_{I_i}}(\lambda),M_{I_i, r})  \text{ and }  S(\lambda)^*:=\Hom_{\mathcal O}(M_{I_i, r},M^{\mathfrak p_{I_i}}(\lambda)^\vee), \forall \lambda\in\Lambda_{I_i,r}.$$
 Then $S(\lambda)$ is the left cell module of $\text{End}_{\mathcal O}(M_{I_i, r})$ and  $S(\lambda)^*$ is the right dual cell  module  of $\text{End}_{\mathcal O}(M_{I_i, r})$.  Let $T^{\mathfrak p_{I_i}}(\lambda)$ be the indecomposable tilting module in $\mathcal O^{\mathfrak p_{I_i}}$ with the highest weight $\lambda\in \Lambda_{I_i,r}$.
 Define $$\bar\Lambda_{I_i,r}:=\{\lambda\in\Lambda_{I_i,r}\mid  T^{\mathfrak p_{I_i}}(\lambda) \text{ is a direct summand of }M_{I_i, r}\}.$$
 By \cite[Theorem~4.11]{AST}, $S(\lambda)$ has the simple head, say  $ D(\lambda)$ for any $\lambda\in \bar\Lambda_{I_i,r}$. Moreover, $ \{D(\lambda)\mid\lambda  \in \bar\Lambda_{I_i,r}\}$ is the complete set of pairwise non-isomorphic simple $\text{End}_{\mathcal O}(M_{I_i, r})$-modules.

Since  $M_{I_i,r}$ is  a left  $\text{End}_{\mathcal O}(M_{I_i, r})$-module, $ \Hom_{\mathcal O}(M_{I_i, r},M)$ is  a right $\text{End}_{\mathcal O}(M_{I_i, r})$-module for any  $M\in \mathcal O^{\mathfrak p_{I_i}}$.
Via the anti-involution  $\sigma$ in  \eqref{antiinvolution}, any left $\text{End}_{\mathcal O}(M_{I_i, r})$-module can be viewed as a right  $\text{End}_{\mathcal O}(M_{I_i, r})$-module and vice versa.
\begin{Defn}Let $\text{End}_{\mathcal O}(M_{I_i, r})\text{-mod}$ be the category of left $\text{End}_{\mathcal O}(M_{I_i, r})$-modules.
Define two functors
$$  \begin{aligned}
\mathbf f:=&\Hom_{\mathcal O}(M_{I_i, r},-): \mathcal O^{\mathfrak p_{I_i}}\rightarrow \text{End}_{\mathcal O}(M_{I_i, r})\text{-mod},\\
 \mathbf g:=&M_{I_i, r}\otimes_{ \text{End}_{\mathcal O}(M_{I_i, r})}-: \text{End}_{\mathcal O}(M_{I_i, r})\text{-mod}\rightarrow\mathcal O^{\mathfrak p_{I_i}}.
\end{aligned}$$
\end{Defn}
\begin{Lemma}\label{isomorcell}Keep the assumption in Theorem~\ref{main123}. For any $\lambda\in\Lambda_{I_i,r}$,
  $\mathbf f( M^{\mathfrak p_{I_i}}(\lambda)^\vee)\cong S(\lambda)$ as left $\text{End}_{\mathcal O}(M_{I_i, r})$-modules.
\end{Lemma}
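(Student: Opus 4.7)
The plan is to exploit the fact that the tilting module $M_{I_i,r}$ is self-dual under the contravariant duality $\vee$, and turn this self-duality into the desired module isomorphism. Since $M_{I_i,r}$ is tilting in $\mathcal O^{\mathfrak p_{I_i}}$, fix once and for all an isomorphism $\nu:M_{I_i,r}\to M_{I_i,r}^\vee$. Using the identification $\vee^2\cong \operatorname{id}$, one may arrange $\nu$ so that the anti-involution in \eqref{antiinvolution} is concretely given by $\sigma(\phi)=\nu^{-1}\circ\phi^\vee\circ\nu$ for $\phi\in \End_{\mathcal O}(M_{I_i,r})$, and $\sigma^2=\operatorname{id}$.

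The key step is to define
$$\Theta:S(\lambda)=\Hom_{\mathcal O}(M^{\mathfrak p_{I_i}}(\lambda),M_{I_i,r})\longrightarrow \Hom_{\mathcal O}(M_{I_i,r},M^{\mathfrak p_{I_i}}(\lambda)^\vee)=\mathbf f(M^{\mathfrak p_{I_i}}(\lambda)^\vee),\quad f\longmapsto f^\vee\circ\nu,$$
where $f^\vee:M_{I_i,r}^\vee\to M^{\mathfrak p_{I_i}}(\lambda)^\vee$ is the image of $f$ under the duality functor. Because $\vee$ is a contravariant self-equivalence, $\Theta$ is a $\mathbb C$-linear bijection; the inverse uses the natural isomorphism $M^{\mathfrak p_{I_i}}(\lambda)\cong M^{\mathfrak p_{I_i}}(\lambda)^{\vee\vee}$ together with $\nu^{-1}$.

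It remains to check that $\Theta$ is an $\End_{\mathcal O}(M_{I_i,r})$-module homomorphism. Recall that $\mathbf f(M^{\mathfrak p_{I_i}}(\lambda)^\vee)$ is by definition a right $\End_{\mathcal O}(M_{I_i,r})$-module via $g\cdot\phi=g\circ\phi$, which via $\sigma$ becomes a left module $\phi\star g=g\circ\sigma(\phi)$. For any $\phi\in\End_{\mathcal O}(M_{I_i,r})$ and $f\in S(\lambda)$, direct computation gives
$$\Theta(\phi\circ f)=(\phi\circ f)^\vee\circ\nu=f^\vee\circ\phi^\vee\circ\nu=(f^\vee\circ\nu)\circ(\nu^{-1}\circ\phi^\vee\circ\nu)=\Theta(f)\circ\sigma(\phi)=\phi\star\Theta(f),$$
which is precisely the module compatibility.

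The only real subtlety, and hence the main obstacle, is bookkeeping around the choice of $\nu$: one must ensure that the self-duality isomorphism is chosen so that \eqref{antiinvolution} takes the explicit form $\sigma(\phi)=\nu^{-1}\circ\phi^\vee\circ\nu$ and is an honest involution (so that the cellular structure from \cite{AST} matches the one used here). This is a standard feature of self-dual tilting modules in BGG categories and follows from the fact that $\nu^\vee$, precomposed with the canonical biduality isomorphism $M_{I_i,r}\cong M_{I_i,r}^{\vee\vee}$, can be rescaled to coincide with $\nu$. Once that normalization is in place, the above computation completes the proof.
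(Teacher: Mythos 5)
Your proposal is correct and follows essentially the same route as the paper's proof: both use the contravariant duality $\vee$ (together with the self-duality of the tilting module $M_{I_i,r}$ and $\vee^2\cong\operatorname{id}$) to produce a bijection between $S(\lambda)$ and $\Hom_{\mathcal O}(M_{I_i,r},M^{\mathfrak p_{I_i}}(\lambda)^\vee)$, and then verify equivariance by the computation $(\phi\circ f)^\vee=f^\vee\circ\phi^\vee$ against the twisted action $gh=h\circ g^\vee$. The only difference is that you make explicit the self-duality isomorphism $\nu$ and its normalization, which the paper leaves implicit.
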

\begin{proof}
The duality functor $\vee$ induces two linear maps
  \begin{equation} \label{vvv} \begin{aligned} & \vee: \Hom_{\mathcal O}(M_{I_i, r},M^{\mathfrak p_{I_i}}(\lambda)^\vee) \rightarrow \Hom_{\mathcal O}(M^{\mathfrak p_{I_i}}(\lambda),M_{I_i, r}), \\
& \vee: \Hom_{\mathcal O}(M^{\mathfrak p_{I_i}}(\lambda),M_{I_i, r}) \rightarrow \Hom_{\mathcal O}(M_{I_i, r},M^{\mathfrak p_{I_i}}(\lambda)^\vee).\\
\end{aligned}\end{equation}
Since the square of $\vee$ is isomorphic  to the identity functor (c.f. \cite[Theorem~3.2(a)]{Hum}),  two linear maps in \eqref{vvv} are bijective.
  Note that $\text{End}_{\mathcal O}(M_{I_i, r})$ acts on $\Hom_{\mathcal O}(M_{I_i, r},M^{\mathfrak p_{I_i}}(\lambda)^\vee)$ via
\begin{equation}\label{actionofvee}
gh:= h\circ g^\vee, \text{ for } h\in \Hom_{\mathcal O}(M_{I_i, r},M^{\mathfrak p_{I_i}}(\lambda)^\vee), g\in \text{End}_{\mathcal O}(M_{I_i, r}).
\end{equation}
For any $h\in \Hom_{\mathcal O}(M_{I_i, r},M^{\mathfrak p_{I_i}}(\lambda)^\vee), h_1\in\Hom_{\mathcal O}(M^{\mathfrak p_{I_i}}(\lambda),M_{I_i, r}),  g\in \text{End}_{\mathcal O}(M_{I_i, r})$,
\begin{equation}\label{mouldeact}
(gh)^\vee\overset{\eqref{actionofvee}}=(h\circ g^\vee)^\vee=g\circ h^\vee=g(h^\vee),\quad (gh_1)^\vee=(g\circ h_1)^\vee=h_1^\vee\circ g^\vee\overset{\eqref{actionofvee}}=g(h_1^\vee).
\end{equation}
This shows that  the bijective maps $\vee$  in \eqref{vvv}  are  $\text{End}_{\mathcal O}(M_{I_i, r})$-homomorphisms.
\end{proof}
The arguments  in the proof of Lemmas~\ref{isotiltin}--\ref{porfcover} are essentially the formal arguments  in \cite{RS1,RS2}.
\begin{Lemma}\label{isotiltin}
Keep the assumption in Theorem~\ref{main123}.
Let $T$ be any indecomposable direct summand of $M_{I_i, r}$. Then $\mathbf g\mathbf f(T)\cong T$.
\end{Lemma}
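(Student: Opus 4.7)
The plan is to exhibit the standard evaluation map
$$\epsilon_N : \mathbf g\mathbf f(N) \,=\, M \otimes_A \Hom_{\mathcal O}(M, N) \,\longrightarrow\, N, \qquad m \otimes \phi \longmapsto \phi(m),$$
where $M := M_{I_i, r}$ and $A := \End_{\mathcal O}(M)$, and then to show that $\epsilon_T$ is an isomorphism in $\mathcal{O}^{\mathfrak p_{I_i}}$ for any direct summand $T$ of $M$. The map $\epsilon$ is well-defined, $\mathfrak g$-equivariant (because $A$ acts on $M$ by $\mathfrak g$-module endomorphisms), and natural in $N$: it is the counit of the tensor--hom adjunction between $\mathbf g$ and $\mathbf f$.

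First I would handle the base case $N = M$. Under the identification $\mathbf f(M) = \Hom_{\mathcal O}(M, M) = A$, where $A$ carries the twisted left action $g \cdot h = h \circ g^\vee$ of \eqref{actionofvee}, the anti-involution $\sigma$ of \eqref{antiinvolution} gives an isomorphism between this twisted left $A$-module and the regular left $A$-module. Combined with the canonical isomorphism $\mathbf g(A) = M \otimes_A A \xrightarrow{\sim} M$ coming from the right $A$-module structure on $M$ (obtained from the tautological left action of $A = \End_{\mathcal O}(M)$ on $M$ by twisting with $\sigma$), this displays $\epsilon_M$ as an isomorphism in $\mathcal{O}^{\mathfrak p_{I_i}}$.

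Next, since $T$ is a direct summand of $M$, fix a decomposition $M = T \oplus T'$ with its associated inclusions $\iota_T, \iota_{T'}$ and retractions $\pi_T, \pi_{T'}$. The functor $\mathbf f = \Hom_{\mathcal O}(M, -)$ preserves finite direct sums, and $\mathbf g = M \otimes_A -$ preserves arbitrary direct sums; hence $\mathbf g\mathbf f(M) = \mathbf g\mathbf f(T) \oplus \mathbf g\mathbf f(T')$. By the naturality of $\epsilon$ applied to $\iota_T, \iota_{T'}, \pi_T, \pi_{T'}$, this decomposition is compatible with $\epsilon_M$, so $\epsilon_M$ is of the block-diagonal form $\epsilon_T \oplus \epsilon_{T'}$ under the identifications above. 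Since $\epsilon_M$ is an isomorphism, each block is, and in particular $\epsilon_T : \mathbf g\mathbf f(T) \to T$ is the required isomorphism.

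The main obstacle is the bookkeeping of left/right $A$-module conventions and the twist by $\sigma$: $\mathbf f(N)$ is a left $A$-module only through the twisted action coming from \eqref{actionofvee}, while $M \otimes_A -$ requires $M$ to be a right $A$-module, again obtained from the natural left $A$-action by $\sigma$. Once these conventions are aligned, the only substantive input is the base case $\epsilon_M \cong \mathrm{id}_M$, and the passage to a direct summand is then formal from naturality and additivity of the two functors.
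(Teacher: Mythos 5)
Your proposal is correct and follows the same basic strategy as the paper: both hinge on the evaluation (counit) map $v\otimes h\mapsto h(v)$ being an isomorphism $\mathbf g\mathbf f(M_{I_i,r})\xrightarrow{\sim} M_{I_i,r}$ and then restricting to the summand $T$. The one genuine difference is the last step: the paper only uses the retraction $\pi$ to deduce that $\epsilon_T$ is surjective and then \emph{compares characters} to conclude it is injective, whereas you use both the inclusion and the retraction together with additivity of $\mathbf f$ and $\mathbf g$ to write $\epsilon_M=\epsilon_T\oplus\epsilon_{T'}$, so that invertibility of each block is automatic. Your finish is purely formal and arguably cleaner (the paper's character comparison implicitly needs the same direct-sum decomposition of $\mathbf g\mathbf f(M_{I_i,r})$ anyway), and your explicit justification of the base case via $\mathbf f(M)\cong {}_AA$ twisted by $\sigma$ and $M\otimes_AA\cong M$ supplies detail the paper simply asserts.
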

\begin{proof}We have the isomorphism $\phi: \mathbf g\mathbf f(M_{I_i, r})\rightarrow  M_{I_i, r} $ such that $\phi(v\otimes h)=h(v)$, $\forall v\in M_{I_i, r} $ and $h\in\text{End}_{\mathcal O}(M_{I_i, r})$. Since $T$ is a direct summand of $M_{I_i, r}$, the projection $\pi:M_{I_i, r}\rightarrow T$ induces the homomorphism $1\otimes \mathbf f(\pi): \mathbf g\mathbf f(M_{I_i, r})\rightarrow\mathbf g\mathbf f(T)$ such that
\begin{equation}\label{piphiemi}
\pi\circ\phi=\tilde \phi\circ(1\otimes\mathbf f(\pi) ),
\end{equation}
where $\tilde \phi:\mathbf g\mathbf f(T)\rightarrow T$, $v\otimes h\mapsto h(v)$, for $v\in M_{I_i, r} $ and $h\in \mathbf f(T)$. By \eqref{piphiemi}, $\tilde \phi $ is surjective. Comparing characters  yields that $\tilde \phi $ is an isomorphism.
\end{proof}

  \begin{Lemma}\label{porfcover}Keep the assumption in Theorem~\ref{main123}. Define  $P(\lambda):= \Hom_{\mathcal O}(M_{I_i, r},T^{\mathfrak p_{I_i}}(\lambda))$, $\lambda\in\bar\Lambda_{I_i,r}$. Then
  $P(\lambda)$ is the projective cover of $D(\lambda)$, $\lambda\in\bar\Lambda_{I_i,r}$.
  \end{Lemma}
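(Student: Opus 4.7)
The plan is to deduce this via the standard Ringel-type relationship between the tilting module $M_{I_i,r}$ and its endomorphism algebra. First, since $\lambda\in\bar\Lambda_{I_i,r}$, the indecomposable tilting $T^{\mathfrak p_{I_i}}(\lambda)$ is a direct summand of $M_{I_i,r}$, so $P(\lambda)=\mathbf f(T^{\mathfrak p_{I_i}}(\lambda))$ is a direct summand of $\mathbf f(M_{I_i,r})=\End_{\mathcal O}(M_{I_i,r})$, and hence $P(\lambda)$ is projective as a left $\End_{\mathcal O}(M_{I_i,r})$-module.

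Next I would verify that $P(\lambda)$ is indecomposable. Suppose $P(\lambda)=A\oplus B$ in the category of left $\End_{\mathcal O}(M_{I_i,r})$-modules. Then $A$ and $B$ are themselves projective summands of the regular module, so each has the form $\mathbf f(T_A)$, $\mathbf f(T_B)$ for direct summands $T_A,T_B$ of $M_{I_i,r}$. Applying $\mathbf g$ and invoking Lemma~\ref{isotiltin} gives $T^{\mathfrak p_{I_i}}(\lambda)\cong T_A\oplus T_B$, and indecomposability of the tilting module forces one of $T_A,T_B$, and therefore one of $A,B$, to vanish.

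The heart of the proof is to exhibit a nonzero surjection $P(\lambda)\twoheadrightarrow D(\lambda)$. Since $M_{I_i,r}$ is tilting (as recorded in the proof of Theorem~\ref{main123}), it carries both a parabolic Verma flag and a dual flag, whence $\operatorname{Ext}^1_{\mathcal O}(M_{I_i,r},-)$ vanishes on every object admitting a filtration whose sections are duals $M^{\mathfrak p_{I_i}}(\mu)^\vee$ of parabolic Verma modules. Applying $\mathbf f$ to the dual parabolic Verma flag of $T^{\mathfrak p_{I_i}}(\lambda)$ — which has $M^{\mathfrak p_{I_i}}(\lambda)^\vee$ as its top quotient, with lower sections of the form $M^{\mathfrak p_{I_i}}(\mu)^\vee$ for $\mu<\lambda$ — therefore produces an $S$-filtration of $P(\lambda)$ whose top section is $\mathbf f(M^{\mathfrak p_{I_i}}(\lambda)^\vee)\cong S(\lambda)$ by Lemma~\ref{isomorcell}. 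Composing the quotient map $P(\lambda)\twoheadrightarrow S(\lambda)$ with the surjection $S(\lambda)\twoheadrightarrow D(\lambda)$ onto the simple head (which exists by \cite[Theorem~4.11]{AST}) yields the required surjection. Combined with the projectivity and indecomposability established above, this identifies $P(\lambda)$ as the projective cover of $D(\lambda)$.

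The main obstacle I anticipate is confirming the exactness of $\mathbf f$ on the dual flag of $T^{\mathfrak p_{I_i}}(\lambda)$; I expect this to reduce cleanly to the $\operatorname{Ext}^1$-vanishing between $\Delta$-flag and $\nabla$-flag objects inside parabolic category $\mathcal O^{\mathfrak p_{I_i}}$, which is part of the usual tilting formalism and applies in our setting precisely because $M_{I_i,r}$ is tilting with a parabolic Verma flag (Lemmas~\ref{polyofx}, \ref{polyofxodd}).
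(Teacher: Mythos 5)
Your proof is correct, but it identifies the head of $P(\lambda)$ by a genuinely different mechanism than the paper. You construct an explicit surjection $P(\lambda)\twoheadrightarrow D(\lambda)$ by applying $\mathbf f$ to the costandard (dual parabolic Verma) flag of $T^{\mathfrak p_{I_i}}(\lambda)$, using $\operatorname{Ext}^1(M_{I_i,r},-)$-vanishing on $\nabla$-filtered objects to keep the top quotient $\mathbf f(M^{\mathfrak p_{I_i}}(\lambda)^\vee)\cong S(\lambda)$ (Lemma~\ref{isomorcell}) and then passing to the simple head of $S(\lambda)$. The paper never touches the $\nabla$-flag of $T^{\mathfrak p_{I_i}}(\lambda)$ or any exactness of $\mathbf f$; instead it first grants that $P(\lambda)$ is a principal indecomposable (so it is the projective cover of \emph{some} $D(\nu)$), establishes the adjunction identity $\dim\Hom_{\mathcal O}(T^{\mathfrak p_{I_i}}(\lambda),M^{\mathfrak p_{I_i}}(\mu)^\vee)=\dim\Hom(P(\lambda),S(\mu))=[S(\mu):D(\nu)]$, and then pins down $\nu=\lambda$ by a two-sided inequality: $\mu=\lambda$ gives $[S(\lambda):D(\nu)]\neq 0$, hence $\lambda\le\nu$ by the cellular-structure result \cite[Corollary~4.10]{AST}, while $\mu=\nu$ gives $(T^{\mathfrak p_{I_i}}(\lambda):M^{\mathfrak p_{I_i}}(\nu))\neq 0$, hence $\nu\le\lambda$ since $\lambda$ is the highest weight of the tilting module. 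Your route is the more standard piece of tilting/Ringel-duality formalism and is self-contained within highest-weight-category theory; the paper's route avoids invoking $\nabla$-flags and exactness of $\mathbf f$ altogether, at the cost of leaning on the cellular machinery from \cite{AST} that it has already set up (and which it needs anyway for the decomposition-number formula that follows). Both arguments are sound; your indecomposability step via $\mathbf g$ and Lemma~\ref{isotiltin} is a slightly roundabout version of the standard fact from \cite[II, Proposition~2.1(c)]{ARS} that the paper cites directly.
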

  \begin{proof}
  It follows from standard arguments (c.f. \cite[II,Proposition~2.1(c)]{ARS}) that $P(\lambda)$ is a principal  indecomposable left $\text{End}_{\mathcal O}(M_{I_i, r})$-module.
  For any $\lambda\in \bar \Lambda_{I_i,r}$ and $\mu\in\Lambda_{I_i,r}$,
  \begin{equation}\label{dec123}
  \begin{aligned}
  \Hom_{\mathcal O}(T^{\mathfrak p_{I_i}}(\lambda), M^{\mathfrak p_{I_i}}(\mu)^\vee )&\cong\Hom_{\mathcal O}(\mathbf g\mathbf f(T^{\mathfrak p_{I_i}}(\lambda)), M^{\mathfrak p_{I_i}}(\mu)^\vee ), \text{ by Lemma~\ref{isotiltin}},\\
  &\cong \Hom_{\text{End}_{\mathcal O}(M_{I_i, r})}(\mathbf f( T^{\mathfrak p_{I_i}}(\lambda)), \mathbf f(M^{\mathfrak p_{I_i}}(\mu)^\vee))\\
  &\cong \Hom_{\text{End}_{\mathcal O}(M_{I_i, r})}(P(\lambda), S(\mu)), \text{ by Lemma~\ref{isomorcell}}, \\
  \end{aligned}
 \end{equation}
  where the isomorphism in the second row follows from the fact that $\mathbf f$ and $\mathbf g$ are adjoint pairs. Thus
  \begin{equation}\label{dimcomp}
  \text{dim}\Hom_{\mathcal O}(T^{\mathfrak p_{I_i}}(\lambda), M^{\mathfrak p_{I_i}}(\mu)^\vee )=\text{dim}\Hom_{\text{End}_{\mathcal O}(M_{I_i, r})}(P(\lambda), S(\mu)).
  \end{equation}
  If $P(\lambda)$ is the projective cover of $D(\nu)$, then \begin{equation}\label{muttil} \text{dim}\Hom_{\text{End}_{\mathcal O}(M_{I_i, r})}(P(\lambda), S(\mu))=[S(\mu):D(\nu)],\end{equation}  where $[S(\mu):D(\nu)]$, called  a decomposition number,  is  the multiplicity of  the simple module $D(\nu)$ in the cell module $S(\mu)$. If we assume $\mu=\lambda$, by  \eqref{dec123}--\eqref{muttil},  $[S(\lambda):D(\nu)]\neq 0$.
Thanks to    \cite[Corollary~4.10]{AST}, $\lambda\le \nu$.  If we assume $\mu=\nu$, we have   $\text{dim}\Hom_{\mathcal O}(T^{\mathfrak p_{I_i}}(\lambda), M^{\mathfrak p_{I_i}}(\nu)^\vee )\neq0$. Since $\lambda$ is the highest weight of  $T^{\mathfrak p_{I_i}}(\lambda)$ (c.f. \cite[\S11.8]{Hum}), $\lambda\ge \nu$, forcing $\lambda=\nu$.
  \end{proof}

\begin{Cor}Keep the assumption in Theorem~\ref{main123}. Then
 \begin{equation} \label{decp} [S(\lambda):D(\mu)]=(T^{\mathfrak p_{I_i}}(\mu):M^{\mathfrak p_{I_i}}(\lambda))\ \  \text{ for all  $\lambda  \in \Lambda_{I_i,r}$ and $\mu  \in \bar\Lambda_{I_i,r}$,} \end{equation}
 where   $(T^{\mathfrak p_{I_i}}(\mu):M^{\mathfrak p_{I_i}}(\lambda)) $ is the multiplicity of $ M^{\mathfrak p_{I_i}}(\lambda)$ in any parabolic Verma   filtration of the tilting module $ T^{\mathfrak p_{I_i}}(\mu)$.
 \end{Cor}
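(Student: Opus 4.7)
The plan is to chain together three identifications already essentially assembled in the lead-up: the $(\mathbf f,\mathbf g)$-adjunction computation in~\eqref{dec123}--\eqref{dimcomp}, the projective-cover statement of Lemma~\ref{porfcover}, and a standard BGG-type reciprocity inside the parabolic category $\mathcal O^{\mathfrak p_{I_i}}$. First, by Lemma~\ref{porfcover}, $P(\mu)=\Hom_{\mathcal O}(M_{I_i,r},T^{\mathfrak p_{I_i}}(\mu))$ is the projective cover of $D(\mu)$, so for any $\lambda\in\Lambda_{I_i,r}$ the standard identity~\eqref{muttil} gives
\[
[S(\lambda):D(\mu)]=\dim\Hom_{\End_{\mathcal O}(M_{I_i,r})}\!\bigl(P(\mu),S(\lambda)\bigr).
\]
Combining this with~\eqref{dimcomp} (applied with the roles of $\lambda$ and $\mu$ as in the final statement) yields
\[
[S(\lambda):D(\mu)]=\dim\Hom_{\mathcal O}\!\bigl(T^{\mathfrak p_{I_i}}(\mu),M^{\mathfrak p_{I_i}}(\lambda)^\vee\bigr).
\]

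The remaining step is to show that this Hom-dimension equals the parabolic Verma multiplicity $(T^{\mathfrak p_{I_i}}(\mu):M^{\mathfrak p_{I_i}}(\lambda))$. For this I would invoke the standard Ext-vanishing inside $\mathcal O^{\mathfrak p_{I_i}}$:
\[
\Ext^{k}_{\mathcal O^{\mathfrak p_{I_i}}}\!\bigl(M^{\mathfrak p_{I_i}}(\nu),M^{\mathfrak p_{I_i}}(\lambda)^\vee\bigr)=\delta_{k,0}\,\delta_{\nu,\lambda}\,\mathbb C,
\]
which is the usual orthogonality between the standard and costandard objects of the highest-weight category $\mathcal O^{\mathfrak p_{I_i}}$ (see e.g.\ \cite[Ch.~9]{Hum}). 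Since $T^{\mathfrak p_{I_i}}(\mu)$ admits a parabolic Verma flag
\[
0=T_0\subset T_1\subset \dots\subset T_k=T^{\mathfrak p_{I_i}}(\mu),\qquad T_j/T_{j-1}\cong M^{\mathfrak p_{I_i}}(\nu_j),
\]
I would induct on the length of this flag, using the long exact $\Hom/\Ext$-sequence obtained from $0\to T_{j-1}\to T_j\to M^{\mathfrak p_{I_i}}(\nu_j)\to 0$ together with the Ext-vanishing above, to conclude
\[
\dim\Hom_{\mathcal O}\!\bigl(T^{\mathfrak p_{I_i}}(\mu),M^{\mathfrak p_{I_i}}(\lambda)^\vee\bigr)=\#\{j\mid \nu_j=\lambda\}=(T^{\mathfrak p_{I_i}}(\mu):M^{\mathfrak p_{I_i}}(\lambda)).
\]
Putting the two equalities together gives~\eqref{decp}.

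The only step requiring care is the Ext-vanishing: one has to confirm it in the parabolic setting $\mathcal O^{\mathfrak p_{I_i}}$ rather than in the ordinary $\mathcal O$. This is standard — the pair $(M^{\mathfrak p_{I_i}}(\nu),M^{\mathfrak p_{I_i}}(\lambda)^\vee)$ forms the standard/costandard objects of the highest-weight structure on $\mathcal O^{\mathfrak p_{I_i}}$ with poset $(\Lambda^{\mathfrak p_{I_i}},\le)$, and the duality $\vee$ used in~\eqref{antiinvolution} is precisely the one exchanging them — but it is the one substantive input beyond formal manipulations, and so is the main (mild) obstacle.
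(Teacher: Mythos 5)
Your proof is correct and follows essentially the same route as the paper: chain Lemma~\ref{porfcover} with \eqref{dimcomp}--\eqref{muttil} to get $[S(\lambda):D(\mu)]=\dim\Hom_{\mathcal O}(T^{\mathfrak p_{I_i}}(\mu),M^{\mathfrak p_{I_i}}(\lambda)^\vee)$, then identify this with the Verma-flag multiplicity. The only difference is that you sketch the standard/costandard Ext-orthogonality argument for the last identity, whereas the paper simply cites it as a well-known fact from \cite[\S9.8]{Hum}.
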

 \begin{proof}
 The result follows from \eqref{dimcomp}--\eqref{muttil} and the well-known fact (c.f.\cite[\S9.8]{Hum}) that
 $$ (T^{\mathfrak p_{I_i}}(\mu):M^{\mathfrak p_{I_i}}(\lambda))=\text{dim}\Hom_{\mathcal O}(T^{\mathfrak p_{I_i}}(\mu), M^{\mathfrak p_{I_i}}(\lambda)^\vee ).$$
 \end{proof}
Thanks to Theorem~\ref{main123}, $\mathcal  W_{a, r}(\omega)\cong \text{End}_{\mathcal O}(M_{I_i, r})$. Therefore,   one can use \eqref{decp}  to compute  the decomposition numbers of $\mathcal  W_{a, r}(\omega) $. More explicitly, these numbers can be computed via the Kazhdan-Lusztig  polynomials associated with Weyl groups of types $B, C, D$. For details, see, e.g, \cite[Chapter~11]{Hum} and references therein.
   If  $\mathfrak g=\mathfrak{so}_{2n}$, $i=1$ and $k=1$, the isomorphism    $\mathcal  W_{2, r}(\omega)\cong \text{End}_{\mathcal O}(M_{I_2, r})$ has been given in \cite{ES}.

  We close the paper by giving  some  examples. One can  see that
    the condition `` $ \mfg\neq \mathfrak{so}_{2n+1}$ if $i=1$ '' in Theorem~\ref{main123}  could not  be removed.  We  remark that we have defined the $a$-mutilpartition $\tilde \lambda$  for a  $\mathfrak p_{I_i}$-dominant integral weight $\lambda$ in the proof of  Theorem~\ref{main123}.

 \begin{example}\label{tableux}
Suppose that $2r\leq\min\{q_1,q_2,\ldots,q_k\}$,
 $r=2$ and  $\lambda=\lambda_{I_i,\bf c}$ such that $M^{\mathfrak p_{I_i}}(\lambda_{I_i, \mathbf c})$ is tilting.

 If $i=1$, $k=2$ and $\mathfrak g\in \{\mathfrak {so}_{2n}, \mathfrak {sp}_{2n}\}$, then $a=4$ and  $\tilde{\lambda}=(\emptyset,\emptyset,\emptyset,\emptyset)$. There are four parabolic Verma paths for $M^{\mathfrak p_1}(\lambda_{I_1,\bf c} )$ in $M_{I_1, 2}$ as follows:
\begin{enumerate}
\item $M^{\mathfrak p_1}(\lambda_{I_1,\bf c} ) \rightarrow M^{\mathfrak p_1}(\lambda_{I_1,\bf c}+\varepsilon_1 ) \rightarrow M^{\mathfrak p_1}(\lambda_{I_1,\bf c} )$,
\item $M^{\mathfrak p_1}(\lambda_{I_1,\bf c} ) \rightarrow M^{\mathfrak p_1}(\lambda_{I_1,\bf c}+\varepsilon_{p_1+1} ) \rightarrow M^{\mathfrak p_1}(\lambda_{I_1,\bf c} )$,
\item $M^{\mathfrak p_1}(\lambda_{I_1,\bf c} ) \rightarrow M^{\mathfrak p_1}(\lambda_{I_1,\bf c}-\varepsilon_n ) \rightarrow M^{\mathfrak p_1}(\lambda_{I_1,\bf c} )$,
\item $M^{\mathfrak p_1}(\lambda_{I_1,\bf c} ) \rightarrow M^{\mathfrak p_1}(\lambda_{I_1,\bf c}-\varepsilon_{p_1} ) \rightarrow M^{\mathfrak p_1}(\lambda_{I_1,\bf c} )$.
\end{enumerate}
\noindent The corresponding  up-down $\tilde{\lambda}$-tableaux  are given as follows:
\begin{enumerate}
\item $(\emptyset,\emptyset,\emptyset,\emptyset)\rightarrow((1),\emptyset,\emptyset,\emptyset)\rightarrow (\emptyset,\emptyset,\emptyset,\emptyset)$,
\item $(\emptyset,\emptyset,\emptyset,\emptyset)\rightarrow(\emptyset,(1),\emptyset,\emptyset)\rightarrow (\emptyset,\emptyset,\emptyset,\emptyset)$,
\item $(\emptyset,\emptyset,\emptyset,\emptyset)\rightarrow(\emptyset,\emptyset,(1),\emptyset)\rightarrow (\emptyset,\emptyset,\emptyset,\emptyset)$,
\item $(\emptyset,\emptyset,\emptyset,\emptyset)\rightarrow(\emptyset,\emptyset,\emptyset,(1))\rightarrow (\emptyset,\emptyset,\emptyset,\emptyset)$.
\end{enumerate}

 If  $ i=2$, $k=2$ and  $\mathfrak g\in \{\mathfrak {so}_{2n}, \mathfrak {sp}_{2n},\mathfrak {so}_{2n+1}\}$, then $a=3$, $\tilde{\lambda}=(\emptyset,\emptyset,\emptyset)$. There are three parabolic Verma paths for $M^{\mathfrak p_2}(\lambda_{2,\mathbf c} )$ in $M_{I_2, 2}$ as follows:

\begin{enumerate}
\item $M^{\mathfrak p_1}(\lambda_{I_1,\bf c} ) \rightarrow M^{\mathfrak p_1}(\lambda_{I_1,\bf c}+\varepsilon_1 ) \rightarrow M^{\mathfrak p_1}(\lambda_{I_1,\bf c} )$,
\item $M^{\mathfrak p_1}(\lambda_{I_1,\bf c} ) \rightarrow M^{\mathfrak p_1}(\lambda_{I_1,\bf c}+\varepsilon_{p_1+1} ) \rightarrow M^{\mathfrak p_1}(\lambda_{I_1,\bf c} )$,
\item $M^{\mathfrak p_1}(\lambda_{I_1,\bf c} ) \rightarrow M^{\mathfrak p_1}(\lambda_{I_1,\bf c}-\varepsilon_{p_1} ) \rightarrow M^{\mathfrak p_1}(\lambda_{I_1,\bf c} )$.
\end{enumerate}

\noindent The corresponding  up-down  $\tilde{\lambda}$-tableaux are given as follows:
\begin{enumerate}
\item $(\emptyset,\emptyset,\emptyset)\rightarrow((1),\emptyset,\emptyset)\rightarrow (\emptyset,\emptyset,\emptyset)$,
\item $(\emptyset,\emptyset,\emptyset)\rightarrow(\emptyset,(1),\emptyset)\rightarrow (\emptyset,\emptyset,\emptyset)$,
\item $(\emptyset,\emptyset,\emptyset)\rightarrow(\emptyset,\emptyset,(1))\rightarrow (\emptyset,\emptyset,\emptyset)$.
\end{enumerate}

If $ i=1$, $k=1$ and  $\mathfrak g=\mathfrak {so}_{2n+1}$, then $a=3$ and $\dim_{\mathbb C}\mathcal W_{3,2}(\omega)=27$.
In this case,  $\Lambda_{I_1,2}=\Lambda_1\cup \Lambda_2\cup \Lambda_3$, where
$\Lambda_1=\{\lambda_{I_1,\bf c}+2\varepsilon_1, \lambda_{I_1,\bf c}+\varepsilon_1+\varepsilon_2, \lambda_{I_1,\bf c}-2\varepsilon_n, \lambda_{I_1,\bf c}-\varepsilon_n-\varepsilon_{n-1}\}$, $\Lambda_2=\{\lambda_{I_1,\bf c}+ \varepsilon_1-\varepsilon_n, \lambda_{I_1,\bf c}+ \varepsilon_1, \lambda_{I_1,\bf c}- \varepsilon_n\}$, $\Lambda_3=\{ \lambda_{I_1,\bf c}\}$.
Further, $(M_{I_1,2}:M^{\mathfrak p_1}(\mu ))=j$     if  $\mu\in\Lambda_j$, for $j=1,2,3$. So, $\dim_{\mathbb C}\End_{\mathcal O}( M_{I_1,2})=25$.
Therefore, $\Psi_{M^{\mathfrak p_1}(\lambda_{I_1,\bf c} ) }: \End_{\CB^{f_1}(\omega)}(\ob 2)\rightarrow \End_{\mathcal O}( M_{I_1,2})$ can not be injective.

\end{example}

\small
\end{document}